\documentclass{amsart}

\makeatletter
\def\@serieslogo{}
\def\@issueinfo{}
\def\@copyrightyear{}
\def\@copyrightline{}
\def\@PII{}
\makeatother

\usepackage{amssymb}
\usepackage{mathrsfs}
\usepackage[authoryear,round]{natbib}

\newtheorem{theorem}{Theorem}[section]
\newtheorem{lemma}[theorem]{Lemma}

\theoremstyle{definition}
\newtheorem{definition}[theorem]{Definition}
\newtheorem{example}[theorem]{Example}

\theoremstyle{remark}
\newtheorem{remark}[theorem]{Remark}

\numberwithin{equation}{section}

\newtheorem{proposition}[theorem]{Proposition}
\newtheorem{corollary}[theorem]{Corollary}
\theoremstyle{definition}
\newtheorem{notation}[theorem]{Notation}
\theoremstyle{remark}

\theoremstyle{cupplain}
\newtheorem{maintheorem}[theorem]{Main Theorem}

\newcommand{\R}{\mathbb{R}}
\newcommand{\N}{\mathbb{N}}
\newcommand{\Z}{\mathbb{Z}}

\usepackage[colorlinks=true, citecolor=blue, linkcolor=blue, urlcolor=blue]{hyperref}

\usepackage{tikz}
\usetikzlibrary{calc}

\everymath{\displaystyle}

\begin{document}

\title[Uniform Hyperbolicity and Symbolic Dynamics]{Uniform Hyperbolicity and Symbolic Dynamics: \\
Markov Partitions, Shadowing, and the Coding of Axiom A Diffeomorphisms}

%    Information for first author
\author[A. Thiam]{Abdoulaye Thiam}
%    Address of record for the research reported here
\address{Division of Mathematics and Natural Sciences, Allen University, Columbia, South Carolina 29204, USA}
\email{athiam@allenuniversity.edu}

%    General info
\subjclass[2020]{Primary 37D20; Secondary 37B10, 37C15, 37D05, 37D35}

\date{}

%\dedicatory{} % Dedication in Paper I only

\keywords{Axiom A diffeomorphisms, hyperbolic sets, stable manifold theorem, Markov partitions, shadowing, symbolic dynamics}

%\begin{abstract}
%We establish the geometric theory of uniformly hyperbolic sets with explicit quantitative bounds throughout. The results include a complete proof of the Stable Manifold Theorem via the graph transform with H\"{o}lder estimates on manifold dependence; canonical coordinates with quantitative bounds on the bracket map; the Spectral Decomposition Theorem with explicit mixing rates; a shadowing lemma with explicit error bounds; and Markov partitions with explicit diameter estimates in terms of hyperbolicity data. The coding map from the subshift of finite type to the hyperbolic set is constructed with quantitative control on the exceptional set. All constants are expressed in terms of the contraction rate, the H\"{o}lder exponent of the derivative, the manifold dimension, and the injectivity radius. This Part constitutes Part~III of a six-part series on the thermodynamic formalism for hyperbolic dynamical systems.
%\end{abstract}

\begin{abstract}
This Part establishes the geometric theory of uniformly hyperbolic sets with explicit quantitative bounds throughout, and contains five main theorems. The Stable Manifold Theorem is proved via the backward graph transform, with a complete fiber-contraction argument yielding $C^r$ regularity and H\"{o}lder dependence of the local stable and unstable manifolds on the base point, with explicit manifold-size estimate in terms of the contraction rate $\lambda$ and the second-derivative bound of the diffeomorphism. The Spectral Decomposition Theorem gives the unique decomposition of the nonwandering set into basic sets, with explicit mixing rates for the topologically mixing factors. The Shadowing Lemma provides explicit error bounds controlling how far a pseudo-orbit deviates from a tracking true orbit. The existence of Markov partitions of arbitrarily small diameter is established constructively, with explicit diameter bounds expressed in terms of the shadowing constants. Finally, the coding map from the subshift of finite type to the hyperbolic set is constructed and shown to be H\"{o}lder continuous with quantitative control on the exceptional set where it fails to be injective. Along the way we establish canonical coordinates through the bracket map with quantitative bounds. All constants are expressed in terms of the contraction rate, the H\"{o}lder exponent of the derivative, the manifold dimension, and the injectivity radius, providing the quantitative infrastructure required to transfer the symbolic spectral theory of Part~I \cite{Thiam2026a} and the variational theory of Part~II \cite{Thiam2026b} to the smooth setting. This Part constitutes Part~III of a six-part series on the thermodynamic formalism for hyperbolic dynamical systems.
\end{abstract}

%\begin{abstract}
%This Part establishes the geometric theory of uniformly hyperbolic sets with explicit quantitative bounds throughout, and contains five main theorems: the Stable Manifold Theorem proved via the graph transform with H\"{o}lder estimates on manifold dependence; the Spectral Decomposition Theorem with explicit mixing rates; a shadowing lemma with explicit error bounds; the existence of Markov partitions with explicit diameter estimates in terms of hyperbolicity data; and the coding map from the subshift of finite type to the hyperbolic set with quantitative control on the exceptional set. Along the way we establish canonical coordinates through the bracket map with quantitative bounds. All constants are expressed in terms of the contraction rate, the H\"{o}lder exponent of the derivative, the manifold dimension, and the injectivity radius. This Part constitutes Part~III of a six-part series on the thermodynamic formalism for hyperbolic dynamical systems.
%\end{abstract}

\maketitle

\begin{center}
\textit{Dedicated to the memory of Jean-Christophe Yoccoz (1957--2016),}\\
\textit{Fields Medalist and Professor at the Coll\`{e}ge de France, with whom the author}\\
\textit{had the privilege of working, and who introduced him to hyperbolic dynamics.}
\end{center}

\thispagestyle{empty}

\makeatletter
\renewcommand{\ps@headings}{%
  \def\@oddfoot{\hfill\thepage\hfill}%
  \let\@evenfoot\@oddfoot%
  \def\@evenhead{\hfill\normalfont\small\textit{A.~Thiam}\hfill}%
  \def\@oddhead{\hfill\normalfont\small\textit{Uniform Hyperbolicity and Symbolic Dynamics}\hfill}%
  \let\@mkboth\markboth%
}
\pagestyle{headings}
\makeatother

\setlength{\parskip}{0.2em}

\section{Introduction}\label{sec:introduction}

The geometric theory of hyperbolic dynamical systems originates in the work of  \cite{Hadamard1898} on geodesics on negatively curved surfaces,  \cite{Morse1921} on symbolic coding of geodesic flows, and the foundational contributions of  \cite{Anosov1967} and  \cite{AnosovSinai1967} on globally hyperbolic diffeomorphisms. This Part establishes the geometric theory of Axiom A diffeomorphisms with quantitative estimates throughout, providing the bridge that connects the symbolic thermodynamic formalism of Parts~I--II \cite{Thiam2026a,Thiam2026b} to smooth dynamics. The main results are: a complete proof of the Stable Manifold Theorem via the graph transform with H\"{o}lder exponent $\alpha = \log\mu/\log\lambda$ for the dependence on the base point (this exponent is known from  \cite{Hasselblatt1994}; we compute the H\"{o}lder constant explicitly); canonical coordinates with quantitative bracket map estimates; the Spectral Decomposition Theorem with explicit mixing rates; a shadowing lemma with explicit constants; Markov partitions with diameter bounds in terms of hyperbolicity data; and the coding map $\pi: \Sigma_A \to \Lambda$ with quantitative control on the exceptional set.

Bowen's monograph \cite{Bowen1975} develops this theory in Chapter~3, citing the Stable Manifold Theorem from  \cite{HirschPugh1970} without proof, asserting the existence of adapted metrics without explicit bounds, and constructing Markov partitions without diameter estimates. Complete proofs of the Stable Manifold Theorem appear in  \cite{Shub1987},  \cite{KatokHasselblatt1995}, and  \cite{BrinStuck2002}. The regularity theory for invariant manifolds was developed by  \cite{HirschPughShub1977}, with perturbation of hyperbolic sets studied by  \cite{HPPS1970}. Structural stability for Axiom~A systems was established by  \cite{Robbin1971} and  \cite{Robinson1974, Robinson1976}. The Markov partition construction originates with  \cite{AdlerWeiss1970} for toral automorphisms,  \cite{Sinai1968a,Sinai1968b} for Anosov diffeomorphisms, and  \cite{Bowen1970b} for the Axiom~A case;  \cite{Sarig2013} extended it to non-uniformly hyperbolic surface diffeomorphisms. We provide all proofs with explicit constants expressed in terms of the contraction rate~$\lambda$, the H\"{o}lder exponent of~$Df$, the dimension~$d$, and the injectivity radius of~$M$.

The coding map $\pi: \Sigma_A \to \Lambda$ constructed in Section~\ref{sec:symbolic_dynamics} satisfies three properties required by Parts~IV--VI \cite{Thiam2026d,Thiam2026e,Thiam2026f}: it intertwines the shift $\sigma$ and the diffeomorphism~$f$, it is H\"{o}lder continuous with explicit exponent, and it is injective off a set of measure zero for any Gibbs measure. Given a H\"{o}lder potential $\phi: \Lambda \to \R$, the composition $\phi \circ \pi$ is H\"{o}lder on $\Sigma_A$, and the equilibrium state for $\phi$ on $\Lambda$ is the pushforward under $\pi$ of the equilibrium state for $\phi\circ\pi$ on $\Sigma_A$, which exists and is unique by the theory developed in Part~I \cite{Thiam2026a}.

This Part contains five main theorems. Main Theorem~\ref{thm:stable_manifold} (Stable Manifold Theorem) provides the $C^r$ local invariant manifolds through each point of a hyperbolic set, with explicit size and H\"{o}lder dependence. Main Theorem~\ref{thm:spectral_decomposition} (Spectral Decomposition) gives the unique decomposition of the nonwandering set into basic sets on which $f$ is topologically transitive. Main Theorem~\ref{thm:shadowing} (Shadowing Lemma) shows that approximate orbits are tracked by true orbits with explicit error bounds. Main Theorem~\ref{thm:markov_existence} (Existence of Markov Partitions) constructs Markov partitions of arbitrarily small diameter on each basic set. Main Theorem~\ref{thm:symbolic_coding} (Symbolic Coding) establishes the H\"{o}lder coding map $\pi: \Sigma_A \to \Omega$ with quantitative control on the exceptional set.

The contributions of this Part are fivefold, corresponding to its five Main Theorems. First, we prove the Stable Manifold Theorem (Main Theorem~\ref{thm:stable_manifold}) with explicit manifold-size estimate $\varepsilon_0 = (1-\lambda)^2/(4C_0)$ where $C_0 = \max(\|D^2f\|_\infty, \|D^2f^{-1}\|_\infty)$, $C^r$ regularity obtained through the fiber-contraction principle applied to successive derivatives of the graph map, exponential contraction $d(f^n(x), f^n(y)) \leq \lambda^n d(x,y)$ along the manifolds, and H\"{o}lder dependence of the local stable manifold on the base point with explicit exponent $\alpha_s = \beta\log\lambda^{-1}/(\log\|Df\|_\infty + \beta\log\lambda^{-1})$ for $f \in C^{1+\beta}$. Second, we prove the Spectral Decomposition Theorem (Main Theorem~\ref{thm:spectral_decomposition}) giving the unique decomposition $\Omega(f) = \Omega_1 \cup \cdots \cup \Omega_s$ of the nonwandering set into finitely many closed $f$-invariant basic sets on which $f$ is topologically transitive, together with the cyclic refinement $\Omega_i = X_{1,i} \cup \cdots \cup X_{n_i,i}$ into topologically mixing components. Third, we prove the Shadowing Lemma (Main Theorem~\ref{thm:shadowing}) with explicit linear dependence $\alpha = C^{-1}(1-\lambda)\beta$ between the pseudo-orbit tolerance $\alpha$ and the shadowing accuracy $\beta$, where the constant $C$ depends on the local product structure scale and the adapted metric distortion $K = c(1-\lambda/\mu)^{-1}$; the Anosov Closing Lemma and the specification property are derived as consequences with explicit constants. Fourth, we prove the Existence of Markov Partitions (Main Theorem~\ref{thm:markov_existence}) constructively, obtaining partitions of arbitrarily small diameter on each basic set through a refinement argument on an initial cover of shadowing points, with the diameter bound $\max_i \mathrm{diam}(R_i) \leq C\cdot\beta$ expressed directly in terms of the shadowing accuracy. Fifth, we prove the Symbolic Coding Theorem (Main Theorem~\ref{thm:symbolic_coding}) establishing the coding map $\pi: \Sigma_A \to \Omega$ as a continuous surjection intertwining the shift with the diffeomorphism, H\"{o}lder continuous with accuracy bound $\mathrm{diam}(K_N(a)) \leq C\lambda^N\mathrm{diam}(\mathcal{R})$, injective off the countable boundary set $\bigcup_n f^n(\partial\mathcal{R})$ of zero measure for every ergodic invariant measure; the topological entropy of $f|_\Omega$ is identified with $\log\rho(A)$. All constants are tracked explicitly as functions of the contraction rate $\lambda$, the H\"{o}lder exponent of $Df$, the manifold dimension $d$, and the injectivity radius of $M$, providing the quantitative infrastructure required by Parts IV--VI \cite{Thiam2026d,Thiam2026e,Thiam2026f}.

Our technical approach combines the graph-transform construction of invariant manifolds with the bracket-based construction of the Markov partition. The Stable Manifold Theorem is proved through the \emph{backward} graph transform $\Gamma^{-1}$ on the space of Lipschitz graphs $\phi: B^s_\delta \to B^u_\delta$: since the linearization $B$ in the unstable direction satisfies $\|B^{-1}\| \leq \lambda < 1$, the forward graph transform $\Gamma$ is expansive, while $\Gamma^{-1}$ is a strict contraction in the $C^0$ norm with explicit rate $\theta = \lambda/(1 - \lambda C_1\delta(K+1))$. The unique fixed point of $\Gamma^{-1}$ is the graph of the local stable manifold. $C^r$ regularity is obtained inductively by applying the fiber-contraction principle at each order $j \leq r$: formal differentiation of the fixed-point equation yields a linear operator $\Psi_j$ on the space of $j$-multilinear-map-valued functions, with contraction rate bounded by $\lambda^{j+1}(1+\mathcal{O}(\delta))$, and the unique fixed point is identified with $D^j\phi_\infty$. H\"{o}lder dependence of the manifold on the base point is obtained through a truncation-and-balance argument: the forward orbit of the base-point comparison is tracked for $N \sim \log(1/d(x,y))/\log\|Df\|_\infty$ iterates and the telescoping error is balanced against the exponential decay of the comparison. For the geometric theory, canonical coordinates through the bracket map $[x,y] = W^s_\varepsilon(x) \cap W^u_\varepsilon(y)$ provide a local product structure on the hyperbolic set, with Lipschitz constant $C_1 = (1-K)^{-1}$ where $K$ is the Lipschitz constant of the manifold graphs. Expansiveness follows from the local product structure: any pair of orbits staying within $\varepsilon < \delta_0$ of each other must coincide by $W^s_\varepsilon(x) \cap W^u_\varepsilon(x) = \{x\}$. For the Shadowing Lemma, we iterate the bracket map along the pseudo-orbit: given $\alpha$-pseudo-orbit $\{x_i\}$, we construct a shadowing point by successive bracketing of $f^M(x'_{(k-1)M})$ with $x_{kM}$, and the telescoping estimate yields the linear bound $\alpha = C^{-1}(1-\lambda)\beta$. The Markov partition construction begins with a $\gamma$-dense set $P \subset \Omega$ of reference points, coded into a subshift $\Sigma(P)$ of finite type via the pseudo-orbit condition, and the shadowing map $\theta: \Sigma(P) \to \Omega$ defines initial rectangles $T_s = \theta(\{q : q_0 = p_s\})$. The final Markov partition is obtained by refining these rectangles according to all four possible intersection patterns with other rectangles in the cover, eliminating boundary overlaps to produce the open rectangles $R(x)$ with the required Markov property. The coding map is then $\pi(a) = \bigcap_n f^{-n}(R_{a_n})$: successive forward preimages narrow the unstable direction by factor $\lambda^n$ and backward iterates narrow the stable direction by factor $\lambda^n$, so the intersection collapses to a single point with explicit accuracy $\mathrm{diam}(K_N(a)) \leq C\lambda^N\mathrm{diam}(\mathcal{R})$. These methods replace the compactness arguments used in \cite{Bowen1975} and the semi-conjugacy techniques of \cite{Sinai1968a, Sinai1968b} by constructive, quantitative procedures.

This Part is organized as follows. Section~\ref{sec:preliminaries} fixes notation and collects prerequisites from Riemannian geometry, function spaces, spectral theory, measure theory, and symbolic dynamics. Section~\ref{sec:hyperbolic_sets} develops the quantitative theory of hyperbolic sets, including the adapted metric construction and the H\"{o}lder continuity of the stable-unstable splitting. Section~\ref{sec:stable_manifolds} proves Main Theorem~\ref{thm:stable_manifold}. Section~\ref{sec:canonical_coordinates} establishes canonical coordinates through the bracket map and proves the local product structure. Section~\ref{sec:spectral_decomposition} proves Main Theorem~\ref{thm:spectral_decomposition}. Section~\ref{sec:shadowing} proves Main Theorem~\ref{thm:shadowing} and derives the Anosov Closing Lemma and the specification property as consequences. Section~\ref{sec:markov_partitions} proves Main Theorem~\ref{thm:markov_existence}. Section~\ref{sec:symbolic_dynamics} proves Main Theorem~\ref{thm:symbolic_coding} and identifies the topological entropy of $f|_\Omega$ with $\log\rho(A)$. The dimension theory of hyperbolic sets, developed by \cite{Pesin1977, Pesin1997}, \cite{Young1982}, and \cite{BarreiraPesinSchmeling1999}, relies on the coding map and Markov partition constructed here. Section~\ref{sec:computational} provides a numerical illustration for the Smale horseshoe, computing explicit values of the adapted metric constant, stable manifold size, shadowing constant, Markov partition diameter, and grid resolution for rigorous symbolic coding at prescribed accuracy. Section~\ref{sec:conclusion} concludes with a summary of the five Main Theorems and open problems. The appendix collects technical proofs and supplementary material on H\"{o}lder spaces, quasi-compact operators, and measure-theoretic lemmas.

The non-uniformly expanding extension of the Markov partition and coding program developed in the present Part was extended by  \cite{LuzzattoTucker1999}, who constructed induced Markov structures for maps with criticalities and singularities and established decay of return times in Publications Math\'ematiques de l'IH\'ES. The comprehensive survey of  \cite{Luzzatto2006} in the Handbook of Dynamical Systems collects the methods and results of this line of work. \cite{AlvesLuzzattoPinheiro2005} constructed the Markov tower in the non-uniformly expanding setting and established polynomial and exponential rates of decay of correlations depending on the decay rate of the return times. \cite{AlvesBonattiViana2000} proved existence of SRB measures for partially hyperbolic systems whose central direction is mostly expanding, a setting closely adjacent to ours. The present Part is the uniformly hyperbolic anchor: we construct Markov partitions with explicit diameter bounds, controlled by the contraction rate and the injectivity radius, and prove the coding map is H\"older continuous with quantitative bounds on the exceptional set. The works cited above do not provide such explicit constants.

\cite{VianaOliveira2016}, Chapters~9 and~11, treat entropy and expanding maps in the general ergodic-theoretic setting on compact metric spaces, but do not construct Markov partitions for hyperbolic diffeomorphisms or track the explicit constants that are our focus. For a survey of continuity and regularity results for Lyapunov exponents on hyperbolic sets,  \cite{Viana2020} gives the current picture.

This Part is organized as follows. Section~\ref{sec:preliminaries} fixes notation and collects the prerequisites from Riemannian geometry, function spaces, spectral theory, measure theory, and symbolic dynamics that are used in the sequel. Section~\ref{sec:hyperbolic_sets} develops the quantitative theory of hyperbolic sets: the existence of adapted metrics with explicit contraction rate $\lambda$, the H\"{o}lder continuity of the stable-unstable splitting with exponent computed from the hyperbolicity data, and the basic properties of Axiom~A diffeomorphisms and their periodic orbits. Section~\ref{sec:stable_manifolds} proves Main Theorem~\ref{thm:stable_manifold} (the Stable Manifold Theorem) via the backward graph transform, establishing the existence, $C^r$ regularity, exponential contraction, and H\"{o}lder dependence on base points of the local stable and unstable manifolds, with explicit size estimate $\varepsilon_0 = (1-\lambda)^2/(4C_0)$. Section~\ref{sec:canonical_coordinates} establishes canonical coordinates through the bracket map $[\cdot, \cdot]$, proves the local product structure of hyperbolic sets, and quantifies expansiveness and the fundamental neighborhood; these are the geometric tools used in the Markov partition construction. Section~\ref{sec:spectral_decomposition} proves Main Theorem~\ref{thm:spectral_decomposition} (the Spectral Decomposition), which partitions the nonwandering set $\Omega(f)$ into finitely many closed $f$-invariant basic sets on which $f$ is topologically transitive, and establishes the cyclic refinement into mixing components along with quantitative mixing rates. Section~\ref{sec:shadowing} proves Main Theorem~\ref{thm:shadowing} (the Shadowing Lemma) with explicit constants, derives the Anosov Closing Lemma and the specification property as consequences, and develops quantitative periodic orbit approximation estimates needed in Parts~IV--VI \cite{Thiam2026d,Thiam2026e,Thiam2026f}. Section~\ref{sec:markov_partitions} proves Main Theorem~\ref{thm:markov_existence} (Existence of Markov Partitions), constructing Markov partitions of arbitrarily small diameter on each basic set via the shadowing lemma, with explicit diameter bounds in terms of the hyperbolicity data. Section~\ref{sec:symbolic_dynamics} proves Main Theorem~\ref{thm:symbolic_coding} (Symbolic Coding), establishing the H\"{o}lder continuous coding map $\pi: \Sigma_A \to \Omega$ with quantitative control on the exceptional set where $\pi$ fails to be injective, and identifies the topological entropy of $f|_\Omega$ with $\log\rho(A)$. The dimension theory of hyperbolic sets, developed by  \cite{Pesin1977, Pesin1997},  \cite{Young1982}, and \cite{BarreiraPesinSchmeling1999}, relies on the coding map and Markov partition constructed here. Section~\ref{sec:computational} provides a complete numerical illustration for the Smale horseshoe, computing explicit values of the adapted metric constant, stable manifold size, shadowing constant, Markov partition diameter, and grid resolution needed for rigorous symbolic coding at prescribed accuracy. Section~\ref{sec:conclusion} concludes the Part with a summary of the five Main Theorems and open problems. The appendix collects technical proofs and supplementary material: prerequisite results on H\"{o}lder spaces and quasi-compact operators, and standard measure-theoretic lemmas used throughout.

\section{Preliminaries}\label{sec:preliminaries}

We fix notation and collect prerequisites from Riemannian geometry, dynamical systems, and spectral theory.

\subsection{Riemannian Manifolds and Bundles}

Throughout this Part, $M$ denotes a compact smooth manifold of dimension $d$ equipped with a Riemannian metric $g$. The metric induces a distance function $d : M \times M \to [0,\infty)$ defined by
\begin{equation}
d(x,y) = \inf_{\gamma} \int_0^1 \|\gamma'(t)\|_{g(\gamma(t))} \, dt
\end{equation}
where the infimum is over all piecewise smooth paths $\gamma : [0,1] \to M$ with $\gamma(0) = x$ and $\gamma(1) = y$. The compactness of $M$ ensures that $(M, d)$ is a complete metric space.

The tangent bundle $TM = \bigcup_{x \in M} T_x M$ is a smooth vector bundle of rank $d$ over $M$. For $x \in M$, the tangent space $T_x M$ is a $d$-dimensional real vector space equipped with the inner product $g_x : T_x M \times T_x M \to \mathbb{R}$. The induced norm on $T_x M$ is denoted $\|v\| = \sqrt{g_x(v,v)}$ for $v \in T_x M$.

For a diffeomorphism $f : M \to M$, the derivative $Df : TM \to TM$ is a bundle map covering $f$, meaning that for each $x \in M$, the restriction $Df_x : T_x M \to T_{f(x)} M$ is a linear isomorphism. The norm of this linear map is
\begin{equation}
\|Df_x\| = \sup_{v \in T_x M, \|v\| = 1} \|Df_x(v)\|.
\end{equation}
Similarly, we define $\|Df^{-1}_x\| = \|(Df_x)^{-1}\|$ where $(Df_x)^{-1} = Df^{-1}_{f(x)}$.

A continuous subbundle of $TM$ over a subset $\Lambda \subset M$ is a collection $E = \bigcup_{x \in \Lambda} E_x$ where each $E_x$ is a linear subspace of $T_x M$, such that for any continuous section $v : \Lambda \to TM$ with $v(x) \in E_x$ for all $x$, the function $x \mapsto v(x)$ is continuous when $E$ is given the subspace topology from $TM$.

\begin{definition}[Angle Between Subspaces]
For linear subspaces $E$ and $F$ of an inner product space $V$, the angle between them is defined by
\begin{equation}
\angle(E, F) = \min\left\{ \arccos \frac{|g(u,v)|}{\|u\| \|v\|} : u \in E \setminus \{0\}, v \in F \setminus \{0\} \right\}
\end{equation}
when $E \cap F = \{0\}$, and $\angle(E, F) = 0$ otherwise.
\end{definition}

The following lemma provides a quantitative relationship between angles and projection norms.

\begin{lemma}\label{lem:angle_projection}
Let $V = E \oplus F$ be a direct sum decomposition of a finite-dimensional inner product space. Let $P_E : V \to E$ denote the projection onto $E$ along $F$. Then
\begin{equation}
\|P_E\| = \frac{1}{\sin \angle(E, F)}
\end{equation}
where the norm is the operator norm induced by the inner product.
\end{lemma}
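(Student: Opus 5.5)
The plan is to reduce the operator norm of the oblique projection $P_E$ to a distance computation from a vector of $E$ to the subspace $F$. That distance is then expressed through the orthogonal projection onto $F$. Throughout we assume $E\neq\{0\}$ and $F\neq\{0\}$. If $F=\{0\}$, then $P_E$ is the identity with norm $1$, matching the convention $\angle(E,F)=\pi/2$. If $E=\{0\}$, then $P_E=0$; this case is degenerate and excluded. Since $V=E\oplus F$, we have $E\cap F=\{0\}$, so the angle is given by the minimum formula. Moreover, the absolute value in the definition places $\angle(E,F)$ in $[0,\pi/2]$. By compactness of the unit spheres, the minimum is attained, and it is strictly positive because $E\cap F=\{0\}$.

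\emph{Step 1 (reduction to a distance).} Every $v\in V$ is uniquely $v=e+f$ with $e\in E$, $f\in F$, and $P_Ev=e$. Hence
\begin{equation*}
\|P_E\|=\sup_{e\in E\setminus\{0\}}\ \sup_{f\in F}\frac{\|e\|}{\|e+f\|}=\sup_{e\in E\setminus\{0\}}\frac{\|e\|}{\operatorname{dist}(e,F)}.
\end{equation*}

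\emph{Step 2 (distance via orthogonal projection).} Let $Q$ be the orthogonal projection onto $F$. By Pythagoras,
\begin{equation*}
\operatorname{dist}(e,F)^2=\|e-Qe\|^2=\|e\|^2-\|Qe\|^2.
\end{equation*}
By Cauchy--Schwarz, $\max_{f\in F\setminus\{0\}}|g(e,f)|/\|f\|=\|Qe\|$, with the maximum attained at $f=Qe$ when $Qe\neq 0$. So if $\varphi(e)\in[0,\pi/2]$ denotes the angle between the line $\mathbb{R}e$ and $F$, then $\cos\varphi(e)=\|Qe\|/\|e\|$. It follows that $\operatorname{dist}(e,F)=\|e\|\sin\varphi(e)$.

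\emph{Step 3 (optimize over $e$).} Combining the two steps gives $\|P_E\|=\sup_{e}1/\sin\varphi(e)$. By definition, $\angle(E,F)=\min_{e}\varphi(e)$. Since $\sin$ is increasing on $[0,\pi/2]$, we get $\sup_e 1/\sin\varphi(e)=1/\sin\angle(E,F)$. The supremum is attained by compactness of the unit sphere of $E$, and the value is finite because $\angle(E,F)>0$.

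The only delicate point is Step 2: identifying the pointwise angle $\varphi(e)$, defined by a minimum over $f$, with the orthogonal-projection quantity $\|Qe\|/\|e\|$. This needs care with the absolute value. Replacing $f$ by $-f$ shows the sign of $g(e,f)$ is irrelevant, and the case $Qe=0$ (where $\varphi(e)=\pi/2$ and $\operatorname{dist}(e,F)=\|e\|$) should be checked separately. The remaining steps are routine.
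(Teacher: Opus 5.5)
Your proof is correct and follows essentially the same route as the paper's: fix the $E$-component $e$, minimize $\|e+f\|$ over $f\in F$ to get $\|e\|\sin\varphi(e)$, and then optimize over the direction of $e$. You carry out the minimization with the orthogonal projection onto $F$ and Pythagoras, where the paper uses a quadratic in $t$, and you get attainment by compactness rather than the paper's explicit extremal vector $(e-\cos\theta\,w)/\sin\theta$; your separation of the pointwise angle $\varphi(e)$ from $\angle(E,F)$ in Step~3 also makes the upper bound cleaner than in the paper, which tacitly treats every $v_E$ as if it realized the minimal angle.
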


\begin{proof}
For any unit vector $v \in V$, write $v = v_E + v_F$ with $v_E \in E$ and $v_F \in F$. Then $P_E(v) = v_E$ and $\|P_E(v)\| = \|v_E\|$. The supremum $\|P_E\| = \sup_{\|v\|=1} \|v_E\|$ is achieved when $v_F$ is chosen to minimize $\|v\|$ for fixed $v_E$. By orthogonal decomposition, this occurs when $v_F$ is the component of $v$ orthogonal to $v_E$ that lies in $F$. To compute this supremum, let $\theta = \angle(E,F)$ and choose unit vectors $e \in E$ and $w \in F$ with $|\langle e, w \rangle| = \cos\theta$. For any unit $v = v_E + v_F$, we have $1 = \|v_E\|^2 + \|v_F\|^2 + 2\langle v_E, v_F\rangle$. To maximize $\|v_E\|$ subject to $\|v\| = 1$, write $v_F = tw$ for $t \geq 0$ and choose $w \in F$ with $\|w\| = 1$ to minimize $\|v\|$ for fixed $v_E$. The optimal choice satisfies $\langle v_E/\|v_E\|, w\rangle = -\cos\theta$. Then $1 = \|v_E\|^2 + t^2 - 2t\|v_E\|\cos\theta$, and minimizing over $t$ gives $t = \|v_E\|\cos\theta$, hence $1 = \|v_E\|^2(1 - \cos^2\theta) = \|v_E\|^2\sin^2\theta$, yielding $\|v_E\| = 1/\sin\theta$. This upper bound is achieved: set $v = (e - \cos\theta \cdot w)/\sin\theta$. Then $\|v\|^2 = (1 + \cos^2\theta - 2\cos^2\theta)/\sin^2\theta = 1$ and $P_E(v) = e/\sin\theta$, giving $\|P_E\| \geq 1/\sin\theta$.
\end{proof}

\subsection{Diffeomorphisms and Dynamical Systems}

Let $f : M \to M$ be a $C^r$ diffeomorphism with $r \geq 1$. The orbit of a point $x \in M$ is the sequence $\{f^n(x)\}_{n \in \mathbb{Z}}$. A point $x$ is periodic with period $n$ if $f^n(x) = x$ and $f^k(x) \neq x$ for $0 < k < n$. The set of periodic points of $f$ is denoted $\mathrm{Per}(f)$, and the set of periodic points with period $n$ is $\mathrm{Per}_n(f) = \{x \in M : f^n(x) = x\}$.

\begin{definition}[Nonwandering Set]
A point $x \in M$ is nonwandering for $f$ if for every neighborhood $U$ of $x$ and every $N > 0$, there exists $n > N$ such that $f^n(U) \cap U \neq \emptyset$. The nonwandering set of $f$ is
\begin{equation}
\Omega(f) = \{x \in M : x \text{ is nonwandering for } f\}.
\end{equation}
\end{definition}

The nonwandering set is closed and $f$-invariant, meaning $f(\Omega(f)) = \Omega(f)$. Every periodic point is nonwandering, so $\mathrm{Per}(f) \subset \Omega(f)$.

\begin{definition}[Topological Transitivity and Mixing]
A homeomorphism $f : X \to X$ of a compact metric space is topologically transitive if for any nonempty open sets $U, V \subset X$, there exists $n > 0$ such that $f^n(U) \cap V \neq \emptyset$. The map $f$ is topologically mixing if for any nonempty open sets $U, V \subset X$, there exists $N > 0$ such that $f^n(U) \cap V \neq \emptyset$ for all $n \geq N$.
\end{definition}

Topological mixing implies topological transitivity. For transitive systems on complete metric spaces without isolated points, the Birkhoff Transitivity Theorem ensures the existence of a dense orbit.

\subsection{Function Spaces}

For a compact metric space $X$ and $\alpha \in (0,1]$, the space of $\alpha$-H\"{o}lder continuous functions is
\begin{equation}
C^\alpha(X) = \left\{ \phi : X \to \mathbb{R} : \|\phi\|_\alpha < \infty \right\}
\end{equation}
where the H\"{o}lder norm is
\begin{equation}
\|\phi\|_\alpha = \|\phi\|_\infty + |\phi|_\alpha, \quad |\phi|_\alpha = \sup_{x \neq y} \frac{|\phi(x) - \phi(y)|}{d(x,y)^\alpha}.
\end{equation}
The seminorm $|\phi|_\alpha$ is called the H\"{o}lder constant of $\phi$.

\begin{proposition}\label{prop:holder_algebra}
The space $C^\alpha(X)$ is a Banach algebra under pointwise multiplication, satisfying
\begin{equation}
\|\phi \psi\|_\alpha \leq \|\phi\|_\alpha \|\psi\|_\alpha.
\end{equation}
Furthermore, $C^\alpha(X)$ is compactly embedded in $C^0(X)$: every bounded sequence in $C^\alpha(X)$ has a subsequence converging uniformly.
\end{proposition}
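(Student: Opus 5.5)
The proposition has two parts, and I will treat them separately. First I check that $C^\alpha(X)$ is a Banach space. Then I prove the submultiplicative inequality. Finally I deduce compactness of the embedding from Arzel\`a--Ascoli.

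\emph{Completeness.} Let $(\phi_n)$ be Cauchy in $\|\cdot\|_\alpha$. Since $\|\cdot\|_\infty\le\|\cdot\|_\alpha$, the sequence converges uniformly to some continuous $\phi$. For fixed $x\neq y$, the quotient
\[
\frac{|(\phi_n-\phi_m)(x)-(\phi_n-\phi_m)(y)|}{d(x,y)^\alpha}
\]
is at most $\varepsilon$ for $n,m\ge N$. Letting $m\to\infty$ pointwise and then taking the supremum over $x\neq y$ gives $|\phi_n-\phi|_\alpha\le\varepsilon$. Hence $\phi\in C^\alpha(X)$ and $\phi_n\to\phi$ in norm.

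\emph{Submultiplicativity.} I will use the pointwise splitting
\[
\phi(x)\psi(x)-\phi(y)\psi(y)=\phi(x)\bigl(\psi(x)-\psi(y)\bigr)+\psi(y)\bigl(\phi(x)-\phi(y)\bigr).
\]
Dividing by $d(x,y)^\alpha$ gives $|\phi\psi|_\alpha\le\|\phi\|_\infty|\psi|_\alpha+\|\psi\|_\infty|\phi|_\alpha$. Combining this with $\|\phi\psi\|_\infty\le\|\phi\|_\infty\|\psi\|_\infty$ yields
\[
\|\phi\psi\|_\alpha\le \|\phi\|_\infty\|\psi\|_\infty+\|\phi\|_\infty|\psi|_\alpha+\|\psi\|_\infty|\phi|_\alpha .
\]
The right-hand side is dominated by
\[
(\|\phi\|_\infty+|\phi|_\alpha)(\|\psi\|_\infty+|\psi|_\alpha),
\]
whose expansion contains these three terms plus the nonnegative cross term $|\phi|_\alpha|\psi|_\alpha$. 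This is the one place where the specific choice of norm $\|\cdot\|_\infty+|\cdot|_\alpha$ matters, and it works with constant $1$.

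\emph{Compact embedding.} Let $\|\phi_n\|_\alpha\le B$. Then the family is uniformly bounded by $B$. It is also uniformly equicontinuous, since
\[
|\phi_n(x)-\phi_n(y)|\le B\,d(x,y)^\alpha .
\]
Since $X$ is compact, Arzel\`a--Ascoli gives a uniformly convergent subsequence. The limit is automatically $\alpha$-H\"older with constant at most $B$, by passing to the limit pointwise in the H\"older inequality.

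This last observation is not needed for the statement, but it is useful later. No step here is a genuine obstacle. The only point requiring care is the completeness argument: one takes the pointwise limit inside the H\"older quotient before taking the supremum over pairs $x\neq y$, rather than the other way round.
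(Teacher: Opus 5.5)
Your proof is correct and follows the paper's route: the same product-rule splitting yielding $|\phi\psi|_\alpha \leq \|\phi\|_\infty|\psi|_\alpha + \|\psi\|_\infty|\phi|_\alpha$, and Arzel\`{a}--Ascoli via the uniform equicontinuity bound $|\phi_n(x)-\phi_n(y)|\leq B\,d(x,y)^\alpha$ for the compact embedding. You also supply two steps the paper leaves implicit: the completeness argument, and the explicit comparison of the three-term bound with $\|\phi\|_\alpha\|\psi\|_\alpha$.
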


\begin{proof}
The Banach algebra property follows from the product rule for H\"{o}lder functions:
\begin{equation}
|\phi\psi(x) - \phi\psi(y)| \leq |\phi(x)||\psi(x) - \psi(y)| + |\psi(y)||\phi(x) - \phi(y)|
\end{equation}
which yields $|\phi\psi|_\alpha \leq \|\phi\|_\infty |\psi|_\alpha + \|\psi\|_\infty |\phi|_\alpha$. The compact embedding follows from the Arzel\`{a}-Ascoli theorem, since bounded H\"{o}lder families are equicontinuous.
\end{proof}

For a Riemannian manifold $M$, we define H\"{o}lder spaces using the geodesic distance. When $M$ has boundary or corners, we use the induced distance from the ambient space.

\subsection{Spectral Theory for Bounded Operators}

Let $\mathcal{B}$ be a Banach space and $L : \mathcal{B} \to \mathcal{B}$ a bounded linear operator. The spectrum of $L$ is
\begin{equation}
\sigma(L) = \{\lambda \in \mathbb{C} : \lambda I - L \text{ is not invertible}\}.
\end{equation}
The spectral radius is $\rho(L) = \sup\{|\lambda| : \lambda \in \sigma(L)\} = \lim_{n \to \infty} \|L^n\|^{1/n}$.

\begin{definition}[Essential Spectrum]
The essential spectrum $\sigma_{\mathrm{ess}}(L)$ consists of all $\lambda \in \sigma(L)$ such that at least one of the following holds: (i) $\lambda I - L$ has infinite-dimensional kernel, (ii) $\lambda I - L$ has non-closed range, or (iii) $\lambda$ is an accumulation point of $\sigma(L)$. The essential spectral radius is $\rho_{\mathrm{ess}}(L) = \sup\{|\lambda| : \lambda \in \sigma_{\mathrm{ess}}(L)\}$.
\end{definition}

\begin{definition}[Quasi-compactness]
An operator $L$ is quasi-compact if $\rho_{\mathrm{ess}}(L) < \rho(L)$. In this case, the spectrum in the annulus $\{z : \rho_{\mathrm{ess}}(L) < |z| \leq \rho(L)\}$ consists of finitely many eigenvalues of finite multiplicity.
\end{definition}

The following theorem, due to  \cite{IonescuTulceaMarinescu1950}, provides a criterion for quasi-compactness.

\begin{proposition}[Ionescu-Tulcea and Marinescu]\label{thm:itm}
Let $\mathcal{B} \subset \mathcal{B}_0$ be Banach spaces with $\mathcal{B}$ compactly embedded in $\mathcal{B}_0$. Suppose $L : \mathcal{B} \to \mathcal{B}$ is a bounded operator satisfying the Lasota-Yorke inequality: there exist constants $r \in (0,1)$, $C > 0$, and $D \geq 0$ such that
\begin{equation}
\|L^n \phi\|_{\mathcal{B}} \leq C r^n \|\phi\|_{\mathcal{B}} + D \|\phi\|_{\mathcal{B}_0}
\end{equation}
for all $n \geq 1$ and $\phi \in \mathcal{B}$. Then $L : \mathcal{B} \to \mathcal{B}$ is quasi-compact with $\rho_{\mathrm{ess}}(L) \leq r$.
\end{proposition}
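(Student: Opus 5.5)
Since $\mathcal{B}$ is compactly embedded in $\mathcal{B}_0$, the natural route is Nussbaum's formula for the essential spectral radius,
\[
\rho_{\mathrm{ess}}(L)=\lim_{n\to\infty}\beta(L^n)^{1/n},
\]
where $\beta(T)$ is the Kuratowski measure of noncompactness of $T(B_1)$ and $B_1$ is the closed unit ball of $\mathcal{B}$. The plan is to show that the Lasota--Yorke inequality forces $\beta(L^n)\le 2Cr^n$. Taking $n$-th roots then gives $\rho_{\mathrm{ess}}(L)\le r$.

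\textbf{Step 1 (covering argument).} Fix $n$ and $\epsilon>0$. Because $B_1$ is relatively compact in $\mathcal{B}_0$, there are finitely many $\phi_1,\dots,\phi_m\in B_1$ such that every $\phi\in B_1$ has some $\phi_j$ with $\|\phi-\phi_j\|_{\mathcal{B}_0}<\epsilon/D$. If $D=0$ this step is unnecessary. Apply the inequality to $\phi-\phi_j$, which has $\mathcal{B}$-norm at most $2$:
\[
\|L^n\phi-L^n\phi_j\|_{\mathcal{B}}\le 2Cr^n+\epsilon .
\]
So $L^n(B_1)$ is covered by finitely many $\mathcal{B}$-balls of radius $2Cr^n+\epsilon$. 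This gives $\beta(L^n)\le 2(2Cr^n+\epsilon)$, and letting $\epsilon\to0$ yields $\beta(L^n)\le 4Cr^n$. The constant does not matter after taking $n$-th roots.

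\textbf{Step 2 (conclusion).} From Step 1, $\beta(L^n)^{1/n}\le (4C)^{1/n}r\to r$, so $\rho_{\mathrm{ess}}(L)\le r$.

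The paper's definition of the essential spectrum uses conditions (i)--(iii), which match Browder's essential spectrum. To connect this to Nussbaum's formula, I will use the standard fact that $\lambda$ with $|\lambda|>\lim\beta(L^n)^{1/n}$ is a point where $\lambda I-L$ is Fredholm of index zero and $\lambda$ is isolated in $\sigma(L)$ with finite-rank Riesz projection. One can show this directly. Take $n$ with $Cr^n<|\lambda|^n$. Then $L^n$ is, modulo a compact perturbation, of norm less than $|\lambda|^n$: composing with the compact embedding and an approximation exhibits $L^n=K+S$ with $K$ compact and $\|S\|<|\lambda|^n$. Riesz--Schauder theory applied to $\lambda^nI-L^n$ then gives finite-dimensional kernel, closed range, and isolatedness, and these properties transfer from $\lambda^n$ for $L^n$ to $\lambda$ for $L$ via the factorization $\lambda^nI-L^n=(\lambda I-L)\prod_{k=1}^{n-1}(\omega^k\lambda I-L)$, where $\omega=e^{2\pi i/n}$.

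To finish, the phrase ``$L$ is quasi-compact'' needs $r<\rho(L)$, which the hypotheses do not force, since $\rho(L)$ could itself be at most $r$. I will read the statement as asserting the bound $\rho_{\mathrm{ess}}(L)\le r$, which is the substantive content and the standard form of Hennion's theorem. Quasi-compactness then follows whenever $\rho(L)>r$; for instance, $L$ has an eigenvalue of modulus greater than $r$ in the applications.

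\textbf{Main obstacle.} The delicate step is the bridge from the measure-of-noncompactness bound to the paper's three-condition definition of $\sigma_{\mathrm{ess}}$, specifically the isolatedness part (iii). I expect the cleanest approach is to cite Nussbaum's theorem together with the equality of the Browder and Kuratowski essential spectral radii, as in Hennion (1993), rather than reprove Riesz--Schauder theory.
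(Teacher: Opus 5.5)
Your route is the one the paper relies on. The paper gives no argument here; it only cites Ionescu-Tulcea--Marinescu and Hennion. Your covering estimate $\beta(L^n)\le 4Cr^n$, followed by Nussbaum's formula $\rho_{\mathrm{ess}}(L)=\lim_n\beta(L^n)^{1/n}$ and the coincidence of the Browder and Kuratowski essential spectral radii, is exactly Hennion's proof. As a cited-theorem argument it is correct.

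You are also right that the statement as written lacks Hennion's hypothesis $r<\rho(L)$. Take $\mathcal{B}=C^1[0,1]$, $\mathcal{B}_0=C^0[0,1]$ and $L=0$: the Lasota--Yorke inequality holds, but $\rho_{\mathrm{ess}}(L)=\rho(L)=0$. So only the bound $\rho_{\mathrm{ess}}(L)\le r$ can be proved, with quasi-compactness following when $\rho(L)>r$.

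The one step that fails is your ``direct'' bridge. From the Lasota--Yorke inequality and a compact embedding you cannot in general write $L^n=K+S$ with $K$ compact on $\mathcal{B}$ and $\|S\|_{\mathcal{B}}<|\lambda|^n$. The operator $L^n$ does not act boundedly from $\mathcal{B}_0$ into $\mathcal{B}$, so there is nothing to ``compose with the embedding.''

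Building such a splitting needs finite-rank or compact approximations of the identity on $\mathcal{B}$ that are uniformly bounded in $\mathcal{B}$ and converge uniformly on the unit ball in $\mathcal{B}_0$. That is an approximation property a general Banach space need not have; the essential norm and the measure of noncompactness are not equivalent in general. Your covering argument delivers only the measure-of-noncompactness bound.

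The correct way to exploit that bound is the standard fact underlying Nussbaum's theorem: for linear $T$ with $\beta(T)<|\mu|$, the operator $\mu I-T$ is Fredholm of index zero. Apply this to $T=L^n$ on the connected region $\{|\mu|>4Cr^n\}$, which contains resolvent points. Analytic Fredholm theory then shows that the spectrum of $L^n$ there consists of isolated eigenvalues with finite-rank Riesz projections.

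Your factorization through the commuting factors $\omega^k\lambda I-L$ then transfers finite-dimensional kernel, closed range and isolatedness to every $\lambda$ with $(4C)^{1/n}r<|\lambda|$. The transfer uses two facts: commuting factors of a Fredholm operator are Fredholm, and accumulation points of $\sigma(L)$ map to accumulation points of $\sigma(L^n)$. Letting $n\to\infty$ gives $\rho_{\mathrm{ess}}(L)\le r$ in the paper's sense.

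Either this argument or simply citing Nussbaum closes the proof. The $K+S$ sentence should be dropped.
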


\noindent The proof is given in  \cite{IonescuTulceaMarinescu1950}; the version stated here is the Hennion form \cite{Hennion1993}.

\subsection{Measure Theory and Ergodic Theory}

Let $(X, \mathcal{B}, \mu)$ be a probability space and $T : X \to X$ a measurable map. The measure $\mu$ is $T$-invariant if $\mu(T^{-1}A) = \mu(A)$ for all $A \in \mathcal{B}$. The measure is ergodic if every $T$-invariant set has measure $0$ or $1$. For a compact metric space $X$, we denote by $\mathcal{M}(X)$ the space of Borel probability measures with the weak-$*$ topology, and by $\mathcal{M}_T(X) \subset \mathcal{M}(X)$ the subspace of $T$-invariant measures.

\begin{proposition}[Birkhoff Ergodic Theorem]\label{thm:birkhoff}
Let $\mu$ be a $T$-invariant ergodic probability measure and $\phi \in L^1(\mu)$. Then for $\mu$-almost every $x$,
\begin{equation}
\lim_{n \to \infty} \frac{1}{n} \sum_{k=0}^{n-1} \phi(T^k x) = \int_X \phi \, d\mu.
\end{equation}
\end{proposition}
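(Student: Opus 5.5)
We prove the Birkhoff Ergodic Theorem (Proposition~\ref{thm:birkhoff}) by the standard route through the Maximal Ergodic Theorem, in Garsia's short form. Write $S_n\phi = \sum_{k=0}^{n-1}\phi\circ T^k$. The \emph{maximal inequality} we aim for is: for $\psi\in L^1(\mu)$ and $E_\psi=\{x:\sup_{n\ge1}S_n\psi(x)>0\}$, we have $\int_{E_\psi}\psi\,d\mu\ge 0$.

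\textbf{Step 1: the maximal inequality.} Set $M_N=\max(0,S_1\psi,\dots,S_N\psi)$. For $1\le n\le N$ we have $S_{n}\psi=\psi+S_{n-1}\psi\circ T\le \psi+M_N\circ T$. Taking the maximum over $n$ gives $M_N\le\psi+M_N\circ T$ on $\{M_N>0\}$. Integrate over $\{M_N>0\}$. Since $M_N\ge0$ everywhere and $M_N=0$ off that set, we get
\[
\int_{\{M_N>0\}}\psi\,d\mu\ \ge\ \int M_N\,d\mu-\int M_N\circ T\,d\mu=0,
\]
where the last equality uses $T$-invariance of $\mu$. The sets $\{M_N>0\}$ increase to $E_\psi$, so dominated convergence ($|\psi|\in L^1$) gives the claim.

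\textbf{Step 2: a.e.\ convergence.} Let $\overline\phi=\limsup \frac1n S_n\phi$ and $\underline\phi=\liminf\frac1n S_n\phi$. Both are $T$-invariant, because $\frac1n S_n\phi\circ T=\frac{n+1}{n}\frac{1}{n+1}S_{n+1}\phi-\frac{\phi}{n}$ and $\phi/n\to0$ a.e. (finite a.e.). For rationals $a<b$ let $E_{a,b}=\{\underline\phi<a<b<\overline\phi\}$, which is an invariant set. Apply Step 1 to the restriction of $T$ to $E_{a,b}$, which is legitimate because $E_{a,b}$ is invariant. With $\psi=(\phi-b)\mathbf 1_{E_{a,b}}$, the set $E_\psi$ contains all of $E_{a,b}$, so $\int_{E_{a,b}}(\phi-b)\ge0$. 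With $\psi=(a-\phi)\mathbf 1_{E_{a,b}}$ we get $\int_{E_{a,b}}(a-\phi)\ge0$. Adding the two yields $(a-b)\mu(E_{a,b})\ge0$, hence $\mu(E_{a,b})=0$. Taking the countable union over rational pairs gives $\overline\phi=\underline\phi$ a.e., so the limit $\phi^*$ exists a.e.

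\textbf{Step 3: identifying the limit.} This is the main obstacle, because $\phi$ is only $L^1$, so we must show that $\phi^*$ is integrable and that $\int\phi^*=\int\phi$. Fatou's lemma applied to $|\frac1nS_n\phi|$ gives $\|\phi^*\|_1\le\|\phi\|_1$. To get equality of integrals, first treat bounded $\phi$: there dominated convergence gives $\int\phi^*=\lim\int\frac1nS_n\phi=\int\phi$. For general $\phi$, choose bounded $g$ with $\|\phi-g\|_1<\varepsilon$. Linearity of the a.e.\ limit gives $(\phi-g)^*=\phi^*-g^*$, and the Fatou bound then gives $|\int\phi^*-\int\phi|\le 2\varepsilon$. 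Finally, $\phi^*$ is $T$-invariant, so by ergodicity each level set $\{\phi^*>c\}$ has measure $0$ or $1$. Hence $\phi^*$ is a.e.\ constant, and that constant equals $\int\phi\,d\mu$.
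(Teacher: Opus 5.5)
Your proof is correct and complete. The paper gives no argument of its own for this proposition; it only cites Birkhoff's 1931 paper and Walters, Theorem~1.14, so the meaningful comparison is with Walters.

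Your Steps~1 and~2 are essentially his proof: Garsia's maximal inequality, invariance of $\overline\phi$ and $\underline\phi$, and $\mu(E_{a,b})=0$ obtained by applying the maximal inequality to $(\phi-b)\mathbf{1}_{E_{a,b}}$ and $(a-\phi)\mathbf{1}_{E_{a,b}}$. This is the modern maximal-ergodic-theorem route, not Birkhoff's original argument.

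You depart from Walters only in Step~3. He obtains $\int\phi^*\,d\mu=\int\phi\,d\mu$ by slicing the range of $\phi^*$ into the invariant sets $D^n_k=\{k/n\le\phi^*<(k+1)/n\}$ and applying the maximal inequality on each slice. This yields $\int\phi^*\,d\mu\le\int\phi\,d\mu+1/n$, and applying it to $-\phi$ gives the reverse. You instead combine the Fatou bound $\|\phi^*\|_1\le\|\phi\|_1$ with approximation by bounded functions, for which dominated convergence settles the matter. Your version reuses Step~2 as a black box and is a bit shorter; his avoids the approximation and the linearity of the a.e.\ limit.

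Two small points to tighten:

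\begin{enumerate}
\item The limit from Step~2 exists a priori only in $[-\infty,\infty]$. It is the Fatou bound that makes $\phi^*$ and $(\phi-g)^*$ finite a.e., and this is needed before writing $(\phi-g)^*=\phi^*-g^*$. Your ordering provides it, but you should say so.
\item In Step~2 there is no need to restrict $T$ to $E_{a,b}$. Invariance of $E_{a,b}$ is used only to write $S_n\psi=\mathbf{1}_{E_{a,b}}S_n(\phi-b)$, after which Step~1 applies on all of $X$.
\end{enumerate}
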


\noindent The proof is given in Birkhoff's original paper \cite{Birkhoff1931} (see also  \cite[Theorem~1.14]{Walters1982}).

\begin{definition}[Measure-Theoretic Entropy]
For a $T$-invariant probability measure $\mu$ and a finite measurable partition $\mathcal{P}$ of $X$, the entropy of $\mathcal{P}$ is $H_\mu(\mathcal{P}) = -\sum_{P \in \mathcal{P}} \mu(P) \log \mu(P)$. The entropy of $T$ with respect to $\mu$ is
\begin{equation}
h_\mu(T) = \sup_{\mathcal{P}} \lim_{n \to \infty} \frac{1}{n} H_\mu\left(\bigvee_{k=0}^{n-1} T^{-k}\mathcal{P}\right)
\end{equation}
where the supremum is over all finite measurable partitions.
\end{definition}

\subsection{Cones and Cone Fields}

Cone fields provide a tool for establishing hyperbolicity through invariance criteria.

\begin{definition}[Cone]
A cone in a vector space $V$ with decomposition $V = E \oplus F$ is a set of the form
\begin{equation}
\mathcal{C}_a(E) = \{v = v_E + v_F : v_E \in E, v_F \in F, \|v_F\| \leq a \|v_E\|\}
\end{equation}
for some $a > 0$. The parameter $a$ is called the cone width.
\end{definition}

\begin{definition}[Cone Field]
A cone field on a subset $\Lambda \subset M$ is a continuous family of cones $\{\mathcal{C}_x\}_{x \in \Lambda}$ where $\mathcal{C}_x \subset T_x M$ for each $x$.
\end{definition}

\begin{proposition}[Cone Criterion for Hyperbolicity]\label{prop:cone_criterion}
Let $f : M \to M$ be a diffeomorphism and $\Lambda$ a compact $f$-invariant set. Suppose there exist continuous cone fields $\{\mathcal{C}^u_x\}_{x \in \Lambda}$ and $\{\mathcal{C}^s_x\}_{x \in \Lambda}$ with $\mathcal{C}^u_x \cap \mathcal{C}^s_x = \{0\}$ such that:
\begin{enumerate}
\item[(i)] $Df_x(\mathcal{C}^u_x) \subset \mathrm{int}(\mathcal{C}^u_{f(x)}) \cup \{0\}$ for all $x \in \Lambda$;
\item[(ii)] $Df^{-1}_x(\mathcal{C}^s_x) \subset \mathrm{int}(\mathcal{C}^s_{f^{-1}(x)}) \cup \{0\}$ for all $x \in \Lambda$;
\item[(iii)] There exists $\lambda < 1$ such that $\|Df_x v\| \geq \lambda^{-1} \|v\|$ for $v \in \mathcal{C}^u_x$ and $\|Df^{-1}_x v\| \geq \lambda^{-1} \|v\|$ for $v \in \mathcal{C}^s_x$.
\end{enumerate}
Then $\Lambda$ is a hyperbolic set for $f$.
\end{proposition}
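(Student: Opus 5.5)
We prove the Cone Criterion (Proposition~\ref{prop:cone_criterion}) by building the splitting from nested images of cones: define
\begin{equation}
E^u_x = \bigcap_{n\ge 0} Df^n_{f^{-n}(x)}\bigl(\mathcal{C}^u_{f^{-n}(x)}\bigr), \qquad E^s_x = \bigcap_{n\ge 0} Df^{-n}_{f^{n}(x)}\bigl(\mathcal{C}^s_{f^{n}(x)}\bigr).
\end{equation}
By (i), the sets $Df^n(\mathcal{C}^u_{f^{-n}x})$ are nested and decreasing in $n$, and each contains a linear subspace of dimension $u:=\dim F$. Here we assume $\mathcal{C}^u_x=\mathcal{C}_a(F_x)$ for a splitting $T_xM=E_x\oplus F_x$ whose dimensions are constant on $\Lambda$, and that the stable cone has the complementary dimension $s=d-u$. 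We will show that $E^u_x$ is a subspace of dimension exactly $u$, that $E^s_x$ is a subspace of dimension $s$, that $Df E^u_x=E^u_{fx}$ and $Df E^s_x=E^s_{fx}$, and that the two sum directly to $T_xM$. The invariance is immediate from the definitions. The exponential estimates $\|Df^nv\|\ge\lambda^{-n}\|v\|$ on $E^u$ and $\|Df^{-n}v\|\ge\lambda^{-n}\|v\|$ on $E^s$ follow by iterating (iii), since $E^u_{f^kx}\subset\mathcal{C}^u_{f^kx}$ for every $k$. The first inequality, applied along the orbit of $f^{-n}x$, is equivalent to $\|Df^{-n}|_{E^u}\|\le\lambda^n$, and the second gives $\|Df^{n}|_{E^s}\|\le\lambda^n$. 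This is the hyperbolicity estimate with constant $C=1$.

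Showing that $E^u_x$ is a subspace is the main obstacle. Expansion alone does not shrink the cones, so we argue by dimensions.
\begin{itemize}
\item[(a)] Every $Df^n(\mathcal{C}^u)$ contains a $u$-dimensional subspace. By compactness of the Grassmannian, a limit $L$ of these subspaces lies in every cone of the nested family, so $E^u_x\supset L$ with $\dim L=u$.
\item[(b)] $E^u_x$ contains no subspace of dimension $u+1$, since $\mathcal{C}_a(F)$ contains none. Such a subspace would meet $E$ nontrivially, and $E\cap\mathcal{C}_a(F)=\{0\}$.
\item[(c)] Every vector of $E^u_x$ lies in $L$. Suppose $v\in E^u_x\setminus L$. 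Put $w_n=Df^{-n}v$ and $\ell_n=Df^{-n}\ell$ for $\ell\in L$; all of these lie in the cones $\mathcal{C}^u_{f^{-n}x}$. Vectors of $L$ contract uniformly under $Df^{-n}$ by the rate $\lambda$. The difficulty is to show that $\mathrm{span}(L,v)$ also lies in the cone at time $-n$ (convexity fails for general cones). To avoid this, we use the complementary stable cone. By (ii), the preimages of $\mathcal{C}^s$ under $Df$ grow, and the criterion $\mathcal{C}^u_x\cap\mathcal{C}^s_x=\{0\}$ with continuity and compactness gives a uniform angle between the cones.
\end{itemize}

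Symmetrically, $E^s_x$ contains an $s$-dimensional subspace $L'$. Every nonzero vector of $E^u_x$ expands forward at rate $\lambda^{-n}$ by (iii), while every nonzero vector of $E^s_x$ contracts forward at rate $\lambda^{n}$. Hence $E^u_x\cap E^s_x=\{0\}$, and more precisely any nonzero vector with both behaviours is impossible. Counting dimensions gives $u+s=d$, so $L\oplus L'=T_xM$. Now take $v\in E^u_x$ and write $v=\ell+\ell'$ with $\ell\in L$, $\ell'\in L'$. Forward iteration expands $v$ and $\ell$ by (iii) and contracts $\ell'$ at rate $\lambda^n$. If $\ell'\neq0$, then pulling back along $f^{-n}$ uniformly expands the $L'$ component relative to the $L$ component, so for large $n$ the vector $Df^{-n}v$ leaves $\mathcal{C}^u$ by the uniform angle bound, a contradiction. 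Thus $E^u_x=L$, and likewise $E^s_x=L'$.

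Continuity of $x\mapsto E^{u}_x,E^s_x$ follows from uniqueness: a limit of $E^u_{x_k}$ is a $u$-dimensional subspace satisfying the defining cone conditions at the limit point, by continuity of the cone fields, so it equals $E^u_x$. We expect step (c), the nonconvexity issue, to be the delicate point. The first approach is the transversality argument with the stable cone above. Failing that, we would pass to an adapted metric in which the cones are round, hence convex in the projective sense.
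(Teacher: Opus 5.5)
Your proof is correct, and at its key step it takes a different and more careful route than the paper. The paper's appendix proof uses the same intersections. It gets that $E^u_x$ is a subspace by asserting that the nested cones $Df^n(\mathcal{C}^u_{f^{-n}x})$ have aperture shrinking to zero. It then obtains transversality from $\mathcal{C}^u_x\cap\mathcal{C}^s_x=\{0\}$ and spanning from a dimension count. The shrinking-aperture claim is not justified there. Strict nesting alone does not force the aperture to $0$, and for $u\ge 2$ the cones are not projectively convex, so no Hilbert-metric contraction is available off the shelf.

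You instead trap $E^u_x$ from both sides. Grassmannian compactness gives a $u$-plane $L\subset E^u_x$, and the expansion/contraction dichotomy from (ii)--(iii) excludes everything else. Your last step does not even need the uniform angle bound: for $v=\ell+\ell'\in E^u_x$ with $\ell\in L$ and $\ell'\in L'\subset E^s_x$,
\[
\lambda^{-n}\|\ell'\|\le\|Df^{-n}\ell'\|\le\|Df^{-n}v\|+\|Df^{-n}\ell\|\le\lambda^{n}\bigl(\|v\|+\|\ell\|\bigr),
\]
so $\ell'=0$. Your middle paragraph therefore already closes step (c). Step (b) and the adapted-metric fallback can be dropped; the fallback would not work anyway, since round cones with $u\ge 2$ are still not projectively convex. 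Bullet (c) should be rewritten so it does not read as unresolved.

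Your route costs using both cone fields and the rates in (iii) together. In return it gives a complete, elementary argument and a proof of continuity of the splitting, which the definition of a hyperbolic set requires and the paper does not address. It also states explicitly the hypothesis that the core dimensions of $\mathcal{C}^u$ and $\mathcal{C}^s$ sum to $\dim M$. The paper uses this tacitly in its Step~6, and it is genuinely needed. At a fixed point with $Df=\mathrm{diag}(3,1,1/3)$, thin one-dimensional cones around $e_1$ and $e_3$ are disjoint and satisfy (i)--(iii), yet the point is not hyperbolic.
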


\begin{proof}
The invariant subbundles are defined by $E^u_x = \bigcap_{n \geq 0} Df^n_{f^{-n}(x)}(\mathcal{C}^u_{f^{-n}(x)})$ and $E^s_x = \bigcap_{n \geq 0} Df^{-n}_{f^n(x)}(\mathcal{C}^s_{f^n(x)})$. The cone invariance ensures these intersections are nonempty and consist of linear subspaces. The expansion and contraction estimates follow from condition (iii).
\end{proof}

\subsection{Symbolic Dynamics Prerequisites}

Let $A$ be an $m \times m$ matrix with entries in $\{0, 1\}$. The associated one-sided subshift of finite type is
\begin{equation}
\Sigma_A^+ = \{(x_0, x_1, x_2, \ldots) \in \{1, \ldots, m\}^{\mathbb{N}_0} : A_{x_i x_{i+1}} = 1 \text{ for all } i \geq 0\}
\end{equation}
and the two-sided subshift is
\begin{equation}
\Sigma_A = \{(\ldots, x_{-1}, x_0, x_1, \ldots) \in \{1, \ldots, m\}^{\mathbb{Z}} : A_{x_i x_{i+1}} = 1 \text{ for all } i \in \mathbb{Z}\}.
\end{equation}
The shift map $\sigma : \Sigma_A \to \Sigma_A$ is defined by $(\sigma x)_i = x_{i+1}$.

The metric on $\Sigma_A$ is defined by
\begin{equation}
d(x, y) = 2^{-N(x,y)}, \quad N(x, y) = \min\{|i| : x_i \neq y_i\}
\end{equation}
with the convention $d(x,x) = 0$. With this metric, $\Sigma_A$ is a compact metric space and $\sigma$ is a homeomorphism.

The matrix $A$ is irreducible if for all $i, j$ there exists $n > 0$ with $(A^n)_{ij} > 0$. It is aperiodic if there exists $N$ such that $(A^n)_{ij} > 0$ for all $n \geq N$ and all $i, j$. Irreducibility corresponds to topological transitivity of $\sigma$, and aperiodicity to topological mixing.

\begin{proposition}[{\cite[Theorem~4.3.6]{LindMarcus1995}}]\label{prop:sft_periodic}
For an irreducible aperiodic matrix $A$, the periodic points of $\sigma : \Sigma_A \to \Sigma_A$ are dense, and their growth rate satisfies
\begin{equation}
\lim_{n \to \infty} \frac{1}{n} \log |\mathrm{Per}_n(\sigma)| = \log \rho(A)
\end{equation}
where $\rho(A)$ is the spectral radius of $A$.
\end{proposition}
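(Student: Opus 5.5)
The plan is to handle the two claims of Proposition~\ref{prop:sft_periodic} separately, both via the combinatorics of admissible words. The starting identity is that a point $x \in \Sigma_A$ satisfies $\sigma^n x = x$ exactly when $x$ is the bi-infinite repetition of a word $x_0 x_1 \cdots x_{n-1}$ with $A_{x_i x_{i+1}} = 1$ for $0 \le i \le n-2$ and $A_{x_{n-1} x_0} = 1$. Such words are closed paths of length $n$ in the directed graph of $A$. Counting them gives
\begin{equation}
|\mathrm{Per}_n(\sigma)| = \mathrm{tr}(A^n) = \sum_{j=1}^m \lambda_j^n,
\end{equation}
where $\lambda_1,\dots,\lambda_m$ are the eigenvalues of $A$ with multiplicity. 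Here $\mathrm{Per}_n$ is read as $\{x : \sigma^n x = x\}$, following the convention of the paper.

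For density, I fix $x \in \Sigma_A$ and $k \ge 0$. I will find a periodic point agreeing with $x$ on the coordinates $-k,\dots,k$, which places it within $2^{-k-1}$ of $x$.
\begin{itemize}
\item[(a)] Take $N$ from aperiodicity, so that $(A^N)_{ij} > 0$ for all $i,j$.
\item[(b)] This gives an admissible word of length $N+1$ from $x_k$ back to $x_{-k}$.
\item[(c)] Concatenate the block $x_{-k}\cdots x_k$ with the interior of that word, and repeat periodically. Every transition in the result is allowed.
\end{itemize}
Irreducibility alone would already suffice for density.

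For the growth rate, I apply Perron--Frobenius to the primitive nonnegative matrix $A$. Aperiodicity gives $A^N > 0$ entrywise, so Perron's theorem for positive matrices applies to $A^N$. It follows that $\rho(A)$ is a simple eigenvalue of $A$, and every other eigenvalue satisfies $|\lambda_j| < \rho(A)$. The reason is that a second eigenvalue of modulus $\rho$ for $A$ would produce a second eigenvalue of modulus $\rho^N$ for $A^N$, which is impossible. Consequently
\begin{equation}
\mathrm{tr}(A^n) = \rho(A)^n\bigl(1 + O(\theta^n)\bigr), \qquad \theta = \max_{j \ge 2} \frac{|\lambda_j|}{\rho(A)} < 1.
\end{equation}
Taking $\frac1n\log$ of both sides gives the limit $\log\rho(A)$. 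I also need $\rho(A) \ge 1$, so that the logarithm makes sense and $\mathrm{tr}(A^n) > 0$. This holds because $A^N$ is a positive integer matrix, so $\rho(A)^N = \rho(A^N) \ge 1$.

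The main obstacle is the spectral step: showing that $\rho(A)$ strictly dominates every other eigenvalue, so that the remaining eigenvalues cannot cancel the leading term. If I want to avoid citing Perron--Frobenius in full, I have a more elementary route giving two-sided bounds for $n \ge N$.
\begin{itemize}
\item[(i)] Upper bound: $\mathrm{tr}(A^n) \le m\,\|A^n\|$, and $\|A^n\|^{1/n} \to \rho(A)$ by Gelfand's formula.
\item[(ii)] Lower bound: $\mathrm{tr}(A^n) \ge \max_i (A^n)_{ii}$. Since $A^N > 0$, we have $(A^{n})_{ii} \ge c\,\|A^{n-2N}\|$ for some $c > 0$: go from $i$ to any entry in $N$ steps, follow the paths counted by $A^{n-2N}$, and return to $i$ in $N$ steps.
\end{itemize}
Applying Gelfand's formula to both bounds then yields the same limit $\log\rho(A)$.
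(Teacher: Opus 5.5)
Your proposal is correct and follows essentially the same route as the paper: the count $|\mathrm{Per}_n(\sigma)| = \mathrm{tr}(A^n)$ via cyclically admissible words, Perron--Frobenius dominance of $\rho(A)$ for the primitive matrix $A$, and density by splicing in a connecting word supplied by $A^N > 0$. Your derivation of strict dominance from Perron's theorem for $A^N$ and your alternative two-sided bound via Gelfand's formula are valid additions that the paper does not spell out.
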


\begin{proof}
Periodic points of period $n$ correspond to admissible words $w = (w_0, \ldots, w_{n-1})$ with $A_{w_i w_{i+1}} = 1$ for all $i$ (indices mod $n$). The count is $|\mathrm{Per}_n(\sigma)| = \mathrm{tr}(A^n)$. By the Perron-Frobenius theorem for primitive matrices, $A$ has a simple dominant eigenvalue $\rho(A) > 0$ with $|\lambda_i| < \rho(A)$ for all other eigenvalues $\lambda_i$. Thus $\mathrm{tr}(A^n) = \rho(A)^n + \sum_i \lambda_i^n = \rho(A)^n(1 + o(1))$, giving $n^{-1}\log\mathrm{tr}(A^n) \to \log\rho(A)$. Density of periodic points follows from aperiodicity: for any cylinder $[w_0 \cdots w_{k-1}]$, aperiodicity provides $N$ such that $(A^N)_{w_{k-1} w_0} > 0$, yielding a periodic point of period $k + N$ in the cylinder.
\end{proof}

\section{Hyperbolic Sets: Quantitative Theory}\label{sec:hyperbolic_sets}

This section develops the theory of hyperbolic sets with explicit quantitative estimates. We construct adapted metrics with sharp bounds and establish H\"{o}lder continuity of the invariant distributions.

\subsection{Definition and Basic Properties}

We define hyperbolic sets through the invariant splitting and collect basic properties. The definition is the foundation for all subsequent material in this section.

\begin{definition}[Hyperbolic Set]\label{def:hyperbolic_set}
Let $f : M \to M$ be a $C^r$ diffeomorphism ($r \geq 1$) of a compact Riemannian manifold $(M, g)$. A compact $f$-invariant set $\Lambda \subset M$ is hyperbolic if there exists a continuous splitting of the tangent bundle over $\Lambda$,
\begin{equation}
T_\Lambda M = E^s \oplus E^u
\end{equation}
where $E^s = \bigcup_{x \in \Lambda} E^s_x$ and $E^u = \bigcup_{x \in \Lambda} E^u_x$, satisfying the following conditions:
\begin{enumerate}
\item[(H1)] Invariance: $Df_x(E^s_x) = E^s_{f(x)}$ and $Df_x(E^u_x) = E^u_{f(x)}$ for all $x \in \Lambda$.
\item[(H2)] Contraction and expansion: There exist constants $c > 0$ and $\lambda \in (0, 1)$ such that for all $n \geq 0$,
\begin{equation}
\|Df^n_x(v)\| \leq c \lambda^n \|v\| \quad \text{for } v \in E^s_x
\end{equation}
and
\begin{equation}
\|Df^{-n}_x(v)\| \leq c \lambda^n \|v\| \quad \text{for } v \in E^u_x.
\end{equation}
\end{enumerate}
The constant $\lambda$ is called the hyperbolicity exponent and $c$ is the hyperbolicity constant.
\end{definition}

The dimensions $\dim E^s_x$ and $\dim E^u_x$ are constant on connected components of $\Lambda$ by continuity of the splitting.

\begin{remark}
Condition (H2) implies that for $v \in E^u_x$ and $n \geq 0$,
\begin{equation}
\|Df^n_x(v)\| \geq c^{-1} \lambda^{-n} \|v\|.
\end{equation}
This follows by applying the second inequality to $Df^n_x(v) \in E^u_{f^n(x)}$ with the map $f^{-1}$ iterated $n$ times.
\end{remark}

\begin{proposition}[Uniqueness of Hyperbolic Splitting]\label{prop:splitting_unique}
The hyperbolic splitting $T_\Lambda M = E^s \oplus E^u$ of a hyperbolic set is unique.
\end{proposition}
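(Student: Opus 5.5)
The plan is to characterize each subspace dynamically, purely in terms of $Df$ and the norm, so that any two hyperbolic splittings must agree. Specifically, I will show that for any hyperbolic splitting satisfying (H1)--(H2),
\begin{equation}
E^s_x = \{v \in T_xM : \sup_{n\geq 0}\|Df^n_x v\| < \infty\}, \qquad E^u_x = \{v \in T_xM : \sup_{n\geq 0}\|Df^{-n}_x v\| < \infty\}.
\end{equation}
The right-hand sides do not refer to the splitting, so if $T_\Lambda M = \tilde E^s \oplus \tilde E^u$ is a second hyperbolic splitting (possibly with different constants $\tilde c,\tilde\lambda$), the same characterization forces $\tilde E^s_x = E^s_x$ and $\tilde E^u_x = E^u_x$ for every $x \in \Lambda$.

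For the stable characterization, the inclusion ``$\subseteq$'' is immediate from (H2), since $\|Df^n_x v\| \le c\lambda^n\|v\| \le c\|v\|$. For ``$\supseteq$'', take $v$ with bounded forward orbit and decompose $v = v_s + v_u$ with $v_s\in E^s_x$ and $v_u \in E^u_x$. By (H1), $Df^n_x v_s \in E^s_{f^n x}$ and $Df^n_x v_u \in E^u_{f^n x}$, so the decomposition is respected by the dynamics. The Remark following Definition~\ref{def:hyperbolic_set} gives $\|Df^n_x v_u\| \ge c^{-1}\lambda^{-n}\|v_u\|$, while $\|Df^n_x v_s\| \le c\lambda^n\|v_s\|$. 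Hence
\begin{equation}
\|Df^n_x v\| \ge c^{-1}\lambda^{-n}\|v_u\| - c\lambda^n\|v_s\|.
\end{equation}
If $v_u \neq 0$, the right-hand side tends to infinity, contradicting boundedness. Therefore $v_u = 0$ and $v\in E^s_x$. The unstable case follows by the symmetric argument, applying $f^{-1}$ and interchanging the roles of $E^s$ and $E^u$.

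The only point that needs care is that the characterization uses only qualitative boundedness, not the specific constants. This is what allows two splittings with different $(c,\lambda)$ to be compared. No angle bound between $E^s$ and $E^u$ (such as Lemma~\ref{lem:angle_projection}) is required, because the argument uses only the triangle inequality on the two components, not a bound on the projections. I do not expect a genuine obstacle here; the main thing to verify is that (H1) is used so the component $Df^n_x v_u$ stays in $E^u$, which is what lets the Remark's expansion estimate apply to it.
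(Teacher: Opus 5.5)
Your proof is correct, but it takes a different route from the paper's. The paper fixes a second splitting $F^s\oplus F^u$ and uses the same decay-versus-growth estimate to show $E^s_x\cap F^u_x=\{0\}$. It then concludes $E^s_x=F^s_x$ ``by dimension count''. You instead characterize $E^s_x$ intrinsically as the set of vectors with bounded forward orbit, and $E^u_x$ via backward orbits. Since that characterization makes no reference to the splitting or to the constants $(c,\lambda)$, uniqueness is immediate.

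The main advantage of your route is that it is complete as written. Trivial intersection of $E^s_x$ with $F^u_x$, even together with the symmetric fact $F^s_x\cap E^u_x=\{0\}$, only gives $\dim E^s_x=\dim F^s_x$. Two complements of the same subspace need not coincide. For example, in $\mathbb{R}^2$ take $F^u$ the $y$-axis, $E^s$ the $x$-axis, $F^s$ the line $y=x$, and $E^u$ the line $y=2x$: all the intersection and dimension conditions hold, yet $E^s\neq F^s$.

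To finish the paper's argument one has to do essentially what you do. One option is to decompose $v\in E^s_x$ along $F^s_x\oplus F^u_x$ and kill the $F^u_x$-component by its exponential growth. Equivalently, apply the intersection argument to the decaying subspace $E^s_x+F^s_x$ rather than to $E^s_x$ alone. This gives $\dim(E^s_x+F^s_x)\le \dim F^s_x$, hence $E^s_x\subseteq F^s_x$, and equality follows by symmetry.

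Your version also shows directly that the splitting is determined by $Df$ alone, without using continuity of the splitting.
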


\begin{proof}
Suppose $T_\Lambda M = F^s \oplus F^u$ is another splitting satisfying (H1) and (H2) with constants $c'$ and $\lambda' \in (0,1)$. Let $v \in E^s_x \cap F^u_x$ be nonzero. Then $\|Df^n_x(v)\| \leq c\lambda^n \|v\|$ by the first property and $\|Df^n_x(v)\| \geq (c')^{-1} (\lambda')^{-n} \|v\|$ by the second, yielding a contradiction for large $n$. Thus $E^s_x \cap F^u_x = \{0\}$ and by dimension count $E^s_x = F^s_x$. Similarly $E^u_x = F^u_x$.
\end{proof}

\subsection{Adapted Metrics}\label{subsec:adapted_metrics}

The hyperbolicity condition involves two constants $c$ and $\lambda$. An adapted metric eliminates the constant $c$ by adjusting the Riemannian structure.

\begin{definition}[Adapted Metric]
A Riemannian metric $g'$ on $M$ is adapted to the hyperbolic set $\Lambda$ if the hyperbolicity conditions (H2) hold with $c = 1$:
\begin{equation}
\|Df^n_x(v)\|_{g'} \leq \lambda^n \|v\|_{g'} \quad \text{for } v \in E^s_x, \, n \geq 0
\end{equation}
and
\begin{equation}
\|Df^{-n}_x(v)\|_{g'} \leq \lambda^n \|v\|_{g'} \quad \text{for } v \in E^u_x, \, n \geq 0.
\end{equation}
\end{definition}

\begin{theorem}[Existence of Adapted Metric]\label{thm:adapted_metric}
Let $\Lambda$ be a hyperbolic set for $f$ with constants $c > 0$ and $\lambda \in (0,1)$. For any $\mu \in (\lambda, 1)$, there exists an adapted metric $g'$ on a neighborhood of $\Lambda$ with hyperbolicity exponent $\mu$. Moreover, the metrics $g$ and $g'$ are related by
\begin{equation}\label{eq:metric_equivalence}
K^{-1} \|v\|_g \leq \|v\|_{g'} \leq K \|v\|_g
\end{equation}
for all $v \in T_\Lambda M$, where $K = c \cdot (1 - \lambda/\mu)^{-1}$.
\end{theorem}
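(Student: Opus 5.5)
The plan is Mather's averaging trick, carried out separately on the two subbundles and then glued orthogonally. For $x \in \Lambda$ and $\mu \in (\lambda,1)$ I will define
\begin{equation*}
\|v\|_{g'} = \sum_{n \geq 0} \mu^{-n} \|Df^n_x v\|_g \quad (v \in E^s_x), \qquad \|v\|_{g'} = \sum_{n \geq 0} \mu^{-n} \|Df^{-n}_x v\|_g \quad (v \in E^u_x).
\end{equation*}
By (H2), the $n$-th term is at most $c(\lambda/\mu)^n \|v\|_g$, so each series converges. The $n=0$ term gives the lower bound, and summing the geometric series gives $\|v\|_g \leq \|v\|_{g'} \leq c(1-\lambda/\mu)^{-1}\|v\|_g$ on $E^s$ and on $E^u$. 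This is the constant $K$ of \eqref{eq:metric_equivalence}.

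Next I show these are norms and that they contract. Each is a sum of seminorms dominated by a norm, so it is a norm. Continuity in $x$ follows from uniform convergence of the series and continuity of the splitting. The contraction is a one-line index shift: for $v \in E^s_x$,
\[
\|Df_x v\|_{g'} = \sum_{n\ge0}\mu^{-n}\|Df^{n+1}_x v\|_g = \mu\bigl(\|v\|_{g'}-\|v\|_g\bigr) \le \mu\|v\|_{g'} .
\]
The same computation with $f^{-1}$ works on $E^u$. Induction on $n$, using invariance (H1), then gives the adapted inequalities with exponent $\mu$. To get an inner product rather than just a norm, I will either replace the $\ell^1$ sums by $\ell^2$ sums $\bigl(\sum \mu^{-2n}\|Df^n v\|^2\bigr)^{1/2}$, which gives the same contraction and constants up to square roots, or polarize.

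To define $g'$ on all of $T_\Lambda M$, I declare $E^s_x \perp E^u_x$ and set $\|v\|_{g'}^2 = \|v_s\|_{g'}^2 + \|v_u\|_{g'}^2$. The lower bound is immediate: $\|v\|_g \le \|v_s\|_g+\|v_u\|_g \le \sqrt2\,\|v\|_{g'}$. The upper bound needs $\|v_s\|_g \le \|P_{E^s}\|\,\|v\|_g$, and Lemma~\ref{lem:angle_projection} with compactness of $\Lambda$ and continuity of the splitting bounds this by $1/\sin\theta_0$, where $\theta_0>0$ is the minimal angle. So on general vectors the equivalence constant picks up a factor depending on the angle. I expect to have to record this honestly: either interpret \eqref{eq:metric_equivalence} on $E^s \cup E^u$, or absorb $\sqrt2/\sin\theta_0$ into $K$.

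The main obstacle is the extension from $\Lambda$ to a neighborhood with a genuine, smooth Riemannian metric. I would first extend the continuous inner product on $T_\Lambda M$ to a continuous one on $TM$ (Tietze on the bundle of positive-definite forms, which is convex fiberwise). I would then approximate it by a smooth metric within a factor $1+\eta$. This perturbs the contraction to $(1+\eta)^2\mu$. To absorb the loss, I will run the construction with some $\mu' \in (\lambda,\mu)$ and choose $\eta$ small enough that $(1+\eta)^2\mu' \le \mu$, which changes $K$ only by a factor tending to $1$. The adapted inequalities themselves are only claimed on $\Lambda$, so no estimate off $\Lambda$ is needed beyond continuity.
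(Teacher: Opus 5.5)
Your construction is the paper's own: Mather's $\ell^1$ sums on $E^s_x$ and $E^u_x$, the index shift $\|Df_xv\|'=\mu(\|v\|'-\|v\|_g)$, and orthogonal gluing. The paper stops there. It asserts \eqref{eq:metric_equivalence} for all $v\in T_\Lambda M$ directly from the splitting, and it never addresses the inner-product or neighborhood issues, so the three points you add are genuine.

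\textbf{The angle caveat is forced.} The paper's derivation is wrong at exactly the step you flagged. Take a hyperbolic fixed point with $Df_pe_s=\lambda e_s$, $Df_pe_u=\lambda^{-1}e_u$, $\|e_s\|_g=\|e_u\|_g=1$ and $\angle(e_s,e_u)=\theta$. Then (H2) holds with $c=1$. Yet the glued metric has $\|e_s-e_u\|_{g'}\ge\sqrt2$, while $\|e_s-e_u\|_g=2\sin(\theta/2)$ tends to $0$ with $\theta$. This also refutes Corollary~\ref{cor:angle_bound}.

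\textbf{No other adapted metric rescues the stated constant.} Consider a period-two orbit with stable multipliers $4,\tfrac1{16}$ and unstable multipliers $2,2$. Condition (H2) holds with $c=8$, $\lambda=\tfrac12$ for every angle $\theta$ at the two points. Adaptedness, however, forces the ratio $\|e_s\|_{g'}/\|e_u\|_{g'}$ to change by a factor at least $2/\mu^2>2$ between the two points. So at one point $\|e_s\|_{g'}$ and $\|e_u\|_{g'}$ differ by a factor at least $\sqrt2$. This is incompatible with \eqref{eq:metric_equivalence} once $\theta$ is small. One of your two amendments of the statement is therefore necessary.

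\textbf{One correction to your plan.} Drop the ``polarize'' option: an $\ell^1$ sum of Euclidean norms fails the parallelogram law, so polarization yields no inner product. Commit to the $\ell^2$ sums instead. They give $\|Df_xv\|'^2=\mu^2(\|v\|'^2-\|v\|_g^2)$ and the constant $c(1-\lambda^2/\mu^2)^{-1/2}<K$ on $E^s\cup E^u$. That strict slack absorbs your $\mu'<\mu$ and smoothing losses, so on $E^s\cup E^u$ you keep exactly the stated $K$.
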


\begin{proof}
Following Mather's construction, we define the adapted norm on $E^s_x$ by
\begin{equation}
\|v\|'_s = \sum_{n=0}^{\infty} \mu^{-n} \|Df^n_x(v)\|_g
\end{equation}
and on $E^u_x$ by
\begin{equation}
\|v\|'_u = \sum_{n=0}^{\infty} \mu^{-n} \|Df^{-n}_x(v)\|_g.
\end{equation}
These series converge because $\|Df^n_x(v)\|_g \leq c\lambda^n \|v\|_g$ for $v \in E^s_x$, giving
\begin{equation}
\|v\|'_s \leq \sum_{n=0}^{\infty} \mu^{-n} c\lambda^n \|v\|_g = \frac{c}{1 - \lambda/\mu} \|v\|_g.
\end{equation}
Similarly $\|v\|'_u \leq c(1 - \lambda/\mu)^{-1} \|v\|_g$.

The lower bound $\|v\|'_s \geq \|v\|_g$ follows from including only the $n = 0$ term. Thus with $K = c(1 - \lambda/\mu)^{-1}$,
\begin{equation}
\|v\|_g \leq \|v\|'_s \leq K \|v\|_g \quad \text{for } v \in E^s_x.
\end{equation}

For the adapted contraction property, observe that for $v \in E^s_x$,
\begin{align}
\|Df_x(v)\|'_s &= \sum_{n=0}^{\infty} \mu^{-n} \|Df^{n+1}_x(v)\|_g \\
&= \mu \sum_{m=1}^{\infty} \mu^{-m} \|Df^m_x(v)\|_g \\
&= \mu \left( \|v\|'_s - \|v\|_g \right) \leq \mu \|v\|'_s.
\end{align}
Similarly $\|Df^{-1}_{f(x)}(w)\|'_u \leq \mu \|w\|'_u$ for $w \in E^u_{f(x)}$.

Define the adapted metric by declaring $E^s_x$ and $E^u_x$ orthogonal and using the norms $\|\cdot\|'_s$ and $\|\cdot\|'_u$. The equivalence \eqref{eq:metric_equivalence} follows from the splitting $v = v_s + v_u$ and $\|v\|^2_{g'} = \|v_s\|'^2_s + \|v_u\|'^2_u$.
\end{proof}

\begin{remark}
The explicit bound $K = c(1 - \lambda/\mu)^{-1}$ shows that the adapted metric becomes increasingly distorted as $\mu \to \lambda$. For numerical applications, one should choose $\mu$ well separated from $\lambda$ to minimize distortion.
\end{remark}

\begin{corollary}[Quantitative Angle Bound]\label{cor:angle_bound}
With an adapted metric, the angle between stable and unstable subspaces satisfies
\begin{equation}
\angle(E^s_x, E^u_x) \geq \arcsin(1/K)
\end{equation}
where $K$ is the metric equivalence constant from Theorem \ref{thm:adapted_metric}.
\end{corollary}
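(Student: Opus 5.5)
The plan is to read the statement as a bound on the angle between $E^s_x$ and $E^u_x$ measured in the \emph{original} metric $g$, given that an adapted metric $g'$ exists. The reason is that in $g'$ itself the bound is trivial. In the construction of Theorem~\ref{thm:adapted_metric} the subspaces $E^s_x$ and $E^u_x$ are declared $g'$-orthogonal, so $\angle_{g'}(E^s_x,E^u_x)=\pi/2\ge\arcsin(1/K)$, because $K\ge 1$. I will record this case in one line and then treat the substantive $g$-angle bound. The route goes through Lemma~\ref{lem:angle_projection}. It suffices to show that the projection $P_s:T_xM\to E^s_x$ along $E^u_x$ satisfies $\|P_s\|_g\le K$. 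Then $\sin\angle_g(E^s_x,E^u_x)=1/\|P_s\|_g\ge 1/K$, and since $\angle\in[0,\pi/2]$ the claimed bound follows.

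To bound $\|P_s\|_g$, take $v\in T_xM$ and write $v=v_s+v_u$. I will use the chain
\begin{equation*}
\|v_s\|_g\le \|v_s\|'_s\le \bigl(\|v_s\|'^2_s+\|v_u\|'^2_u\bigr)^{1/2}=\|v\|_{g'}\le K\|v\|_g .
\end{equation*}
The first inequality is the lower bound $\|w\|'_s\ge\|w\|_g$, which comes from the $n=0$ term in Mather's series. The middle equality is the definition of $g'$ as the orthogonal sum. The last inequality is the upper half of the equivalence \eqref{eq:metric_equivalence}. Together these give $\|P_s v\|_g\le K\|v\|_g$. The same chain, with the roles of $s$ and $u$ exchanged, gives $\|P_u\|_g\le K$, although only one projection is needed.

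I expect the main obstacle to be the last step, $\|v\|_{g'}\le K\|v\|_g$ for a general $v$ that is not in $E^s_x$ or $E^u_x$. The proof of Theorem~\ref{thm:adapted_metric} only establishes the comparison on each summand separately. Going from $\|v_s\|_g$ and $\|v_u\|_g$ to $\|v\|_g$ already involves the $g$-angle, so there is a risk of circularity. I will take \eqref{eq:metric_equivalence} as stated in the theorem, which we are allowed to assume, and cite it directly. If one wanted to avoid relying on it, the fallback would be the cruder bound $\|v_s\|_g\le \|v\|_{g'}\le K\|v\|_g$ with $K$ understood as the full equivalence constant between $g$ and $g'$ on $T_\Lambda M$. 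In that reading the corollary holds with whatever constant makes \eqref{eq:metric_equivalence} true, and the argument above is unchanged.
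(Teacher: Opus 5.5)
Your route is the paper's route: orthogonality of $E^s_x,E^u_x$ in $g'$, Lemma~\ref{lem:angle_projection}, and a bound on $\|P^s\|_g$ obtained by comparing norms. You differ in one step, and that step matters.

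The paper writes $\|P^s v\|_g\le K\|P^s v\|_{g'}\le K\|v\|_{g'}\le K^2\|v\|_g$, using \eqref{eq:metric_equivalence} twice. This only gives $\sin\angle_g(E^s_x,E^u_x)\ge K^{-2}$. The paper then claims $\arcsin(K^{-2})\ge\arcsin(K^{-1})$, which is backwards: $K^{-2}\le K^{-1}$ and $\arcsin$ is increasing.

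You instead use the exact one-sided bound $\|w\|_g\le\|w\|'_s$ for $w\in E^s_x$, which is the $n=0$ term of Mather's series and costs no factor of $K$. That gives $\|P^s\|_g\le K$ and the stated $\arcsin(1/K)$ directly. As a deduction from the paper's stated results, your argument is correct, and it proves the constant that the printed proof does not.

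Your concern about the last link $\|v\|_{g'}\le K\|v\|_g$ is justified, and the problem is worse than possible circularity. The proof of Theorem~\ref{thm:adapted_metric} only compares $g$ and $g'$ on $E^s_x$ and on $E^u_x$ separately. The full inequality \eqref{eq:metric_equivalence} with $K=c(1-\lambda/\mu)^{-1}$ is false in general.

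For a counterexample, take a hyperbolic automorphism of $\mathbb{T}^2$ and a flat metric $g$ in which its $g$-unit eigenvectors $e_s,e_u$ meet at angle $\theta$. Then (H2) holds with $c=1$, so $K$ does not depend on $\theta$. Yet $v=e_s-e_u$ has $\|v\|_g=2\sin(\theta/2)$ and $\|v\|_{g'}=\sqrt2\,K$.

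In this example $\angle_g(E^s_x,E^u_x)=\theta$ can be made smaller than $\arcsin(1/K)$. So the corollary itself fails for the $g$-angle with this $K$, and both your argument and the paper's inherit the defect from \eqref{eq:metric_equivalence}.

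What survives is exactly your fallback. The bound holds if $K$ is replaced by the true equivalence constant between $g$ and $g'$, which depends on the $g$-angle and is not controlled by $c,\lambda,\mu$ alone. It also holds trivially for the angle measured in $g'$.
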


\begin{proof}
In the adapted metric, $E^s_x$ and $E^u_x$ are orthogonal by construction, giving $\angle_{g'}(E^s_x, E^u_x) = \pi/2$. The original angle satisfies 
\begin{equation}
\sin \angle_g(E^s_x, E^u_x) \geq K^{-2} \sin \angle_{g'}(E^s_x, E^u_x) = K^{-2}
\end{equation}
by the metric equivalence. More precisely, let $P^s$ denote the projection onto $E^s_x$ along $E^u_x$, so $\|P^s\|_g = 1/\sin\angle_g(E^s_x, E^u_x)$ by Lemma~\ref{lem:angle_projection}. In the adapted metric, $\|P^s\|_{g'} = 1$ since $E^s$ and $E^u$ are $g'$-orthogonal. For any $v \in T_xM$ with $\|v\|_g = 1$, we have $\|P^s v\|_g \leq K\|P^s v\|_{g'} \leq K\|v\|_{g'} \leq K^2\|v\|_g = K^2$, using \eqref{eq:metric_equivalence} twice. Thus $\|P^s\|_g \leq K^2$, giving $\sin\angle_g(E^s_x, E^u_x) \geq K^{-2}$. Since $K \geq 1$ and $\arcsin$ is increasing, $\angle_g(E^s_x, E^u_x) \geq \arcsin(K^{-2}) \geq \arcsin(K^{-1})$ (using $K^{-2} \leq K^{-1}$ for $K \geq 1$).
\end{proof}

\subsection{H\"{o}lder Continuity of the Splitting}

The hyperbolic splitting depends continuously on the base point by definition. We now establish the stronger H\"{o}lder continuity with an explicit exponent.

\begin{definition}[Grassmannian Distance]
For $k$-dimensional subspaces $E, F$ of a $d$-dimensional inner product space $V$, define
\begin{equation}
d_G(E, F) = \|P_E - P_F\|
\end{equation}
where $P_E$ and $P_F$ are the orthogonal projections onto $E$ and $F$ respectively.
\end{definition}

\begin{theorem}[H\"{o}lder Continuity of Hyperbolic Splitting]\label{thm:splitting_holder}
Let $\Lambda$ be a hyperbolic set for a $C^{1+\beta}$ diffeomorphism $f$ with hyperbolicity exponent $\lambda$ and $Df$ having H\"{o}lder exponent $\beta$. Then the stable distribution $x \mapsto E^s_x$ is H\"{o}lder continuous with exponent
\begin{equation}
\alpha_s = \frac{\beta \log \lambda^{-1}}{\log \|Df\|_\infty + \beta \log \lambda^{-1}}
\end{equation}
and the unstable distribution is H\"{o}lder continuous with exponent
\begin{equation}
\alpha_u = \frac{\beta \log \lambda^{-1}}{\log \|Df^{-1}\|_\infty + \beta \log \lambda^{-1}}.
\end{equation}
Explicitly, there exists $C > 0$ such that
\begin{equation}
d_G(E^s_x, E^s_y) \leq C \cdot d(x, y)^{\alpha_s}
\end{equation}
for all $x, y \in \Lambda$.
\end{theorem}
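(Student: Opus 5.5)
We follow the classical truncation-and-balance argument (Brin--Stuck, Hasselblatt). We work in the adapted metric of Theorem~\ref{thm:adapted_metric}, so that $E^s$ is contracted by $\lambda$ under $Df$ and $E^u$ is expanded by $\lambda^{-1}$. Since the Grassmannian distances for $g$ and $g'$ differ by a factor depending only on $K$, the final constant $C$ absorbs this change. To compare fibers over nearby points we identify $T_xM$ and $T_yM$ by parallel transport along the short geodesic (or via exponential charts), valid when $d(x,y)$ is below the injectivity radius. For larger $d(x,y)$ the bound is trivial, since $d_G \le 2$.

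The key characterization is
\begin{equation*}
E^s_x = \{v : \|Df^n_x v\| \text{ stays bounded}\},
\end{equation*}
together with the quantitative form: if $v = v_s + v_u$ with $v_u \neq 0$, then $\|Df^n_x v\| \geq \lambda^{-n}\|v_u\| - \lambda^n\|v_s\|$.

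\textbf{Step 1 (comparison of derivative cocycles).} Write $L = \|Df\|_\infty$. For $n \le N$ we have $d(f^kx, f^ky) \le L^k d(x,y)$. Using the $\beta$-H\"older property of $Df$ and a telescoping product, we bound
\begin{equation*}
\|Df^n_x - Df^n_y\| \le C_1 \sum_{k<n} L^{n-1} \bigl(L^k d(x,y)\bigr)^\beta \le C_2\, L^{n(1+\beta)} d(x,y)^\beta,
\end{equation*}
with the transport identifications understood.

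\textbf{Step 2 (cone/tilt estimate).} Take a unit $v \in E^s_x$ and decompose it at $y$ as $v = w_s + w_u$ with $w_s \in E^s_y$ and $w_u \in E^u_y$. The aim is to bound $\|w_u\|$, since $d_G(E^s_x, E^s_y)$ is comparable to $\sup_v \|w_u\|$ (by Lemma~\ref{lem:angle_projection} and Corollary~\ref{cor:angle_bound}). From the inequalities
\begin{equation*}
\lambda^{-n}\|w_u\| - \lambda^n\|w_s\| \le \|Df^n_y v\| \le \|Df^n_x v\| + \|Df^n_x - Df^n_y\| \le \lambda^n + C_2 L^{n(1+\beta)} d(x,y)^\beta
\end{equation*}
we get
\begin{equation*}
\|w_u\| \le C_3\bigl(\lambda^{2n} + \lambda^n L^{n(1+\beta)} d(x,y)^\beta\bigr).
\end{equation*}
Here $\|w_s\|$ is bounded via the projection norm.

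\textbf{Step 3 (balance).} Choose $n$ so that the two terms are comparable. This gives $n \approx \beta\log(1/d)/(\log L + \ldots)$ and an exponent of the form $\beta\log\lambda^{-1}/(\log L + \beta\log\lambda^{-1})$, i.e.\ the stated $\alpha_s$. The exact algebra depends on how sharply Step~1 is done: one should use $L^{n}$ rather than $L^{n(1+\beta)}$, by bounding the telescoped product more carefully, or equivalently by comparing $Df$ one step at a time and propagating the tilt through an invariant graph-transform on linear maps $E^s\to E^u$. I expect this exponent bookkeeping to be the main obstacle. If the crude estimate gives a weaker exponent, I will redo it via the one-step approach. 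Represent $E^s_{y}$ as the graph of $\Phi_y : E^s_x \to E^u_x$ and use that $Df^{-1}$ contracts such graphs toward $E^s$: the graph distance at $x$ is at most $\lambda^{2}$ times that at $f(x)$, plus $C d(x,y)^\beta$. Iterating $n$ times and cutting off at $n$ where $L^n d(x,y)\sim 1$ yields the required balance between $\lambda^{2n}$-type decay (losing at most $\lambda^n$) and H\"older error growth. The result for $E^u$ follows by applying the same argument to $f^{-1}$, replacing $L$ by $\|Df^{-1}\|_\infty$.
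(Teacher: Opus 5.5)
Your proposal is correct, and it takes a genuinely different route from the paper.

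\textbf{The two routes.} The paper works backward. It pushes $E^s_x$ forward along the orbit of $x$, transports it to $f^n(y)$, and pulls it back along the orbit of $y$ to get planes $F_n \to E^s_y$. It then telescopes $\sum_n d_G(F_n,F_{n+1})$ using the per-term bound $C_2\|Df^{-1}\|_\infty^n\, d(f^nx,f^ny)^\beta$, truncating at $N\approx \beta\log(1/d)/\log\lambda^{-1}$. You instead use the forward characterization of $E^s$ and a finite-time comparison of the cocycles $Df^n_x$ and $Df^n_y$, so that the $\lambda^{-n}$ growth of $w_u$ does the work.

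Your estimate is the one that closes. When $E^s,E^u\neq 0$ one has $\|Df\|_\infty,\|Df^{-1}\|_\infty\ge\lambda^{-1}$, so the paper's $\theta=\|Df^{-1}\|_\infty\|Df\|_\infty^\beta$ satisfies $\theta\ge\lambda^{-(1+\beta)}$. The exponent $\beta(1+\log\theta/\log\lambda)$ produced by its balancing is therefore at most $-\beta^2$, and the asserted simplification to $\alpha_s$ is not an identity. The culprit is the crude factor $\|Df^{-1}\|_\infty^n$. Replacing it by the $\lambda^{2n}$ contraction of slopes of pulled-back graphs over $E^s$ is exactly your one-step fallback. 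So your fallback is the natural repair of the paper's scheme, while your main argument needs no dynamics on the Grassmannian at all.

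\textbf{Step~3 does not literally yield $\alpha_s$.} With Step~1 as you wrote it, balancing $\lambda^{2n}$ against $\lambda^nL^{n(1+\beta)}d^\beta$ gives $n=\beta\log(1/d)/D$, where $D=(1+\beta)\log L+\log\lambda^{-1}$, and
\[
\|w_u\|\lesssim d^{\alpha'},\qquad \alpha'=\frac{2\beta\log\lambda^{-1}}{(1+\beta)\log L+\log\lambda^{-1}}.
\]
The inequality $\alpha'\ge\alpha_s$ is equivalent to $(1-\beta)\log L\ge(1-2\beta)\log\lambda^{-1}$. This holds because $L\ge\lambda^{-1}$, since expansion on $E^u$ gives $L^n\ge c^{-1}\lambda^{-n}$ for all $n$. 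Because $d_G$ is bounded, a $d^{\alpha'}$ bound gives a $d^{\alpha_s}$ bound. So your crude Step~1 already suffices, and the fallback is not needed.

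You should also record that the transports along the orbit segment are legitimate at this $n$: $L^nd=d^{(\log L+\log\lambda^{-1})/D}\to 0$.

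\textbf{Adapted metric.} Theorem~\ref{thm:adapted_metric} only gives a rate $\mu>\lambda$ and changes $\|Df\|_\infty$, whereas $\alpha_s$ is stated in terms of the original $\lambda$ and $\|Df\|_\infty$. It is cleaner to run Steps~1--2 in the original metric $g$ with the constant $c$ from (H2); this alters only constants.
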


\begin{proof}
Using an adapted metric, we work with $c = 1$. The key observation is that $E^s_x$ can be characterized as the unique $Df$-invariant subspace of dimension $\dim E^s$ in the cone
\begin{equation}
\mathcal{C}^s_x(a) = \{v = v_s + v_u \in T_x M : \|v_u\| \leq a \|v_s\|\}
\end{equation}
for any $a > 0$.

Let $x, y \in \Lambda$ with $d(x, y) < \varepsilon_0$ small. Consider the linear map $L_{xy} : T_x M \to T_y M$ given by parallel transport along the minimal geodesic. The derivative $Df_y \circ L_{xy}$ and $L_{f(x),f(y)} \circ Df_x$ differ by an error controlled by the H\"{o}lder continuity of $Df$:
\begin{equation}
\|Df_y \circ L_{xy} - L_{f(x),f(y)} \circ Df_x\| \leq C_1 d(x,y)^\beta.
\end{equation}

Define $F_n = (Df^n_y)^{-1} L_{f^n(x),f^n(y)} Df^n_x (E^s_x)$. This is the stable subspace at $x$ transported to $y$ through $n$ iterates. We have
\begin{equation}
d_G(F_n, F_{n+1}) \leq C_2 \|Df^{-1}\|^n_\infty d(f^n(x), f^n(y))^\beta.
\end{equation}
Since $d(f^n(x), f^n(y)) \leq \|Df\|^n_\infty d(x,y)$, this gives
\begin{equation}
d_G(F_n, F_{n+1}) \leq C_2 \|Df^{-1}\|^n_\infty \|Df\|^{n\beta}_\infty d(x,y)^\beta.
\end{equation}

The sequence $F_n$ converges to $E^s_y$ as $n \to \infty$ because $F_n$ is increasingly close to the $Df^{-1}$-invariant subspace. The total distance is bounded by the telescoping sum
\begin{equation}
d_G(E^s_x, E^s_y) \leq d_G(F_0, E^s_y) \leq \sum_{n=0}^{\infty} d_G(F_n, F_{n+1}).
\end{equation}

Setting $\theta = \|Df^{-1}\|_\infty \|Df\|^\beta_\infty$, if $\theta < 1$ the sum converges geometrically and we obtain H\"{o}lder continuity with exponent $\beta$. In general, $\theta$ may be $\geq 1$, so we optimize the iteration count. Instead of summing the full telescoping series, we truncate at level $N$ and bound the tail separately. The cone field argument gives $d_G(F_N, E^s_y) \leq C_3\lambda^N$ (since the transported subspaces converge to $E^s_y$ at rate $\lambda$ per backward iterate). The partial sum satisfies 
\begin{equation}
\sum_{n=0}^{N-1} d_G(F_n, F_{n+1}) \leq C_2 d(x,y)^\beta \sum_{n=0}^{N-1} \theta^n \leq C_2 N \theta^N d(x,y)^\beta
\end{equation}
when $\theta \geq 1$. Choose $N = \lfloor \beta \log(1/d(x,y)) / \log(1/\lambda) \rfloor$ to balance: then 
\begin{equation}
\lambda^N \leq d(x,y)^\beta \quad\text{and}\quad  \theta^N = d(x,y)^{\beta\log\theta/\log\lambda}.
\end{equation}
The factor $N$ contributes at most $O(\log(1/d(x,y)))$, which is absorbed into the constant for the H\"{o}lder estimate. Collecting powers of $d(x,y)$:
\begin{equation}
d_G(E^s_x, E^s_y) \leq C' d(x,y)^{\beta(1 + \log\theta/\log\lambda)} + C_3 d(x,y)^\beta.
\end{equation}
The H\"{o}lder exponent is the minimum of $\beta$ and $\beta(1 + \log\theta/\log\lambda) = \beta \cdot (\log\lambda + \log\theta)/\log\lambda$. Since $\log\theta = \log\|Df^{-1}\|_\infty + \beta\log\|Df\|_\infty$, this simplifies to $\alpha_s = \beta\log\lambda^{-1}/(\log\|Df\|_\infty + \beta\log\lambda^{-1})$ as stated.
\end{proof}

\subsection{Axiom A Diffeomorphisms}

We now specialize to the setting of Axiom A diffeomorphisms.

\begin{definition}[Axiom A]\label{def:axiom_a}
A $C^r$ diffeomorphism $f : M \to M$ satisfies Axiom A if:
\begin{enumerate}
\item[(A1)] The nonwandering set $\Omega(f)$ is hyperbolic.
\item[(A2)] The periodic points are dense in $\Omega(f)$: $\overline{\mathrm{Per}(f)} = \Omega(f)$.
\end{enumerate}
\end{definition}

\begin{definition}[Anosov Diffeomorphism]
A diffeomorphism $f : M \to M$ is Anosov if the entire manifold $M$ is a hyperbolic set for $f$.
\end{definition}

Every Anosov diffeomorphism satisfies Axiom A, with $\Omega(f) = M$ (as we shall prove). The converse fails: Axiom A diffeomorphisms may have $\Omega(f) \subsetneq M$, as in the Smale horseshoe.

\begin{proposition}[Anosov Implies Axiom A]\label{thm:anosov_axiom_a}
Every Anosov diffeomorphism satisfies Axiom A with $\Omega(f) = M$.
\end{proposition}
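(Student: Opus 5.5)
The plan splits the statement into three parts: the hyperbolicity axiom (A1), the density axiom (A2), and the equality $\Omega(f)=M$. The first two are routine; the third is where I expect the real difficulty.

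\emph{Axiom (A1).} The nonwandering set $\Omega(f)$ is closed and $f$-invariant, as recorded in Section~\ref{sec:preliminaries}, and $M$ is compact, so $\Omega(f)$ is a compact invariant subset of the hyperbolic set $M$. Restricting the splitting $T_MM=E^s\oplus E^u$ to points of $\Omega(f)$ preserves continuity, the invariance (H1) and the estimates (H2) with the same constants $c,\lambda$. Hence $\Omega(f)$ is hyperbolic. By Proposition~\ref{prop:splitting_unique} this restricted splitting is the only hyperbolic splitting over $\Omega(f)$, so nothing depends on the choice.

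\emph{Axiom (A2).} I would use the shadowing machinery of Section~\ref{sec:shadowing}, specifically the Anosov Closing Lemma, which is stated there as a consequence of Main Theorem~\ref{thm:shadowing}. It applies on all of $M$ because $M$ is itself hyperbolic.
\begin{enumerate}
\item Fix $x\in\Omega(f)$ and $\varepsilon>0$, and let $\alpha=C^{-1}(1-\lambda)\varepsilon$ be the tolerance supplied by the shadowing estimate.
\item Since $x$ is nonwandering, there exist $y$ with $d(x,y)<\alpha/2$ and $n>0$ with $d(f^n(y),y)<\alpha/2$.
\item The periodic sequence $y,f(y),\dots,f^{n-1}(y),y,\dots$ is then an $\alpha$-pseudo-orbit.
\item The closing lemma gives a periodic point $p$ of period $n$ with $d(p,y)\le\varepsilon$, so $d(p,x)<2\varepsilon$.
\end{enumerate}
Thus $\overline{\mathrm{Per}(f)}\supset\Omega(f)$. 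The reverse inclusion holds because periodic points are nonwandering and $\Omega(f)$ is closed.

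\emph{The equality $\Omega(f)=M$.} By (A2), this reduces to showing that periodic points are dense in all of $M$. By the closing argument above, it would suffice to show that every point of $M$ is chain recurrent: a chain recurrent point admits a periodic $\alpha$-pseudo-orbit through it, and shadowing then produces a periodic point nearby. My first attempt would be the following.
\begin{enumerate}
\item Use canonical coordinates (Section~\ref{sec:canonical_coordinates}): on an Anosov manifold the bracket $[x,y]$ is defined for every nearby pair, not only within a basic set.
\item Try to show that each global unstable manifold $W^u(x)$ is dense in $M$.
\item Density of unstable manifolds gives topological transitivity, hence every point is nonwandering.
\end{enumerate}
I expect step 2 to be the main obstacle, and possibly a genuine gap rather than just the hardest step. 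Density of unstable leaves does not follow formally from uniform hyperbolicity. As far as I know, $\Omega(f)=M$ for every Anosov diffeomorphism is a long-standing open problem (it is Smale's question whether all Anosov diffeomorphisms are transitive). It is known only under extra hypotheses: codimension one (Franks, Newhouse), infranilmanifolds (Manning), or preservation of a measure of full support, in which case Poincar\'e recurrence gives $\Omega(f)=M$ directly.

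So in writing the proof I would make sure the paper's argument for this step invokes one of these hypotheses or an established result. If it relies only on the general tools developed above, I would not accept it as it stands and would flag that step.
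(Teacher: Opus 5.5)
Your proposal is sound, and the gap you anticipate is real; it sits in the paper's proof, not in your argument. The paper obtains $\Omega(f)=M$ from the parenthetical claim that ``the stable and unstable manifolds of any point are dense,'' and this claim is proved nowhere in the paper. Section~\ref{sec:stable_manifolds} only constructs local manifolds and their saturations.

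On a connected manifold this claim is in fact equivalent to the conclusion. Suppose every unstable leaf is dense. Compactness and continuity of the lamination give a radius $R$ such that every unstable leaf disk of radius $R$ meets a prescribed open set $V$. Since $f^n$ stretches a small unstable disk inside $U$ until it contains such a disk, $f$ is transitive and $\Omega(f)=M$. Conversely, if $\Omega(f)=M$ with $M$ connected, Main Theorem~\ref{thm:spectral_decomposition} leaves a single mixing piece, whose unstable leaves are dense.

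So the paper's justification presupposes Smale's open transitivity question, and you are right not to accept it. The statement as written is not established by the paper and is not known in general. What is actually proved is ``Anosov $\Rightarrow$ Axiom A''; $\Omega(f)=M$ needs an extra hypothesis, such as an invariant measure of full support or one of the other special cases you list.

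For (A2), you and the paper use the same tool, the closing lemma at a nonwandering point. The difference is where it is applied. Corollary~\ref{cor:periodic_dense} applies it to an arbitrary $y\in M$ and gets ``$y$ is nonwandering'' from $\Omega(f)=M$, so even the paper's density-of-periodic-points argument inherits the gap. Your version runs only at points of $\Omega(f)$, so it proves $\overline{\mathrm{Per}(f)}=\Omega(f)$ unconditionally, which is exactly what Axiom A requires.

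Two adjustments are needed, neither affecting the argument. Proposition~\ref{thm:closing_lemma} needs $2\varepsilon$ below the expansiveness constant, so take $\varepsilon$ small. It also yields $f^n(p)=p$ rather than minimal period $n$. The paper does not address (A1) at all; your restriction of the splitting to $\Omega(f)$ is the right argument.
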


\begin{proof}
For an Anosov diffeomorphism, $\Omega(f) = M$ (every point is nonwandering since the stable and unstable manifolds of any point are dense). The density of periodic points follows from the Anosov Closing Lemma (Proposition~\ref{thm:closing_lemma}), proved in Section~\ref{sec:shadowing}. See Corollary~\ref{cor:periodic_dense}.
\end{proof}

\begin{proposition}[Properties of Axiom A]\label{prop:axiom_a_properties}
Let $f$ be an Axiom A diffeomorphism. Then:
\begin{enumerate}
\item[(i)] $\Omega(f)$ is a compact $f$-invariant subset of $M$.
\item[(ii)] $f|\Omega(f)$ is chain recurrent: for every $\varepsilon > 0$ and $x, y \in \Omega(f)$, there exists an $\varepsilon$-chain from $x$ to $y$.
\item[(iii)] If $\mu$ is any $f$-invariant probability measure, then $\mathrm{supp}(\mu) \subset \Omega(f)$.
\end{enumerate}
\end{proposition}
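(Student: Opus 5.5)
For (i), I would argue directly from the definition of the nonwandering set. Take $x \notin \Omega(f)$. Then there are an open neighborhood $U$ of $x$ and $N>0$ with $f^n(U)\cap U=\emptyset$ for all $n>N$. Every point of $U$ is witnessed as wandering by the same $U$, so the complement of $\Omega(f)$ is open. Hence $\Omega(f)$ is closed, and it is compact because $M$ is compact. Invariance follows because $f$ is a homeomorphism: if $x\in\Omega(f)$ and $V\ni f(x)$ is open, put $U=f^{-1}(V)$. Then $f^n(U)\cap U\neq\emptyset$ gives $f^n(V)\cap V=f(f^n(U)\cap U)\neq\emptyset$, so $f(\Omega)\subset\Omega$. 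Applying the same argument to $f^{-1}$ gives the reverse inclusion.

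For (iii), I would use Poincaré recurrence. Let $x\in\mathrm{supp}(\mu)$, let $U\ni x$ be open, and let $N>0$. Then $\mu(U)>0$, and $U$ is also invariant-measure-positive for $f^{N+1}$. Poincaré recurrence for $f^{N+1}$ gives a point of $U$ returning to $U$ under some $f^{k(N+1)}$ with $k\ge1$. This yields $n>N$ with $f^n(U)\cap U\neq\emptyset$, so $x\in\Omega(f)$.

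For (ii), the plan has two stages. \textbf{First}, I would show every $x\in\Omega(f)$ is $\varepsilon$-chain recurrent within $\Omega(f)$. Using (A2) and continuity of $f$, pick a periodic point $p$ with $d(p,x)<\delta$, where $\delta$ is small enough that $d(f(p),f(x))<\varepsilon$. The chain $x, f(p), f^2(p),\dots,f^{k-1}(p), x$, with $k$ the period of $p$, is then an $\varepsilon$-chain from $x$ to $x$ whose points lie in $\Omega(f)$. \textbf{Second}, to pass from $x$ to an arbitrary $y$, I would combine periodic points near $x$ and near $y$ with the local product structure of Section~\ref{sec:canonical_coordinates}. Specifically, I would try to produce a heteroclinic point $z\in W^u(p)\cap W^s(q)$, with $p,q$ periodic near $x,y$, and then splice the orbit of $z$ into the chain, using the shadowing estimates of Main Theorem~\ref{thm:shadowing} to keep every jump below $\varepsilon$.

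The main obstacle is this second stage. The required intersections $W^u(p)\cap W^s(q)\neq\emptyset$ are guaranteed only when $p$ and $q$ are related, that is, when they lie in a common transitive piece. If $x$ and $y$ lie in different basic sets, for example an attractor and a repeller, no such connection need exist. I therefore expect that the argument goes through exactly when $\Omega(f)$ is chain-transitive. That holds, for example, for Anosov $f$ with $\Omega(f)=M$ connected, and in general the statement needs this extra input. Accordingly, I would first establish (ii) for $x,y$ in a common basic set, and I would isolate the chain-transitivity of $\Omega(f)$ as the precise hypothesis under which the stated claim holds verbatim.
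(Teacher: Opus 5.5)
Your arguments for (i) and (iii) are correct and coincide with the paper's. The paper calls (i) immediate from the definitions; you supply the details, including the homeomorphism argument giving $f(\Omega(f))=\Omega(f)$. For (iii) the paper uses Poincar\'e recurrence exactly as you do.

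Your first stage of (ii) is also correct. It establishes what is actually true, namely chain recurrence of $f|\Omega(f)$ in the usual sense, with a chain lying in $\Omega(f)$. Beginning the chain with $f(p)$ is the right move. With the usual convention $d(f(z_i),z_{i+1})<\varepsilon$, the paper's first step from $x$ to $p$ would need $d(f(x),p)<\varepsilon$, which $d(x,p)<\varepsilon/3$ does not provide.

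Your objection to the second stage is correct, and the gap lies in the paper's proof rather than in your plan. The paper chooses periodic $p,q$ near $x,y$ and asserts that, ``since $\Omega(f)$ is compact and periodic points are dense,'' there are periodic points $p=p_0,p_1,\dots,p_k=q$ with $d(p_j,p_{j+1})<\varepsilon/3$. Compactness and density give finite nets, not $\varepsilon/3$-connectedness. A compact metric space admits such chains between any two of its points, for every $\varepsilon>0$, exactly when it is connected.

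Statement (ii) as written is in fact false. Take the north--south diffeomorphism of $S^2$: a hyperbolic repelling fixed point $N$, a hyperbolic attracting fixed point $S$, and every other orbit running from $N$ to $S$. It is Axiom A with $\Omega(f)=\{N,S\}$. For $\varepsilon<d(N,S)$, every $\varepsilon$-chain in $\Omega(f)$ that starts at $S$ stays at $S$. Even chains in $M$ cannot escape a disc $D\ni S$ with $f(\overline{D})\subset D$ once $\varepsilon<d(f(\overline{D}),M\setminus D)$.

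So (ii) must be weakened, in one of three ways:
\begin{enumerate}
\item[(a)] to chain recurrence, as in your first stage;
\item[(b)] to chain transitivity within each basic set;
\item[(c)] to the case where $\Omega(f)$ is connected, e.g.\ Anosov $f$ on connected $M$, where the paper's chain of periodic points does exist.
\end{enumerate}
For version (b) you do not need heteroclinic points or shadowing. Topological transitivity of $f|_{\Omega_i}$ yields $z\in\Omega_i$ near $f(x)$ and $n\ge 1$ with $f^n(z)$ near $y$. Then $x,z,f(z),\dots,f^{n-1}(z),y$ is the required chain. Note, however, that in the paper's ordering transitivity is only proved later, in Section~\ref{sec:spectral_decomposition}.
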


\begin{proof}
Part (i) is immediate from the definitions: $\Omega(f)$ is closed by definition, and $f$-invariant because $x$ is nonwandering if and only if $f(x)$ is.

For (ii), let $x, y \in \Omega(f)$ and $\varepsilon > 0$. By density of periodic points, choose $p, q$ periodic with $d(x, p) < \varepsilon/3$ and $d(y, q) < \varepsilon/3$. Since $\Omega(f)$ is compact and periodic points are dense, there exist periodic points $p = p_0, p_1, \ldots, p_k = q$ with $d(p_j, p_{j+1}) < \varepsilon/3$ for each $j$. For each $j$, the orbit of $p_j$ returns to $p_j$ after period $n_j$. Construct the $\varepsilon$-chain: start at $x$, jump to $p_0 = p$ (distance $< \varepsilon/3$), follow the orbit $p, f(p), \ldots, f^{n_0-1}(p)$ (each step has zero error), then jump from $f^{n_0}(p) = p$ to $p_1$ (distance $< \varepsilon/3$), follow the orbit of $p_1$ for one period, and continue until reaching $q = p_k$, then jump to $y$. Each jump has error less than $\varepsilon$, completing the chain.

Part (iii): if $x \in \mathrm{supp}(\mu)$, every neighborhood $U$ of $x$ satisfies $\mu(U) > 0$. By the Poincar\'{e} Recurrence Theorem, for $\mu$-almost every $z \in U$, there exist arbitrarily large $n$ with $f^n(z) \in U$, hence $f^n(U) \cap U \neq \emptyset$. Since this holds for every $N > 0$, $x$ is nonwandering.
\end{proof}

\subsection{Hyperbolic Periodic Orbits}

Periodic orbits of hyperbolic sets inherit the hyperbolic structure with explicit eigenvalue bounds.

\begin{proposition}[Hyperbolicity of Periodic Orbits]\label{prop:periodic_hyperbolicity}
Let $\Lambda$ be a hyperbolic set with exponent $\lambda$ and let $p \in \Lambda$ be a periodic point of period $n$. Then $Df^n_p : T_p M \to T_p M$ has:
\begin{enumerate}
\item[(i)] No eigenvalues of modulus $1$.
\item[(ii)] All eigenvalues associated to $E^s_p$ have modulus at most $\lambda^n$.
\item[(iii)] All eigenvalues associated to $E^u_p$ have modulus at least $\lambda^{-n}$.
\end{enumerate}
\end{proposition}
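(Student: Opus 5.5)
The plan is to restrict $Df^n_p$ to the two invariant summands and bound the spectral radius on each, using an adapted metric so that no hyperbolicity constant $c$ appears.

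\textbf{Block decomposition.} Since $f^n(p)=p$, the invariance (H1) gives $Df^n_p(E^s_p)=E^s_p$ and $Df^n_p(E^u_p)=E^u_p$. With respect to $T_pM=E^s_p\oplus E^u_p$, the map $Df^n_p$ is therefore block diagonal, $A_s\oplus A_u$. Its spectrum is the union of $\sigma(A_s)$ and $\sigma(A_u)$. The eigenvalues ``associated to $E^s_p$'' are those of $A_s$, and similarly for $E^u_p$.

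\textbf{Stable block, (ii).} First I take the constant $c$ from (H2). For $k\ge1$, $A_s^k=Df^{kn}_p|_{E^s_p}$, so (H2) gives $\|A_s^k\|\le c\lambda^{kn}$. By Gelfand's formula, $\rho(A_s)=\lim_k\|A_s^k\|^{1/k}\le\lim_k c^{1/k}\lambda^n=\lambda^n$. So every eigenvalue of $A_s$ has modulus at most $\lambda^n$. Gelfand's formula is what removes $c$ here, so the adapted metric of Theorem~\ref{thm:adapted_metric} is not needed, and it would in any case only give exponent $\mu>\lambda$.

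\textbf{Unstable block, (iii).} Applying the same argument to $A_u^{-1}=Df^{-n}_p|_{E^u_p}$ gives $\|A_u^{-k}\|\le c\lambda^{kn}$, hence $\rho(A_u^{-1})\le\lambda^n$. Since the eigenvalues of $A_u^{-1}$ are exactly the reciprocals of those of $A_u$, every eigenvalue of $A_u$ has modulus at least $\lambda^{-n}$. The eigenvalues are computed over $\mathbb{C}$, so I will note that Gelfand's formula applies to the complexification and that the operator norm does not change under complexification up to a constant factor, which the $k$-th root removes.

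\textbf{No eigenvalue on the unit circle, (i).} Because $\lambda<1$ and $n\ge1$, we have $\lambda^n<1<\lambda^{-n}$. Combined with the block decomposition, (ii) and (iii) show that no eigenvalue has modulus $1$.

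The only delicate point is that bounds of the form $c\lambda^n$ with $c>1$ do not bound eigenvalues directly. This is why I iterate the period and take $k$-th roots instead of reading eigenvalue bounds off a single application of (H2). Beyond that, the argument is routine.
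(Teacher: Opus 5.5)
Your argument is correct, and it differs from the paper's in how the constant $c$ is removed. The paper passes to an adapted metric, where $c=1$. One application of the contraction bound at time $n$ then gives $\|Df^n_p|_{E^s_p}\|\le\lambda^n$, and similarly for $Df^{-n}_p|_{E^u_p}$. The eigenvalue bounds follow from $\rho\le\|\cdot\|$ on each invariant summand, since eigenvalues do not depend on the metric, and (i) follows as in your last step.

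You instead stay in the original metric and iterate the period: $\rho(A_s)\le\|A_s^k\|^{1/k}\le c^{1/k}\lambda^n$, then let $k\to\infty$. This route is self-contained and gives exactly $\lambda^n$. As you point out, Theorem~\ref{thm:adapted_metric} only provides an adapted metric with exponent $\mu\in(\lambda,1)$. Read literally, the paper's one-line argument therefore gives $\mu^n$, and it needs an extra limit $\mu\downarrow\lambda$ to reach the stated $\lambda^n$; that limit does the same job as your $k\to\infty$. The paper's route is shorter once the adapted-metric machinery is in place, and it matches the convention used throughout that section.
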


\begin{proof}
Using an adapted metric, for $v \in E^s_p$ we have $\|Df^n_p(v)\| \leq \lambda^n \|v\|$. Since $E^s_p$ is $Df^n_p$-invariant, all eigenvalues of $Df^n_p|_{E^s_p}$ have modulus at most $\lambda^n < 1$. Similarly, eigenvalues of $Df^n_p|_{E^u_p}$ have modulus at least $\lambda^{-n} > 1$. The spectral radius bounds imply no eigenvalue has modulus $1$.
\end{proof}

\begin{definition}[Stable and Unstable Dimensions]
For a hyperbolic set $\Lambda$, the stable dimension is $k_s = \dim E^s_x$ and the unstable dimension is $k_u = \dim E^u_x$ (constant on connected components). We have $k_s + k_u = \dim M$.
\end{definition}

The stable dimension determines the index of periodic orbits: a periodic orbit $\{p, f(p), \ldots, f^{n-1}(p)\}$ in $\Lambda$ has index $k_s$, meaning $Df^n_p$ has $k_s$ eigenvalues inside the unit circle and $k_u$ eigenvalues outside.

\section{Stable Manifold Theorem}\label{sec:stable_manifolds}

This section provides a complete, self-contained proof of the Stable Manifold Theorem for hyperbolic sets, with explicit estimates on manifold size, regularity, and dependence on base points. The proof uses the graph transform method, which we develop with full quantitative detail.

\subsection{Statement of the Main Theorem}

We state the Stable Manifold Theorem precisely, defining the local and global stable and unstable manifolds that the theorem constructs. The statement collects all properties proved in the subsequent subsections, and the formal proof assembling these is given in the subsection \emph{Completion of the Proof of the Stable Manifold Theorem} below.

\begin{definition}[Stable and Unstable Sets]
For a diffeomorphism $f : M \to M$ and a point $x \in M$, define:
\begin{align}
W^s(x) &= \{y \in M : d(f^n(x), f^n(y)) \to 0 \text{ as } n \to +\infty\}, \\
W^u(x) &= \{y \in M : d(f^{-n}(x), f^{-n}(y)) \to 0 \text{ as } n \to +\infty\}.
\end{align}
For $\varepsilon > 0$, define the local stable and unstable manifolds:
\begin{align}
W^s_\varepsilon(x) &= \{y \in M : d(f^n(x), f^n(y)) \leq \varepsilon \text{ for all } n \geq 0\}, \\
W^u_\varepsilon(x) &= \{y \in M : d(f^{-n}(x), f^{-n}(y)) \leq \varepsilon \text{ for all } n \geq 0\}.
\end{align}
\end{definition}

\begin{maintheorem}[Stable Manifold Theorem]\label{thm:stable_manifold}
Let $\Lambda$ be a hyperbolic set for a $C^r$ diffeomorphism $f : M \to M$ with $r \geq 1$. Using an adapted metric with hyperbolicity exponent $\lambda \in (0,1)$, there exists $\varepsilon_0 > 0$ such that for all $\varepsilon \in (0, \varepsilon_0]$ and $x \in \Lambda$:
\begin{enumerate}
\item[(i)] $W^s_\varepsilon(x)$ is a $C^r$ embedded disk of dimension $\dim E^s_x$, tangent to $E^s_x$ at $x$.
\item[(ii)] $T_x W^s_\varepsilon(x) = E^s_x$.
\item[(iii)] For $y \in W^s_\varepsilon(x)$, we have $d(f^n(x), f^n(y)) \leq \lambda^n d(x, y)$ for all $n \geq 0$.
\item[(iv)] $W^s_\varepsilon(x)$ depends continuously on $x$ in the $C^r$ topology.
\item[(v)] $W^s(x) = \bigcup_{n \geq 0} f^{-n}(W^s_\varepsilon(f^n(x)))$.
\end{enumerate}
Analogous statements hold for $W^u_\varepsilon(x)$ with $f$ replaced by $f^{-1}$.
\end{maintheorem}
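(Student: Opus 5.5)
The plan is to work in exponential charts adapted to the splitting, using the adapted metric of Theorem~\ref{thm:adapted_metric} so that $c=1$. For $x\in\Lambda$ set $x_n=f^n(x)$ and
\[
\tilde f_n=\exp_{x_{n+1}}^{-1}\circ f\circ\exp_{x_n}: E^s_{x_n}\oplus E^u_{x_n}\to E^s_{x_{n+1}}\oplus E^u_{x_{n+1}},
\]
defined on a ball of radius below the injectivity radius. Write $\tilde f_n(v,w)=(A_nv+a_n(v,w),\,B_nw+b_n(v,w))$ with $\|A_n\|\le\lambda$, $\|B_n^{-1}\|\le\lambda$ and $a_n(0)=b_n(0)=0$, $Da_n(0)=Db_n(0)=0$. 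Taylor's theorem gives $\mathrm{Lip}(a_n,b_n)\le C_0\delta$ on $B_\delta$, with $C_0$ controlling $D^2f$ and the chart distortion. This is how the size $\varepsilon_0=(1-\lambda)^2/(4C_0)$ will be fixed: it is the largest $\delta$ for which the cone and contraction inequalities below close.

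\textbf{Step 1 (existence via backward graph transform).} Consider sequences $\phi=(\phi_n)_{n\ge0}$ of $1$-Lipschitz maps $\phi_n:B^s_\delta\to B^u_\delta$ with $\phi_n(0)=0$, equipped with the sup metric. Define $(\Gamma^{-1}\phi)_n$ by requiring that $\tilde f_n$ map $\mathrm{graph}((\Gamma^{-1}\phi)_n)$ into $\mathrm{graph}(\phi_{n+1})$. Concretely, solve
\[
B_nw+b_n(v,w)=\phi_{n+1}\bigl(A_nv+a_n(v,w)\bigr)
\]
for $w$. Because $B_n$ is invertible with $\|B_n^{-1}\|\le\lambda$, this is a contraction in $w$, so $w$ is uniquely determined. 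One checks that $\Gamma^{-1}$ preserves the Lipschitz-$1$ class; this uses the cone invariance of Proposition~\ref{prop:cone_criterion} type. One also checks that $\Gamma^{-1}$ is a $C^0$ contraction with rate $\theta=\lambda/(1-\lambda C_0\delta\cdot2)<1$ for $\delta\le\varepsilon_0$. Its fixed point $\phi^*$ has a graph $\mathrm{graph}(\phi^*_0)$ that is $f$-forward invariant. A point off the graph has its $u$-component expanded by at least $\lambda^{-1}-2C_0\delta>1$, so it leaves the $\delta$-ball. Conversely, on the graph the $s$-component contracts by at most $\lambda+2C_0\delta$. Together these identify the graph with $W^s_\varepsilon(x)$ in the chart. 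This gives (i) at the Lipschitz level and gives (iii).

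For (iii) with exactly $\lambda$, I will need care: the naive rate is $\lambda+O(\delta)$. I would absorb this by running the construction with the adapted exponent $\mu\in(\lambda,1)$ from Theorem~\ref{thm:adapted_metric}, then renaming it $\lambda$.

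\textbf{Step 2 (regularity, tangency, dependence).} I first obtain $C^1$ by fiber contraction. Differentiating the fixed-point equation gives an affine operator on candidate derivative fields $P_n(v)\in L(E^s,E^u)$ of the form
\[
P\mapsto B_n^{-1}\bigl(P\circ(A_n+\cdots)-\partial_vb_n+\cdots\bigr),
\]
with fiber rate at most $\lambda(\lambda+O(\delta))<1$. The fiber contraction theorem then yields joint convergence, so $\phi^*$ is $C^1$ with $D\phi^*_0(0)=0$, which is (ii). Induction on $j\le r$ for $D^j\phi^*$ gives fiber rates $\lambda^{j+1}(1+O(\delta))$. Continuity in $x$, (iv), follows because all the data $A_n,B_n,a_n,b_n$ depend continuously on $x$ in $C^r$, since the splitting is continuous. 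The contraction constants are uniform, and the uniform contraction principle gives continuous dependence of the fixed point.

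\textbf{Step 3 (global manifold).} For (v), the inclusion $\supseteq$ follows from (iii). For $\subseteq$: if $d(f^nx,f^ny)\to0$, then for large $N$ we have $d(f^{n}x,f^ny)\le\varepsilon$ for all $n\ge N$, so $f^Ny\in W^s_\varepsilon(f^Nx)$ by definition. The unstable statements follow by applying everything to $f^{-1}$.

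The main obstacle I expect is Step 2 for $j\ge2$ together with continuity in $x$. The derivative operators involve compositions with $\phi^*$ itself, so the fiber contraction theorem must be applied on a bundle over a base that is only contracted in $C^0$. The base-point dependence also requires the cocycle of charts to vary continuously despite parallel-transport identifications. I would handle this by fixing a continuous (not invariant) extension of the splitting to a neighborhood and working with the uniform contraction principle on the product space of sequences indexed by the orbit.
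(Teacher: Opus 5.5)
Your proposal is correct in outline and follows the paper's skeleton. Both use the backward graph transform as a $C^0$ contraction with rate $\lambda/(1-2\lambda C_0\delta)$ (the paper's $\theta$ with $K=1$). Both get $C^r$ regularity by fiber contraction at order $j$ with rate $\lambda^{j+1}(1+O(\delta))$, identified with $D^j\phi^*$ by starting from a smooth graph. Both use slack to get the exact rate in (iii), and both argue (v) directly.

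Two differences are worth recording.

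First, you run a non-stationary graph transform on sequences $(\phi_n)_{n\ge0}$ over charts at $f^n(x)$. The paper instead writes $f(s,u)=(As+a(s,u),Bu+b(s,u))$ in a single chart at $x_0$ with $a(0,0)=b(0,0)=0$. That literally presupposes $f(x_0)=x_0$, and the paper only tacitly moves to charts along the orbit in Proposition~\ref{prop:stable_contraction}. Your formulation is the one that genuinely covers non-periodic base points. Your off-graph expansion estimate (growth factor $\lambda^{-1}-2C_0\delta>1$) is also a cleaner proof that the graph is the local stable set than the series argument in Theorem~\ref{thm:graph_convergence}.

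Second, for (iv) you aim only at continuity via a uniform contraction principle, which needs nothing beyond the stated $C^r$ hypothesis. The paper derives (iv) from H\"older dependence (Theorem~\ref{thm:manifold_holder} and Corollary~\ref{cor:foliation_regularity}). That gives more, but it uses $C^{1+\beta}$, respectively $C^{r+\beta}$, regularity that the statement does not assume.

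Two points need tightening.

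\emph{Continuity in (iv).} The uniform contraction principle does not apply in the sup metric over $n$ that you set up in Step~1. The distance $d(f^nx,f^ny)$ can grow like $\|Df\|_\infty^n d(x,y)$, so $\tilde f^{(y)}_n$ is not uniformly close to $\tilde f^{(x)}_n$. For large $n$ the charts at $f^n(x)$ and $f^n(y)$ cannot even be compared. The fix is to use the weighted metric $\sup_n\kappa^n\|\phi_n-\psi_n\|_{C^0}$ with some $\kappa\in(\theta,1)$:
\begin{itemize}
\item $\Gamma^{-1}$ is still a contraction, with rate $\theta/\kappa$;
\item indices $n\ge N$ contribute at most $2\delta\kappa^N$;
\item only the finitely many charts with $n<N$ must vary continuously, and these can be trivialized near $f^n(x)$ by your continuous extension of the splitting.
\end{itemize}
The same weighting handles the derivative fibers. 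This is the paper's truncation idea, used for continuity rather than for a H\"older rate.

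\emph{Exact rate in (iii).} Make sure the slack runs the right way. The linear rate in the charts must be strictly below the rate you claim, so prove (iii) with any $\mu$ exceeding the adapted chart rate and then rename, as in Step~3 of Proposition~\ref{prop:stable_contraction}. Also, converting $\|v_n\|$ into $d(f^nx,f^ny)$ costs a norm-comparison factor, roughly $\sqrt{1+K^2}$ for a Lipschitz-$K$ graph over an orthogonal splitting. That factor is not $1+O(\delta)$ when $K=1$. Take the invariant Lipschitz class with $K=O(\delta)$ instead, which the cone estimate permits for small $\delta$; then the slack absorbs the factor. The paper's identification of $d$ with the max norm glosses over this same point.
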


The proof occupies the remainder of this section. We begin by establishing the setup for the graph transform, construct the stable manifold as the fixed point of the backward graph transform, and then establish the regularity and quantitative properties. We record the proof after the supporting lemmas are in place.

\subsection{Local Coordinates and the Graph Transform}

Fix a point $x_0 \in \Lambda$. Using the exponential map, we identify a neighborhood of $x_0$ in $M$ with a neighborhood of the origin in $T_{x_0}M \cong \mathbb{R}^d$. Under this identification, the splitting $T_{x_0}M = E^s_{x_0} \oplus E^u_{x_0}$ corresponds to $\mathbb{R}^d = \mathbb{R}^{k_s} \times \mathbb{R}^{k_u}$ where $k_s = \dim E^s$ and $k_u = \dim E^u$.

\begin{notation}
We write points in local coordinates as $(s, u) \in \mathbb{R}^{k_s} \times \mathbb{R}^{k_u}$. The norm is $\|(s, u)\| = \max\{\|s\|, \|u\|\}$ where we use the Euclidean norms on factors.
\end{notation}

In these coordinates, the diffeomorphism $f$ takes the form
\begin{equation}
f(s, u) = (A s + a(s, u), B u + b(s, u))
\end{equation}
where $A : \mathbb{R}^{k_s} \to \mathbb{R}^{k_s}$ and $B : \mathbb{R}^{k_u} \to \mathbb{R}^{k_u}$ are the linearizations satisfying $\|A\| \leq \lambda$ and $\|B^{-1}\| \leq \lambda$, and the nonlinear terms $a, b$ satisfy $a(0, 0) = 0$, $b(0, 0) = 0$, $Da(0, 0) = 0$, $Db(0, 0) = 0$.

\begin{definition}[Graph Space]
For $\delta > 0$ and $K > 0$, define the space of graphs
\begin{equation}
\mathcal{G}(\delta, K) = \{\phi : B_\delta^s \to B_\delta^u : \phi(0) = 0, \|\phi\|_{\mathrm{Lip}} \leq K\}
\end{equation}
where $B_\delta^s = \{s \in \mathbb{R}^{k_s} : \|s\| \leq \delta\}$ and $B_\delta^u = \{u \in \mathbb{R}^{k_u} : \|u\| \leq \delta\}$, and the Lipschitz norm is
\begin{equation}
\|\phi\|_{\mathrm{Lip}} = \sup_{s \neq s'} \frac{\|\phi(s) - \phi(s')\|}{\|s - s'\|}.
\end{equation}
\end{definition}

The graph of $\phi \in \mathcal{G}(\delta, K)$ is the set $\mathrm{graph}(\phi) = \{(s, \phi(s)) : s \in B_\delta^s\}$.

\begin{definition}[Graph Transform]
For $\phi \in \mathcal{G}(\delta, K)$, define $\Gamma(\phi)$ as follows: if $(s', u') = f(s, \phi(s))$, then $\Gamma(\phi)(s') = u'$ when this is well-defined.
\end{definition}

The graph transform takes a graph over the stable direction and produces a new graph by applying $f$ to the original graph and projecting. As we show below, the forward graph transform $\Gamma$ is not itself contractive; instead, the \emph{backward} graph transform $\Gamma^{-1}$ provides the required contraction.

\begin{lemma}[Graph Transform Well-Defined]\label{lem:graph_transform_welldefined}
There exist $\delta_0 > 0$ and $K_0 > 0$ such that for all $\delta \in (0, \delta_0]$ and $K \in (0, K_0]$, the graph transform $\Gamma : \mathcal{G}(\delta, K) \to \mathcal{G}(\delta, K)$ is well-defined.
\end{lemma}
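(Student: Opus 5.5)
The plan is to write $\Gamma(\phi)$ explicitly as a composition and check, for one pair $(\delta_0,K_0)$ chosen uniformly in $x_0\in\Lambda$, the four requirements of $\mathcal{G}(\delta,K)$: single-valuedness, $\Gamma(\phi)(0)=0$, values in $B^u_\delta$, and the Lipschitz bound. For $\phi\in\mathcal{G}(\delta,K)$ put
\[
h_\phi(s)=As+a(s,\phi(s)),\qquad g_\phi(s)=B\phi(s)+b(s,\phi(s)),
\]
so that $f(s,\phi(s))=(h_\phi(s),g_\phi(s))$ and $\Gamma(\phi)=g_\phi\circ h_\phi^{-1}$ wherever $h_\phi^{-1}$ makes sense. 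The constants must not depend on $\delta$ or $K$ beyond $\delta\le\delta_0$, $K\le K_0$.

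\emph{Step 1: uniform nonlinear estimates.} Since $a,b$ and their first derivatives vanish at $0$, the mean value theorem gives $\|Da\|,\|Db\|\le C_0\rho$ on the $\rho$-box, with $C_0=\max(\|D^2f\|_\infty,\|D^2f^{-1}\|_\infty)$ up to the distortion of $\exp_{x_0}$ and the constant $K$ of Theorem~\ref{thm:adapted_metric}. On $\mathrm{graph}(\phi)$ we have $\rho\le\delta$, so $a(\cdot,\phi(\cdot))$ and $b(\cdot,\phi(\cdot))$ are Lipschitz with constant at most $C_0\delta(1+K)\le C_0\delta_0(1+K_0)=:\eta$. Compactness of $\Lambda$ and the injectivity radius of $M$ make all these constants independent of $x_0$.

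\emph{Step 2: injectivity of $h_\phi$.} $A=Df_{x_0}|_{E^s}$ is invertible with $\|A^{-1}\|\le\|Df^{-1}\|_\infty=:m^{-1}$. Hence
\[
\|h_\phi(s)-h_\phi(s')\|\ge(m-\eta)\|s-s'\|,
\]
so $h_\phi$ is a bi-Lipschitz homeomorphism onto its image once $\eta<m/2$. Thus $\Gamma(\phi)$ is single-valued, and $h_\phi(0)=0=g_\phi(0)$ gives $\Gamma(\phi)(0)=0$. Since $h_\phi(B^s_\delta)\subset B^s_{(\lambda+\eta)\delta}$, the domain is a neighbourhood of $0$ strictly inside $B^s_\delta$. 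I would extend $\Gamma(\phi)$ to all of $B^s_\delta$ by a Kirszbraun (McShane, coordinatewise) extension composed with the radial retraction onto $B^u_\delta$. Neither operation increases the Lipschitz constant, and both leave the graph over the genuine image unchanged, which is all the later fixed-point argument uses. Membership in $B^u_\delta$ is then automatic.

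\emph{Step 3: the Lipschitz bound, the main obstacle.} The direct estimate is
\[
\mathrm{Lip}(\Gamma(\phi))\le\frac{\|B\|K+\eta}{m-\eta}.
\]
Requiring this to be $\le K$ for all $K\le K_0$ needs $\|B\|<m$, which a hyperbolic splitting does not give. Indeed, for the linear part the slope of a graph over $E^s$ is multiplied by roughly $\|B\|\,\|A^{-1}\|\ge\lambda^{-2}$; this is exactly why the paper calls $\Gamma$ expansive.

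What I would try first is to make the slope-expanding content harmless. I would set $\delta_0$ so that $\eta\le\frac12\min(m,1-\lambda)$, and aim only for a bound of the form $\mathrm{Lip}(\Gamma(\phi))\le\kappa K+\eta'$ with explicit $\kappa=\|B\|/(m-\eta)$. Then I would check whether the intended reading of ``well-defined'' is satisfied by Step 2 together with membership of $\Gamma(\phi)$ in some $\mathcal{G}(\delta,K')$.

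If the literal inclusion $\Gamma(\mathcal{G}(\delta,K))\subset\mathcal{G}(\delta,K)$ is really required, the only route I see is to shrink the domain. One would view $\Gamma(\phi)$ only on $B^s_{\delta'}$ with $\delta'$ small, and use the vanishing of $D\phi$-type terms near $0$. That needs $C^1$ information on $\phi$ that $\mathcal{G}(\delta,K)$ does not carry, so I expect this step to fail for general Lipschitz $\phi$. In that case the statement holds in the single-valued, Lipschitz sense of Steps 1 and 2, and cone invariance in the correct direction belongs to $\Gamma^{-1}$.
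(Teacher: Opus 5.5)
Your Step~3 is not a gap in your argument. It correctly diagnoses why the lemma fails, and it can be sharpened into an outright refutation.

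\emph{Counterexample.} Take the cat map $\begin{pmatrix}2&1\\1&1\end{pmatrix}$ on $\mathbb{T}^2$ at its fixed point. In orthonormal eigencoordinates $f(s,u)=(\lambda s,\lambda^{-1}u)$ with $\lambda=(3-\sqrt5)/2$ and $a=b=0$, and the flat metric is already adapted. For every $\delta>0$ and $K\in(0,1]$, the graph $\phi(s)=Ks$ lies in $\mathcal{G}(\delta,K)$. Yet $h_\phi(B^s_\delta)=B^s_{\lambda\delta}$, and on that set $\Gamma(\phi)(s')=\lambda^{-2}Ks'$. So $\Gamma(\phi)$ is neither defined on all of $B^s_\delta$ nor $K$-Lipschitz, and no choice of $\delta_0,K_0$ works. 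This $\phi$ is linear, hence $C^\infty$ with $D\phi(0)\neq 0$. So the fallback you float at the end (shrink the domain and use $C^1$ information near $0$) cannot rescue the inclusion either; you can drop the hedge ``I expect this step to fail''.

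\emph{Where the paper's proof breaks.} It fails exactly where you stall, at three points.
\begin{enumerate}
\item[(1)] It asserts that every $s'\in B^s_\delta$ has a preimage $s\in B^s_\delta$ with $\|s\|\le(1+C\delta)\lambda^{-1}\|s'\|\le\delta$. The last inequality fails once $\|s'\|$ exceeds roughly $\lambda\delta$; this is your observation $h_\phi(B^s_\delta)\subset B^s_{(\lambda+\eta)\delta}$.
\item[(2)] For the range and Lipschitz bounds it uses $\|B\|\le\lambda^{-1}$, and it replaces $\|A^{-1}\|^{-1}$ by $\lambda$ in a denominator. But $\|B^{-1}\|\le\lambda$ and $\|A\|\le\lambda$ give the reverse inequalities $\|B\|\ge\lambda^{-1}$ and $\|A^{-1}\|^{-1}\le\lambda$.
\item[(3)] Even after those substitutions, its closing claim $(\lambda^{-1}K+C\delta)/(\lambda-C\delta)\le K$ (``e.g.\ $K=1/2$'') is false for every $K>0$, since the left side is at least $\lambda^{-2}K$.
\end{enumerate}
Your bound $\mathrm{Lip}(\Gamma(\phi))\le(\|B\|K+\eta)/(m-\eta)$, with a slope factor of at least about $\lambda^{-2}$, is the honest computation. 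It also agrees with the paper's own later remark that $\Gamma$ is expansive. The true statement, and the one the stable manifold construction actually needs, is the backward invariance $\Gamma^{-1}(\mathcal{G}(\delta,K))\subset\mathcal{G}(\delta,K)$ of Lemma~\ref{lem:backward_welldefined}. There $s'=As+a(s,u)$ stays inside $B^s_\delta$, and the slope factor is about $\|B^{-1}\|\,\|A\|\le\lambda^2$.

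\emph{Two small corrections to your weakened version.}
\begin{enumerate}
\item[(a)] A coordinatewise McShane extension can raise the Euclidean Lipschitz constant on $\mathbb{R}^{k_u}$ by a factor $\sqrt{k_u}$, so use Kirszbraun proper.
\item[(b)] The radial retraction onto $B^u_\delta$ leaves the genuine values unchanged only if $\|B\|K+\eta\le 1$. That requires $K_0$ of order $\|Df\|_\infty^{-1}$; it is not automatic.
\end{enumerate}
With these fixes, your Steps~1--2 prove only that $\Gamma(\phi)$ is single-valued and lies in some $\mathcal{G}(\delta,K')$ with $K'>K$. That is the most that can be salvaged from the lemma.
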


\begin{proof}
For $\phi \in \mathcal{G}(\delta, K)$ and $(s, u) = (s, \phi(s))$ on the graph, we have
\begin{align}
s' &= As + a(s, \phi(s)), \\
u' &= B\phi(s) + b(s, \phi(s)).
\end{align}

We need to show: (1) for each $s' \in B^s_\delta$, there exists a unique $s \in B^s_\delta$ with $s' = As + a(s, \phi(s))$; (2) the resulting $u' = \Gamma(\phi)(s')$ satisfies $\|u'\| \leq \delta$ and $\|\Gamma(\phi)\|_{\mathrm{Lip}} \leq K$.

For (1), the map $s \mapsto As + a(s, \phi(s))$ has derivative $A + D_s a + D_u a \cdot D\phi$, which for small $\delta$ is close to $A$. Since $\|A\| \leq \lambda < 1$, by the Inverse Function Theorem, for each $s' \in B^s_{\lambda\delta}$, there is a unique $s \in B^s_\delta$ mapping to $s'$. For $\|s'\| \leq \delta$ when $\delta$ is small enough, this $s$ satisfies $\|s\| \leq (1 + C\delta)\lambda^{-1}\|s'\| \leq \delta$ for small $\delta$.

For (2), we have $\|u'\| \leq \|B\| \cdot \|\phi(s)\| + \|b(s, \phi(s))\| \leq \lambda^{-1} K\delta + C\delta^2 \leq \delta$ for small $\delta$ and appropriate $K$.

The Lipschitz estimate follows from computing
\begin{equation}
\frac{\|u'_1 - u'_2\|}{\|s'_1 - s'_2\|} \leq \frac{\|B\| \cdot K + C\delta}{\|A^{-1}\|^{-1} - C\delta} \leq \frac{\lambda^{-1}K + C\delta}{\lambda - C\delta}
\end{equation}
which is at most $K$ when $K$ is chosen appropriately (e.g., $K = 1/2$) and $\delta$ is small.
\end{proof}

\subsection{The Backward Graph Transform and Contraction}

The forward graph transform $\Gamma$ defined above maps graphs over $E^s$ forward by $f$. Since $\|B\| \geq \lambda^{-1} > 1$, the operator $\Gamma$ \emph{expands} differences between graphs and is therefore unsuitable for a contraction argument. To construct the stable manifold, we require the \emph{backward} graph transform, which exploits the contraction $\|B^{-1}\| \leq \lambda < 1$ in the unstable direction under $f^{-1}$.

\begin{definition}[Backward Graph Transform]\label{def:backward_graph_transform}
For $\phi \in \mathcal{G}(\delta, K)$, define $\Gamma^{-1}(\phi)$ as follows. For each $s \in B^s_\delta$, let $s' = As + a(s, u)$ where $u$ is the unique solution of
\begin{equation}\label{eq:backward_implicit}
\phi(s') = Bu + b(s, u), \quad s' = As + a(s, u).
\end{equation}
Then $\Gamma^{-1}(\phi)(s) = u$, so that $f(s, u) = (s', \phi(s'))$, i.e., $f(\mathrm{graph}(\Gamma^{-1}(\phi))) \supset \mathrm{graph}(\phi)$ locally.
\end{definition}

Equation \eqref{eq:backward_implicit} defines $u$ implicitly via $u = B^{-1}[\phi(s') - b(s, u)]$. Since $\|B^{-1}\| \leq \lambda$ and $\|D_u b\| \leq C_1\delta$, the map $u \mapsto B^{-1}[\phi(As + a(s, u)) - b(s, u)]$ is a contraction for small $\delta$, so the implicit equation has a unique solution by the Banach Fixed Point Theorem.

\begin{lemma}[Backward Graph Transform Well-Defined]\label{lem:backward_welldefined}
For $\delta$ sufficiently small, $\Gamma^{-1} : \mathcal{G}(\delta, K) \to \mathcal{G}(\delta, K)$ is well-defined.
\end{lemma}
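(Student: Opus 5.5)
To prove Lemma~\ref{lem:backward_welldefined}, I will check three things for each $\phi\in\mathcal{G}(\delta,K)$. First, the implicit equation \eqref{eq:backward_implicit} has a unique solution $u=u(s)\in B^u_\delta$ for every $s\in B^s_\delta$. Second, $\Gamma^{-1}(\phi)(0)=0$. Third, $\|\Gamma^{-1}(\phi)\|_{\mathrm{Lip}}\le K$.

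Throughout I use $C_1$ for a bound on the first derivatives of $a,b$ on $B^s_\delta\times B^u_\delta$. Since $Da(0,0)=Db(0,0)=0$ and $f\in C^{1+}$ (in fact $C^2$, as $C_0=\max(\|D^2f\|_\infty,\|D^2f^{-1}\|_\infty)$ is used later), we may take $\|Da\|,\|Db\|\le C_1\delta$ there. Integrating from the origin then gives $\|a(s,u)\|,\|b(s,u)\|\le C_1\delta\max(\|s\|,\|u\|)$.

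\emph{Step 1: existence and uniqueness.} Fix $s$ and consider the map
\[
T_s(u)=B^{-1}\bigl[\phi(As+a(s,u))-b(s,u)\bigr]
\]
on $B^u_\delta$. Before $\phi$ can be evaluated, $As+a(s,u)$ must lie in $B^s_\delta$; this holds because $\|As+a(s,u)\|\le\lambda\delta+C_1\delta^2\le\delta$ once $C_1\delta\le 1-\lambda$. $T_s$ maps $B^u_\delta$ into itself because
\[
\|T_s(u)\|\le\lambda\bigl(K\|As+a\|+C_1\delta^2\bigr)\le\lambda\bigl(K(\lambda+C_1\delta)+C_1\delta\bigr)\delta\le\delta
\]
for $K\le 1$ and $\delta$ small. $T_s$ is a contraction because
\[
\operatorname{Lip}(T_s)\le\lambda\bigl(K\,C_1\delta+C_1\delta\bigr)=\lambda C_1\delta(K+1)<1 .
\]
The Banach Fixed Point Theorem then gives a unique fixed point $u(s)$. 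At $s=0$, the value $u=0$ solves the equation because $a(0,0)=b(0,0)=0$ and $\phi(0)=0$. By uniqueness, $\Gamma^{-1}(\phi)(0)=0$.

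\emph{Step 2: the Lipschitz bound.} This is the main point. Take $s_1,s_2$ with solutions $u_1,u_2$, and set $\Delta s=\|s_1-s_2\|$ and $\Delta u=\|u_1-u_2\|$. Subtracting the two fixed-point equations gives
\[
\Delta u\le\lambda\Bigl[K\bigl(\lambda\Delta s+C_1\delta(\Delta s+\Delta u)\bigr)+C_1\delta(\Delta s+\Delta u)\Bigr].
\]
Rearranging,
\[
\Delta u\le\frac{\lambda\bigl(K\lambda+C_1\delta(K+1)\bigr)}{1-\lambda C_1\delta(K+1)}\,\Delta s .
\]
As $\delta\to0$ the coefficient tends to $\lambda^2K<K$, so it is $\le K$ for $\delta\le\delta_0(\lambda,K,C_1)$. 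An explicit sufficient condition is $C_1\delta(K+1)(\lambda+\lambda K)\le K(1-\lambda^2)$.

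The main obstacle is the domain issue in Step~1: the argument $s'$ must remain in $B^s_\delta$ so that $\phi(s')$ is defined. Here this is automatic, since $\|A\|\le\lambda<1$ shrinks the stable coordinate; I will make the constraint on $\delta$ explicit as $\delta\le(1-\lambda)/C_1$. The remaining conditions on $\delta$ are then combined into a single explicit $\delta_0$. This $\delta_0$ is also consistent with the contraction rate $\theta=\lambda/(1-\lambda C_1\delta(K+1))$ announced in the introduction, whose denominator is exactly the quantity appearing in Step~2.
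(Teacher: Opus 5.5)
Your proposal is correct and follows the paper's route: you solve the implicit equation for each fixed $s$ by the same Banach fixed-point argument, with $F(u)=B^{-1}[\phi(As+a(s,u))-b(s,u)]$ on $B^u_\delta$ and contraction ratio $\lambda C_1\delta(K+1)$, and then prove a Lipschitz bound. At that last step the paper only invokes the implicit function theorem for $\|D_s\Gamma^{-1}(\phi)\|\le K$, which as stated needs $\phi\in C^1$, something elements of $\mathcal{G}(\delta,K)$ need not satisfy; your direct difference estimate (coefficient tending to $\lambda^2K$) avoids this, and your checks that $s'\in B^s_\delta$ and $\Gamma^{-1}(\phi)(0)=0$ supply steps the paper leaves implicit.
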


\begin{proof}
Let $\phi \in \mathcal{G}(\delta, K)$ and $s \in B^s_\delta$. Define $F(u) = B^{-1}[\phi(As + a(s, u)) - b(s, u)]$. We verify that $F : B^u_\delta \to B^u_\delta$ is a contraction.

\textbf{Mapping into $B^u_\delta$:} We have $\|F(u)\| \leq \|B^{-1}\|(\|\phi(s')\| + \|b(s,u)\|) \leq \lambda(K\delta + C_1\delta^2) \leq \delta$ for small $\delta$.

\textbf{Contraction:} For $u_1, u_2 \in B^u_\delta$,
\begin{align}
\|F(u_1) - F(u_2)\| &\leq \|B^{-1}\|\bigl(\|\phi(s'_1) - \phi(s'_2)\| + \|b(s, u_1) - b(s, u_2)\|\bigr) \\
&\leq \lambda\bigl(K\|s'_1 - s'_2\| + C_1\delta\|u_1 - u_2\|\bigr).
\end{align}
Since $\|s'_1 - s'_2\| = \|a(s, u_1) - a(s, u_2)\| \leq C_1\delta\|u_1 - u_2\|$, we obtain
\begin{equation}
\|F(u_1) - F(u_2)\| \leq \lambda C_1\delta(K + 1)\|u_1 - u_2\|.
\end{equation}
For $\delta < 1/(\lambda C_1(K+1))$, $F$ is a strict contraction, so the unique fixed point $u = \Gamma^{-1}(\phi)(s)$ exists.

\textbf{Lipschitz bound:} The implicit function theorem applied to $G(s, u) = u - B^{-1}[\phi(As + a(s, u)) - b(s, u)]$ gives $\|D_s(\Gamma^{-1}(\phi))\| \leq K$ for small $\delta$, by a computation analogous to the proof of Lemma~\ref{lem:graph_transform_welldefined}.
\end{proof}

\begin{proposition}[Backward Graph Transform Contraction]\label{prop:graph_contraction}
There exists $\theta \in (0, 1)$ such that for $\phi, \psi \in \mathcal{G}(\delta, K)$,
\begin{equation}
\|\Gamma^{-1}(\phi) - \Gamma^{-1}(\psi)\|_{C^0} \leq \theta \|\phi - \psi\|_{C^0}
\end{equation}
where $\|\phi - \psi\|_{C^0} = \sup_{s \in B^s_\delta} \|\phi(s) - \psi(s)\|$.
\end{proposition}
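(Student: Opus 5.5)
The plan is to compare the two fixed-point equations defining $u_\phi = \Gamma^{-1}(\phi)(s)$ and $u_\psi = \Gamma^{-1}(\psi)(s)$ pointwise in $s \in B^s_\delta$, exactly as in the proof of Lemma~\ref{lem:backward_welldefined}. By Definition~\ref{def:backward_graph_transform}, $u_\phi = B^{-1}[\phi(s'_\phi) - b(s,u_\phi)]$ with $s'_\phi = As + a(s,u_\phi)$, and similarly for $\psi$. Subtracting these gives
\begin{equation*}
u_\phi - u_\psi = B^{-1}\bigl[\phi(s'_\phi) - \psi(s'_\psi) - \bigl(b(s,u_\phi) - b(s,u_\psi)\bigr)\bigr].
\end{equation*}

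We split $\phi(s'_\phi) - \psi(s'_\psi) = [\phi(s'_\phi) - \phi(s'_\psi)] + [\phi(s'_\psi) - \psi(s'_\psi)]$. The second bracket is bounded by $\|\phi-\psi\|_{C^0}$, which is legitimate provided $s'_\psi \in B^s_\delta$. That membership holds because $\|s'_\psi\| \le \lambda\delta + C_1\delta^2 \le \delta$ for small $\delta$. The first bracket is at most $K\|s'_\phi - s'_\psi\| = K\|a(s,u_\phi)-a(s,u_\psi)\| \le KC_1\delta\|u_\phi-u_\psi\|$, using the Lipschitz bound on $\phi$ and the bound $\|D_u a\| \le C_1\delta$ on the ball (valid since $Da(0,0)=0$ and $C_0$ bounds second derivatives). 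The $b$-term is at most $C_1\delta\|u_\phi-u_\psi\|$ for the same reason.

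Combining these with $\|B^{-1}\|\le\lambda$ yields
\begin{equation*}
\|u_\phi - u_\psi\| \le \lambda\|\phi-\psi\|_{C^0} + \lambda C_1\delta(K+1)\|u_\phi-u_\psi\|.
\end{equation*}
Absorbing the last term into the left side gives
\begin{equation*}
\|u_\phi-u_\psi\| \le \frac{\lambda}{1-\lambda C_1\delta(K+1)}\|\phi-\psi\|_{C^0}.
\end{equation*}
Taking the supremum over $s$ gives the claim with $\theta = \lambda/(1-\lambda C_1\delta(K+1))$, which is the rate quoted in the introduction. Finally, $\theta<1$ exactly when $\delta < (1-\lambda)/(\lambda C_1(K+1))$, so we shrink $\delta$ accordingly. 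This condition is compatible with the earlier smallness conditions, since $(1-\lambda)/(\lambda C_1(K+1)) < 1/(\lambda C_1(K+1))$.

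I expect the main obstacle to be domain bookkeeping rather than the estimate itself. One must check that $s'_\phi, s'_\psi$ stay in $B^s_\delta$ so that $\phi,\psi$ can be evaluated there. One must also check that the constant $C_1$ controlling $D_u a, D_u b$ on $B^s_\delta\times B^u_\delta$ is genuinely linear in $\delta$. I would obtain the latter from the mean value theorem applied to $Da$, $Db$ vanishing at the origin, together with the $C^2$ bound. Everything else is a direct transcription of the contraction estimate already carried out in Lemma~\ref{lem:backward_welldefined}.
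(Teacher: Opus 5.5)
Your proposal is correct and follows the paper's proof essentially line by line. It uses the same pointwise subtraction of the two implicit equations, the same absorption of the $\lambda C_1\delta(K+1)\|u_\phi-u_\psi\|$ term, and arrives at the same rate $\theta=\lambda/(1-\lambda C_1\delta(K+1))$ under $\delta<(1-\lambda)/(\lambda C_1(K+1))$. The only differences are cosmetic: you insert $\phi(s'_\psi)$ and use the Lipschitz bound of $\phi$ where the paper inserts $\psi(s'_1)$ and uses that of $\psi$, and your explicit check that $s'_\psi\in B^s_\delta$ fills in a point the paper leaves implicit.
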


\begin{proof}
Fix $s \in B^s_\delta$. Let $u_1 = \Gamma^{-1}(\phi)(s)$ and $u_2 = \Gamma^{-1}(\psi)(s)$, and let $s'_1 = As + a(s, u_1)$, $s'_2 = As + a(s, u_2)$ be the corresponding forward stable coordinates. From the defining equations \eqref{eq:backward_implicit}:
\begin{align}
u_1 &= B^{-1}[\phi(s'_1) - b(s, u_1)], \\
u_2 &= B^{-1}[\psi(s'_2) - b(s, u_2)].
\end{align}
Subtracting:
\begin{align}
u_1 - u_2 &= B^{-1}\bigl[\phi(s'_1) - \psi(s'_2)\bigr] - B^{-1}\bigl[b(s, u_1) - b(s, u_2)\bigr].
\end{align}
We estimate each term. For the nonlinear remainder:
\begin{equation}
\|B^{-1}[b(s, u_1) - b(s, u_2)]\| \leq \lambda \cdot C_1\delta \cdot \|u_1 - u_2\|.
\end{equation}
For the main term, decompose:
\begin{align}
\|\phi(s'_1) - \psi(s'_2)\| &\leq \|\phi(s'_1) - \psi(s'_1)\| + \|\psi(s'_1) - \psi(s'_2)\| \\
&\leq \|\phi - \psi\|_{C^0} + K\|s'_1 - s'_2\|.
\end{align}
Since $\|s'_1 - s'_2\| = \|a(s, u_1) - a(s, u_2)\| \leq C_1\delta\|u_1 - u_2\|$, we have
\begin{equation}
\|\phi(s'_1) - \psi(s'_2)\| \leq \|\phi - \psi\|_{C^0} + KC_1\delta\|u_1 - u_2\|.
\end{equation}
Combining all estimates:
\begin{equation}
\|u_1 - u_2\| \leq \lambda\|\phi - \psi\|_{C^0} + \lambda C_1\delta(K + 1)\|u_1 - u_2\|.
\end{equation}
Solving for $\|u_1 - u_2\|$:
\begin{equation}\label{eq:backward_contraction}
\|u_1 - u_2\| \leq \frac{\lambda}{1 - \lambda C_1\delta(K+1)}\|\phi - \psi\|_{C^0}.
\end{equation}
For $\delta$ small enough that $\lambda C_1\delta(K+1) < 1 - \lambda$ (which holds when $\delta < (1-\lambda)/(\lambda C_1(K+1))$), the contraction factor satisfies
\begin{equation}
\theta = \frac{\lambda}{1 - \lambda C_1\delta(K+1)} < 1.
\end{equation}
Taking the supremum over $s \in B^s_\delta$ yields $\|\Gamma^{-1}(\phi) - \Gamma^{-1}(\psi)\|_{C^0} \leq \theta\|\phi - \psi\|_{C^0}$.
\end{proof}

We now establish the convergence of the graph transform:

\begin{theorem}[Graph Transform Convergence]\label{thm:graph_convergence}
Starting from any initial graph $\phi_0 \in \mathcal{G}(\delta, K)$, the sequence $\phi_n = (\Gamma^{-1})^n(\phi_0)$ converges uniformly to a unique fixed point $\phi_\infty \in \mathcal{G}(\delta, K)$ satisfying $\Gamma^{-1}(\phi_\infty) = \phi_\infty$. The graph of $\phi_\infty$ is the local stable manifold $W^s_\delta(x_0)$.
\end{theorem}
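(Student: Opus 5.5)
The plan has two parts: a Banach fixed-point argument for existence and uniqueness, and a characterization of the fixed graph as the set of points whose forward orbits stay $\delta$-close. For the first part, equip $\mathcal{G}(\delta,K)$ with the $C^0$ metric $\|\phi-\psi\|_{C^0}$. This space is complete, because a uniform limit of maps $B^s_\delta\to B^u_\delta$ with $\phi(0)=0$ and Lipschitz constant at most $K$ again has these properties: the Lipschitz bound passes to pointwise limits and $B^u_\delta$ is closed. By Lemma~\ref{lem:backward_welldefined}, $\Gamma^{-1}$ maps $\mathcal{G}(\delta,K)$ into itself. By Proposition~\ref{prop:graph_contraction}, it is a $\theta$-contraction with $\theta=\lambda/(1-\lambda C_1\delta(K+1))<1$. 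The Banach Fixed Point Theorem then gives a unique fixed point $\phi_\infty$, and $\|\phi_n-\phi_\infty\|_{C^0}\le \theta^n\|\phi_0-\phi_\infty\|_{C^0}\le 2\delta\theta^n$, which is uniform convergence at a geometric rate.

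For the second part, I would identify $\mathrm{graph}(\phi_\infty)$ with the set $\{p : \|f^n(p)\|\le\delta \text{ for all } n\ge 0\}$ in local coordinates. Since the coordinate norm is comparable to $d$, this is $W^s_\delta(x_0)$ up to adjusting $\delta$ by the comparison constant. In the nonautonomous case the chart at $x_0$ is carried to charts at $f^n(x_0)$, so $\Gamma^{-1}$ should be read as a map from graphs at $f(x_0)$ to graphs at $x_0$. I would phrase this as a fixed point in the product space $\prod_n \mathcal{G}_{f^n(x_0)}(\delta,K)$ with the sup metric, where the same contraction estimate applies uniformly in $n$ because the constants $C_1$ and $\lambda$ are uniform over $\Lambda$.

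\emph{Graph $\subset$ stable set.} The invariance $f(\mathrm{graph}\,\phi_\infty)\subset\mathrm{graph}\,\phi_\infty$, which follows from the defining relation $f(s,u)=(s',\phi(s'))$, shows that the orbit of a point on the graph stays on the graph. Its $s$-coordinate satisfies $\|s'\|\le(\lambda+C_1\delta(1+K))\|s\|<\|s\|$. Hence the orbit stays in the $\delta$-box and in fact contracts.

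\emph{Stable set $\subset$ graph.} This is the main obstacle. Let $p=(s,u)$ have its whole forward orbit in the box, and write $p_n=(s_n,u_n)=f^n(p)$. Define $w_n=u_n-\phi_\infty(s_n)$. I will show that the unstable deviation $w_n$ expands. Write $f(s,u)=(s_1,u_1)$ and $f(s,\phi_\infty(s))=(\tilde s_1,\phi_\infty(\tilde s_1))$. Then $\|s_1-\tilde s_1\|\le C_1\delta\|w_0\|$, and
\begin{equation*}
\|u_1-\phi_\infty(s_1)\|\ \ge\ \|B w_0\|-C_1\delta\|w_0\|-K C_1\delta\|w_0\|\ \ge\ (\lambda^{-1}-C_1\delta(1+K))\|w_0\|.
\end{equation*}
Iterating gives $\|w_n\|\ge\kappa^n\|w_0\|$ with $\kappa>1$ for $\delta$ small. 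On the other hand $\|w_n\|\le 2\delta$ for all $n$, so $w_0=0$ and $p$ lies on the graph. This cone-type expansion estimate is the step that requires care with the constants. It yields the smallness condition $C_1\delta(1+K)<\lambda^{-1}-1$, which is compatible with the condition already imposed in Proposition~\ref{prop:graph_contraction}.

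Finally, uniqueness of the fixed point shows that the limit is independent of $\phi_0$, which completes the identification of $\mathrm{graph}(\phi_\infty)$ with $W^s_\delta(x_0)$.
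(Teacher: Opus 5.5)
Your proof is correct. The existence half is the same as the paper's: completeness of $(\mathcal{G}(\delta,K),\|\cdot\|_{C^0})$, the self-map property from Lemma~\ref{lem:backward_welldefined}, the contraction from Proposition~\ref{prop:graph_contraction}, and Banach's theorem. So is the inclusion $\mathrm{graph}(\phi_\infty)\subset W^s_\delta(x_0)$. You state the invariance in the right direction, $f(\mathrm{graph}\,\phi_\infty)\subset\mathrm{graph}\,\phi_\infty$; the paper writes $\supset$, which is not what forward invariance needs.

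The real difference is in the reverse inclusion. The paper passes to the series $u=-\sum_{k\ge0}B^{-k-1}b(s_k,u_k)$ and asserts that $u$ is determined by $\{s_k\}$, ``which is determined by $s$.'' Since $s_k$ depends on $u$ through $a(s,u)$, that step is circular as written. Closing it would require a Lyapunov--Perron contraction on bounded forward sequences $(s_k,u_k)_{k\ge0}$ with $s_0$ prescribed. Once repaired, that route would yield a representation of $\phi_\infty$ that does not presuppose the graph.

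You instead compare an arbitrary orbit in the box with the invariant graph already constructed. The transverse deviation $w_n=u_n-\phi_\infty(s_n)$ grows by the factor $\kappa=\lambda^{-1}-C_1\delta(1+K)$, and boundedness forces $w_0=0$. This is shorter and fully rigorous given the fixed point. It also gives an escape time: an orbit with $w_0\neq0$ stays in the box for at most $\log(2\delta/\|w_0\|)/\log\kappa$ steps. Your smallness condition $C_1\delta(1+K)<\lambda^{-1}-1$ is not just compatible with the condition $\lambda C_1\delta(K+1)<1-\lambda$ of Proposition~\ref{prop:graph_contraction}; the two are identical, so no extra shrinking of $\delta$ is needed.

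Your caveats on the nonautonomous charts along the orbit of $x_0$, and on box norms versus Riemannian balls, are also points the paper suppresses. The equality $\mathrm{graph}(\phi_\infty)=W^s_\delta(x_0)$ holds exactly only if $W^s_\delta$ is measured in the chart box norms along the orbit, which is the paper's implicit convention.
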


\begin{proof}
By Proposition~\ref{prop:graph_contraction}, $\Gamma^{-1}$ is a contraction on the complete metric space $(\mathcal{G}(\delta, K), \|\cdot\|_{C^0})$ with contraction factor $\theta < 1$. The Banach Fixed Point Theorem gives a unique fixed point $\phi_\infty$ with $\|\phi_n - \phi_\infty\|_{C^0} \leq \theta^n \|\phi_0 - \phi_\infty\|_{C^0}$.

To verify that $\mathrm{graph}(\phi_\infty) = W^s_\delta(x_0)$: the fixed point condition $\Gamma^{-1}(\phi_\infty) = \phi_\infty$ means $f(\mathrm{graph}(\phi_\infty)) \supset \mathrm{graph}(\phi_\infty)$, so forward iterates of points on the graph remain on the graph, hence remain in $B_\delta$. Thus $\mathrm{graph}(\phi_\infty) \subset W^s_\delta(x_0)$. Conversely, let $(s, u) \in W^s_\delta(x_0)$, so $f^n(s, u) = (s_n, u_n) \in B_\delta$ for all $n \geq 0$. We show $u = \phi_\infty(s)$. Since $u_n = B^n u + \sum_{k=0}^{n-1} B^{n-1-k}b(s_k, u_k)$, applying $B^{-n}$ gives $u = B^{-n}u_n - \sum_{k=0}^{n-1}B^{-k-1}b(s_k, u_k)$. Since $\|B^{-1}\| \leq \lambda$, the first term satisfies $\|B^{-n}u_n\| \leq \lambda^n\delta \to 0$. The series $u = -\sum_{k=0}^{\infty}B^{-k-1}b(s_k, u_k)$ converges absolutely (each term bounded by $\lambda^{k+1}C_1\delta^2$), so $u$ is uniquely determined by the forward orbit sequence $\{s_k\}_{k \geq 0}$, which is determined by $s$. Since $\phi_\infty$ is the unique $\Gamma^{-1}$-fixed point satisfying the same functional relation, $u = \phi_\infty(s)$.
\end{proof}

\subsection{Regularity of Stable Manifolds}

The backward graph transform yields a Lipschitz fixed point; in this subsection we upgrade this to $C^r$ regularity by applying the fiber contraction principle to the derivatives of the graph map. This result establishes item~(i) of Main Theorem~\ref{thm:stable_manifold}.

\begin{theorem}[$C^r$ Regularity]\label{thm:manifold_regularity}
If $f$ is $C^r$ with $r \geq 1$, then the local stable manifold $W^s_\varepsilon(x)$ is a $C^r$ embedded submanifold for each $x \in \Lambda$.
\end{theorem}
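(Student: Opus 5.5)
The plan is to upgrade the Lipschitz fixed point $\phi_\infty$ of Theorem~\ref{thm:graph_convergence} to a $C^r$ map by the fiber contraction principle (Hirsch--Pugh), working inductively in the order $j = 1, \dots, r$. In the local chart at $x_0$, the fixed-point equation is
\begin{equation*}
\phi_\infty(s) = B^{-1}\bigl[\phi_\infty(g(s)) - b(s,\phi_\infty(s))\bigr], \qquad g(s) = As + a(s,\phi_\infty(s)),
\end{equation*}
where $g$ maps $B^s_\delta$ into $B^s_{\lambda(1+C\delta)\delta} \subset B^s_\delta$. I will treat the pair (graph, candidate derivative) as a point of a bundle: base $\mathcal{G}(\delta,K)$ with the $C^0$ metric, and fiber the bounded continuous sections $H : B^s_\delta \to L(\mathbb{R}^{k_s},\mathbb{R}^{k_u})$ with $\|H\|\le K$. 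Formally differentiating the fixed-point equation defines the fiber map
\begin{equation*}
\Psi_1(\phi,H)(s) = B^{-1}\bigl[H(g_\phi(s))\,(A + D_sa + D_ua\,H(s)) - D_sb - D_ub\,H(s)\bigr],
\end{equation*}
where the derivatives of $a,b$ are evaluated at $(s,\phi(s))$. Combined with $\Gamma^{-1}$ on the base, this gives a bundle map $(\phi,H) \mapsto (\Gamma^{-1}\phi, \Psi_1(\phi,H))$.

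\textbf{Step 1: the bundle map for $j=1$.}
\begin{itemize}
\item Cone invariance: as in Lemma~\ref{lem:backward_welldefined}, $\|H\|\le K$ is preserved once $\delta$ is small.
\item Fiber contraction: the Lipschitz constant of $\Psi_1(\phi,\cdot)$ in the sup norm is at most $\|B^{-1}\|\,\|A\|\,(1+O(\delta)) \le \lambda^2(1+O(\delta)) < 1$.
\item Continuity in the base: $\Psi_1$ depends continuously on $\phi$ in $C^0$, because $Da$ and $Db$ are uniformly continuous.
\end{itemize}
The base map is a contraction by Proposition~\ref{prop:graph_contraction}. The fiber contraction theorem then gives a globally attracting fixed point $(\phi_\infty, H_\infty)$. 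Starting from $\phi_0 = 0$, $H_0 = 0$, each iterate satisfies $H_n = D\phi_n$, since $\Psi_1$ is exactly the chain rule applied to $\Gamma^{-1}$ on $C^1$ graphs; this is an inductive check via the implicit function theorem. So $\phi_n \to \phi_\infty$ uniformly and $D\phi_n \to H_\infty$ uniformly. A uniform limit of $C^1$ functions whose derivatives converge uniformly is $C^1$ with $D\phi_\infty = H_\infty$.

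\textbf{Step 2: higher orders.} For $2 \le j \le r$, assume $\phi_\infty \in C^{j-1}$ with $D^i\phi_n \to D^i\phi_\infty$ uniformly for $i<j$. Differentiate the fixed-point equation $j$ times using Fa\`a di Bruno. The only term containing the top derivative is
\begin{equation*}
B^{-1} D^j\phi(g(s))\bigl(Dg(s)^{\otimes j}\bigr),
\end{equation*}
plus a term $B^{-1}D_ub\,D^j\phi$ of size $O(\delta)$. All remaining terms involve $D^i\phi$ for $i<j$ and derivatives of $a,b$ up to order $j$, which exist because $f\in C^r$. This defines an affine fiber map $\Psi_j$ on $j$-multilinear-valued sections, with Lipschitz constant at most $\lambda\cdot\lambda^j(1+O(\delta)) = \lambda^{j+1}(1+O(\delta))$. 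The same fiber contraction argument on the enlarged base $(\phi, D\phi, \dots, D^{j-1}\phi)$ then yields $D^j\phi_n \to D^j\phi_\infty$ uniformly.

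\textbf{Step 3: conclusion.} The graph of a $C^r$ map $\phi_\infty$ with $\phi_\infty(0)=0$ is an embedded $C^r$ disk. Transporting by the $C^\infty$ exponential chart gives the statement for $W^s_\varepsilon(x)$ with $\varepsilon \le \delta$ (comparing $\varepsilon$-balls in the chart norm). Applying the same argument at each $x\in\Lambda$, with $\delta$ uniform by compactness and continuity of the splitting, proves the theorem.

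\textbf{Expected main obstacle.} The delicate point is continuity of the fiber maps in the base variable, together with the identification $H_n = D^j\phi_n$. When $f$ is only $C^r$, the $r$-th derivatives of $a,b$ are merely continuous, so $\Psi_r$ is continuous but not Lipschitz in $\phi$. This is why I will use the fiber contraction theorem, which needs only continuity in the base, rather than a single global contraction on $C^r$ graphs. A second subtlety is that $Dg$ is close to $A$ only up to $O(\delta)$, so $\delta$ must be shrunk once for all $j\le r$; this is harmless because $r$ is finite.
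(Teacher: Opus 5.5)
Your strategy is the paper's: fiber contraction applied inductively to $D^j\phi$ with fiber ratio $\lambda^{j+1}(1+O(\delta))$, identification of the fiber iterates with $D^j\phi_n$ by starting from a smooth graph, and the closure lemma. Where you differ, you are more careful than the paper. You use the same fiber-contraction-plus-closure mechanism already at $j=1$, whereas the paper jumps from ``$D\phi\mapsto D(\Gamma^{-1}\phi)$ is a contraction'' straight to ``$\phi_\infty\in C^1$''. You also keep the $C^0$ metric on the base of the jet tower, and your inductive hypothesis includes $D^i\phi_n\to D^i\phi_\infty$ for $i<j$, which is exactly what the fiber contraction principle consumes. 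The paper instead asserts that $\Gamma^{-1}$ contracts in the $C^{r-1}$ metric ``by the inductive hypothesis''. That is never shown, and it would need a Lipschitz bound on $D^{r-1}\phi$ that $\mathcal{G}^{r-1}(\delta,K)$ does not provide.

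One step, however, fails as written: the identification $H_n=D\phi_n$. Your $\Psi_1$ is the chain rule for the \emph{explicit} operator $\tilde\Gamma(\phi)(s)=B^{-1}\bigl[\phi(As+a(s,\phi(s)))-b(s,\phi(s))\bigr]$, not for the implicitly defined $\Gamma^{-1}$. If $\psi=\Gamma^{-1}\phi$, then
\[
D\psi(s)=B^{-1}\bigl[D\phi(g_\psi(s))\bigl(A+D_sa+D_ua\,D\psi(s)\bigr)-D_sb-D_ub\,D\psi(s)\bigr],\qquad g_\psi(s)=As+a(s,\psi(s)),
\]
with $a,b$ evaluated at $(s,\psi(s))$. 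So the evaluation points and the factor $D\psi(s)$ involve the new graph, where your $\Psi_1$ has $\phi$ and $H(s)=D\phi(s)$. Hence $\Psi_1(\phi_n,D\phi_n)\neq D\phi_{n+1}$ in general. For $k_s=k_u=1$, $a=0$, $b(s,u)=\epsilon(s^2+su)$, $\phi_0=H_0=0$, one gets $H_1(s)=-2\epsilon s/B$ but $D\phi_1(s)=-\epsilon(2Bs+\epsilon s^2)/(B+\epsilon s)^2$. Then $H_n\to H_\infty$ says nothing about $D\phi_n$.

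The repair is to put $\tilde\Gamma$ on the base. For small $\delta$ it maps $\mathcal{G}(\delta,K)$ into itself. A direct version of the estimate in Proposition~\ref{prop:graph_contraction} shows it is a $C^0$ contraction with ratio $\lambda(1+C_1\delta(1+K))$. Its fixed point is $\phi_\infty$, because $\phi=\tilde\Gamma\phi$ is exactly the equation $\phi_\infty$ satisfies. Now $D^j(\tilde\Gamma\phi)=\Psi_j(\phi,D\phi,\dots,D^j\phi)$ holds exactly by the chain rule and Fa\`{a} di Bruno, with no implicit function theorem. This is in fact the recursion the paper differentiates, though it calls it $\Gamma^{-1}$.

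Two smaller points. First, Fa\`{a} di Bruno also produces the top-order term $B^{-1}D\phi(g(s))\,D_ua\,D^j\phi(s)$ from $D^jg$; it is $O(K\delta)$ and harmless. Second, as in the paper, one fixed-point equation in a single chart at $x_0$ is literally valid only if $f(x_0)=x_0$. In general it couples the graph at $x$ to the graph at $f(x)$, so your Step~3 should run the argument simultaneously on sequences of graphs along the orbit, with sup norms. The estimates are uniform, so nothing else changes.
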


\begin{proof}
The proof extends the graph transform contraction to spaces of $C^r$ graphs. Define $\mathcal{G}^r(\delta, K) = \{\phi \in \mathcal{G}(\delta, K) : \phi \in C^r,\; \|D^j\phi\|_\infty \leq K_j \text{ for } 1 \leq j \leq r\}$ for appropriate constants $K_j$.

For $r = 1$: differentiating the fixed point equation $\Gamma^{-1}(\phi)(s) = B^{-1}\phi(s'(s)) + B^{-1}b(s'(s), \phi(s'(s)))$ with respect to $s$ yields a linear equation for $D\phi_\infty$. The chain rule gives $D(\Gamma^{-1}(\phi))(s) = B^{-1}[D\phi(s') + D_ub(s',\phi(s'))D\phi(s') + D_sb(s',\phi(s'))]\cdot Ds'(s)$, where $Ds'(s) = [A^{-1}(I + D_s\tilde{a} + D_u\tilde{a}\cdot D\phi)]^{-1}$. For small $\delta$, $\|Ds'\| \leq \|A\|(1 - C_1\delta(1+K))^{-1} \leq \lambda(1-C_1\delta(1+K))^{-1}$. The operator norm of $D\phi \mapsto B^{-1}D\phi \cdot Ds'$ is at most $\|B^{-1}\|\cdot\|Ds'\| \leq \lambda \cdot \lambda(1-C_1\delta(1+K))^{-1} = \lambda^2(1-C_1\delta(1+K))^{-1}$, and the nonlinear corrections from $b$ contribute $O(\delta)$. For $\delta$ small enough, the total contraction factor $\theta_1 \leq \lambda^2(1+C'\delta) < 1$, so $D\phi \mapsto D(\Gamma^{-1}(\phi))$ is a contraction. The fixed point $\phi_\infty$ is therefore $C^1$.

For $r \geq 2$: we proceed by induction on $r$. Suppose the fixed point $\phi_\infty$ has been shown to be of class $C^{r-1}$ with $\|D^j \phi_\infty\|_\infty \leq K_j$ for $1 \leq j \leq r-1$. We show $\phi_\infty \in C^r$ by the fiber contraction principle, whose statement and proof we include for completeness.

\emph{Fiber contraction principle.} Let $\Phi : X \to X$ be a contraction on a complete metric space $(X, d_X)$ with fixed point $x_*$. Let $E$ be a Banach space and $\Psi : X \times E \to E$ a continuous map such that $\Psi(x, \cdot) : E \to E$ is an affine contraction of ratio $\sigma < 1$ uniform in $x$ (that is, $\|\Psi(x, \xi) - \Psi(x, \eta)\| \leq \sigma\|\xi - \eta\|$ for all $x, \xi, \eta$). Let $\xi_*$ be the fixed point of $\Psi(x_*, \cdot)$. Then for every $\xi_0 \in E$, the iterates defined by $x_{n+1} = \Phi(x_n)$ and $\xi_{n+1} = \Psi(x_n, \xi_n)$ satisfy $x_n \to x_*$ and $\xi_n \to \xi_*$ in $E$.

\emph{Proof of the fiber contraction principle:} Set $\eta_n = \xi_n - \xi_*$. Then
\begin{align*}
\|\eta_{n+1}\| &= \|\Psi(x_n, \xi_n) - \Psi(x_*, \xi_*)\|\\
&\leq \|\Psi(x_n, \xi_n) - \Psi(x_n, \xi_*)\| + \|\Psi(x_n, \xi_*) - \Psi(x_*, \xi_*)\|\\
&\leq \sigma \|\eta_n\| + \omega(d_X(x_n, x_*)),
\end{align*}
where $\omega$ is a modulus of continuity for $\Psi(\cdot, \xi_*)$ at $x_*$. Since $d_X(x_n, x_*) \to 0$ and $\sigma < 1$, a standard telescoping estimate gives $\|\eta_n\| \to 0$.

\emph{Application to $D^r \phi_\infty$.} Let $X$ be the complete metric space $\mathcal{G}^{r-1}(\delta, K) := \{\phi \in \mathcal{G}(\delta, K) : \phi \in C^{r-1},\; \|D^j \phi\|_\infty \leq K_j,\; 1 \leq j \leq r-1\}$ with the $C^{r-1}$ metric. By the inductive hypothesis, $\Phi = \Gamma^{-1}|_X$ is a well-defined contraction on $X$ with fixed point $\phi_\infty$. Let $E = C^0(B^s_\delta, \mathcal{L}^r_{\mathrm{sym}}(\mathbb{R}^{k_s}, \mathbb{R}^{k_u}))$ be the Banach space of continuous symmetric $r$-multilinear-map-valued functions on $B^s_\delta$ with the sup norm.

Formally differentiating $\phi_{n+1}(s) = B^{-1}[\phi_n(s'(s, \phi_n)) - b(s, \phi_n(s))]$ $r$ times using Fa\`{a} di Bruno's formula yields a linear recursion
\begin{equation}\label{eq:fiber_recursion}
D^r \phi_{n+1} = B^{-1} \cdot (D^r \phi_n \circ s') \cdot (Ds')^{\otimes r} + R_r(\phi_n, D\phi_n, \ldots, D^{r-1}\phi_n),
\end{equation}
where $R_r$ is a \emph{fixed} continuous expression in $\phi_n$ and its derivatives up to order $r-1$ (it contains no $D^r\phi_n$ terms). Define $\Psi : X \times E \to E$ by
\begin{equation}
\Psi(\phi, \Xi)(s) = B^{-1} \cdot \Xi(s'(s, \phi)) \cdot (Ds'(s, \phi))^{\otimes r} + R_r(\phi, D\phi, \ldots, D^{r-1}\phi)(s).
\end{equation}
The operator norm of $\Xi \mapsto B^{-1} \cdot \Xi \circ s' \cdot (Ds')^{\otimes r}$ is bounded by $\|B^{-1}\| \cdot \|Ds'\|^r$. From the proof of the $r=1$ case, $\|Ds'\| \leq \lambda(1 - C_1\delta(1+K))^{-1}$. Taking $\delta$ small enough that $(1 - C_1\delta(1+K))^{-1} \leq 1 + \epsilon$ for any prescribed $\epsilon > 0$, the contraction ratio is at most $\sigma_r := \lambda \cdot (\lambda(1+\epsilon))^r = \lambda^{r+1}(1+\epsilon)^r$. Since $\lambda < 1$, we have $\lambda^{r+1} < 1$, so choosing $\epsilon$ small enough gives $\sigma_r < 1$.

Thus $\Psi(\phi, \cdot)$ is a uniform contraction. The fiber contraction principle now applies to the pair $(\Phi, \Psi)$: starting from any initial $\phi_0 \in X$ and $\Xi_0 \in E$, the iterates $\phi_n = \Gamma^{-n}(\phi_0)$ converge in $C^{r-1}$ to $\phi_\infty$, and the fiber iterates $\Xi_n$ converge in $C^0$ to the unique fixed point $\Xi_*$ of $\Psi(\phi_\infty, \cdot)$.

It remains to identify $\Xi_*$ with $D^r \phi_\infty$. Starting from $\phi_0 \in C^\infty$ (e.g., $\phi_0 \equiv 0$) with $\Xi_0 = D^r \phi_0$, the recursion~\eqref{eq:fiber_recursion} gives $\Xi_n = D^r \phi_n$ exactly, because formal differentiation commutes with the graph transform on smooth functions. Taking $n \to \infty$ gives $C^{r-1}$ convergence $\phi_n \to \phi_\infty$ and $C^0$ convergence $D^r \phi_n \to \Xi_*$. The standard closure lemma for $C^r$ functions (if $\phi_n \to \phi$ in $C^{r-1}$ and $D^r \phi_n \to \Xi$ in $C^0$, then $\phi \in C^r$ and $D^r \phi = \Xi$) yields $\phi_\infty \in C^r$ with $D^r \phi_\infty = \Xi_*$. This closes the induction.
\end{proof}

\subsection{Quantitative Estimates}

This subsection gives the explicit estimate $\varepsilon_0 = (1-\lambda)^2/(4C_0)$ for the size of the local stable manifold, in terms of the contraction rate $\lambda$ and the second-derivative bound $C_0 = \max(\|D^2 f\|_\infty, \|D^2 f^{-1}\|_\infty)$. The contraction rate along the manifold is also recorded.

\begin{proposition}[Manifold Size Estimate]\label{prop:manifold_size}
The local stable manifold exists for $\varepsilon \leq \varepsilon_0$ where
\begin{equation}
\varepsilon_0 = \frac{(1 - \lambda)^2}{4C_0}
\end{equation}
with $C_0 = \max\{\|D^2 f\|_\infty, \|D^2 f^{-1}\|_\infty\}$.
\end{proposition}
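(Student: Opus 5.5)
The plan is to make the constant $C_1$ from Lemma~\ref{lem:backward_welldefined} and Proposition~\ref{prop:graph_contraction} explicit in terms of $C_0$. Then I will check that every smallness condition on $\delta$ used so far holds once $\delta \leq (1-\lambda)^2/(4C_0)$.

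\textbf{Step 1: bound the nonlinear terms.} In exponential coordinates at $x_0$ (adapted metric, so $\|A\|\le\lambda$, $\|B^{-1}\|\le\lambda$), the nonlinear parts $a,b$ vanish to first order at the origin. By Taylor's theorem with the mean value inequality, on the box $B_\delta = B^s_\delta\times B^u_\delta$ we get $\|Da\|,\|Db\| \le C_0\delta$. Hence the Lipschitz constants of $a$ and $b$ in $(s,u)$ are at most $C_0\delta$, and $\|a\|,\|b\|\le C_0\delta^2$. So one may take $C_1 = C_0$ in the earlier estimates. Here I will absorb the curvature and exponential-map distortion at scale $\delta$ into the definition of $\|D^2f\|_\infty$ as computed in these charts. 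The same bounds hold for $f^{-1}$ with the same $C_0$, which handles the unstable manifold.

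\textbf{Step 2: check the conditions with the graph slope fixed.} Fix $K=1$ as the graph Lipschitz bound. I will list the requirements as inequalities in $\delta$:
\begin{itemize}
\item the self-map condition $\lambda(K\delta+C_0\delta^2)\le\delta$, i.e.\ $\lambda(1+C_0\delta)\le 1$;
\item the contraction condition $\lambda C_0\delta(K+1)<1-\lambda$ from Proposition~\ref{prop:graph_contraction}, which gives $\theta<1$;
\item invariance of the cone $\|D\phi\|\le K$ under $\Gamma^{-1}$.
\end{itemize}
For the cone condition, the derivative computation from Lemma~\ref{lem:backward_welldefined} gives
\[
\|D\Gamma^{-1}(\phi)\|\le \frac{\lambda\bigl(K(\lambda + C_0\delta(1+K)) + C_0\delta\bigr)}{1-\lambda C_0\delta(1+K)}.
\]
With $K=1$ and $\eta := C_0\delta$, this requires $\lambda(\lambda+3\eta)\le 1-2\lambda\eta$. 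That rearranges to $\eta \le (1-\lambda^2)/(5\lambda)$, which holds once $\eta\le(1-\lambda)^2/4$, since $(1-\lambda)^2/4 \le (1-\lambda)(1+\lambda)/(5\lambda)$ for $\lambda\in(0,1)$.

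\textbf{Step 3: verify that $\eta=(1-\lambda)^2/4$ suffices everywhere.}
\begin{itemize}
\item For the contraction condition, $2\lambda\eta=\lambda(1-\lambda)^2/2<1-\lambda$.
\item For the self-map condition, $\lambda(1+\eta)\le\lambda+(1-\lambda)^2/4<1$.
\item From the contraction condition, $\theta \le \lambda/(1-\lambda(1-\lambda)/2)$, and this is $<1$ because $\lambda+\lambda(1-\lambda)/2<1$.
\end{itemize}
The factor $(1-\lambda)^2$ enters as the product of two spectral-gap margins: one from the contraction $\theta<1$ and one from absorbing the nonlinearity into the cone estimate. Tracking where the square is genuinely needed is a matter of inserting these margins.

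\textbf{Step 4: from graph box to $W^s_\varepsilon$.} Theorem~\ref{thm:graph_convergence} identifies the graph of the fixed point $\phi_\infty$ with $W^s_\delta(x_0)$ in the max-norm box. To pass to the metric definition of $W^s_\varepsilon(x)$ for $\varepsilon\le\varepsilon_0$, I will note that the max-norm is comparable to the Riemannian distance up to a factor absorbed at these scales. Restricting to smaller $\varepsilon$ only shrinks the box, so every condition above persists.

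I expect Step 1 to be the main obstacle. Honestly controlling $C_1$ by $C_0$ requires that the coordinate charts move the splitting $E^s_{x_0}\oplus E^u_{x_0}$ to $E^s_{f(x_0)}\oplus E^u_{f(x_0)}$ without adding first-order error. My first attempt is to use chart maps $\exp_{f^n x_0}$ composed with linear isometries adapted to the splitting. Then $a,b$ are exactly the second-order Taylor remainders of $\exp^{-1}\circ f\circ\exp$, and their $C^2$ norm is bounded by $C_0$ up to a geometric factor that I will fold into the definition of $C_0$.
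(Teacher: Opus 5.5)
This is correct and is essentially the paper's argument. You identify $C_1$ with $C_0$ through the Taylor bound on $a,b$ (the paper cites Lemma~\ref{lem:nonlinear_bounds} and silently makes the same identification of the chart constant with $\max\{\|D^2f\|_\infty,\|D^2f^{-1}\|_\infty\}$ that you make explicitly), list the smallness conditions behind $\Gamma^{-1}$, and check that $C_0\delta\le(1-\lambda)^2/4$ satisfies each with room to spare. Your bookkeeping is in fact the more accurate of the two: you use the self-map inequality $\lambda(K\delta+C_0\delta^2)\le\delta$ that actually governs the backward transform, and you verify the Lipschitz-cone invariance. The paper instead takes $K=1/2$ rather than your $K=1$ (a harmless difference, since your checks also go through for $K=1/2$) and quotes the forward-transform condition $\lambda^{-1}K\delta+C_0\delta^2\le\delta$, which has no solution $\delta>0$ when $\lambda<1/2$.
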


\begin{proof}
The graph transform $\Gamma^{-1}$ is well-defined and contractive on $\mathcal{G}(\delta, K)$ provided $\delta$ is small enough that the nonlinear terms do not destroy the hyperbolic structure. By Lemma~\ref{lem:nonlinear_bounds} (Appendix), $\|Da(s,u)\| \leq C_0\delta$ and $\|Db(s,u)\| \leq C_0\delta$ on $B_\delta$. The contraction requires $\lambda + C_0\delta(1+K) < 1$ (from Proposition~\ref{prop:graph_contraction}). With $K = 1/2$, this gives $C_0\delta \cdot 3/2 < 1 - \lambda$, i.e., $\delta < 2(1-\lambda)/(3C_0)$. The graph staying in $B_\delta$ requires $\lambda^{-1}K\delta + C_0\delta^2 \leq \delta$, i.e., $C_0\delta \leq 1 - K/\lambda \leq 1 - 1/(2\lambda)$. For $\lambda \leq 1$, the binding constraint is $\delta < 2(1-\lambda)/(3C_0)$. Setting $\varepsilon_0 = (1-\lambda)^2/(4C_0)$ provides a margin that satisfies both conditions simultaneously, since $(1-\lambda)^2/(4C_0) < 2(1-\lambda)/(3C_0)$ for $\lambda \in (0,1)$.
\end{proof}

\begin{proposition}[Contraction Along Stable Manifolds]\label{prop:stable_contraction}
For $y \in W^s_\varepsilon(x)$ with $x \in \Lambda$,
\begin{equation}
d(f^n(x), f^n(y)) \leq \lambda^n d(x, y)
\end{equation}
for all $n \geq 0$.
\end{proposition}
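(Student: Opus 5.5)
The plan is to prove the one-step inequality $d(f(x), f(y)) \leq \lambda\, d(x,y)$ for every $x \in \Lambda$ and every $y \in W^s_\varepsilon(x)$, and then to conclude by induction on $n$. The induction is legitimate because $f(x) \in \Lambda$ and, by the definition of local stable sets, $f(y) \in W^s_\varepsilon(f(x))$. So everything reduces to a single step, which I would obtain by comparing the ambient distance $d$ with the length of the path inside the stable leaf.

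To have room to spare I fix the constants first. Let $\lambda_0$ be the original hyperbolicity rate. Apply Theorem~\ref{thm:adapted_metric} with an exponent $\mu \in (\lambda_0, \lambda)$, so that $\lambda$ is strictly larger than the genuine adapted rate $\mu$. Then shrink $\delta$ (hence $\varepsilon$) and the Lipschitz bound $K$ of the graph space $\mathcal{G}(\delta,K)$. By Theorem~\ref{thm:graph_convergence} and Theorem~\ref{thm:manifold_regularity}, in the exponential chart at $x$ the set $W^s_\varepsilon(x)$ is the graph of a $C^1$ map $\phi_\infty$ with $\|D\phi_\infty\| \leq K$.

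The steps, in order, are as follows.

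\emph{Step 1 (tangent contraction).} For $z$ on the leaf and $v$ tangent to the leaf, write $v = (w, D\phi_\infty w)$. In the chart $Df_z = \begin{pmatrix} A + D_s a & D_u a \\ D_s b & B + D_u b\end{pmatrix}$, and Lemma~\ref{lem:nonlinear_bounds} gives $\|Da\|, \|Db\| \leq C_0\delta$. The stable component of $Df_z v$ therefore has norm at most $(\mu + C_0\delta(1+K))\|w\|$. The fixed-point relation $f(\mathrm{graph}\,\phi_\infty) \supset \mathrm{graph}\,\phi_\infty$ (in the chart at $f(x)$) puts $Df_z v$ again in the $K$-cone. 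Since $\|(w, D\phi_\infty w)\| = \|w\|$ in the max norm, this yields $\|Df_z v\| \leq (\mu + C_0\delta(1+K))\|v\|$.

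\emph{Step 2 (leaf length vs.\ ambient distance).} Let $\gamma$ be the leaf path from $x$ to $y$, namely the image of the segment $[0,s]$ under $t \mapsto (t, \phi_\infty(t))$. Its chart length is $\|s\|$ up to a factor $1 + O(K)$. The chart distance of $y$ from $x$ is at least $\|s\|$. Passing between chart norms and the Riemannian distance of the adapted metric $g'$ costs a factor $1 + O(\delta)$ on a $\delta$-ball. Together these give $\ell(\gamma) \leq (1 + c_1(K+\delta))\, d(x,y)$.

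\emph{Step 3 (one step).} Combining the two steps,
\[
d(f(x), f(y)) \leq \ell(f\circ\gamma) \leq (\mu + C_0\delta(1+K))\,\ell(\gamma) \leq (\mu + C_0\delta(1+K))(1 + c_1(K+\delta))\, d(x,y).
\]
Because $\mu < \lambda$, choosing $K$ and $\delta$ small makes the prefactor at most $\lambda$. Induction on $n$ then finishes the proof.

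The main obstacle is Step 2. It requires a careful chart-to-metric comparison: the exponential map at $x$ for the metric $g'$, the max norm on $E^s \oplus E^u$ versus the $g'$-Euclidean norm (these coincide on vectors in the $K$-cone only up to $1 + O(K)$), and the fact that the chart at $f(x)$ differs from the one at $x$. Trying to compare chart norms globally and only at the endpoints would produce a multiplicative constant $C$ in front of $\lambda^n$, which the statement does not permit.

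That is exactly why I do the estimate one step at a time through path lengths: the only distortion is the single factor $1 + O(K+\delta)$ per step, and the margin $\lambda - \mu > 0$ absorbs it. If the chart bookkeeping turns out to be awkward, my first fallback is to take $K$ so small that the cone is nearly $E^s$. Then Corollary~\ref{cor:angle_bound} (orthogonality in $g'$) makes the max norm and the $g'$-norm agree on the cone to within $1 + O(K^2)$.
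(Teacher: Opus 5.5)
Your proof is correct, but it measures the contraction differently from the paper.

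\textbf{The paper's route.} It works at the level of points. In charts centered at $f^n(x)$ it writes $f^n(y)=(s_n,u_n)$ with $u_n=\phi^{(n)}_\infty(s_n)$. It combines the Lipschitz bound $\|u_n\|\le K\|s_n\|$ (with $K=1/2$ fixed) and the Taylor bound $\|a_n(s,u)\|\le C_0\varepsilon(\|s\|+\|u\|)$ to get $\|s_{n+1}\|\le(\lambda+C_0\varepsilon(1+K))\|s_n\|$, and iterates this along the orbit. It converts to distances only at times $0$ and $n$, via $d(f^n(x),f^n(y))=\max(\|s_n\|,\|u_n\|)=\|s_n\|$. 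Finally it lowers the rate from $(5\lambda+3)/8$ to $\lambda$ by starting from an adapted metric with a smaller exponent.

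\textbf{Your route.} You prove a one-step Lipschitz bound by contracting tangent vectors of the leaf and integrating along the leaf path, then induct.

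\textbf{What each buys.} The paper's argument is more elementary. It needs only Lipschitz graphs through the base point, and only distances to that base point, which in an exponential chart are exactly chart norms, so no curvature correction enters. Its weak point is identifying the Riemannian distance with the max norm. On the $K$-cone these agree only up to a factor $\sqrt{1+K^2}$. Done honestly, the endpoint-only comparison therefore produces exactly the constant in front of the rate that you warn about, and the paper's final rate-sharpening step adjusts the rate, not this prefactor. Your per-step scheme, with $K$ shrunk, absorbs every distortion into the margin $\lambda-\mu$ and gives the constant-free bound. Shrinking $K$ is legitimate provided $\delta$ shrinks in proportion, roughly $C_1\delta\lesssim K(1-\mu^2)$, so that the backward graph transform still preserves $\mathcal{G}(\delta,K)$.

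\textbf{Two small fixes.}
First, in Step~1 the inclusion you need is $f(\mathrm{graph}\,\phi^{(x)}_\infty)\subset\mathrm{graph}\,\phi^{(f(x))}_\infty$, not $\supset$. It follows directly from the definition of $W^s_\varepsilon$, exactly as in your induction, and it is what places $Df_z v$ in the $K$-cone at $f(x)$.
Second, Step~1 bounds $Df_z v$ in chart max norms, while $\ell$ in Step~3 is the $g'$-length. Converting between them costs one more factor of the form $1+O(K+\delta)$ per step, which the same margin absorbs.
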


\begin{proof}
Work in adapted coordinates centered at $x$, so $x$ corresponds to the origin and $y = (s, \phi_\infty(s))$ lies on $W^s_\varepsilon(x) = \mathrm{graph}(\phi_\infty)$. Write $y_n := f^n(y)$ and $(s_n, u_n) := y_n$ in adapted coordinates centered at $f^n(x)$. Since $\phi_\infty$ is the fixed point of the backward graph transform and $f$ maps $\mathrm{graph}(\phi_\infty)$ into itself by Theorem~\ref{thm:graph_convergence}, we have $y_n \in \mathrm{graph}(\phi^{(n)}_\infty)$, the local stable manifold at $f^n(x)$, so $u_n = \phi^{(n)}_\infty(s_n)$ with $\|\phi^{(n)}_\infty\|_{\mathrm{Lip}} \leq K = 1/2$.

\emph{Step 1: Contraction in the stable coordinate.}
From $(s_{n+1}, u_{n+1}) = f(s_n, u_n) = (A_n s_n + a_n(s_n, u_n), B_n u_n + b_n(s_n, u_n))$, and using $\|A_n\| \leq \lambda$, $|a_n(s, u)| \leq C_0 \varepsilon (\|s\| + \|u\|)$ on $B_\varepsilon$ (from the Taylor expansion with $Da_n(0) = 0$), we get
\begin{align}
\|s_{n+1}\| &\leq \lambda \|s_n\| + C_0 \varepsilon (\|s_n\| + \|u_n\|) \nonumber \\
&\leq \lambda \|s_n\| + C_0 \varepsilon (1 + K) \|s_n\| \nonumber \\
&= \bigl(\lambda + C_0 \varepsilon(1 + K)\bigr) \|s_n\|,
\end{align}
where we used $\|u_n\| = \|\phi^{(n)}_\infty(s_n)\| \leq K \|s_n\|$. By the choice $\varepsilon \leq \varepsilon_0 = (1-\lambda)^2/(4C_0)$ and $K = 1/2$:
\begin{equation}
C_0 \varepsilon (1+K) \leq C_0 \cdot \frac{(1-\lambda)^2}{4 C_0} \cdot \frac{3}{2} = \frac{3(1-\lambda)^2}{8} \leq \frac{3(1-\lambda)}{8}
\end{equation}
since $1 - \lambda \leq 1$. Hence $\lambda + C_0\varepsilon(1+K) \leq \lambda + 3(1-\lambda)/8 = (5\lambda + 3)/8 =: \lambda'$. Since $\lambda < 1$, we have $\lambda' < 1$; specifically $\lambda' < (5\lambda + 3(1))/8 = (5\lambda+3)/8$, and the inequality $\lambda' < 1$ is equivalent to $\lambda < 1$. By induction,
\begin{equation}
\|s_n\| \leq (\lambda')^n \|s_0\|.
\end{equation}

\emph{Step 2: Control of the full distance.}
Since $y_n = (s_n, \phi^{(n)}_\infty(s_n))$ and $\|\phi^{(n)}_\infty(s_n)\| \leq K \|s_n\|$, in the max norm on the adapted coordinate product,
\begin{equation}
d(f^n(x), f^n(y)) = \|y_n\| = \max(\|s_n\|, \|u_n\|) \leq \|s_n\|
\end{equation}
when $K \leq 1$ (since then $\|u_n\| \leq K\|s_n\| \leq \|s_n\|$). Similarly $d(x, y) = \|y_0\| = \max(\|s_0\|, \|\phi_\infty(s_0)\|) \geq \|s_0\|$.

Therefore $d(f^n(x), f^n(y)) \leq \|s_n\| \leq (\lambda')^n \|s_0\| \leq (\lambda')^n d(x, y)$.

\emph{Step 3: Sharpening to rate $\lambda$.}
The bound in Step 2 has rate $\lambda' = (5\lambda + 3)/8$, not $\lambda$. To obtain rate $\lambda$ as stated, we use that $\lambda$ is the hyperbolicity exponent of the \emph{adapted} metric, chosen with a safety factor in Subsection~\ref{subsec:adapted_metrics}. Concretely, the adapted metric construction (Theorem~\ref{thm:adapted_metric}) gives a metric $\|\cdot\|_*$ in which $\|Df|_{E^s}\|_* \leq \lambda$. Choosing the adapted metric with rate $\lambda_0 < \lambda$ (any rate strictly less than $\lambda$ is allowed as long as $\lambda_0$ exceeds the true contraction rate of $Df|_{E^s}$), the same calculation as above yields rate $\lambda_0 + C_0\varepsilon(1+K)$, which for $\varepsilon$ sufficiently small is at most $\lambda$. Rescaling constants accordingly proves the claim with rate exactly $\lambda$.
\end{proof}

\subsection{H\"{o}lder Dependence on Base Point}

The stable manifolds depend H\"{o}lder continuously on the base point. This regularity is what allows the coding map constructed in Section~\ref{sec:symbolic_dynamics} to be H\"{o}lder, which is essential for transferring the spectral theory of Part~I \cite{Thiam2026a} to the smooth setting.

\begin{theorem}[H\"{o}lder Continuity in Base Point]\label{thm:manifold_holder}
For a $C^{1+\beta}$ diffeomorphism $f$ with hyperbolic set $\Lambda$, the local stable manifolds depend H\"{o}lder continuously on the base point. Specifically, if we parametrize $W^s_\varepsilon(x)$ as the graph of $\phi_x : E^s_x \to E^u_x$, then
\begin{equation}
\|\phi_x - \phi_y\|_{C^0} \leq C d(x, y)^\alpha
\end{equation}
where $\alpha = \beta \cdot \log\lambda / (\log\lambda - \log\|Df\|_\infty)$.
\end{theorem}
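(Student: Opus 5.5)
We would prove this by a truncation-and-balance argument, in the same spirit as the proof of Theorem~\ref{thm:splitting_holder}, but carried out at the level of points on the manifolds rather than subspaces. Fix $x,y\in\Lambda$ with $d(x,y)$ small, and work in a common chart: using the exponential map at $x$ and the parallel transport $L_{xy}$, we regard both $\phi_x$ and $\phi_y$ as graphs over $E^s_x$. By Theorem~\ref{thm:splitting_holder}, the error from re-expressing $\phi_y$ over $E^s_x$ instead of $E^s_y$ is $O(d(x,y)^{\alpha_s})$ plus $O(d(x,y))$, and one checks that $\alpha_s\ge\alpha$. It therefore suffices to compare, for $s\in B^s_\varepsilon$, the point $p=(s,\phi_x(s))\in W^s_\varepsilon(x)$ with the point $q=(s,\tilde\phi_y(s))$ on the transported copy of $W^s_\varepsilon(y)$, which lies on the same $E^u$-fiber.

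The key steps, in order, are as follows.

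\emph{Step 1: forward iteration for $N$ steps.} Iterate $p$ and $q$ forward $N$ times. The difference of their unstable coordinates $u$ is expanded by at least $\lambda^{-1}(1-O(\varepsilon))$ per step. The derivatives along the two orbits are compared using the $C^{1+\beta}$ estimate
\begin{equation*}
\|Df_y\circ L_{xy}-L\circ Df_x\|\le C_1 d(x,y)^\beta.
\end{equation*}
Since $d(f^n x,f^n y)\le\|Df\|_\infty^n d(x,y)$, the accumulated discrepancy in the linearized models after $n$ steps is bounded by $C\,\|Df\|_\infty^{n\beta}d(x,y)^\beta$.

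\emph{Step 2: tracking the unstable coordinate.} Let $\Delta_n$ denote the unstable-coordinate difference at time $n$. Both $f^n p$ and $f^n q$ stay within $\varepsilon$ of $f^n x$ and $f^n y$ respectively, by Proposition~\ref{prop:stable_contraction}. Hence $\Delta_n$ is bounded by $2\varepsilon$ plus the chart-mismatch term, giving
\begin{equation*}
\lambda^{-N}\|\phi_x(s)-\tilde\phi_y(s)\|\lesssim \varepsilon+\sum_{n<N}\lambda^{-(N-n)}\|Df\|_\infty^{n\beta}d(x,y)^\beta .
\end{equation*}
Equivalently, writing the fixed-point identity $u=-\sum_k B^{-k-1}b(s_k,u_k)$ from Theorem~\ref{thm:graph_convergence} for both base points, truncating at $k=N$, and subtracting, we get
\begin{equation*}
\|\phi_x-\phi_y\|_{C^0}\le C\lambda^N+C\,d(x,y)^\beta\sum_{n<N}\lambda^{n}\|Df\|_\infty^{n\beta}.
\end{equation*}
Here the tail is bounded by $\lambda^N\delta$. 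Each term in the head is controlled by $\lambda^{k+1}$ times the difference of the nonlinearities along the two orbits, and that difference grows at most like $\|Df\|^{k\beta}d(x,y)^\beta$ in the worst case.

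\emph{Step 3: balance.} Choose
\begin{equation*}
N=\left\lfloor \frac{\beta\log(1/d(x,y))}{\beta\log\|Df\|_\infty+\log\lambda^{-1}}\right\rfloor .
\end{equation*}
With this choice $\|Df\|_\infty^{N\beta}d(x,y)^\beta\approx\lambda^N$, and both contributions become $d(x,y)^{\alpha}$ with $\alpha=\beta\log\lambda/(\log\lambda-\beta\log\|Df\|_\infty)$. We would interpret the stated exponent as this balance, absorbing the factor $\beta$ on $\log\|Df\|$ by the monotone inequality $\alpha\ge\beta\log\lambda/(\log\lambda-\log\|Df\|_\infty)$, valid because $\beta\le1$ and a smaller exponent is a weaker claim. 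If the geometric sum is not summable ($\lambda\|Df\|^\beta\ge1$), it is bounded by $N$ times its largest term, and the factor $N=O(\log(1/d))$ is absorbed by shrinking the exponent arbitrarily little, or more carefully by noting that the sum is dominated by its last term.

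The main obstacle is Step 2: making rigorous the comparison of the two nonlinear remainders $b_{f^kx}$ and $b_{f^ky}$ along orbits that are computed in \emph{different} charts. This requires a uniform H\"older bound on the chart maps, derived from $C^{1+\beta}$ regularity of $f$ together with Theorem~\ref{thm:splitting_holder}, and a check that the chart-change errors do not exceed the $\|Df\|^{k\beta}d^\beta$ growth. We would first try to set this up with exponential charts adapted to the splitting and a single Lipschitz bound on the transition maps.
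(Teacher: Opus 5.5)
Your plan works, and it shares the paper's skeleton: a head term $d(x,y)^\beta\sum_{n<N}(\lambda\|Df\|_\infty^\beta)^n$, a tail of order $\lambda^N$, a choice of $N$, and a final relaxation of the exponent using $\beta\le 1$. The mechanism and the truncation are different, though.

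\emph{How the paper does it.} The paper never follows pairs of points. It writes $\phi_x$ as a limit of backward graph transforms along the orbit of $x$ and compares them with those along the orbit of $y$. It charges an error $C'd(f^kx,f^ky)^\beta$ to the $k$-th step, damped by powers of the rate $\theta$ of Proposition~\ref{prop:graph_contraction}. It then truncates at the largest $N$ the charts allow, $\|Df\|_\infty^N d(x,y)\approx\varepsilon_0$, and obtains $\min(\beta,\log(1/\theta)/\log\|Df\|_\infty)$ before relaxing to $\alpha$.

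\emph{How your route differs.} You follow the unstable separation of two points on a common $E^u$-fiber and balance $\|Df\|_\infty^{N\beta}d^\beta$ against $\lambda^N$. That balance is unnecessary. The head is $O(d^\beta)$ when $\lambda\|Df\|_\infty^\beta<1$, and $O\bigl(\lambda^N(\|Df\|_\infty^Nd)^\beta\bigr)\le O(\lambda^N)$ otherwise, so increasing $N$ up to the chart scale only helps. The paper's choice therefore gives a larger exponent than your $\beta\log(1/\lambda)/(\log(1/\lambda)+\beta\log\|Df\|_\infty)$. Yours still dominates $\alpha$, which is all that is claimed. Your $N$ also keeps $\|Df\|_\infty^N d(x,y)=d(x,y)^{\log(1/\lambda)/(\log(1/\lambda)+\beta\log\|Df\|_\infty)}$ small; the comparison needs this, but you do not state it.

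\emph{What each buys.} You explicitly handle the fact that $\phi_x$ and $\phi_y$ are graphs over different splittings, via Theorem~\ref{thm:splitting_holder} and $\alpha_s\ge\alpha$; the paper passes over this. The paper's route reuses the $C^0$ contraction of $\Gamma^{-1}$ in a form that carries over to derivatives in Corollary~\ref{cor:foliation_regularity}.

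\emph{Your series version of Step~2 does not close as written.} The difference $b_{f^kx}(s_k,u_k)-b_{f^ky}(s'_k,u'_k)$ contains a Lipschitz term proportional to the separation of the two orbit points. That separation is driven by the unknown $\phi_x(s)-\tilde\phi_y(s)$ and by graph differences at later times, not by $\|Df\|_\infty^{k\beta}d^\beta$. So the head terms cannot be bounded one at a time; closing that version needs exactly the one-step recursion encoded in the paper's graph-transform comparison.

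\emph{Your forward-tracking version does work.} The obstacle you single out disappears if you express both orbits in the charts along $f^n(x)$ only. Then no remainders from different charts are ever compared. The separation vector stays in the unstable cone and grows by a factor at least $\lambda^{-1}(1-C\rho_n^\beta)$, where $\rho_n\lesssim\lambda^n\varepsilon+\|Df\|_\infty^nd(x,y)$, and the product of these factors over $n<N$ is bounded below. The point $y$ enters only through $d(f^Nx,f^Ny)\le\|Df\|_\infty^Nd(x,y)$. This gives $\|\phi_x(s)-\tilde\phi_y(s)\|\lesssim\lambda^N\bigl(\lambda^N\varepsilon+\|Df\|_\infty^Nd(x,y)\bigr)$. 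Choosing $\|Df\|_\infty^Nd(x,y)\approx\lambda^N$ yields the exponent $2\log(1/\lambda)/\log(\|Df\|_\infty/\lambda)$, which already exceeds $\alpha$.
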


\begin{proof}
We use the graph transform characterization $\phi_x = \lim_{n \to \infty} (\Gamma^{-1}_x)^n(\phi_0)$ where $\Gamma^{-1}_x$ is the backward graph transform at $x$. For two base points $x, y \in \Lambda$ with $d(x,y) < \varepsilon_0$, we compare $\phi_x$ and $\phi_y$ by comparing the iterates $(\Gamma^{-1}_x)^n(\phi_0)$ and $(\Gamma^{-1}_y)^n(\phi_0)$.

At each step, the graph transforms $\Gamma^{-1}_x$ and $\Gamma^{-1}_y$ differ because the local representations of $f$ near $x$ and $y$ differ. Since $f$ is $C^{1+\beta}$, the linearizations at $f^k(x)$ and $f^k(y)$ satisfy $\|Df_{f^k(x)} - Df_{f^k(y)}\| \leq C d(f^k(x), f^k(y))^\beta$. After $n$ iterates, the accumulated error between the two graph transforms is
\begin{equation}
\|(\Gamma^{-1}_x)^n(\phi_0) - (\Gamma^{-1}_y)^n(\phi_0)\|_{C^0} \leq \sum_{k=0}^{n-1} \theta^{n-1-k} \cdot C' d(f^k(x), f^k(y))^\beta
\end{equation}
where $\theta < 1$ is the graph transform contraction rate and $C'$ is a constant depending on the hyperbolicity data. Using $d(f^k(x), f^k(y)) \leq L^k d(x,y)$ with $L = \|Df\|_\infty$ (so that $L \geq 1$):
\begin{equation}\label{eq:phi_x_phi_y_series}
\|\phi_x - \phi_y\|_{C^0} \leq C' d(x,y)^\beta \sum_{k=0}^{\infty} \theta^k L^{k\beta} = C' d(x,y)^\beta \sum_{k=0}^{\infty} (\theta L^\beta)^k.
\end{equation}

If $\theta L^\beta < 1$, the series converges and the H\"{o}lder exponent is $\beta$. We now handle the remaining case $\theta L^\beta \geq 1$.

\emph{Truncation-and-balance argument.} Fix $N \in \N$ to be optimized. Split the error into past-$N$ contributions and up-to-$N$ contributions, using two bounds:

\emph{Bound 1 (use $L^{k\beta}$ growth for small $k$):} For any fixed $n$,
\begin{equation}
\|(\Gamma^{-1}_x)^n(\phi_0) - (\Gamma^{-1}_y)^n(\phi_0)\|_{C^0} \leq C' d(x,y)^\beta \sum_{k=0}^{N-1} \theta^{n-1-k} L^{k\beta} + C' \sum_{k=N}^{n-1} \theta^{n-1-k} (L^k d(x,y))^\beta.
\end{equation}
The first sum is $O(\theta^n (\theta^{-1}L^\beta)^{N-1})$ for fixed $N$; using also the trivial bound $(L^k d(x,y))^\beta \leq \mathrm{diam}(M)^\beta$ when $L^k d(x,y)$ is no longer small, we get an analogous estimate.

\emph{Bound 2 (use triangle inequality with the $C^0$ bound $\|\phi_x - \phi_y\|_{C^0} \leq 2\delta$):} Trivially, $\|\phi_x - \phi_y\|_{C^0} \leq 2\delta$ regardless of $d(x,y)$.

\emph{Balancing the two bounds.} Bound 1 yields a H\"{o}lder exponent $\beta$ estimate valid only when $L^N d(x,y) < \varepsilon_0$ (so that comparison along the orbit stays in the local chart). From $L^N d(x,y) = \varepsilon_0$, we get $N = \log(\varepsilon_0/d(x,y))/\log L$. For this $N$:
\begin{equation}
\theta^N = \bigl(d(x,y)/\varepsilon_0\bigr)^{-\log\theta/\log L} = \bigl(d(x,y)/\varepsilon_0\bigr)^{\log(1/\theta)/\log L}.
\end{equation}
Combining with the factor $d(x,y)^\beta$ in front of the series, the effective H\"{o}lder exponent is
\begin{equation}
\alpha = \min\left(\beta,\ \frac{\log(1/\theta)}{\log L}\right).
\end{equation}
In our setting $\theta \leq \lambda/(1 - \lambda C_1\delta(K+1))$, so $\log(1/\theta) \geq \log(1/\lambda) - O(\delta)$. Taking the explicit form $\alpha = \beta \log\lambda /(\log\lambda - \log L) = \beta \log(1/\lambda)/(\log(1/\lambda) + \log L)$ gives the stated exponent, which is the minimum of $\beta$ and $\log(1/\lambda)/\log L$ in the regime where the latter is smaller.

In either case, setting
\begin{equation}
\alpha = \frac{\beta \log\lambda}{\log\lambda - \log\|Df\|_\infty} = \frac{\beta \log(1/\lambda)}{\log(\|Df\|_\infty/\lambda)} > 0
\end{equation}
yields $\|\phi_x - \phi_y\|_{C^0} \leq C d(x,y)^\alpha$ for a constant $C$ depending on $\beta$, $\lambda$, $L$, $\varepsilon_0$, and $C_1$. This is the H\"{o}lder exponent stated in the theorem.
\end{proof}

\begin{corollary}[Foliation Regularity]\label{cor:foliation_regularity}
The collection $\{W^s_\varepsilon(x) : x \in \Lambda\}$ forms a continuous lamination of a neighborhood of $\Lambda$, with leaves varying H\"{o}lder continuously in the $C^r$ topology.
\end{corollary}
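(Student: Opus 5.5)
The proof combines three earlier results. Theorem~\ref{thm:graph_convergence} identifies $W^s_\varepsilon(x)$ with the graph of the fixed point $\phi_x$ of $\Gamma^{-1}_x$. Theorem~\ref{thm:manifold_regularity} supplies the fiber-contraction description of the derivatives $D^j\phi_x$, $1\le j\le r$. Theorem~\ref{thm:manifold_holder} gives the $C^0$ estimate $\|\phi_x-\phi_y\|_{C^0}\le C\,d(x,y)^{\alpha}$. Two things must be shown. First, the plaques $W^s_\varepsilon(x)$, $x\in\Lambda$, fit together into a continuous lamination. Second, the $C^0$ Hölder estimate upgrades to a Hölder estimate for every $\|D^j\phi_x-D^j\phi_y\|_{C^0}$ with $j\le r$. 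For the top order I read the hypothesis as $D^r f$ being $\beta$-Hölder ($f\in C^{r+\beta}$), as the statement implicitly requires.

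\emph{Step 1: the lamination.} Define $\Theta(x,s)=\exp_x\bigl(s,\phi_x(s)\bigr)$ for $x\in\Lambda$ and $s\in B^s_\varepsilon$, after fixing a continuous trivialization of $E^s\oplus E^u$ near each point of $\Lambda$ (possible since the splitting is continuous, indeed Hölder by Theorem~\ref{thm:splitting_holder}).
\begin{itemize}
\item By Theorem~\ref{thm:manifold_holder}, $\Theta$ is continuous in $x$.
\item By Theorem~\ref{thm:manifold_regularity}, each $\Theta(x,\cdot)$ is a $C^r$ embedding.
\item Plaques are coherent. If $z\in W^s_{\varepsilon/2}(x)\cap W^s_{\varepsilon/2}(y)$, then $d(f^nx,f^ny)\le\varepsilon$ for all $n\ge0$, so $y\in W^s_\varepsilon(x)$. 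Uniqueness of the fixed point then forces $W^s_{\varepsilon/2}(y)\subset W^s_\varepsilon(x)$ as an open piece of the same $C^r$ disk.
\item Laminated charts come from restricting $\Theta$ to small sets $(\Lambda\cap U)\times B^s_{\varepsilon/2}$. Their images cover the neighborhood $\bigcup_{x\in\Lambda}W^s_{\varepsilon/2}(x)$ of $\Lambda$, and the coherence step guarantees that the transition maps preserve plaques.
\end{itemize}

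\emph{Step 2: Hölder variation in $C^r$.} I will prove by induction on $j\le r$ that $\|D^j\phi_x-D^j\phi_y\|_{C^0}\le C_j\,d(x,y)^{\alpha_j}$ with some $\alpha_j>0$; the case $j=0$ is Theorem~\ref{thm:manifold_holder}. For the inductive step, start both fiber iterations from $\phi_0\equiv0$ and $\Xi_0=0$, giving $\Xi^x_n=D^j\phi^x_n$ and $\Xi^y_n=D^j\phi^y_n$. Subtract the two recursions \eqref{eq:fiber_recursion}. Each step contracts the previous difference by $\sigma_j=\lambda^{j+1}(1+\epsilon)<1$ and adds an error term, which has two sources:
\begin{itemize}
\item the change of base point in $B^{-1}$, $Ds'$ and the Taylor coefficients of $f$ up to order $j$, which is $\le C\,d(f^kx,f^ky)^\beta$ by Hölder continuity of $D^jf$ and of the splitting (Theorem~\ref{thm:splitting_holder});
\item the lower-order remainder $R_j$, which is Lipschitz in $(\phi,\dots,D^{j-1}\phi)$ and is therefore bounded by the inductive hypothesis.
\end{itemize}
This gives the same recursive inequality as in Theorem~\ref{thm:manifold_holder},
\[
\|\Xi^x_n-\Xi^y_n\|\le\sum_{k<n}\sigma_j^{\,n-1-k}\,C\bigl(L^kd(x,y)\bigr)^{\min(\beta,\alpha_{j-1})}.
\]
I then rerun the truncation-and-balance argument: truncate at $N=\log(\varepsilon_0/d(x,y))/\log L$ and bound the remaining terms by the uniform bound $2K_j$, which yields $\alpha_j=\min\bigl(\alpha_{j-1},\,\beta,\,\log(1/\sigma_j)/\log L\bigr)\cdot c$ for an explicit $c\in(0,1]$. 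Letting $n\to\infty$, the fiber-contraction convergence from Theorem~\ref{thm:manifold_regularity} transfers the bound to $D^j\phi_x-D^j\phi_y$. Summing over $j\le r$ gives Hölder dependence of the leaves in the $C^r$ topology, with exponent $\min_j\alpha_j$.

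The main obstacle is comparing $D^j\phi_x$ and $D^j\phi_y$ at all, since they live in different charts $E^s_x\times E^u_x$ and $E^s_y\times E^u_y$. I would handle this by expressing both in a single chart at $x$, composing with the change-of-splitting map. That map is $C^\infty$ in the fiber variable and Hölder in $y$ by Theorem~\ref{thm:splitting_holder}. Faà di Bruno then shows that changing coordinates costs only an additional $C\,d(x,y)^{\alpha_s}$ at each order.
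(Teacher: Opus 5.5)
Your mechanism is the paper's: the fiber contraction for $D^j\phi$ with the base point as a parameter, induction on $j$, then truncation-and-balance. However, Step~2 does not close as written, for two reasons.

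\emph{Index pairing.} Your displayed inequality pairs the weight $\sigma_j^{\,n-1-k}$ with the error $C(L^kd(x,y))^{\gamma}$, $\gamma=\min(\beta,\alpha_{j-1})$. This is copied from the first display in the proof of Theorem~\ref{thm:manifold_holder}, which has the same defect. With that pairing the least-contracted terms are exactly the ones with the largest separation. So after capping the terms $k\ge N$ by a uniform bound, their total is still of order one, and no power of $d(x,y)$ comes out. In the scheme you describe (iterating $\Gamma^{-1}_x$ from $\phi_0\equiv 0$ at one base point), errors of the form $d(f^kx,f^ky)$ do not even arise. They come from the plaque at $z$ being the pull-back of the plaque at $f(z)$, i.e.\ $D^j\phi_z=\Psi_z(D^j\phi_{f(z)})$. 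Unrolling this along the orbit puts the weight $\sigma_j^{\,k}$ on the error at $(f^kx,f^ky)$, giving $\sum_{k<N}\sigma_j^{\,k}C(L^kd(x,y))^{\gamma}+2K_j\sigma_j^{\,N}$ with $L^Nd(x,y)\approx\varepsilon_0$, which is what the balance needs.

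\emph{Evaluation shift.} Your list of error sources omits the shift of the evaluation point. The sup-norm difference of $B_x^{-1}\Xi^x(s'_x(s))(Ds'_x)^{\otimes j}$ and $B_y^{-1}\Xi^y(s'_y(s))(Ds'_y)^{\otimes j}$ contains the term $\Xi^y(s'_x(s))-\Xi^y(s'_y(s))$. For $j<r$ this is at most $K_{j+1}\|s'_x-s'_y\|_{C^0}$. For $j=r$, Theorem~\ref{thm:manifold_regularity} gives only continuity of $D^r\phi$, so the term is controlled by an unquantified modulus of continuity, not a power of $d(x,y)$. You need one of two repairs:
\begin{enumerate}
\item a preliminary step showing $D^r\phi_z$ is H\"older in $s$ uniformly in $z$ (a further fiber contraction in a H\"older space, using that $D^rf$ is H\"older); or
\item a pointwise comparison along tracked orbits. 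Set $s^x_0=s$, $s^x_{k+1}=s'_{f^kx}(s^x_k)$, likewise for $y$, and $e_k=\|D^j\phi_{f^kx}(s^x_k)-D^j\phi_{f^ky}(s^y_k)\|$ in a common chart. Then $e_k\le\sigma_je_{k+1}+C(L^kd(x,y))^{\gamma'}$ for some $\gamma'>0$: $D^j\phi$ now appears only at the tracked points, the coefficients are already H\"older-controlled, and $\|s^x_k-s^y_k\|$ is bounded by a power of $L^kd(x,y)$. Unrolling down to $k=N$, where $e_N\le 2K_j$, repairs both defects at once.
\end{enumerate}
The paper's one-step rearrangement has the same blind spots. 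Its second bracket contains the same evaluation shift, and treating $\Xi^{(x)}_*$ as a fixed point of $\Psi_x$ itself bypasses the orbit structure.

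In Step~1, $\bigcup_{x\in\Lambda}W^s_{\varepsilon/2}(x)$ is not a neighborhood of $\Lambda$ in general. For the horseshoe it is locally a Cantor set of arcs with empty interior; it contains a neighborhood of $\Lambda$ only in special situations such as hyperbolic attractors or $\Lambda=M$. What your argument can deliver is a lamination of that union; the paper's proof does not address the lamination part at all. There are two smaller fixes:
\begin{enumerate}
\item $\Theta$ restricted to $(\Lambda\cap U)\times B^s_{\varepsilon/2}$ is not injective, since distinct points of $\Lambda$ on one plaque give overlapping plaques. Parametrize charts by a transversal such as $\Lambda\cap W^u_\varepsilon(x_0)$; injectivity then follows from the local product structure.
\item The triangle inequality only gives $W^s_{\varepsilon/2}(y)\subset W^s_{3\varepsilon/2}(x)$, so use plaques of radius $\varepsilon/4$.
\end{enumerate}
With these repairs, your recursive exponent $\alpha_j=c\min(\alpha_{j-1},\beta,\log(1/\sigma_j)/\log L)$ is more careful than the paper's closed formula. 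That formula ignores the lower-order input through $R_j$, and it is in fact increasing rather than decreasing in $j$, so the paper's conclusion $\min_j\alpha_j=\alpha_r$ does not hold.
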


\begin{proof}
Each leaf $W^s_\varepsilon(x)$ is a $C^r$ embedded disk by Theorem~\ref{thm:manifold_regularity}. The H\"{o}lder dependence of the graph map $\phi_x$ on $x$ in the $C^0$ topology is Theorem~\ref{thm:manifold_holder}. It remains to extend this to the $C^j$ topology for $1 \leq j \leq r$.

Fix $j \in \{1, \ldots, r\}$. We iterate the fiber-contraction construction from the proof of Theorem~\ref{thm:manifold_regularity}, now parametrized by the base point $x \in \Lambda$. For each $x \in \Lambda$, $D^j \phi_x$ is the unique fixed point $\Xi_*^{(x)}$ of the affine operator
\begin{equation}
\Psi_x(\Xi)(s) = B_x^{-1} \cdot \Xi(s'_x(s)) \cdot (Ds'_x(s))^{\otimes j} + R_j\bigl(\phi_x, D\phi_x, \ldots, D^{j-1}\phi_x\bigr)(s),
\end{equation}
where $B_x$, $s'_x(\cdot)$, $Ds'_x(\cdot)$, and $R_j$ depend on the local chart at $x$. The contraction ratio of $\Psi_x$ on $C^0(B^s_\delta, \mathcal{L}^j_{\mathrm{sym}})$ is at most $\|B_x^{-1}\| \cdot \|Ds'_x\|^j \leq \lambda^{j+1}(1+O(\delta))$, uniformly in $x \in \Lambda$ for $\delta$ small.

For two base points $x, y \in \Lambda$, the difference $\Xi_*^{(x)} - \Xi_*^{(y)}$ satisfies
\begin{align}
\Xi_*^{(x)} - \Xi_*^{(y)} &= \Psi_x(\Xi_*^{(x)}) - \Psi_y(\Xi_*^{(y)}) \nonumber \\
&= \bigl[\Psi_x(\Xi_*^{(x)}) - \Psi_x(\Xi_*^{(y)})\bigr] + \bigl[\Psi_x(\Xi_*^{(y)}) - \Psi_y(\Xi_*^{(y)})\bigr].
\end{align}
The first bracket is a contraction: its $C^0$ norm is at most $\lambda^{j+1}(1+O(\delta)) \|\Xi_*^{(x)} - \Xi_*^{(y)}\|_{C^0}$. The second bracket measures the $C^0$ distance between the operators $\Psi_x$ and $\Psi_y$ applied to the same input; this depends on differences in $B_x$ vs.\ $B_y$, $s'_x$ vs.\ $s'_y$, $R_j(\phi_x, \ldots, D^{j-1}\phi_x)$ vs.\ $R_j(\phi_y, \ldots, D^{j-1}\phi_y)$. Since $f$ is $C^{r+\beta}$ (or $C^r$ with the relevant derivatives $\beta$-H\"{o}lder), each of these differences is bounded by a $C$ times $d(x,y)^\alpha$-type estimate, where the H\"{o}lder exponent comes from the same truncation-and-balance argument as in Theorem~\ref{thm:manifold_holder}, applied with the contraction $\lambda^{j+1}$ in place of $\theta$.

Rearranging,
\begin{equation}
\bigl(1 - \lambda^{j+1}(1+O(\delta))\bigr) \cdot \|\Xi_*^{(x)} - \Xi_*^{(y)}\|_{C^0} \leq C_j \cdot d(x,y)^{\alpha_j},
\end{equation}
where
\begin{equation}
\alpha_j = \frac{\beta \log(1/\lambda^{j+1})}{\log(\|Df\|_\infty / \lambda^{j+1})} = \frac{\beta (j+1)\log(1/\lambda)}{(j+1)\log(1/\lambda) + \log\|Df\|_\infty}.
\end{equation}
Since $\lambda < 1$, $\lambda^{j+1} < 1$, so $1 - \lambda^{j+1}(1+O(\delta)) > 0$ for small $\delta$, and dividing gives
\begin{equation}
\|D^j \phi_x - D^j \phi_y\|_{C^0} = \|\Xi_*^{(x)} - \Xi_*^{(y)}\|_{C^0} \leq C'_j \, d(x,y)^{\alpha_j}.
\end{equation}
This establishes $C^j$ H\"{o}lder dependence of $\phi_x$ on $x$ with exponent $\alpha_j$. Summing over $0 \leq j \leq r$ (where $\alpha_0$ is the exponent from Theorem~\ref{thm:manifold_holder}) yields $C^r$ H\"{o}lder dependence with exponent $\min_{0 \leq j \leq r} \alpha_j = \alpha_r$ (since $\alpha_j$ is decreasing in $j$).
\end{proof}

\subsection{Global Stable and Unstable Manifolds}

The local stable manifold $W^s_\varepsilon(x)$ captures the forward-contracting orbits in a neighborhood of $x$. Saturating under $f^{-1}$ extends this to the global stable set $W^s(x)$, which we show is an injectively immersed $C^r$ manifold diffeomorphic to $\mathbb{R}^{\dim E^s_x}$.

\begin{proposition}[Global Manifold Structure]\label{prop:global_manifolds}
For $x \in \Lambda$,
\begin{equation}
W^s(x) = \bigcup_{n=0}^{\infty} f^{-n}(W^s_\varepsilon(f^n(x)))
\end{equation}
is an injectively immersed $C^r$ submanifold of dimension $k_s = \dim E^s_x$, diffeomorphic to $\mathbb{R}^{k_s}$.
\end{proposition}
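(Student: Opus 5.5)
We split the proof into the set-theoretic identity, the manifold structure, and the identification with $\mathbb{R}^{k_s}$.

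\textbf{The identity.} The inclusion $\supseteq$ follows from Proposition~\ref{prop:stable_contraction}. If $f^n(y) \in W^s_\varepsilon(f^n(x))$, then $d(f^{n+m}(x), f^{n+m}(y)) \leq \lambda^m \varepsilon \to 0$, so $y \in W^s(x)$. For $\subseteq$, take $y \in W^s(x)$. Then there is $n$ with $d(f^{k}(x), f^{k}(y)) \leq \varepsilon$ for all $k \geq n$. This says exactly that $f^n(y) \in W^s_\varepsilon(f^n(x))$ by the definition of the local stable set.

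\textbf{Nested union of embedded disks.} Set $D_n := f^{-n}(W^s_\varepsilon(f^n(x)))$. Each $D_n$ is a $C^r$ embedded $k_s$-disk, since it is the image under the $C^r$ diffeomorphism $f^{-n}$ of the disk given by Theorem~\ref{thm:manifold_regularity}.

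Next we show $D_n \subset D_{n+1}$. This is equivalent to $f(W^s_\varepsilon(z)) \subset W^s_\varepsilon(f(z))$ for $z = f^n(x) \in \Lambda$, which is immediate from the definition. Moreover, $f(W^s_\varepsilon(z))$ lies in the interior of $W^s_\varepsilon(f(z))$ relative to the leaf. To see this, note that by Proposition~\ref{prop:stable_contraction} its points lie within $\lambda\varepsilon < \varepsilon$ of $f(z)$, and in the graph chart the leaf $W^s_\varepsilon(f(z))$ contains every graph point within distance $\varepsilon$. So $D_n$ is a compact disk contained in the relative interior of $D_{n+1}$, and both are $C^r$ submanifolds of the same dimension. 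Invariance of domain, applied in the chart of $D_{n+1}$, then shows that $D_n$ is a closed $C^r$ subdisk of $D_{n+1}$.

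\textbf{Immersed manifold structure.} Give $W^s(x)$ the direct-limit (final) topology and the $C^r$ atlas inherited from the $D_n$. The charts are compatible because the inclusions $D_n \hookrightarrow D_{n+1}$ are $C^r$ embeddings. The inclusion $W^s(x) \hookrightarrow M$ is injective, and it is an immersion because on each $D_n$ it is an embedding.

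\textbf{Diffeomorphism with $\mathbb{R}^{k_s}$.} This is the step I expect to be the main obstacle. An increasing union of open $k_s$-balls, each compactly contained in the next, is homeomorphic to $\mathbb{R}^{k_s}$ by Brown's monotone union theorem. To obtain a $C^r$ diffeomorphism I would avoid the general theorem and use the dynamics instead, working in the $C^r$ coordinates $s \in B^s_\varepsilon \subset E^s_{f^n(x)}$ given by the graph $\phi_{f^n(x)}$. In these coordinates the map $f^{-1}\colon D_{n+1}\text{-chart} \supset f(D_n) \to D_n$ is conjugate to the restriction to the leaf. Composing the charts, $D_n$ becomes a star-shaped region $U_n \subset \mathbb{R}^{k_s}$: the contraction estimate in Step~1 of Proposition~\ref{prop:stable_contraction} gives $\|s_{k+1}\| \leq \lambda'\|s_k\|$, so radial lines are crossed transversally and each $U_n$ is a smooth star-shaped domain.

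The plan is then to build diffeomorphisms $h_n\colon U_n \to B_{n}$ inductively, with each $h_{n+1}$ extending $h_n$ on a slightly shrunk domain. Gluing these by a standard radial isotopy yields a $C^r$ diffeomorphism of $\bigcup_n D_n$ onto $\mathbb{R}^{k_s}$. If the radial-transversality bookkeeping becomes awkward, the fallback is the Hirsch–Pugh–Shub route: find a $C^r$ vector field on $W^s(x)$ that points inward on each $\partial D_n$ and has a single attracting zero at $x$. Following its flow then identifies $W^s(x)$ with the basin of a hyperbolic sink, which is diffeomorphic to $\mathbb{R}^{k_s}$.
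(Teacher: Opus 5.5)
Your first three steps (the set identity, the nested disks $D_n=f^{-n}(W^s_\varepsilon(f^n(x)))$, and the direct-limit $C^r$ structure) match the paper's proof. Your remark that $D_n$ lies in the relative interior of $D_{n+1}$ is what makes the union a manifold without boundary; the paper skips this point by declaring the closed disks $D_n$ to be open submanifolds.

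You genuinely diverge in the identification with $\mathbb{R}^{k_s}$. The paper writes down an explicit global coordinate, the asymptotic linearization $\Psi(y)=\lim_{n\to\infty}(A_{n-1}\cdots A_0)^{-1}s_n(y)$, and reads injectivity, surjectivity and regularity off it. You argue instead that a monotone union of smooth closed $k_s$-disks, each inside the interior of the next, is $\mathbb{R}^{k_s}$. The paper's route would give an explicit chart in which the leaf dynamics is linear. However, convergence of $\Psi$ needs the corrections, of size up to $\|(A_n\cdots A_0)^{-1}\|\,\|s_n\|^2$, to be summable. That is a bunching condition, roughly $\lambda^2\|Df^{-1}\|_\infty<1$. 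For $(x,y)\mapsto(\lambda x,\lambda^2y+x^2)$ one gets $A^{-n}f^n(x,y)=(x,\,y+n\lambda^{-2}x^2)$, which diverges. Your route uses only the nesting you already proved, so it is the more robust of the two.

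The justification you sketch for star-shapedness must be replaced, though. The estimate $\|s_{k+1}\|\le\lambda'\|s_k\|$ controls sizes, not directions, so it says nothing about radial transversality. What does give it is that the one-step map $g_n\colon s\mapsto A_ns+a_n(s,\phi(s))$ is $C^1$-close to $A_n$ relative to the conorm $\|A_n^{-1}\|^{-1}$ on $B^s_\varepsilon$. Roughly, this means $\varepsilon\,C_0\|Df^{-1}\|_\infty\ll1$, after shrinking $\varepsilon$. Even then, only the one-step picture, $D_n$ seen in the chart of $D_{n+1}$, is star-shaped. The iterated images of $B^s_\varepsilon$ in the chart of a fixed $D_m$ need not be; in the example above they become thin parabolic bands.

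One-step star-shapedness makes each $D_{n+1}\setminus\mathrm{int}\,D_n$ a product collar $S^{k_s-1}\times[0,1]$, after which your inductive radial gluing goes through. Alternatively, the Palais disc theorem with isotopy extension produces the inductive diffeomorphisms onto round balls without any star-shapedness. The vector-field fallback does not avoid this issue: a field inward on every $\partial D_n$ whose trajectories all converge to $x$ already encodes the same collar structure.
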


\begin{proof}
For each $n \geq 0$, set $D_n := f^{-n}(W^s_\varepsilon(f^n(x)))$. Each $D_n$ is a $C^r$ embedded disk of dimension $k_s$ containing $x$, because $W^s_\varepsilon(f^n(x))$ is $C^r$ embedded (Main Theorem~\ref{thm:stable_manifold}(i)) and $f^{-n}$ is a $C^r$ diffeomorphism.

\emph{Monotonicity.} For $n' > n$, we claim $D_n \subset D_{n'}$. If $y \in D_n$ then $f^{n'}(y) = f^{n'-n}(f^n(y))$ and $f^n(y) \in W^s_\varepsilon(f^n(x))$, so Main Theorem~\ref{thm:stable_manifold}(iii) at $f^n(x)$ gives $d(f^{n'}(y), f^{n'}(x)) \leq \lambda^{n'-n} d(f^n(y), f^n(x)) \leq \lambda^{n'-n}\varepsilon \leq \varepsilon$. Hence $f^{n'}(y) \in W^s_\varepsilon(f^{n'}(x))$ and $y \in D_{n'}$.

\emph{Injectivity of the immersion.} $W^s(x) = \bigcup_n D_n$ is a nested union of $C^r$ embedded disks. The natural smooth atlas on $W^s(x)$ is defined by declaring each $D_n$ to be an open submanifold; since inclusions $D_n \hookrightarrow D_{n'}$ are smooth embeddings, the transition maps are smooth, and $W^s(x)$ becomes a $C^r$ manifold injectively immersed in $M$.

\emph{Diffeomorphism with $\mathbb{R}^{k_s}$.} We construct an explicit $C^r$ diffeomorphism $W^s(x) \to \mathbb{R}^{k_s}$. Write $\psi_n : B^s_\varepsilon \to D_n$ for the composition of the graph parametrization of $W^s_\varepsilon(f^n(x))$ with $f^{-n}$; each $\psi_n$ is a $C^r$ diffeomorphism onto $D_n$. Define
\begin{equation}
\Psi : W^s(x) \to \mathbb{R}^{k_s}, \enskip \Psi(y) := \lim_{n \to \infty} \lambda^{-n} \cdot (\text{stable coordinate of } f^n(y) \text{ in chart at } f^n(x)).
\end{equation}
By Main Theorem~\ref{thm:stable_manifold}(iii), the stable coordinate $s_n(y)$ of $f^n(y)$ in the adapted chart at $f^n(x)$ satisfies $\|s_n(y)\| \leq \lambda^n \|s_0(y)\|$. In fact by the contraction analysis in Proposition~\ref{prop:stable_contraction}, $s_{n+1}(y) = A_n s_n(y) + O(\varepsilon\|s_n(y)\|)$ where $A_n$ is the linearization at $f^n(x)$. Since the $A_n$ have operator norm $\leq \lambda$ and the nonlinear correction is summable, the product
\begin{equation}
\Psi(y) := \lim_{n \to \infty} (A_{n-1} \cdots A_0)^{-1} s_n(y) \in \mathbb{R}^{k_s}
\end{equation}
converges (this is the \emph{asymptotic linearization coordinate}; the limit exists because the correction series $\sum \lambda^n \cdot O(\varepsilon)$ converges absolutely). The map $\Psi$ is a $C^r$ diffeomorphism onto $\mathbb{R}^{k_s}$: injectivity follows because two points with the same asymptotic stable coordinate have $s_n$-coordinates differing by $O(\varepsilon \lambda^n) \to 0$, forcing $y = y'$ by the embedding property of each $D_n$; surjectivity follows because for any $v \in \mathbb{R}^{k_s}$, choosing $n$ large enough that $(A_{n-1}\cdots A_0) v \in B^s_\varepsilon$ and pulling back via $\psi_n$ gives a preimage; regularity follows because $\Psi|_{D_n}$ is $C^r$ for each $n$ with derivatives bounded uniformly on compacta.

Hence $W^s(x)$ is $C^r$-diffeomorphic to $\mathbb{R}^{k_s}$.
\end{proof}

\begin{remark}
The global stable manifold $W^s(x)$ may fail to be an embedded submanifold globally due to self-accumulation. This occurs, for example, in the Smale horseshoe where stable manifolds densely fill the horseshoe's trapping region.
\end{remark}

\subsection{Completion of the Proof of the Stable Manifold Theorem}

We now assemble the supporting results to prove Main Theorem~\ref{thm:stable_manifold}.

\begin{proof}[Proof of Main Theorem~\ref{thm:stable_manifold}]
Fix $x \in \Lambda$ and set $\varepsilon_0 = (1-\lambda)^2/(4C_0)$ as in Proposition~\ref{prop:manifold_size}, where $C_0 = \max\{\|D^2 f\|_\infty, \|D^2 f^{-1}\|_\infty\}$. Let $\varepsilon \in (0, \varepsilon_0]$ be given.

Work in adapted coordinates centered at $x$ as in the \emph{Local Coordinates and the Graph Transform} subsection, so that $f$ takes the form $f(s,u) = (As + a(s,u), Bu + b(s,u))$ with $\|A\| \leq \lambda$, $\|B^{-1}\| \leq \lambda$, and $a,b$ vanishing to first order at the origin. Choose $\delta = \varepsilon$ and $K = 1/2$.

\textbf{Existence and graph representation.} By Proposition~\ref{prop:graph_contraction} the backward graph transform $\Gamma^{-1}$ is a contraction on the complete metric space $(\mathcal{G}(\delta, K), \|\cdot\|_{C^0})$. By Theorem~\ref{thm:graph_convergence}, $\Gamma^{-1}$ admits a unique fixed point $\phi_\infty \in \mathcal{G}(\delta, K)$, and $\mathrm{graph}(\phi_\infty) = W^s_\varepsilon(x)$.

\textbf{(i) $C^r$ embedded disk of dimension $\dim E^s_x$, tangent to $E^s_x$ at $x$.}
By Theorem~\ref{thm:manifold_regularity}, $\phi_\infty \in C^r$, so $\mathrm{graph}(\phi_\infty)$ is a $C^r$ embedded disk in $M$ of dimension $k_s = \dim E^s_x$. Tangency to $E^s_x$ at $x$ is the statement $D\phi_\infty(0) = 0$: indeed, differentiating the fixed-point equation $\phi_\infty(s) = B^{-1}[\phi_\infty(s') - b(s, \phi_\infty(s))]$ at $s = 0$ (using $s'(0) = 0$ since $a(0,0) = 0$, $Da(0,0) = 0$) yields
\begin{equation}
D\phi_\infty(0) = B^{-1} D\phi_\infty(0) \cdot A,
\end{equation}
which, because $\|B^{-1}\|\cdot \|A\| \leq \lambda^2 < 1$, forces $D\phi_\infty(0) = 0$. Therefore $T_0\, \mathrm{graph}(\phi_\infty) = \mathbb{R}^{k_s} \times \{0\}$, which in the original coordinates is exactly $E^s_x$.

\textbf{(ii) $T_x W^s_\varepsilon(x) = E^s_x$.}
This is the tangency statement in (i).

\textbf{(iii) Exponential contraction under iteration.}
This is exactly Proposition~\ref{prop:stable_contraction}.

\textbf{(iv) $C^r$ continuous dependence on $x$.}
By Theorem~\ref{thm:manifold_holder}, the graph map $\phi_x$ of $W^s_\varepsilon(x)$ depends H\"{o}lder continuously on $x$ in the $C^0$ topology. By Corollary~\ref{cor:foliation_regularity}, this extends to H\"{o}lder continuity in the $C^r$ topology. H\"{o}lder continuity implies continuity, establishing (iv).

\textbf{(v) Global stable manifold characterization.}
We prove the two inclusions:

\emph{$(\supseteq)$:} Let $y \in \bigcup_{n \geq 0} f^{-n}(W^s_\varepsilon(f^n(x)))$; then $f^n(y) \in W^s_\varepsilon(f^n(x))$ for some $n \geq 0$. By~(iii) applied at $f^n(x)$, $d(f^{n+k}(x), f^{n+k}(y)) \leq \lambda^k d(f^n(x), f^n(y)) \to 0$ as $k \to \infty$, so $y \in W^s(x)$.

\emph{$(\subseteq)$:} Let $y \in W^s(x)$, so $d(f^n(x), f^n(y)) \to 0$. Pick $N$ so large that $d(f^n(x), f^n(y)) < \varepsilon/2$ for all $n \geq N$. In adapted coordinates centered at $f^N(x)$, the point $f^N(y)$ satisfies $\|f^n(y) - f^n(x)\| \leq \varepsilon$ for all $n \geq N$ in these coordinates (by the adapted metric property), which is exactly the defining property of $W^s_\varepsilon(f^N(x))$. Therefore $f^N(y) \in W^s_\varepsilon(f^N(x))$, so $y \in f^{-N}(W^s_\varepsilon(f^N(x)))$.

The analogous statements for $W^u_\varepsilon(x)$ follow by replacing $f$ with $f^{-1}$ and exchanging the roles of $E^s$ and $E^u$ throughout (the stable splitting for $f^{-1}$ is the unstable splitting for $f$).
\end{proof}

\section{Canonical Coordinates and Local Product Structure}\label{sec:canonical_coordinates}

This section develops the local product structure for hyperbolic sets, establishing canonical coordinates with quantitative estimates. The local product structure is used in the construction of Markov partitions (Section~\ref{sec:markov_partitions}) and the proof of the spectral decomposition (Section~\ref{sec:spectral_decomposition}).

\subsection{Local Product Structure}

The stable and unstable manifolds through points of a hyperbolic set intersect transversally, providing a local coordinate system.

\begin{theorem}[Local Product Structure]\label{thm:local_product}
Let $\Lambda$ be a hyperbolic set for a $C^r$ diffeomorphism $f$. There exist constants $\varepsilon_0 > 0$ and $\delta_0 > 0$ such that for all $x, y \in \Lambda$ with $d(x, y) < \delta_0$, the intersection $W^s_{\varepsilon_0}(x) \cap W^u_{\varepsilon_0}(y)$ consists of exactly one point. This point lies in $\Lambda$ and is denoted $[x, y]$.
\end{theorem}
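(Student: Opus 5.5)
The plan has three steps: existence and uniqueness of the intersection point, then membership in $\Lambda$, carried out in a single chart centered at $x$.

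\textbf{Step 1: existence and uniqueness.} Fix $\varepsilon_0$ no larger than the size given by Main Theorem~\ref{thm:stable_manifold} and Proposition~\ref{prop:manifold_size}. Work in the exponential chart at $x$ with splitting $\mathbb{R}^{k_s}\times\mathbb{R}^{k_u}$ adapted to $E^s_x\oplus E^u_x$. By Main Theorem~\ref{thm:stable_manifold}(i),(ii), $W^s_{\varepsilon_0}(x)$ is the graph of $\phi_x : B^s \to B^u$ with $\phi_x(0)=0$ and $\|\phi_x\|_{\mathrm{Lip}}\le K=1/2$.

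Apply the same theorem at $y$ to $f^{-1}$. By the H\"older continuity of the splitting (Theorem~\ref{thm:splitting_holder}), $E^u_y$ transported to $T_xM$ is within $C\,d(x,y)^{\alpha_u}$ of $E^u_x$. So for $\delta_0$ small, $W^u_{\varepsilon_0}(y)$ is the graph of a map $\psi_y : B^u \to B^s$ with $\|\psi_y\|_{\mathrm{Lip}}\le K$ and $\|\psi_y(u_y)-s_y\|$ small, where $(s_y,u_y)$ are the coordinates of $y$. Here I use Theorem~\ref{thm:manifold_holder} and Corollary~\ref{cor:foliation_regularity} to control the change of chart from $y$ to $x$.

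An intersection point is a solution of $s=\psi_y(u)$, $u=\phi_x(s)$. Equivalently, $s$ is a fixed point of $s\mapsto\psi_y(\phi_x(s))$, which has Lipschitz constant $K^2=1/4$. I need this map to send a ball $B^s_{r}$ into itself with $r<\varepsilon_0$. Since $\|\psi_y(0)\|\lesssim d(x,y)$, this holds once $d(x,y)<\delta_0\approx\varepsilon_0(1-K^2)/C$. The Banach fixed point theorem then gives a unique intersection point $z$ in the chart, with $\|z\|\le C_1 d(x,y)$ and $C_1=(1-K)^{-1}$-type constants.

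\textbf{Main obstacle.} The step I expect to be hardest is reconciling the chart picture with the dynamical definitions of $W^{s/u}_{\varepsilon_0}$ as sets of points whose orbits stay $\varepsilon_0$-close. Uniqueness in the chart only covers solutions inside the chart ball. To exclude other intersection points I would first shrink to $W^s_{\varepsilon_0}\cap W^u_{\varepsilon_0}$ with $\varepsilon_0$ below the chart radius, so that every point of both local manifolds lies in the chart. Theorem~\ref{thm:graph_convergence} identifies these local manifolds with the graphs, so any intersection point is a fixed point as above and hence equals $z$. To keep $z$ within $\varepsilon_0$ of $x$ and of $y$ (rather than some larger radius), I would take $\delta_0$ small relative to $\varepsilon_0$ and use the contraction of Proposition~\ref{prop:stable_contraction} along each manifold.

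\textbf{Step 2: $z=[x,y]\in\Lambda$.} This is the step that needs care, since $\Lambda$ is only assumed hyperbolic, not locally maximal. Completing it would require either the density of periodic points (as in Axiom A, where $\Lambda=\Omega(f)$) or local maximality. The argument I would use is as follows.
\begin{itemize}
\item The orbit of $z$ stays $\varepsilon_0$-close to the orbit of $x$ in forward time and to the orbit of $y$ in backward time.
\item Approximate $x$ and $y$ by periodic points $p,q\in\Lambda$ using (A2), and form $[p,q]$ by the same construction.
\item $[p,q]$ is a limit of points of $\Lambda$: it lies in $W^s(p)\cap W^u(q)$, which is approximated by $\Lambda$ via chain recurrence (Proposition~\ref{prop:axiom_a_properties}(ii)) together with the closedness of $\Lambda$.
\item Continuity of the bracket, a consequence of the fixed-point construction depending continuously on $(x,y)$, then passes this to $z$.
\end{itemize}
In the general hyperbolic case I would state this as a standing assumption of local maximality.
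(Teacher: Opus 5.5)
Your Step 1 is the paper's argument. Both local manifolds become graphs with Lipschitz constant $K<1$ over complementary directions in one chart; the paper centers the chart at an auxiliary point near $x$ and $y$, you center it at $x$. The intersection is then the unique fixed point of the composed graph map, which is a $K^2$-contraction. Your extra care in matching the dynamically defined $W^s_{\varepsilon_0}$, $W^u_{\varepsilon_0}$ with the graphs, and in keeping the fixed point at scale $\varepsilon_0$, covers points the paper passes over.

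You are also right that Step 2 does not follow from hyperbolicity alone. The paper proves exactly your first bullet: the forward orbit of $[x,y]$ tracks that of $x$, and the backward orbit tracks that of $y$. It then concludes $[x,y]\in\Lambda$ ``since $\Lambda$ is the maximal invariant set in any sufficiently small neighborhood''. That is, it silently assumes local maximality, which is your fallback. With that assumption, $z\in\bigcap_n f^n(U)=\Lambda$ and your proof is complete along the paper's lines.

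Without some such assumption the membership claim is false. Take $\Lambda=\{p\}\cup\mathcal{O}(q)$, where $q$ is a transverse homoclinic point of a hyperbolic fixed point $p$. For large $m,n$ the points $x=f^{-m}(q)$ and $y=f^{n}(q)$ are arbitrarily close. Yet $[x,y]$ lies on a two-bump homoclinic orbit, which is not in $\Lambda$.

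Your alternative Axiom A route has a genuine gap at the third bullet. Chain recurrence only produces $\varepsilon$-chains. It says nothing about whether a point of $W^s(p)\cap W^u(q)$ is nonwandering. Moreover, Proposition~\ref{prop:axiom_a_properties}(ii) as stated asserts chain transitivity of all of $\Omega(f)$, which fails for an attractor--repeller pair such as the north--south map on the sphere.

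The missing idea is the heteroclinic cycle. The points $w=[p,q]$ and $w'=[q,p]$ connect the orbits of $p$ and $q$ in both directions.
\begin{enumerate}
\item[] Splicing long stretches of the orbits of $w$ and $w'$ gives periodic pseudo-orbits through $w$ with arbitrarily small jumps.
\item[] These lie in the compact invariant set $\Omega(f)\cup\overline{\mathcal{O}(w)\cup\mathcal{O}(w')}$. This set stays in a small neighborhood of $\Omega(f)$ and is therefore hyperbolic.
\item[] The closing lemma then yields periodic points of $f$ converging to $w$, so $w\in\overline{\mathrm{Per}(f)}=\Omega(f)$.
\item[] Continuity of the bracket together with (A2) passes this to arbitrary $x,y\in\Omega(f)$.
\end{enumerate}

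This argument uses $\Lambda=\Omega(f)$ essentially, not merely density of periodic points in $\Lambda$. For example, the closed invariant subset of a horseshoe coded by the even shift has dense periodic points but is not closed under $[\cdot,\cdot]$ at any scale.
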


\begin{proof}
Using an adapted metric with hyperbolicity exponent $\lambda$, choose $\varepsilon_0$ small enough that local stable and unstable manifolds are defined and are graphs over the stable and unstable subspaces respectively.

In local coordinates near a point $z \in \Lambda$ with $d(x, z) < \delta_0/2$ and $d(y, z) < \delta_0/2$, we can write $W^s_{\varepsilon_0}(x)$ as the graph of a function $\phi_x : E^s_z \to E^u_z$ and $W^u_{\varepsilon_0}(y)$ as the graph of a function $\psi_y : E^u_z \to E^s_z$.

The intersection $W^s_{\varepsilon_0}(x) \cap W^u_{\varepsilon_0}(y)$ corresponds to solving
\begin{equation}
(s, \phi_x(s)) = (\psi_y(u), u)
\end{equation}
which gives the system $s = \psi_y(u)$ and $u = \phi_x(s) = \phi_x(\psi_y(u))$.

Define $F(u) = \phi_x(\psi_y(u))$. Since $\phi_x$ and $\psi_y$ are Lipschitz with small constants (say, at most $K < 1/2$ each), the composition $F$ is a contraction:
\begin{equation}
\|F(u_1) - F(u_2)\| \leq K^2 \|u_1 - u_2\| < \|u_1 - u_2\|.
\end{equation}
By the Banach Fixed Point Theorem, there is a unique $u^*$ with $F(u^*) = u^*$, yielding the unique intersection point $[x, y] = (\psi_y(u^*), u^*)$.

To show $[x, y] \in \Lambda$, we verify that the orbit of $z = [x,y]$ remains in a neighborhood of $\Lambda$, hence in $\Lambda$ (since $\Lambda$ is the maximal invariant set in any sufficiently small neighborhood). For $n \geq 0$: $f^n(z) \in W^s_\varepsilon(f^n(x))$ since $z \in W^s_\varepsilon(x)$ and stable manifolds are invariant, so $d(f^n(z), f^n(x)) \leq C\lambda^n d(z,x) \to 0$. Since $f^n(x) \in \Lambda$, $f^n(z)$ stays near $\Lambda$. For $n \leq 0$: $f^n(z) \in W^u_\varepsilon(f^n(y))$ since $z \in W^u_\varepsilon(y)$, so $d(f^n(z), f^n(y)) \leq C\lambda^{|n|} d(z,y) \to 0$. Since $f^n(y) \in \Lambda$, $f^n(z)$ stays near $\Lambda$ for negative iterates as well. Thus $z \in \bigcap_{n\in\mathbb{Z}}f^{-n}(U) = \Lambda$ for any isolating neighborhood $U$ of $\Lambda$.
\end{proof}

\begin{proposition}[Continuity of Bracket]\label{prop:bracket_continuous}
The map $[\cdot, \cdot] : \{(x, y) \in \Lambda \times \Lambda : d(x, y) < \delta_0\} \to \Lambda$ is continuous.
\end{proposition}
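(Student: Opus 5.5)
The plan is to reuse the fixed-point description of $[x,y]$ from the proof of Theorem~\ref{thm:local_product} and to combine it with the continuous dependence of local manifolds on the base point, Main Theorem~\ref{thm:stable_manifold}(iv), quantified by Theorem~\ref{thm:manifold_holder}. Fix $(x_0,y_0)$ with $d(x_0,y_0)<\delta_0$ and a reference point $z$ near both. For $(x,y)$ close to $(x_0,y_0)$, I work in the single chart at $z$. In that chart $W^s_{\varepsilon_0}(x)$ is the graph of $\phi_x:E^s_z\to E^u_z$, $W^u_{\varepsilon_0}(y)$ is the graph of $\psi_y:E^u_z\to E^s_z$, and both maps have Lipschitz constant at most $K<1/2$. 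The bracket is then $[x,y]=(\psi_y(u^*),u^*)$, where $u^*=u^*(x,y)$ is the unique fixed point of $F_{x,y}=\phi_x\circ\psi_y$.

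The key step is a parametrized contraction estimate. For two pairs $(x,y)$ and $(x',y')$, I would write
\begin{equation*}
\|u^*-u'^*\| \le \|F_{x,y}(u^*)-F_{x,y}(u'^*)\| + \|F_{x,y}(u'^*)-F_{x',y'}(u'^*)\| \le K^2\|u^*-u'^*\| + \|F_{x,y}-F_{x',y'}\|_{C^0}.
\end{equation*}
This gives $\|u^*-u'^*\|\le (1-K^2)^{-1}\|F_{x,y}-F_{x',y'}\|_{C^0}$. To bound the right side, I would use the triangle inequality together with the Lipschitz bound on $\phi_x$:
\begin{equation*}
\|F_{x,y}-F_{x',y'}\|_{C^0}\le \|\phi_x-\phi_{x'}\|_{C^0}+K\|\psi_y-\psi_{y'}\|_{C^0}.
\end{equation*}
By Theorem~\ref{thm:manifold_holder} and its unstable analogue, this is at most $C\bigl(d(x,x')^\alpha+d(y,y')^\alpha\bigr)$. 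The last step is to pass to the point itself via $\|\psi_y(u^*)-\psi_{y'}(u'^*)\|\le \|\psi_y-\psi_{y'}\|_{C^0}+K\|u^*-u'^*\|$. The result is that $[\cdot,\cdot]$ is continuous, in fact H\"older, with Lipschitz-type constant involving $(1-K)^{-1}$, which is consistent with the constant $C_1$ announced in the introduction.

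The main obstacle is that Theorem~\ref{thm:manifold_holder} compares $\phi_x$ and $\phi_{x'}$ as graphs over the different spaces $E^s_x$ and $E^s_{x'}$, each in its own exponential chart. What I need instead is a comparison of their re-expressions as graphs over the common splitting at $z$. I would handle this in three steps. First, the charts $\exp_x$ and $\exp_z$ differ by a $C^1$-small map near $z$. Second, by Theorem~\ref{thm:splitting_holder}, $d_G(E^s_x,E^s_z)$ is small. Third, a graph of Lipschitz constant at most $1/2$ over $E^s_x$ therefore re-expresses, by the implicit function theorem, as a graph over $E^s_z$ with Lipschitz constant below $K$, and its $C^0$ distance to the re-expression of $\phi_{x'}$ is controlled by $\|\phi_x-\phi_{x'}\|_{C^0}$ plus the chart discrepancy. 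The chart discrepancy tends to $0$ as $x'\to x$.

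If this quantitative route proves messy, I would fall back on a purely topological argument. Take $(x_n,y_n)\to(x,y)$ and any limit point $w$ of $[x_n,y_n]$. By closedness of the defining conditions, $d(f^k w,f^k x)\le\varepsilon_0$ for $k\ge0$ and $d(f^{-k}w,f^{-k}y)\le\varepsilon_0$ for $k\ge0$, so $w\in W^s_{\varepsilon_0}(x)\cap W^u_{\varepsilon_0}(y)$. Uniqueness in Theorem~\ref{thm:local_product} then forces $w=[x,y]$, and compactness of $M$ gives convergence. Finally, $[x,y]\in\Lambda$ holds by the same theorem.
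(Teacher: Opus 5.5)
This is correct and is essentially the paper's proof. The paper also realizes $[x,y]$ as the fixed point of $F=\phi_x\circ\psi_y$, uses the bound $\|u^*_n-u^*\|\le\|F_n(u^*)-F(u^*)\|/(1-K^2)$, and takes $C^0$ convergence of the graph maps from Theorem~\ref{thm:manifold_holder}; the re-expression over the common splitting at $z$ that you flag and handle is skipped there without comment. Your topological fallback (closedness of the conditions defining $W^s_{\varepsilon_0}$ and $W^u_{\varepsilon_0}$, uniqueness in Theorem~\ref{thm:local_product}, and compactness) is also valid, and it avoids the chart issue entirely.
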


\begin{proof}
Let $(x_n, y_n) \to (x, y)$ with $d(x_n, y_n) < \delta_0$. The point $z_n = [x_n, y_n]$ lies on $W^s_\varepsilon(x_n) \cap W^u_\varepsilon(y_n)$, which is the fixed point of $F_n(u) = \phi_{x_n}(\psi_{y_n}(u))$. As $n \to \infty$, the graph maps $\phi_{x_n}$ and $\psi_{y_n}$ converge to $\phi_x$ and $\psi_y$ in the $C^0$ topology (by the continuous dependence of stable/unstable manifolds on the base point, Theorem~\ref{thm:manifold_holder}). The fixed point of a contraction depends continuously on the contraction map (since $\|u^*_n - u^*\| \leq \|F_n(u^*) - F(u^*)\|/(1-K^2) \to 0$), so $z_n \to z = [x,y]$.
\end{proof}

\subsection{Quantitative Estimates}

This subsection records the distance and Lipschitz bounds for the bracket map $[\cdot,\cdot]$, which quantify how well the bracket approximates the identity on small scales. These bounds are used in the construction of Markov partitions.

\begin{proposition}[Bracket Distance Bounds]\label{prop:bracket_bounds}
For $x, y \in \Lambda$ with $d(x, y) < \delta_0$,
\begin{equation}
d([x, y], x) \leq C_1 d(x, y), \quad d([x, y], y) \leq C_1 d(x, y)
\end{equation}
where $C_1 = (1 - K)^{-1}$ and $K$ is the Lipschitz constant of the manifold graphs.
\end{proposition}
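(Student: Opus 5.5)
The plan is to work in the local coordinates used in the proof of Theorem~\ref{thm:local_product}, centered at $x$ itself rather than at an auxiliary point $z$. Up to the bounded distortion between the chart norm and $d$ (absorbed into constants, or exact if the chart is taken isometric at the base point in the adapted metric), distances near $x$ are then the max-norm $\|(s,u)\| = \max\{\|s\|,\|u\|\}$. In these coordinates $x = (0,0)$, and $W^s_{\varepsilon_0}(x)$ is the graph of $\phi_x : E^s_x \to E^u_x$ with $\phi_x(0) = 0$ and $\|\phi_x\|_{\mathrm{Lip}} \leq K$. Likewise $W^u_{\varepsilon_0}(y)$ is the graph $\{(\psi_y(u),u)\}$ of a $K$-Lipschitz map $\psi_y$ passing through $y = (y_s, y_u)$, so $\psi_y(y_u) = y_s$. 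The point $[x,y] = (s^*, u^*)$ is the unique solution of $s^* = \psi_y(u^*)$ and $u^* = \phi_x(s^*)$.

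\textbf{Bound for $d([x,y],x)$.} Since $\phi_x(0)=0$, we have $\|u^*\| \leq K\|s^*\|$, so $\|[x,y]\| = \|s^*\|$ once $K \leq 1$. Next we estimate $s^*$ using the unstable graph:
\begin{equation*}
\|s^*\| = \|\psi_y(u^*)\| \leq \|\psi_y(y_u)\| + K\|u^* - y_u\| \leq \|y_s\| + K(\|u^*\| + \|y_u\|).
\end{equation*}
Combining this with $\|u^*\| \leq K\|s^*\|$ gives $\|s^*\|(1-K^2) \leq (1+K)\|y\|$, hence $\|s^*\| \leq (1-K)^{-1}\|y\|$. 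This yields $d([x,y],x) \leq C_1 d(x,y)$.

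\textbf{Bound for $d([x,y],y)$.} By symmetry we recentre the chart at $y$. Now $W^u(y)$ is the graph of $\psi_y$ with $\psi_y(0)=0$, and $W^s(x)$ is a $K$-Lipschitz graph through $x$. The same computation with the roles of the stable and unstable coordinates swapped gives $\|[x,y] - y\| \leq (1-K)^{-1}\|x-y\|$.

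\textbf{Main obstacle.} The delicate step is justifying that both manifolds are genuinely $K$-Lipschitz graphs over the splitting at the chosen centre. For $W^s(x)$ at centre $x$ this is immediate from Theorem~\ref{thm:graph_convergence}. For $W^u(y)$ re-expressed in the chart at $x$, however, the splittings $E^{s/u}_y$ and $E^{s/u}_x$ differ. Here I would use Theorem~\ref{thm:splitting_holder} together with Corollary~\ref{cor:foliation_regularity}: for $d(x,y)<\delta_0$ small, the tilt of $E^u_y$ relative to $E^u_x$ is $O(d(x,y)^{\alpha_u})$, so the Lipschitz constant of the re-expressed graph is at most $K + O(\delta_0^{\alpha_u})$. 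I would absorb this by starting from graph constant $K/2$ (or simply redefining $K$ as the constant after change of chart), so that the final estimate holds with $C_1=(1-K)^{-1}$.
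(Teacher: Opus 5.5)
Your proposal is correct and follows essentially the same route as the paper: work in the chart centered at $x$, use $\|u^*\|\le K\|s^*\|$ from $\phi_x(0)=0$, bound $s^*$ through the Lipschitz property of $\psi_y$, and solve the resulting linear inequality. Your version is slightly cleaner in two ways. First, you anchor at the exact relation $\psi_y(y_u)=y_s$, whereas the paper uses the approximation $\psi_y(0)\approx s_y$; this yields the constant $(1+K)/(1-K^2)=(1-K)^{-1}$ exactly. Second, you also prove the inequality $d([x,y],y)\le C_1 d(x,y)$ by recentring the chart at $y$, which the paper leaves implicit.
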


\begin{proof}
In local coordinates centered at $x$, we have $y = (s_y, u_y)$ with $\|(s_y, u_y)\| \approx d(x, y)$. The intersection point $[x, y] = (s^*, u^*)$ satisfies $s^* = \psi_y(u^*)$ and $u^* = \phi_x(s^*)$.

Since $\phi_x(0) = 0$ (the stable manifold passes through the origin) and $\|\phi_x\|_{\mathrm{Lip}} \leq K$, we have $\|u^*\| = \|\phi_x(s^*)\| \leq K\|s^*\|$. Similarly, $\|s^* - s_y\| = \|\psi_y(u^*) - \psi_y(0)\| \leq K\|u^*\|$ (using that $\psi_y(0) \approx s_y$ for $y$ close to $x$).

Combining these estimates,
\begin{equation}
\|s^*\| \leq \|s_y\| + K\|u^*\| \leq \|s_y\| + K^2\|s^*\|
\end{equation}
giving $\|s^*\| \leq (1 - K^2)^{-1}\|s_y\| \leq (1 - K)^{-1}d(x, y)$.

Thus $d([x, y], x) = \|(s^*, u^*)\| \leq \max\{\|s^*\|, K\|s^*\|\} \leq (1 - K)^{-1}d(x, y)$.
\end{proof}

\begin{proposition}[Bracket Lipschitz Continuity]\label{prop:bracket_lipschitz}
The bracket map satisfies
\begin{equation}
d([x_1, y_1], [x_2, y_2]) \leq C_2(d(x_1, x_2) + d(y_1, y_2))
\end{equation}
for a constant $C_2$ depending on the hyperbolicity data.
\end{proposition}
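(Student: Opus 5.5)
The plan is to write the bracket point as the fixed point of a contraction in a single chart and perturb that fixed point, with a triangle inequality through $[x_1,y_2]$ isolating the dependence on each argument. Write $z_1=[x_1,y_1]$, $z_2=[x_2,y_2]$, and use the intermediate point $w=[x_1,y_2]$, which is defined once $\delta_0$ is shrunk so that $d(x_1,y_2)<\delta_0$. Then
\begin{equation*}
d(z_1,z_2)\le d([x_1,y_1],[x_1,y_2])+d([x_1,y_2],[x_2,y_2]),
\end{equation*}
so it suffices to prove $d([x,y_1],[x,y_2])\le C\,d(y_1,y_2)$ and $d([x_1,y],[x_2,y])\le C\,d(x_1,x_2)$. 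The second estimate is the first one applied to $f^{-1}$, which swaps the roles of $W^s$ and $W^u$. So the work reduces to varying only the unstable argument.

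Fix $x$ and work in the adapted chart at a nearby point $z\in\Lambda$, exactly as in the proof of Theorem~\ref{thm:local_product}. There $W^s_{\varepsilon_0}(x)=\mathrm{graph}(\phi_x)$ and $W^u_{\varepsilon_0}(y_i)=\mathrm{graph}(\psi_{y_i})$, with Lipschitz constants at most $K<1/2$. The $u$-coordinate $u_i^*$ of $[x,y_i]$ is the unique fixed point of $F_i=\phi_x\circ\psi_{y_i}$, and each $F_i$ is a $K^2$-contraction. The argument in Proposition~\ref{prop:bracket_continuous} gives
\begin{equation*}
\|u_1^*-u_2^*\|\le \frac{K}{1-K^2}\,\|\psi_{y_1}(u_1^*)-\psi_{y_2}(u_1^*)\|.
\end{equation*}
Both $s$-coordinates are then controlled by $s_i^*=\psi_{y_i}(u_i^*)$ and the Lipschitz bound $K$. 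So everything hinges on the \emph{local gap estimate}
\begin{equation*}
\|\psi_{y_1}(u)-\psi_{y_2}(u)\|\le C\,d(y_1,y_2)\qquad\text{for }\|u\|\lesssim \delta_0 .
\end{equation*}
In words: two local unstable leaves in one chart are separated, at the relevant height, by at most a constant times the distance of their base points.

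This gap estimate is the main obstacle. Theorem~\ref{thm:manifold_holder} only gives it with exponent $\alpha$, and a general Lipschitz bound on leaf dependence is not available. My first attempt will exploit that the two leaves lie in the same chart, so that the graph $\psi_{y_2}$, re-expressed in that chart, is a fixed point of the same backward graph transform for $f^{-1}$ as $\psi_{y_1}$, along orbits that stay $\varepsilon_0$-close. I will try to compare the two graphs iterate by iterate, using the contraction rate $\theta$ of Proposition~\ref{prop:graph_contraction} and the Lipschitz bound $K$ to transfer the gap at the base points to the gap at height $u$. The hope is to obtain $C=(1+K)(1-\theta)^{-1}$ times constants from Proposition~\ref{prop:bracket_bounds}.

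If this iterate-by-iterate transfer only yields a constant that depends on $\|Df\|_\infty$ raised to the number of steps, I would fall back on restricting to pairs with $d(y_1,y_2)$ comparable to $\delta_0$. There the trivial bound $d(z_1,z_2)\le 2C_1\delta_0$ from Proposition~\ref{prop:bracket_bounds} is already Lipschitz with constant $2C_1\delta_0/\min d$. I expect, however, that the small-scale regime is exactly where the claimed Lipschitz constant could fail and only the H\"older bound survives.
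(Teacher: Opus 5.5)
Your reduction is the paper's own. The bracket $[x,y]$ is the fixed point of the $K^2$-contraction $F_{x,y}=\phi_x\circ\psi_y$. The bound $\|u_1^*-u_2^*\|\le(1-K^2)^{-1}\|F_{x_1,y_1}(u_1^*)-F_{x_2,y_2}(u_1^*)\|$ plus a triangle inequality then reduces everything to $C^0$ bounds on $\phi_{x_1}-\phi_{x_2}$ and $\psi_{y_1}-\psi_{y_2}$. Your split through $[x_1,y_2]$ and the $f^{-1}$ symmetry is a harmless variant.

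The paper closes in one line, asserting $\|\phi_{x_1}-\phi_{x_2}\|_{C^0}\le C''d(x_1,x_2)$ and $\|\psi_{y_1}-\psi_{y_2}\|_{C^0}\le C''d(y_1,y_2)$ \emph{by the continuous dependence of stable manifolds on base points}. That is exactly your local gap estimate, and Theorem~\ref{thm:manifold_holder} does not provide it: it gives only exponent $\alpha<\beta\le 1$.

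So you have located the real issue, but your proposal does not prove the statement. The gap estimate is left open. The fallback is not a Lipschitz bound either: a constant $2C_1\delta_0/\min d(y_1,y_2)$ on pairs bounded away from the diagonal says nothing about small scales, which is the whole content of the claim.

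In fact the gap cannot be closed at this generality, and your first attempt shows why. Its premise fails. The transforms producing $\psi_{y_1}$ and $\psi_{y_2}$ run along the backward orbits of $y_1$ and $y_2$, which stay $\varepsilon_0$-close only if $y_2\in W^u(y_1)$, the trivial case. Any stable component of the displacement is expanded by $f^{-1}$. In the worst case the two transforms can then be compared in common charts only for $N\approx\log(\delta_0/d(y_1,y_2))/\log\|Df^{-1}\|_\infty$ steps. The residual $\theta^N$ produces $d(y_1,y_2)^{\log(1/\theta)/\log\|Df^{-1}\|_\infty}$, which is the truncation-and-balance loss of Theorem~\ref{thm:manifold_holder}.

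This loss is genuine. If the proposition held, the product charts $(y,z)\mapsto[y,z]$ and their inverses $w\mapsto([x,w],[w,x])$ would be Lipschitz. The stable and unstable foliations of every Anosov diffeomorphism would then be Lipschitz. But these foliations are in general only H\"older, and non-Lipschitz ones are known to be prevalent (Hasselblatt--Wilkinson). So the proposition is false as stated, and the paper's proof rests on an unjustified step.

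What your reduction does prove, using Theorem~\ref{thm:manifold_holder} and its unstable analogue in place of the gap estimate, is the H\"older bound $d([x_1,y_1],[x_2,y_2])\le C\bigl(d(x_1,x_2)^{\alpha}+d(y_1,y_2)^{\alpha}\bigr)$ for $f\in C^{1+\beta}$. Here $\alpha$ is the smaller of the stable and unstable exponents. A Lipschitz bound needs additional hypotheses, e.g.\ $C^1$ holonomies (as for $C^2$ surface diffeomorphisms) or suitable bunching conditions.
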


\begin{proof}
Recall from Theorem~\ref{thm:local_product} that $[x,y]$ is the unique fixed point of $F_{x,y}(u) = \phi_x(\psi_y(u))$, where $\phi_x$ parametrizes $W^s_\varepsilon(x)$ and $\psi_y$ parametrizes $W^u_\varepsilon(y)$ in local coordinates. The map $F_{x,y}$ is a contraction with Lipschitz constant $K^2 < 1$.

For $(x_1, y_1)$ and $(x_2, y_2)$ with $d(x_i, y_i) < \delta_0$, let $u^*_1$ and $u^*_2$ be the respective fixed points. Then
\begin{align}
\|u^*_1 - u^*_2\| &= \|F_{x_1,y_1}(u^*_1) - F_{x_2,y_2}(u^*_2)\| \\
&\leq \|F_{x_1,y_1}(u^*_1) - F_{x_2,y_2}(u^*_1)\| + \|F_{x_2,y_2}(u^*_1) - F_{x_2,y_2}(u^*_2)\| \\
&\leq \|F_{x_1,y_1}(u^*_1) - F_{x_2,y_2}(u^*_1)\| + K^2\|u^*_1 - u^*_2\|.
\end{align}
Thus $\|u^*_1 - u^*_2\| \leq (1 - K^2)^{-1}\|F_{x_1,y_1}(u^*_1) - F_{x_2,y_2}(u^*_1)\|$.

Now $\|F_{x_1,y_1}(u) - F_{x_2,y_2}(u)\| = \|\phi_{x_1}(\psi_{y_1}(u)) - \phi_{x_2}(\psi_{y_2}(u))\|$. By the triangle inequality and the Lipschitz property:
\begin{align}
\|\phi_{x_1}(\psi_{y_1}(u)) - \phi_{x_2}(\psi_{y_2}(u))\| &\leq \|\phi_{x_1}(\psi_{y_1}(u)) - \phi_{x_2}(\psi_{y_1}(u))\| + K\|\psi_{y_1}(u) - \psi_{y_2}(u)\|.
\end{align}
By the continuous dependence of stable manifolds on base points, $\|\phi_{x_1} - \phi_{x_2}\|_{C^0} \leq C''d(x_1, x_2)$ and $\|\psi_{y_1} - \psi_{y_2}\|_{C^0} \leq C''d(y_1, y_2)$ in adapted coordinates. Combining: $d([x_1,y_1],[x_2,y_2]) \leq C_2(d(x_1,x_2) + d(y_1,y_2))$ with $C_2 = C''(1+K)/(1-K^2)$.
\end{proof}

\subsection{Expansiveness}

The hyperbolic structure ensures that distinct orbits eventually separate.

\begin{definition}[Expansiveness]
A homeomorphism $f : X \to X$ of a compact metric space is expansive if there exists $\varepsilon > 0$ (the expansiveness constant) such that for any $x \neq y$ in $X$, there exists $n \in \mathbb{Z}$ with $d(f^n(x), f^n(y)) > \varepsilon$.
\end{definition}

\begin{proposition}[Expansiveness of Hyperbolic Sets]\label{thm:expansiveness}
Let $\Lambda$ be a hyperbolic set for $f$. Then $f|_\Lambda$ is expansive with expansiveness constant $\varepsilon$ depending only on the local product structure scale $\delta_0$.
\end{proposition}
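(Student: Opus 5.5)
We take the expansiveness constant to be $\varepsilon = \min\{\varepsilon_0, \delta_0\}$. Here $\varepsilon_0$ is a scale at which Main Theorem~\ref{thm:stable_manifold} provides local stable and unstable manifolds that are graphs with Lipschitz constant $K<1/2$. The scale $\delta_0$ is the one from Theorem~\ref{thm:local_product}, shrunk if necessary so that $\varepsilon\le\delta_0$. We then argue by contradiction. Suppose $x,y\in\Lambda$ satisfy $d(f^n(x),f^n(y))\le\varepsilon$ for all $n\in\mathbb{Z}$; we must show $x=y$.

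The first step is to read off the definitions of the local manifolds. The inequality $d(f^n(x),f^n(y))\le\varepsilon$ for all $n\ge0$ says exactly that $y\in W^s_\varepsilon(x)$. The same inequality for all $n\le0$ says that $y\in W^u_\varepsilon(x)$. Hence $y\in W^s_{\varepsilon}(x)\cap W^u_{\varepsilon}(x)\subset W^s_{\varepsilon_0}(x)\cap W^u_{\varepsilon_0}(x)$. Since $\varepsilon\le\varepsilon_0$, these local sets are the graph manifolds produced by Theorem~\ref{thm:graph_convergence}. This identification is the only place where the characterization of $W^s_\varepsilon$ as a set of orbits, rather than as a graph, is used.

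The second step uses the uniqueness in Theorem~\ref{thm:local_product} with the pair $(x,x)$, which trivially satisfies $d(x,x)=0<\delta_0$. It gives that $W^s_{\varepsilon_0}(x)\cap W^u_{\varepsilon_0}(x)$ is exactly one point. The point $x$ itself lies in both manifolds, so the intersection is $\{x\}$ and therefore $y=x$. As a check, this can also be seen directly in the chart at $x$. A common point $(s,u)$ satisfies $u=\phi_x(s)$ and $s=\psi_x(u)$ with $\phi_x(0)=0$ and $\psi_x(0)=0$. It follows that $\|s\|\le K^2\|s\|$, and since $K<1$ this forces $s=0$, hence $u=0$.

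The main obstacle is the consistency of scales. Membership $y\in W^s_\varepsilon(x)$ comes from the dynamical definition, while uniqueness of the intersection holds for the graph description, and that description is valid only inside a single chart of size $\delta$. So we must make sure $\varepsilon$ is small enough that $W^s_\varepsilon(x)$, defined by orbits, coincides with the graph of $\phi_x$ over $B^s_\delta$. We would handle this by invoking Theorem~\ref{thm:graph_convergence} with $\delta=\varepsilon$ and noting that the max-norm ball in adapted coordinates is comparable to the metric ball. Replacing $\varepsilon$ by $\varepsilon/K'$ for the constant $K'$ of the chart distortion then absorbs the discrepancy. The resulting constant depends only on $\delta_0$ and the chart data, as the statement claims.
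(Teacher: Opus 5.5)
Your proposal is correct and follows essentially the same route as the paper: you read off $y \in W^s_\varepsilon(x)\cap W^u_\varepsilon(x)$ from the orbit-wise definition of the local manifolds, then use the uniqueness in Theorem~\ref{thm:local_product} to conclude that this intersection is $\{x\}$. Your choice $\varepsilon=\min\{\varepsilon_0,\delta_0\}$ and your remark on matching the dynamical and graph descriptions of $W^s_\varepsilon$ make explicit the requirement $\varepsilon\le\varepsilon_0$, which the paper's proof leaves implicit since it only imposes $\varepsilon<\delta_0$.
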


\begin{proof}
Suppose $x, y \in \Lambda$ satisfy $d(f^n(x), f^n(y)) \leq \varepsilon$ for all $n \in \mathbb{Z}$, where $\varepsilon < \delta_0$. Then for all $n \geq 0$,
\begin{equation}
f^n(y) \in W^s_\varepsilon(f^n(x)) \quad \text{and} \quad f^{-n}(y) \in W^u_\varepsilon(f^{-n}(x)).
\end{equation}

By the local product structure, $y \in W^s_\varepsilon(x) \cap W^u_\varepsilon(x) = \{x\}$, so $y = x$.
\end{proof}

\begin{proposition}[Quantitative Expansiveness]\label{prop:quantitative_expansive}
For $\varepsilon < \delta_0$ and $x, y \in \Lambda$ with $x \neq y$, there exists $n \in \mathbb{Z}$ with $|n| \leq N(d(x, y))$ and $d(f^n(x), f^n(y)) > \varepsilon$, where
\begin{equation}
N(\delta) = \frac{\log(\varepsilon/\delta)}{\log \lambda^{-1}} + C
\end{equation}
for a constant $C$ depending on the geometry.
\end{proposition}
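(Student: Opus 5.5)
The plan is to argue by contradiction via the local product structure, tracking how fast the stable and unstable components of the separation grow. Fix $x \neq y$ in $\Lambda$ with $\delta = d(x,y)$. If $\delta > \varepsilon$ we may take $n=0$, so assume $\delta \leq \varepsilon < \delta_0$. We introduce the intermediate point $z = [x,y] \in W^s_{\varepsilon_0}(x) \cap W^u_{\varepsilon_0}(y)$ from Theorem~\ref{thm:local_product}. By Proposition~\ref{prop:bracket_bounds} we have $d(z,x) \leq C_1\delta$ and $d(z,y) \leq C_1\delta$. Since $x \neq y$, at least one of $z \neq x$ or $z \neq y$ holds. We treat the case $z \neq y$, which will give a separation at negative times along the unstable manifold of $y$. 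The case $z \neq x$ gives positive times along $W^s(x)$ and is symmetric (replace $f$ by $f^{-1}$).

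\textbf{Step 1: Lower bound on the transverse component.} In the case $z \neq y$, we need a lower bound on $d(z,y)$ in terms of $\delta$, not just an upper bound. We would use the graph description in adapted coordinates at $x$. Here $x = 0$ and $y = (s_y,u_y)$ with $\max(\|s_y\|,\|u_y\|) \approx \delta$. The stable leaf is the graph of $\phi_x$, with Lipschitz constant $K \leq 1/2$. The transversality argument from the proof of Proposition~\ref{prop:bracket_bounds}, run in reverse, shows that $\max(d(x,z), d(z,y)) \geq c\,\delta$ with $c = (1-K)/2$, say. Exchanging roles if necessary, we may assume $d(z,y) \geq c\delta$. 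The case where $d(x,z)$ is large is handled by the symmetric argument.

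\textbf{Step 2: Exponential expansion along unstable leaves.} Apply the unstable version of Proposition~\ref{prop:stable_contraction} (Main Theorem~\ref{thm:stable_manifold}(iii) for $f^{-1}$) in its reverse reading. As long as $f^{-k}(z)$ stays in $W^u_{\varepsilon_0}(f^{-k}(y))$, the distance $d(f^{-k}(z), f^{-k}(y))$ grows at least like $\lambda^{-k}\,d(z,y) \geq c\lambda^{-k}\delta$, because $f^{-1}$ contracts by $\lambda$ in forward unstable time. At the same time, $z \in W^s(x)$ only controls forward iterates. So for negative times we instead compare $f^{-k}(x)$ with $f^{-k}(y)$ directly. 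We argue by contradiction: suppose $d(f^{-k}(x), f^{-k}(y)) \leq \varepsilon$ for all $0 \leq k \leq N$. Then the bracket $[f^{-k}x, f^{-k}y] = f^{-k}(z)$ is defined, by invariance of the local manifolds and uniqueness in Theorem~\ref{thm:local_product}. Applying Proposition~\ref{prop:bracket_bounds} at time $-k$ gives $d(f^{-k}z, f^{-k}y) \leq C_1\varepsilon$. Combined with the growth bound, this yields $c\lambda^{-N}\delta \leq C_1\varepsilon$.

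\textbf{Step 3: Solving for $N$.} The inequality gives $N \leq \log(\varepsilon/\delta)/\log\lambda^{-1} + \log(C_1/c)/\log\lambda^{-1}$. Hence separation beyond $\varepsilon$ must occur at some $|n| \leq N(\delta)$ with $C = \log(C_1/c)/\log\lambda^{-1} + 1$.

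\textbf{Main obstacle.} The hardest step is Step 2's expansion claim. Main Theorem~\ref{thm:stable_manifold}(iii) states contraction $d(f^{-n}z,f^{-n}y)\le\lambda^n d(z,y)$ for $f^{-1}$ on unstable leaves, and this is the wrong direction: it gives an upper bound where we need the lower bound $d(f^{-k}z,f^{-k}y)\ge\lambda^{-k}d(z,y)$. To obtain it, I would first try to read the statement forward. Apply (iii) to the pair $(f^{-k}z, f^{-k}y)$, both lying on $W^u_{\varepsilon_0}(f^{-k}y)$, which is legitimate by the contradiction hypothesis and the bracket bound. This gives $d(z,y) \leq \lambda^k d(f^{-k}z,f^{-k}y)$, which is exactly the needed expansion. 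The remaining delicacy is ensuring the iterates stay within $\varepsilon_0$ so that (iii) applies. This is why we choose $C_1\varepsilon \leq \varepsilon_0$, possibly shrinking $\delta_0$.
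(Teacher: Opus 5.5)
There is a genuine gap in your Step~2: the direction of time is reversed, in both cases. Since $z\in W^u_{\varepsilon_0}(y)$, the unstable version of Main Theorem~\ref{thm:stable_manifold}(iii) gives $d(f^{-k}z,f^{-k}y)\le\lambda^k d(z,y)$. So backward iterates of $z$ and $y$ \emph{approach} each other. The inequality $d(z,y)\le\lambda^k d(f^{-k}z,f^{-k}y)$ in your last paragraph does not follow from applying (iii) at the base point $f^{-k}y$. That application only bounds distances at the still earlier times $-k-n$ by the distance at time $-k$. An inequality with $d(z,y)$ on the small side comes from applying (iii) for $f^{-1}$ at $f^{k}y$ to the pair $(f^kz,f^ky)$, i.e.\ from \emph{forward} iterates.

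The step is in fact false. Take an Anosov diffeomorphism with $\Lambda=M$, and choose $y\in W^u_{\varepsilon_0}(x)\setminus\{x\}$ close to $x$. Then $z=[x,y]=x$, so $d(z,y)=\delta$ and you are in your case $z\neq y$. Yet $d(f^{-k}x,f^{-k}y)\le\lambda^k\delta\le\varepsilon$ for all $k\ge 0$. Your contradiction hypothesis therefore holds for every $N$, and your argument would yield $c\lambda^{-N}\delta\le C_1\varepsilon$ for all $N$, which is absurd. Symmetrically, a point of $W^s(x)$ separates from $x$ only at negative times, not at positive times as you assert for the case $z\neq x$.

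The repair is to swap the directions, which is what the paper's proof does: the unstable component of the displacement grows under forward iteration, the stable one under backward iteration.
\begin{itemize}
\item Step~1 is just the triangle inequality: $\max(d(x,z),d(z,y))\ge\delta/2$.
\item Suppose $d(z,y)\ge\delta/2$ and $d(f^kx,f^ky)\le\varepsilon$ for $0\le k\le N$, recalling $\delta\le\varepsilon$. Then $d(f^jz,f^jx)\le\lambda^jC_1\delta$ for $j\ge 0$. Hence $d(f^jz,f^jy)\le(C_1+1)\varepsilon\le\varepsilon_0$ for $0\le j\le N$, after shrinking $\delta_0$. For $j\le 0$ the same bound follows from $z\in W^u_{\varepsilon_0}(y)$.
\item Therefore $f^Nz\in W^u_{\varepsilon_0}(f^Ny)$. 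Applying (iii) for $f^{-1}$ at $f^Ny$ gives $\delta/2\le d(z,y)\le\lambda^N d(f^Nz,f^Ny)\le\lambda^N(C_1+1)\varepsilon$. This is the required bound on $N$, with $C$ of order $\log(2(C_1+1))/\log\lambda^{-1}$.
\item The case $d(x,z)\ge\delta/2$ is identical with $f^{-1}$ and $W^s_{\varepsilon_0}(x)$.
\end{itemize}

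Once repaired, your bracket-based argument is a genuine alternative to the paper's. The paper decomposes the displacement vector as $v_s+v_u\in E^s_x\oplus E^u_x$ and asserts growth $\lambda^{-n}\|v_u\|/(1+C\varepsilon)$ of the dominant component. That is a linearized estimate whose control of nonlinear errors over $n$ iterates is left implicit. Your route uses only the exact contraction statement (iii) on local invariant manifolds and the bracket bound of Proposition~\ref{prop:bracket_bounds}. The price is taking $\varepsilon$ small compared with $\varepsilon_0$.
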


\begin{proof}
Suppose $d(f^n(x), f^n(y)) \leq \varepsilon$ for all $|n| \leq N$. Write $d(x,y) = \delta > 0$ in adapted coordinates. Decompose the displacement vector from $x$ to $y$ as $v = v_s + v_u$ with $v_s \in E^s_x$ and $v_u \in E^u_x$. We have $\max\{\|v_s\|, \|v_u\|\} \geq \delta/(1+K)$ where $K$ is the Lipschitz constant of the manifold graphs (by Proposition~\ref{prop:bracket_bounds}).

\textbf{Case 1:} $\|v_u\| \geq \delta/(2(1+K))$. Under forward iteration, the unstable component grows: $\|v_u^{(n)}\| \geq \lambda^{-n}\|v_u\|/(1+C\varepsilon)$ for $n \geq 0$ (the factor $(1+C\varepsilon)$ accounts for the nonlinear corrections). For $d(f^n(x), f^n(y)) \leq \varepsilon$, we need $\lambda^{-n}\delta/(2(1+K)(1+C\varepsilon)) \leq \varepsilon$. Solving: $n \leq \log(\varepsilon \cdot 2(1+K)(1+C\varepsilon)/\delta)/\log\lambda^{-1}$.

\textbf{Case 2:} $\|v_s\| \geq \delta/(2(1+K))$. Under backward iteration, the stable component grows analogously, giving the same bound with $n$ replaced by $-n$.

In either case, $|n| \leq \log(\varepsilon/\delta)/\log\lambda^{-1} + C$ where $C = \log(2(1+K)(1+C\varepsilon))/\log\lambda^{-1}$ depends only on the geometry.
\end{proof}

\subsection{Fundamental Neighborhood}

The bracket map is continuous and satisfies quantitative distance bounds only in a fundamental neighborhood of the diagonal. We characterize this neighborhood explicitly in terms of the hyperbolicity data.

\begin{proposition}[Fundamental Neighborhood]\label{thm:fundamental_neighborhood}
Let $f$ be an Axiom A diffeomorphism. There exists a neighborhood $U$ of $\Omega(f)$ such that
\begin{equation}
\bigcap_{n \in \mathbb{Z}} f^n(U) = \Omega(f).
\end{equation}
\end{proposition}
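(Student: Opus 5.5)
The plan is to prove the nontrivial inclusion $\bigcap_{n\in\mathbb{Z}} f^n(U)\subset\Omega(f)$; the reverse inclusion is immediate from the $f$-invariance of $\Omega(f)$ (Proposition~\ref{prop:axiom_a_properties}(i)). The approach is by approximation with periodic orbits. Fix a small $\beta>0$, below the shadowing and local product scales of Main Theorem~\ref{thm:shadowing} and Theorem~\ref{thm:local_product}, and let $\alpha=\alpha(\beta)$ be the corresponding pseudo-orbit tolerance. Take $U=\{z: d(z,\Omega(f))<\eta\}$ with $\eta\le\beta$ chosen so that $(\|Df\|_\infty+1)\eta<\alpha/2$. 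Shrinking $\eta$ further if necessary, I would also arrange that the hyperbolic splitting extends, via the cone criterion (Proposition~\ref{prop:cone_criterion}) and continuity of $Df$, to invariant cone fields on $\overline U$. This makes every compact invariant subset of $U$ hyperbolic with slightly worse constants, which is needed at the end.

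Let $x$ have $f^n(x)\in U$ for all $n\in\mathbb{Z}$. For each $n$ choose $y_n\in\Omega(f)$ with $d(f^n(x),y_n)<\eta$. Then
\begin{equation*}
d(f(y_n),y_{n+1})\le d(f(y_n),f^{n+1}(x))+d(f^{n+1}(x),y_{n+1})\le(\|Df\|_\infty+1)\eta<\alpha/2,
\end{equation*}
so $\{y_n\}$ is an $\alpha/2$-pseudo-orbit in $\Omega(f)$. Fix $N$. By chain recurrence of $f|\Omega(f)$ (Proposition~\ref{prop:axiom_a_properties}(ii)) there is an $\alpha/2$-chain in $\Omega(f)$ from $y_N$ back to $y_{-N}$. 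Concatenating this chain with the segment $y_{-N},\dots,y_N$ gives a periodic $\alpha$-pseudo-orbit in $\Omega(f)$. By the shadowing lemma in its periodic form (the Anosov Closing Lemma, Proposition~\ref{thm:closing_lemma}, using uniqueness of the shadowing orbit from expansiveness), this pseudo-orbit is $\beta$-shadowed by a periodic point $p_N$. Since $p_N\in\mathrm{Per}(f)\subset\Omega(f)$, we obtain $d(f^n(p_N),f^n(x))\le\beta+\eta\le2\beta$ for $|n|\le N$.

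Next I would convert this finite-time closeness into $d(p_N,x)\to0$ using quantitative expansiveness (Proposition~\ref{prop:quantitative_expansive}). Apply it on the compact invariant set $\overline{\{f^n(x)\}}\cup\Omega(f)\subset\overline U$, which is hyperbolic by the cone extension above, taking $2\beta<\delta_0$. The proposition gives $d(x,p_N)\le C\lambda^{N}$. Letting $N\to\infty$ shows $p_N\to x$, and since $\Omega(f)$ is closed, $x\in\Omega(f)$.

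The main obstacle is this last step. The orbit of $x$ is not known a priori to lie in $\Omega(f)$, so Proposition~\ref{prop:quantitative_expansive} as stated (for points of a fixed hyperbolic set) does not directly apply. First I would try to justify the extension of the splitting to the orbit closure of $x$ by the cone criterion, with hyperbolicity constants uniform in $x$. If that proves delicate, the fallback is to use the local product structure directly at $p_N$. The point $f^{-N}(x)$ stays $2\beta$-close to $f^{-N}(p_N)$ for $2N$ forward steps, so its unstable coordinate relative to the orbit of $p_N$ is at most $\lambda^{N}$ times its size at step $N$; the symmetric argument bounds the stable coordinate. Both coordinates of $x$ relative to $p_N$ are then $O(\lambda^N)$, and this uses only the graph-transform estimates of Section~\ref{sec:stable_manifolds} in charts centered on the orbit of $p_N\in\Omega(f)$.
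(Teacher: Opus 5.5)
Your argument can be made to work, but it is much longer than necessary, and the step you single out as the main obstacle is not really one. The paper shadows the whole bi-infinite pseudo-orbit $\{y_n\}_{n\in\Z}$ in one stroke. Main Theorem~\ref{thm:shadowing} gives a single $z\in\Omega(f)$ with $d(f^n(x),f^n(z))\le\beta+\eta$ for all $n\in\Z$, and expansiveness then gives $x=z$.

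You are right that Proposition~\ref{thm:expansiveness} is stated for two points of $\Lambda$. Its proof, however, only uses that the base point lies in $\Lambda$. The sets $W^s_\varepsilon(z)$ and $W^u_\varepsilon(z)$ are by definition subsets of $M$, so the hypothesis says exactly that $x\in W^s_\varepsilon(z)\cap W^u_\varepsilon(z)$. Theorem~\ref{thm:local_product}, applied to the pair $(z,z)$ with $z\in\Omega(f)$ and $\beta+\eta\le\varepsilon_0$, says this intersection is $\{z\}$. That is how the paper's one-line appeal to expansiveness should be read, and it is also the content of your fallback paragraph.

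So you do not need the enlargement to $\overline{\{f^n(x)\}}\cup\Omega(f)$ via cone fields. That enlargement rests on a true and standard fact, but one the paper never establishes. You also do not need Proposition~\ref{prop:quantitative_expansive} or the limit $N\to\infty$.

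Your route does buy something if you run the fallback chart estimate on the finite window only. You get a finite-time quantitative version: $f^n(x)\in U$ for $|n|\le N$ already forces $d(x,\Omega(f))$ to be exponentially small in $N$. The paper's argument is purely qualitative but two lines long.

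The periodic detour also rests on a faulty citation. Proposition~\ref{prop:axiom_a_properties}(ii), read as stated (an $\varepsilon$-chain inside $\Omega(f)$ between any two of its points), is false once $\Omega(f)$ has more than one basic set. For a north--south map, $\Omega(f)$ consists of two fixed points, and no chain in $\Omega(f)$ with jumps smaller than their distance connects them.

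The chain you want does exist, but for a different reason. Take $\alpha/2$ smaller than the minimal distance between distinct basic sets. Then $\{y_n\}$ cannot jump from one basic set to another, so it lies in a single $\Omega_i$. Transitivity of $f|_{\Omega_i}$ (Main Theorem~\ref{thm:spectral_decomposition}(a)) then supplies a return chain from $y_N$ to $y_{-N}$.

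Better still, drop the step entirely. You never use that $p_N$ is periodic, only that $p_N\in\Omega(f)$. Main Theorem~\ref{thm:shadowing} applied directly to the finite segment $\{y_n\}_{|n|\le N}$ already produces a shadowing point in $\Omega(f)$.
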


\begin{proof}
Let $\beta > 0$ be small and let $\alpha > 0$ be as in the Shadowing Lemma (Theorem \ref{thm:shadowing}). Choose $\gamma < \alpha/2$ such that $d(f(x), f(y)) < \alpha/2$ whenever $d(x, y) < \gamma$. Set $U = \{z \in M : d(z, \Omega(f)) < \gamma\}$.

Suppose $y \in \bigcap_{n \in \mathbb{Z}} f^n(U)$. For each $i \in \mathbb{Z}$, choose $x_i \in \Omega(f)$ with $d(f^i(y), x_i) < \gamma$. Then $\{x_i\}_{i \in \mathbb{Z}}$ is an $\alpha$-pseudo-orbit in $\Omega(f)$, and some $x \in \Omega(f)$ $\beta$-shadows it.

We have $d(f^i(y), f^i(x)) \leq d(f^i(y), x_i) + d(x_i, f^i(x)) < \gamma + \beta$ for all $i$. For $\gamma + \beta < \varepsilon$ (the expansiveness constant), this implies $y = x \in \Omega(f)$ by expansiveness.
\end{proof}

\subsection{Stable and Unstable Sets of Basic Sets}

For a basic set $\Omega_j$ (defined in Section \ref{sec:spectral_decomposition}), we define global stable and unstable sets.

\begin{definition}
For a basic set $\Omega_j$ of an Axiom A diffeomorphism $f$, define
\begin{align}
W^s(\Omega_j) &= \{x \in M : d(f^n(x), \Omega_j) \to 0 \text{ as } n \to +\infty\}, \\
W^u(\Omega_j) &= \{x \in M : d(f^{-n}(x), \Omega_j) \to 0 \text{ as } n \to +\infty\}.
\end{align}
\end{definition}

\begin{proposition}[Manifold Structure of $W^s(\Omega_j)$]\label{prop:ws_omega_structure}
We have
\begin{equation}
W^s(\Omega_j) = \bigcup_{x \in \Omega_j} W^s(x) \quad \text{and} \quad W^u(\Omega_j) = \bigcup_{x \in \Omega_j} W^u(x).
\end{equation}
Furthermore, $M = \bigcup_{j=1}^{s} W^s(\Omega_j) = \bigcup_{j=1}^{s} W^u(\Omega_j)$ (disjoint unions).
\end{proposition}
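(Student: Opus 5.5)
The plan is to prove the two set equalities first and then the decomposition of $M$. We rely on three inputs: the Stable Manifold Theorem (Main Theorem~\ref{thm:stable_manifold}), the local product structure (Theorem~\ref{thm:local_product}), and the shadowing/expansiveness package (Proposition~\ref{thm:expansiveness} and the Shadowing Lemma). From the spectral decomposition we also need that the basic sets $\Omega_1,\dots,\Omega_s$ are disjoint, compact, invariant, and separated by a positive distance $\eta$.

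\textbf{Step 1: the inclusion $\supseteq$.} If $y \in W^s(x)$ with $x \in \Omega_j$, then $d(f^n(y),\Omega_j) \le d(f^n(y),f^n(x)) \to 0$. Hence $y \in W^s(\Omega_j)$. This step is immediate.

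\textbf{Step 2: the inclusion $\subseteq$.} This is the main obstacle. Let $y \in W^s(\Omega_j)$ and fix a small $\beta$ with its shadowing tolerance $\alpha$. Choose $N$ such that $d(f^n(y),\Omega_j) < \gamma$ for all $n \ge N$, where $\gamma$ is chosen, as in Proposition~\ref{thm:fundamental_neighborhood}, so that $d(z,w)<\gamma$ implies $d(f(z),f(w))<\alpha/2$. Pick $x_n \in \Omega_j$ with $d(f^n(y),x_n)<\gamma$ for $n \ge N$. Then $\{x_n\}_{n\ge N}$ is an $\alpha$-pseudo-orbit in $\Omega_j$.

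To apply the two-sided Shadowing Lemma, extend the sequence backwards by the true orbit $x_n := f^{n-N}(x_N)$ for $n<N$. The Shadowing Lemma then gives $x \in \Omega_j$ (the shadowing point lies in the closed invariant set, by local maximality or by the bracket construction) with $d(f^n(x),x_n)\le\beta$. Consequently $d(f^n(y),f^n(x))<\gamma+\beta \le \varepsilon$ for all $n \ge N$. So $f^N(y) \in W^s_\varepsilon(f^N(x))$, and by Main Theorem~\ref{thm:stable_manifold}(iii) together with part (v), $y \in W^s(x)$.

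The delicate point is that the shadowing point must lie in $\Omega_j$ itself, and not merely in $\Omega(f)$. I would handle this in two ways. First, I would invoke the bracket-based construction of the Shadowing Lemma, whose brackets stay inside $\Omega_j$ by Theorem~\ref{thm:local_product}, since $\Omega_j$ is locally maximal. Failing that, I would note that the shadowing point lies in $\Omega(f)$ within $\beta<\eta/2$ of $\Omega_j$, and hence in $\Omega_j$. The unstable case follows by applying the same argument to $f^{-1}$.

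\textbf{Step 3: covering and disjointness.} For any $y\in M$, the $\omega$-limit set $\omega(y)$ is nonempty, compact and invariant, and it is contained in $\Omega(f)$ since every $\omega$-limit point is nonwandering. Thus $d(f^n(y),\Omega(f))\to0$.

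We next show that $\omega(y)$ lies in a single basic set. Because the $\Omega_i$ are $\eta$-separated, for large $n$ each $f^n(y)$ is near exactly one $\Omega_{i(n)}$. By uniform continuity of $f$, $i(n)$ is eventually constant: a jump would require $f^n(y)$ to be $\eta/3$-close to $\Omega_i$ while $f^{n+1}(y)$ is close to $\Omega_{i'}$. This is impossible once $d(f^n(y),\Omega)$ is below the modulus of continuity, because $f(\Omega_i)=\Omega_i$. Hence $y\in W^s(\Omega_j)$ for a unique $j$, and disjointness follows from the same $\eta$-separation, since $d(f^n(y),\Omega_j)\to 0$ can hold for at most one $j$. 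The unstable decomposition follows by replacing $f$ with $f^{-1}$.
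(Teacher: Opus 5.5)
Your proof is correct. Steps 1 and 2 follow essentially the paper's route for the two set equalities: you shadow the pseudo-orbit $\{x_n\}$ of nearby points of $\Omega_j$, then use contraction along local stable manifolds. You also make two improvements. First, you extend the one-sided pseudo-orbit backwards by a true orbit, so the bi-infinite Shadowing Lemma applies as proved. Second, you justify that the shadowing point lies in $\Omega_j$ using the $\eta$-separation of the basic sets, whereas the paper only asserts this because $\Omega_j$ is closed and invariant, which does not by itself imply it.

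Step 3 is a genuinely different argument. The paper proves $M=\bigcup_j W^s(\Omega_j)$ by importing the no-cycle condition (from strong transversality or $\Omega$-stability) and a partial order on basic sets. It argues that an $\omega$-limit set meeting two basic sets would create a cycle, and it obtains $\omega(y)\subset\Omega(f)$ from the fundamental-neighborhood proposition, which does not actually say that. You instead use three facts:
\begin{enumerate}
\item $\omega(y)\subset\Omega(f)$, because $\omega$-limit points are nonwandering;
\item the basic sets are pairwise disjoint compact invariant sets at mutual distance $\eta>0$;
\item $f$ is uniformly continuous.
\end{enumerate}
Together these show that the index $i(n)$ of the basic set near $f^n(y)$ is eventually constant. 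This is the standard Bowen-style argument and it is more economical. It needs nothing beyond Axiom A, which matters because the proposition is stated for all Axiom A diffeomorphisms, while the no-cycle condition is not a consequence of Axiom A alone. The paper's filtration viewpoint gains nothing for this statement. It is relevant to other questions such as $\Omega$-stability, but here it only adds an unstated hypothesis.
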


\begin{proof}
The inclusion $\bigcup_{x \in \Omega_j} W^s(x) \subset W^s(\Omega_j)$ is clear: if $y \in W^s(x)$ for some $x \in \Omega_j$, then $d(f^n(y), f^n(x)) \to 0$, hence $d(f^n(y), \Omega_j) \to 0$.

For the reverse, suppose $d(f^n(y), \Omega_j) \to 0$. For each $n$, choose $x_n \in \Omega_j$ with $d(f^n(y), x_n) = d(f^n(y), \Omega_j)$. For $n$ large enough, $d(f^n(y), x_n) < \gamma$ where $\gamma$ is the canonical coordinates parameter. Consider the sequence of orbit segments: $f^n(y)$ is $\gamma$-close to $x_n$. By the Shadowing Lemma (Main Theorem~\ref{thm:shadowing}), applied to the $\gamma$-pseudo-orbit $\{x_n, f(x_{n+1}), \ldots\}$ (which is a genuine orbit of $f$ to within $\gamma$ error because $d(f(x_n), x_{n+1}) \leq d(f(x_n), f^{n+1}(y)) + d(f^{n+1}(y), x_{n+1}) \leq \lambda\gamma + \gamma$), there exists $x \in \Lambda$ that $\beta$-shadows this pseudo-orbit. Since $\Omega_j$ is closed and invariant, $x \in \Omega_j$. The shadowing gives $d(f^n(y), f^n(x)) \leq \beta$ for all large $n$, and the hyperbolicity contracts this: $d(f^{n+k}(y), f^{n+k}(x)) \leq C\lambda^k\beta \to 0$, so $y \in W^s(x)$.

For the disjoint decomposition of $M$: we use the no-cycle condition, which states that there is no sequence of distinct basic sets $\Omega_{i_1}, \ldots, \Omega_{i_k}$ with $W^u(\Omega_{i_j}) \cap W^s(\Omega_{i_{j+1}}) \neq \emptyset$ for $j = 1, \ldots, k-1$ and $W^u(\Omega_{i_k}) \cap W^s(\Omega_{i_1}) \neq \emptyset$. The no-cycle condition holds for Axiom A diffeomorphisms satisfying the strong transversality condition (see  \cite{Smale1967}; it also follows from the $\Omega$-stability theorem, see \cite[Theorem~18.2.1]{KatokHasselblatt1995}).

Given the no-cycle condition, the basic sets admit a partial order: $\Omega_i \preceq \Omega_j$ if $W^u(\Omega_i) \cap W^s(\Omega_j) \neq \emptyset$. For any $y \in M$, the $\omega$-limit set $\omega(y) = \bigcap_{n \geq 0}\overline{\{f^k(y) : k \geq n\}}$ is nonempty, compact, connected, and $f$-invariant. Since $\Omega(f)$ attracts all orbits (by Theorem~\ref{thm:fundamental_neighborhood}, orbits eventually enter any neighborhood of $\Omega(f)$), $\omega(y) \subset \Omega(f)$. If $\omega(y)$ intersected two distinct basic sets $\Omega_i$ and $\Omega_j$, the orbit of $y$ would connect them, and by recurrence within $\omega(y)$, a cycle $\Omega_i \preceq \Omega_j \preceq \Omega_i$ would form, contradicting the no-cycle condition. Thus $\omega(y) \subset \Omega_j$ for a unique $j$, giving $d(f^n(y), \Omega_j) \to 0$, so $y \in W^s(\Omega_j)$.

The union is disjoint since $W^s(\Omega_i) \cap W^s(\Omega_j) = \emptyset$ for $i \neq j$: if $y \in W^s(\Omega_i) \cap W^s(\Omega_j)$, then $\omega(y) \subset \Omega_i \cap \Omega_j = \emptyset$, a contradiction.
\end{proof}

\begin{proposition}[Neighborhood Characterization]\label{prop:neighborhood_char}
For every $\varepsilon > 0$, there exists a neighborhood $U_j$ of $\Omega_j$ such that
\begin{equation}
\bigcap_{k \geq 0} f^{-k}(U_j) \subset W^s_\varepsilon(\Omega_j) = \bigcup_{x \in \Omega_j} W^s_\varepsilon(x)
\end{equation}
and
\begin{equation}
\bigcap_{k \geq 0} f^k(U_j) \subset W^u_\varepsilon(\Omega_j) = \bigcup_{x \in \Omega_j} W^u_\varepsilon(x).
\end{equation}
\end{proposition}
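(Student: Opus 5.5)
We prove the first inclusion; the second follows by applying the same argument to $f^{-1}$, which exchanges stable and unstable sets. The strategy is the one used in Proposition~\ref{thm:fundamental_neighborhood}: turn a forward orbit that stays near $\Omega_j$ into a one-sided pseudo-orbit in $\Omega_j$, shadow it by a genuine orbit in $\Omega_j$, and then upgrade \emph{closeness for all forward times} to \emph{membership in a local stable manifold}. The last step is free, because $W^s_\varepsilon(x)$ is defined in this paper exactly as $\{y : d(f^n(x),f^n(y)) \leq \varepsilon \ \text{for all } n \geq 0\}$.

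Fix $\varepsilon>0$ and shrink it if necessary so that $\varepsilon$ is below the scale of Main Theorem~\ref{thm:stable_manifold}. Set $\beta = \varepsilon/2$ and let $\alpha = \alpha(\beta)$ be given by the Shadowing Lemma (Main Theorem~\ref{thm:shadowing}) applied on the hyperbolic set $\Omega_j$. By uniform continuity of $f$ on the compact manifold $M$, choose $\gamma \in (0, \min\{\alpha/2, \varepsilon/2\})$ such that $d(p,q)<\gamma$ implies $d(f(p),f(q))<\alpha/2$. Define
\[
U_j = \{z \in M : d(z,\Omega_j) < \gamma\}.
\]
Now take $y \in \bigcap_{k\ge0} f^{-k}(U_j)$. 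Then $f^k(y) \in U_j$ for every $k \ge 0$, so we may pick $x_k \in \Omega_j$ with $d(f^k(y),x_k)<\gamma$. The triangle inequality gives
\[
d(f(x_k),x_{k+1}) \le d(f(x_k),f^{k+1}(y)) + d(f^{k+1}(y),x_{k+1}) < \alpha/2+\gamma<\alpha .
\]
Hence $\{x_k\}_{k\ge0}$ is a one-sided $\alpha$-pseudo-orbit in $\Omega_j$.

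\textbf{Main obstacle: the Shadowing Lemma is for bi-infinite pseudo-orbits.} If Main Theorem~\ref{thm:shadowing} is stated only in that form, extend the sequence backward by setting $x_{-k} = f^{-k}(x_0)$ for $k>0$. This keeps it an $\alpha$-pseudo-orbit in $\Omega_j$, since the backward part is a true orbit. Shadowing then produces $x \in \Omega_j$ with $d(f^k(x),x_k)\le\beta$ for all $k$. To guarantee $x \in \Omega_j$ rather than merely $x\in\Omega(f)$, I would use the local maximality of $\Omega_j$: by Proposition~\ref{thm:fundamental_neighborhood} applied to the basic set, a point whose whole orbit stays within $\beta$ of $\Omega_j$ lies in $\Omega_j$ once $\beta$ is small.

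With $x$ in hand, for all $k \ge 0$,
\[
d(f^k(y),f^k(x)) \le d(f^k(y),x_k)+d(x_k,f^k(x)) < \gamma+\beta < \varepsilon .
\]
So $y \in W^s_\varepsilon(x) \subset W^s_\varepsilon(\Omega_j)$, directly from the definition of the local stable set. As a consistency check, Main Theorem~\ref{thm:stable_manifold} then upgrades this to a point on the embedded disk with exponential contraction.

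Running the identical argument for $f^{-1}$ gives the unstable inclusion. Here $\Omega_j$ is still a basic set and hyperbolic for $f^{-1}$, with the roles of $E^s$ and $E^u$ swapped. Taking the smaller of the two values of $\gamma$ yields one neighborhood $U_j$ that works for both inclusions.
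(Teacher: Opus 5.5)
Your proposal is correct and follows the paper's proof essentially line for line. It makes the same choices $\beta=\varepsilon/2$ and $U_j$ the $\gamma$-neighborhood of $\Omega_j$, builds the same one-sided pseudo-orbit extended backward by the true orbit of $x_0$, shadows it, and uses the triangle inequality to get $y\in W^s_\varepsilon(x)$ directly from the definition. The only difference is that you use uniform continuity where the paper uses $L=\|Df\|_\infty$. Your extra appeal to local maximality to place $x$ in $\Omega_j$ is harmless but not needed, since the Shadowing Lemma applied to the hyperbolic set $\Omega_j$ already returns a point of $\Omega_j$.
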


\begin{proof}
Let $\alpha > 0$ be the shadowing constant from Main Theorem~\ref{thm:shadowing} corresponding to $\beta = \varepsilon/2$. Choose $\gamma > 0$ small enough that $d(f(z_1), f(z_2)) < \alpha/2$ whenever $d(z_1, z_2) < \gamma$, and set $U_j = \{z \in M : d(z, \Omega_j) < \gamma\}$.

Suppose $y \in \bigcap_{k \geq 0} f^{-k}(U_j)$, so $d(f^k(y), \Omega_j) < \gamma$ for all $k \geq 0$. For each $k \geq 0$, choose $x_k \in \Omega_j$ with $d(f^k(y), x_k) < \gamma$. Then
\begin{equation}
d(f(x_k), x_{k+1}) \leq d(f(x_k), f^{k+1}(y)) + d(f^{k+1}(y), x_{k+1}) < Ld(x_k, f^k(y)) + \gamma < L\gamma + \gamma
\end{equation}
where $L = \|Df\|_\infty$. For $\gamma$ small enough, $L\gamma + \gamma < \alpha$, so $\{x_k\}_{k \geq 0}$ is a one-sided $\alpha$-pseudo-orbit in $\Omega_j$. Extend it to a bi-infinite pseudo-orbit by setting $x_k = f^k(x_0)$ for $k < 0$. The Shadowing Lemma provides $x \in \Omega_j$ with $d(f^k(x), x_k) \leq \varepsilon/2$ for all $k$. Then for $k \geq 0$:
\begin{equation}
d(f^k(y), f^k(x)) \leq d(f^k(y), x_k) + d(x_k, f^k(x)) < \gamma + \varepsilon/2 \leq \varepsilon
\end{equation}
for $\gamma \leq \varepsilon/2$. Thus $y \in W^s_\varepsilon(x) \subset W^s_\varepsilon(\Omega_j)$.

The proof for the unstable case is analogous, replacing $f$ by $f^{-1}$, $E^s$ by $E^u$, and $\lambda$ by $\lambda^{-1}$ throughout.
\end{proof}
\section{Spectral Decomposition}\label{sec:spectral_decomposition}

This section establishes the spectral decomposition theorem for Axiom A diffeomorphisms, showing that the nonwandering set decomposes into finitely many basic sets on each of which the dynamics is transitive or mixing. We provide detailed proofs with explicit estimates on mixing rates.

\subsection{Basic Sets and the Spectral Decomposition}

The Spectral Decomposition Theorem is the first of the structural theorems for Axiom~A diffeomorphisms. It partitions the nonwandering set into finitely many transitive pieces, each further decomposing into periodically permuted mixing components. The proof is assembled from the transitivity, density of periodic orbits, and disjointness results that follow.

\begin{maintheorem}[Spectral Decomposition]\label{thm:spectral_decomposition}
Let $f$ be an Axiom A diffeomorphism. The nonwandering set admits a unique decomposition
\begin{equation}
\Omega(f) = \Omega_1 \cup \Omega_2 \cup \cdots \cup \Omega_s
\end{equation}
where the $\Omega_i$ are pairwise disjoint, closed, $f$-invariant sets satisfying:
\begin{enumerate}
\item[(a)] $f|_{\Omega_i}$ is topologically transitive for each $i$.
\item[(b)] Each $\Omega_i$ admits a further decomposition $\Omega_i = X_{1,i} \cup \cdots \cup X_{n_i,i}$ where the $X_{j,i}$ are pairwise disjoint closed sets with $f(X_{j,i}) = X_{j+1,i}$ (indices mod $n_i$), and $f^{n_i}|_{X_{j,i}}$ is topologically mixing.
\end{enumerate}
The sets $\Omega_i$ are called the basic sets of $f$.
\end{maintheorem}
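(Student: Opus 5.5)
The plan is to follow Smale--Bowen and use the local product structure of $\Omega(f)$ (Theorem~\ref{thm:local_product}, with $\Lambda=\Omega(f)$, which is hyperbolic by (A1)) together with density of periodic points (A2).

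\textbf{Step 1: equivalence relation on periodic points.} For $p,q\in\mathrm{Per}(f)$ write $p\sim q$ if $W^u(\mathcal{O}(p))\cap W^s(\mathcal{O}(q))$ and $W^s(\mathcal{O}(p))\cap W^u(\mathcal{O}(q))$ both contain points of $\Omega(f)$ other than periodic points. I would first check that $p\sim q$ whenever $d(p,q)<\delta_0$: the brackets $[p,q]$ and $[q,p]$ exist in $\Omega(f)$ by Theorem~\ref{thm:local_product}. Transitivity of $\sim$ follows from an inclination-lemma argument. If $z\in W^u(p)\cap W^s(q)$ and $w\in W^u(q)\cap W^s(r)$, iterate forward with period $\mathrm{per}(q)$ until $f^{k}(z)$ and $w':=f^{-k'}(w)$ are both $\delta_0$-close to $q$. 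Then $[f^{k}(z),w']$ lies in $W^u$ of the orbit of $p$ and in $W^s$ of the orbit of $r$, and it lies in $\Omega(f)$ by Theorem~\ref{thm:local_product}.

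\textbf{Step 2: finitely many classes.} Set $\Omega_i=\overline{\text{class}_i}$. Since any two periodic points within $\delta_0$ are equivalent, each class is open-and-closed in $\overline{\mathrm{Per}(f)}=\Omega(f)$. Compactness then gives finitely many classes. Disjointness of the closures holds because points of different closures are at distance $\ge\delta_0$. Each $\Omega_i$ is $f$-invariant because $p\sim f(p)$. This gives the decomposition.

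\textbf{Step 3: transitivity.} Given open $U,V$ meeting $\Omega_i$, pick periodic $p\in U$, $q\in V$ in the same class, and take $z\in W^u(\mathcal{O}(p))\cap W^s(\mathcal{O}(q))\cap\Omega_i$. Replacing $p,q$ by suitable orbit points near $U,V$ (using Proposition~\ref{prop:stable_contraction} along the manifolds), a backward iterate of $z$ lies in $U$ and a forward iterate lies in $V$. Hence $f^n(U)\cap V\neq\emptyset$ for some $n>0$.

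\textbf{Step 4: mixing components.} For $p\in\mathrm{Per}(f)\cap\Omega_i$, set $X_p=\overline{W^u(p)\cap\Omega_i}$. By the same local product argument, $X_p$ is open and closed in $\Omega_i$, and either $X_p=X_q$ or $X_p\cap X_q=\emptyset$. Moreover $f(X_p)=X_{f(p)}$ and $X_{f^{\mathrm{per}(p)}(p)}=X_p$. So the distinct sets $X_p$ are finitely many, say $n_i$ of them, and transitivity of $f$ on $\Omega_i$ forces $f$ to permute them cyclically. This gives $X_{j,i}$ with $f(X_{j,i})=X_{j+1,i}$.

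\textbf{Step 5: mixing of $f^{n_i}$ on $X_{j,i}$ (main obstacle).} Given open $U,V$ in $X_{j,i}$, choose periodic $p\in U$ with period $m$ (a multiple of $n_i$) and $q\in V$. Take $z\in W^u(p)$ close to $q$ and pull it back into $U$. To obtain all large multiples of $n_i$ rather than just one, combine Step~3 with the fact that $W^u(p)\cap\Omega_i$ is dense in $X_p$. Then $f^{km}(W^u_\varepsilon(p)\cap U)$ eventually meets every neighbourhood in $X_p$, and the residues modulo $m$ are handled using the other points of the orbit of $p$ that lie in $X_{j,i}$. Closing this residue gap cleanly is the delicate part.

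\textbf{Step 6: uniqueness.} Any decomposition satisfying (a) must have each piece containing a dense orbit. Clopenness then forces it to coincide with the $\Omega_i$.
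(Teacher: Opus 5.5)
Your overall strategy is sound and genuinely different from the paper's, but Step~1 fails as written. Requiring the heteroclinic witnesses to be non-periodic destroys reflexivity whenever $\mathcal{O}(p)$ is isolated in $\Omega(f)$. This happens for a periodic sink or source, and for every periodic orbit of a Morse--Smale diffeomorphism such as the north--south map on $S^2$. In that case any $z\in W^u(\mathcal{O}(p))\cap\Omega(f)$ has its backward orbit in $\Omega(f)$ converging to the isolated orbit $\mathcal{O}(p)$, so $z\in\mathcal{O}(p)$. Hence $p\not\sim q$ for every $q$, including $q=p$ and $q=f(p)$. Such points lie in no class, so the sets $\Omega_i$ of Step~2 do not cover $\Omega(f)$. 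Both the claim that periodic points within $\delta_0$ are equivalent (take $q=p$) and the claim $p\sim f(p)$ are false. The repair is to drop the clause. If $p\sim q$ means only that both intersections meet $\Omega(f)$, then reflexivity is trivial ($p$ itself is a witness), nearby periodic points are related through $[p,q]$ and $[q,p]$, and your transitivity argument is unchanged. Note that this is a bracket argument, not an inclination-lemma argument. It is exactly because the witnesses are required to lie in $\Omega(f)$ that Theorem~\ref{thm:local_product} can replace the inclination lemma.

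Step~5 closes along the lines of your last sentence.
\begin{itemize}
\item Put $N=n_i$ and let $p\in U$ be periodic of period $m$, so $N\mid m$.
\item By Step~4, $X_{f^{lN}(p)}=X_{j,i}$ for $0\le l<m/N$. So $W^u(f^{lN}(p))\cap\Omega(f)$ is dense in $X_{j,i}$, and you may pick $y_l\in W^u(f^{lN}(p))\cap V$.
\item Since $f^{-lN-tm}(y_l)\to p$ as $t\to\infty$, there is $t_l$ with $f^{lN+tm}(U)\cap V\neq\emptyset$ for all $t\ge t_l$.
\item Every $k\ge (m/N)\max_l t_l$ can be written $k=l+tm/N$ with $0\le l<m/N$ and $t\ge\max_l t_l$. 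Hence $f^{kN}(U)\cap V\neq\emptyset$ for all large $k$.
\end{itemize}
The one point of Step~4 that needs more than ``the same argument'' is that $p\in X_q$ whenever $q\in X_p$ is periodic; this is not a symmetry statement. The open set $X_q$ contains some $y\in W^u(p)\cap\Omega(f)$, because $q\in X_q\cap\overline{W^u(p)\cap\Omega(f)}$. Moreover $X_q$ is closed and $f^{\mathrm{per}(q)}$-invariant, so it contains $\lim_k f^{-k\,\mathrm{per}(p)\mathrm{per}(q)}(y)=p$.

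Compared with the paper: the paper introduces no relation on periodic points. It shows directly that the sets $X_p=\overline{W^u(p)}\cap\Omega(f)$ are clopen and pairwise equal or disjoint (Lemma~\ref{lem:basic_set_structure}). It then lets $f$ permute the finitely many distinct ones, \emph{defines} the basic sets as unions over the cycles of this permutation, and deduces transitivity of $\Omega_i$ from mixing of $f^{n_i}$ on the pieces. Your Step~4 needs that same lemma, so the equivalence relation of Steps~1--2 could be dispensed with, and the paper's route is shorter. What your route buys:
\begin{itemize}
\item a direct, mixing-free proof of transitivity;
\item an intrinsic description of the basic sets by heteroclinic connections inside $\Omega(f)$;
\item a mixing argument that uses only density of $W^u(f^{lN}(p))$ in $X_{j,i}$, whereas the paper's step $w_j=[p,z_j]$ brackets points that need not lie within the product-structure scale;
\item through Step~6, the uniqueness assertion, which the paper's proof does not address. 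Transitivity forbids proper nonempty clopen invariant subsets, so each piece of any competing decomposition lies in, and by symmetry equals, some $\Omega_i$.
\end{itemize}
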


The proof requires several preliminary results.

\begin{definition}[Homoclinic Class]
For a hyperbolic periodic point $p$, the homoclinic class of $p$ is
\begin{equation}
H(p) = \overline{W^u(O(p)) \pitchfork W^s(O(p))}
\end{equation}
where $O(p) = \{p, f(p), \ldots, f^{n-1}(p)\}$ is the orbit of $p$ and $\pitchfork$ denotes transverse intersection.
\end{definition}

\begin{lemma}[Basic Set Structure]\label{lem:basic_set_structure}
For a periodic point $p \in \Omega(f)$, define
\begin{equation}
X_p = W^u(p) \cap \Omega(f).
\end{equation}
Then $X_p$ is closed in $\Omega(f)$, and if $q \in X_p$ is periodic, then $X_p = X_q$.
\end{lemma}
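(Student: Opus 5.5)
Before planning, I have to flag a problem with the statement. Read literally, with $X_p = W^u(p)\cap\Omega(f)$, the closedness claim is false in general. For the Smale horseshoe, $W^u(p)\cap\Omega$ is dense in the basic set but is not all of it. I would therefore prove the lemma with $X_p$ read as $\overline{W^u(p)}\cap\Omega(f)$, which is Bowen's definition and presumably the intended one. With that reading, closedness in $\Omega(f)$ is immediate, since it is the intersection of two closed sets. Everything below concerns the second claim: if $q \in X_p$ is periodic, then $X_p = X_q$.

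\emph{Inclusion $X_q \subset X_p$.} Let $p$ and $q$ have periods $n$ and $m$, and set $g = f^{nm}$, so that $p$ and $q$ are both $g$-fixed. Since $q \in \overline{W^u(p)\cap\Omega}$, I pick $y \in W^u(p)\cap\Omega$ with $d(y,q)$ much smaller than the $\delta_0$ of Theorem~\ref{thm:local_product}. I then set $z = [q,y] \in W^s_\varepsilon(q)\cap W^u_\varepsilon(y)$. This point lies in $\Omega$ by Theorem~\ref{thm:local_product}, and it lies in $W^u(p)$ because $W^u_\varepsilon(y)\subset W^u(y) = W^u(p)$. By Proposition~\ref{prop:bracket_bounds}, $d(z,q)\le C_1 d(y,q)$.

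Now take any $w \in W^u(q)\cap\Omega$. As $k\to\infty$, $g^{-k}w \to q$, so $d(g^{-k}w, z)$ is eventually below $\delta_0$. Define $v_k = [g^{-k}w, z] \in W^s_\varepsilon(g^{-k}w)\cap W^u_\varepsilon(z)$. This point lies in $\Omega\cap W^u(p)$. Since $g$ fixes $p$, the image $g^k v_k$ also lies in $\Omega\cap W^u(p)$. By Main Theorem~\ref{thm:stable_manifold}(iii),
\[
d(g^k v_k, w)\le \lambda^{nmk}\,C_1\, d(g^{-k}w,z)\to 0 .
\]
Hence $W^u(q)\cap\Omega\subset\overline{W^u(p)\cap\Omega}$, and taking closures gives $X_q\subset X_p$.

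\emph{Inclusion $X_p \subset X_q$.} By the inclusion just proved with the roles of $p$ and $q$ exchanged, this reduces to showing $p \in X_q$. I expect this to be the main obstacle, because the hypothesis is not symmetric. The starting point is the heteroclinic point $z \in W^u(p)\cap W^s(q)\cap\Omega$ built above, and what I need is a connection from $q$ back towards $p$.

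My first attempt uses that $z$ is nonwandering and Axiom~A (A2): choose periodic points $r_j \to z$. For $r_j$ close to $z$, the brackets $[r_j,z]$ and $[z,r_j]$ give points of $W^s(r_j)\cap W^u(p)$ and of $W^u(r_j)\cap W^s(q)$. Applying the bracket-and-iterate argument of the previous step to these points, with $g$ replaced by a common period, I aim to show that $W^u(r_j)$ accumulates on $p$ and that $r_j \in X_q$. Combining this with the inclusion already proved would then give $p\in X_{r_j}\subset X_q$.

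If the direct periodic-point argument does not close, I would fall back on shadowing. I would use Main Theorem~\ref{thm:shadowing} on the pseudo-orbit that follows the orbit of $z$ from near $p$ to near $q$, then jumps along a recurrence of $z$ to close the loop. The resulting periodic orbit would be near both $p$ and $q$, and its unstable manifold would, via brackets, produce points of $W^u(q)\cap\Omega$ arbitrarily close to $p$.
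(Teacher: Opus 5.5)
Your objection to the literal statement is correct, but you should adopt the reading that your argument actually uses. You pick $y\in W^u(p)\cap\Omega$ near $q$, and you conclude by closing up $W^u(q)\cap\Omega$. That means you are working with $X_p=\overline{W^u(p)\cap\Omega(f)}$, not with $\overline{W^u(p)}\cap\Omega(f)$. The distinction matters, because under the second reading the claim $X_p=X_q$ is false. For the north--south map of $S^2$, $S\in\overline{W^u(N)}\cap\Omega=\{N,S\}$, while $\overline{W^u(S)}\cap\Omega=\{S\}$. With $X_p=\overline{W^u(p)\cap\Omega}$, your proof of $X_q\subset X_p$ is correct.

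The genuine gap is the reverse inclusion, that is, $p\in X_q$. Your first attempt cannot close.
\begin{itemize}
\item The brackets $[r_j,z]\in W^u(p)\cap W^s(r_j)$ and $[z,r_j]\in W^u(r_j)\cap W^s(q)$ both give connections running from $p$ towards $q$. Bracket-and-iterate therefore yields only $r_j\in X_p$ and $q\in X_{r_j}$.
\item Your chain needs $p\in X_{r_j}$. This is exactly as hard as $p\in X_q$, since nothing in the construction connects $W^u(r_j)$ or $W^u(q)$ back to $p$.
\end{itemize}
The shadowing fallback is only a sketch. In particular, deciding which iterate of the closed orbit sits near $p$ and which near $q$ is precisely the cyclic-component issue, and your sketch does not address it.

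The missing ingredient is that $X_q$ is open in $\Omega$. This is the main step of the paper's proof ($X_p=B_\delta(X_p)\cap\Omega$), and you already have the tool for it.
\begin{enumerate}
\item Let $r\in\Omega$ be periodic with $d(r,q)<\delta_0$. Then $[r,q]\in W^s(r)\cap W^u(q)\cap\Omega$.
\item Iterating $[r,q]$ by a common period of $r$ and $q$ gives points of $W^u(q)\cap\Omega$ converging to $r$. Hence $r\in X_q$.
\item By (A2) and closedness of $X_q$, it follows that $B(q,\delta_0)\cap\Omega\subset X_q$.
\item Choose $y\in W^u(p)\cap\Omega$ with $d(y,q)<\delta_0$, which is possible since $q\in X_p$. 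Then $y\in X_q$.
\item Since $g(q)=q$, we have $g^{-1}(W^u(q)\cap\Omega)=W^u(q)\cap\Omega$, so $g^{-1}(X_q)=X_q$ and $g^{-k}y\in X_q$ for all $k$.
\item Because $y\in W^u(p)$ and $g$ fixes $p$, $g^{-k}y\to p$. Closedness gives $p\in X_q$.
\item Your first inclusion with the roles of $p$ and $q$ exchanged now yields $X_p\subset X_q$.
\end{enumerate}
The paper dispatches this last step with ``by symmetry'', which is not a justification because the hypothesis is asymmetric. What actually makes the step work is openness (via density of periodic points) combined with $g$-invariance and closedness, and openness is exactly what your proposal never establishes.
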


\begin{proof}
Let $\delta > 0$ be the local product structure constant. We first show that $X_p = B_\delta(X_p) \cap \Omega$, where $B_\delta(X_p) = \{y \in \Omega : d(y, X_p) < \delta\}$.

Let $q \in B_\delta(X_p) \cap \Omega$ be a periodic point (such points are dense in $\Omega$ by Axiom A). There exists $x \in X_p = W^u(p) \cap \Omega$ with $d(x, q) < \delta$. Consider the bracket point $z = [x, q] \in W^u(p) \cap W^s(q) \cap \Omega$.

Let $f^m(p) = p$ and $f^n(q) = q$. Then for any $k \geq 0$,
\begin{equation}
f^{kmn}(z) \in W^u(f^{kmn}(p)) = W^u(p)
\end{equation}
and
\begin{equation}
d(f^{kmn}(z), q) = d(f^{kmn}(z), f^{kmn}(q)) \to 0 \quad \text{as } k \to \infty
\end{equation}
since $z \in W^s(q)$. Thus $q = \lim_{k \to \infty} f^{kmn}(z) \in \overline{W^u(p)} \cap \Omega = X_p$.

Since periodic points are dense in $\Omega$, we have $B_\delta(X_p) \cap \Omega \subset X_p$, proving that $X_p$ is open in $\Omega$. Being the closure of $W^u(p)$ intersected with the closed set $\Omega$, it is also closed in $\Omega$.

Now suppose $q \in X_p$ is periodic. The argument above shows that $X_q \subset X_p$. By symmetry (applying the same argument with roles exchanged), if $x \in X_p$ is close to $q$, then $W^u(q)$ contains points approaching any point in $X_p$, giving $X_p \subset X_q$. Thus $X_p = X_q$.
\end{proof}

\begin{proof}[Proof of Theorem \ref{thm:spectral_decomposition}]
By Lemma \ref{lem:basic_set_structure}, the sets $X_p$ for periodic $p \in \Omega$ are either disjoint or equal. Since $\Omega = \bigcup_{p \text{ periodic}} X_p$ and the $X_p$ are open in $\Omega$, by compactness there are finitely many distinct sets $X_{p_1}, \ldots, X_{p_t}$.

The map $f$ permutes these sets: $f(X_{p_j}) = X_{f(p_j)}$ which equals some $X_{p_k}$ (since $f(p_j)$ is periodic and $X_{f(p_j)}$ is the unstable manifold intersection for the shifted point). Thus $f$ induces a permutation of $\{X_{p_1}, \ldots, X_{p_t}\}$.

Define the basic sets $\Omega_i$ as the unions of $X_{p_j}$'s in each cycle of this permutation. Then $f(\Omega_i) = \Omega_i$, and we have part (b) with the $X_{j,i}$'s being the individual $X_{p_j}$'s in each cycle.

For part (a), transitivity follows from the mixing of iterates established next.

\textbf{Mixing:} We show that $f^N|_{X}$ is mixing for any $X = X_{j,i}$ where $f^N(X) = X$. Let $U, V$ be nonempty open subsets of $X$. By density of periodic points, there exist periodic points $p \in U$ and $q \in V$ with periods $m$ and $n$ respectively.

For each $0 \leq j < mn$ with $f^j(p) \in X$, there exists $z_j \in W^u(f^j(p)) \cap W^s(q)$ by the local product structure (since both $f^j(p)$ and $q$ are in the same $X$ which is connected via unstable manifolds). We have
\begin{equation}
f^{kmn}(z_j) \in V \text{ for large } k
\end{equation}
since $f^{kmn}(z_j) \to q$ along the stable manifold.

For any $t \geq t_0$ (some large $t_0$), write $tN = kmn + jN$ where $0 \leq j < mn/N$ and $k$ is large. Since $p$ has period $m$ and $f^N(X) = X$, the point $f^{jN}(p)$ lies in $X$. Consider $w_j = [p, z_j] \in W^s_\varepsilon(p) \cap W^u_\varepsilon(f^{jN}(p))$. Since $w_j \in W^s_\varepsilon(p)$ and $p \in U$, for small enough $\varepsilon$ we have $w_j \in U$.

Now $f^{jN}(w_j) \in W^u_\varepsilon(f^{jN}(p))$, and $z_j \in W^u(f^{jN}(p)) \cap W^s(q)$, so $f^{jN}(w_j)$ is close to $z_j$ (both lie on $W^u(f^{jN}(p))$). Applying $f^{kmn}$: since $z_j \in W^s(q)$, we have $d(f^{kmn}(f^{jN}(w_j)), q) \to 0$ as $k \to \infty$. For $k$ large enough, $f^{tN}(w_j) = f^{kmn}(f^{jN}(w_j)) \in V$.

Since $w_j \in U$, this gives $f^{tN}(U) \cap V \neq \emptyset$ for all $t$ sufficiently large (the threshold depends on $p$, $q$, and $\varepsilon$, but is uniform over the finitely many values of $j$). This proves $f^N|_X$ is topologically mixing.
\end{proof}

\subsection{Transitivity Criteria}

This subsection establishes that each basic set $\Omega_i$ in the Spectral Decomposition is topologically transitive, and identifies the dense orbits explicitly via the density of periodic orbits.

\begin{proposition}[Transitivity of Basic Sets]\label{prop:transitivity}
Each basic set $\Omega_i$ is topologically transitive: for any nonempty open sets $U, V \subset \Omega_i$, there exists $n > 0$ with $f^n(U) \cap V \neq \emptyset$.
\end{proposition}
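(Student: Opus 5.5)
The plan is to deduce transitivity of $\Omega_i$ from the cyclic structure in Main Theorem~\ref{thm:spectral_decomposition}(b) together with the mixing of $f^{n_i}$ on each component $X_{j,i}$, which was established in the mixing step of that proof. Write $\Omega_i = X_{1,i}\cup\cdots\cup X_{n_i,i}$ with $f(X_{j,i}) = X_{j+1,i}$ (indices mod $n_i$). Each $X_{j,i}$ is closed, and the pieces are pairwise disjoint and finite in number, so each $X_{j,i}$ is also open in $\Omega_i$. Hence any nonempty open $U\subset\Omega_i$ meets some $X_{a,i}$ in a nonempty set $U' = U\cap X_{a,i}$ that is relatively open in $X_{a,i}$. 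In the same way, $V$ contains a nonempty relatively open $V' = V\cap X_{b,i}$.

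Next, transport $U'$ into the component containing $V'$. Let $r\in\{0,\ldots,n_i-1\}$ satisfy $a + r\equiv b \pmod{n_i}$. Since $f$ is a homeomorphism of $\Omega_i$ carrying $X_{a,i}$ onto $X_{b,i}$, the set $f^r(U')$ is a nonempty relatively open subset of $X_{b,i}$. Because $f^{n_i}|_{X_{b,i}}$ is topologically mixing, there is $T$ with $f^{tn_i}(f^r(U'))\cap V'\neq\emptyset$ for all $t\geq T$. Choosing $t\geq\max(T,1)$ gives $n = r + tn_i > 0$ with $f^n(U)\cap V\neq\emptyset$.

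The main obstacle is that the proof of Main Theorem~\ref{thm:spectral_decomposition} justified part (a) by pointing forward to this mixing argument, so the argument must not be circular. I would therefore check that the mixing step used only the $X_p$-structure of Lemma~\ref{lem:basic_set_structure}, namely that points of a single $X = X_{j,i}$ have connecting heteroclinic points $W^u(f^j(p))\cap W^s(q)$ and that bracketing works there, and did not use transitivity of $\Omega_i$.

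If that step looks too thin, I would fall back on a direct argument. Take periodic points $p\in U$ and $q\in V$, using Axiom A (A2). Since $q\in X_{b,i}$ and the cycle passes through every component, some iterate $f^k(p)$ lies in $X_{b,i} = X_q$. Then $f^k(p)\in X_q$, the closure of $W^u(q)\cap\Omega$, and Lemma~\ref{lem:basic_set_structure} gives $X_{f^k(p)} = X_q$. By the bracket argument of that lemma, $W^u(O(p))$ accumulates on $q$. Concretely, there is $z\in W^u_\varepsilon(f^j(p))\cap W^s(q)$ near $U$'s orbit. Pulling $z$ back by a multiple of the period of $p$ lands inside $U$, and pushing it forward by a multiple of the period of $q$ lands inside $V$. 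This gives the required $n > 0$.
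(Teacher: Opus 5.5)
Your argument is correct and follows the paper's route: transitivity is deduced from the cyclic decomposition of Main Theorem~\ref{thm:spectral_decomposition}(b) together with mixing of $f^{n_i}$ on each component. As you suspected, the paper's mixing step never invokes transitivity of $\Omega_i$, so nothing is circular. Your version is more careful than the paper's, which asserts that a single component $X_{j,i}$ meets both $U$ and $V$. That is false, e.g., for $U = X_{1,i}$ and $V = X_{2,i}$ when $n_i \geq 2$, and your transport step $f^r(U \cap X_{a,i}) \subset X_{b,i}$ handles exactly that case.
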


\begin{proof}
This follows from the mixing of $f^{n_i}$ on the components $X_{j,i}$. Given $U, V$ open in $\Omega_i$, at least one component $X_{j,i}$ intersects both $U$ and $V$ (by the cyclic structure). The mixing of $f^{n_i}$ on $X_{j,i}$ implies $f^{kn_i}(U \cap X_{j,i}) \cap (V \cap X_{j,i}) \neq \emptyset$ for large $k$.
\end{proof}

\begin{proposition}[Density of Periodic Orbits in Basic Sets]\label{prop:periodic_dense_basic}
For each basic set $\Omega_i$, the periodic points of $f$ are dense in $\Omega_i$.
\end{proposition}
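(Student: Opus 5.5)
The plan is to reduce the statement to Axiom~A (A2) together with the fact that each basic set $\Omega_i$ is relatively open in $\Omega(f)$. From the construction in the proof of Main Theorem~\ref{thm:spectral_decomposition}, $\Omega_i$ is a finite union of the sets $X_{p_j}$. Lemma~\ref{lem:basic_set_structure} shows that each $X_{p_j}$ is both open and closed in $\Omega(f)$. Hence $\Omega_i$ is clopen in $\Omega(f)$, and the $\Omega_i$ are pairwise disjoint.

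Now take $x \in \Omega_i$ and $\eta > 0$. Since $\Omega_i$ is open in $\Omega(f)$, we may shrink $\eta$ so that $B_\eta(x) \cap \Omega(f) \subset \Omega_i$. By (A2), $\overline{\mathrm{Per}(f)} = \Omega(f)$, so there is a periodic point $q$ with $d(q,x) < \eta$. Since $q \in \mathrm{Per}(f) \subset \Omega(f)$, we get $q \in B_\eta(x) \cap \Omega(f) \subset \Omega_i$. This shows that $\mathrm{Per}(f) \cap \Omega_i$ is dense in $\Omega_i$.

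I would also note that $q$ is periodic for $f$ and that its whole orbit lies in $\Omega_i$, because $\Omega_i$ is $f$-invariant. Moreover, for the mixing components the same argument applied to the clopen set $X_{j,i}$ gives density of $f^{n_i}$-periodic points in $X_{j,i}$.

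The main obstacle is justifying openness, which rests on the step $B_\delta(X_p) \cap \Omega \subset X_p$ in Lemma~\ref{lem:basic_set_structure}. If I want to avoid depending on that lemma, I would give a fallback argument. Periodic points are dense in $\Omega(f)$, and the finitely many disjoint compact sets $\Omega_i$ have positive mutual distance $\rho$. So for $\eta < \rho$, any periodic point within $\eta$ of $x \in \Omega_i$ lies in $\Omega(f)$ and cannot belong to any $\Omega_k$ with $k \neq i$. It therefore lies in $\Omega_i$, because $\Omega(f) = \bigcup_k \Omega_k$. This version uses only closedness, disjointness and finiteness, which are already asserted in Main Theorem~\ref{thm:spectral_decomposition}.
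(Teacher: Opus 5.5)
Your argument is correct and is essentially the paper's proof, which also deduces the claim directly from (A2) together with the fact that each $\Omega_i$ is open and closed in $\Omega(f)$. Your fallback, which uses the positive mutual distance between the finitely many disjoint closed sets $\Omega_k$, derives that clopenness from the statement of Main Theorem~\ref{thm:spectral_decomposition} alone, without relying on Lemma~\ref{lem:basic_set_structure}.
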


\begin{proof}
This is inherited from the Axiom A hypothesis: periodic points are dense in $\Omega(f)$, and each $\Omega_i$ is an open-closed subset of $\Omega(f)$, so periodic points are dense in each $\Omega_i$.
\end{proof}

\subsection{Mixing Rate Estimates}

For mixing basic sets, we establish quantitative mixing rates.

\begin{theorem}[Exponential Mixing]\label{thm:exponential_mixing}
Let $\Omega$ be a mixing basic set for $f$, and let $\mu$ be the unique equilibrium state for a H\"{o}lder continuous potential $\phi : \Omega \to \mathbb{R}$ (existence and uniqueness established later). For H\"{o}lder continuous observables $g, h : \Omega \to \mathbb{R}$,
\begin{equation}
\left| \int g \cdot (h \circ f^n) \, d\mu - \int g \, d\mu \int h \, d\mu \right| \leq C \|g\|_\alpha \|h\|_\alpha \cdot \theta^n
\end{equation}
where $\theta \in (0, 1)$ depends on $\lambda$ and $\alpha$, and $C$ depends on $\mu$.
\end{theorem}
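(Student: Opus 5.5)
The plan is the standard Bowen route: transfer the problem to a one-sided subshift of finite type and use the spectral gap of the Ruelle transfer operator from Part~I \cite{Thiam2026a}. Main Theorem~\ref{thm:markov_existence} gives a Markov partition of small diameter. Main Theorem~\ref{thm:symbolic_coding} gives the H\"{o}lder coding map $\pi:\Sigma_A\to\Omega$ with $\pi\circ\sigma=f\circ\pi$, which is injective off a set that is null for every Gibbs measure. Since $\Omega$ is mixing, the matrix $A$ can be taken aperiodic; if it were not, the cyclic structure of $A$ would contradict topological mixing of $f|_\Omega$. The potential $\phi\circ\pi$ and the observables $g\circ\pi$, $h\circ\pi$ are then H\"{o}lder on $\Sigma_A$ for the metric $d_\vartheta$, with $\vartheta$ determined by $\lambda$ and $\alpha$ through $\mathrm{diam}(K_N(a))\le C\lambda^N\mathrm{diam}(\mathcal R)$. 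The equilibrium state is $\mu=\pi_*\nu$, where $\nu$ is the Gibbs state for $\phi\circ\pi$. The correlation integral is therefore equal to the corresponding correlation of $\tilde g=g\circ\pi$ and $\tilde h=h\circ\pi$ under $\sigma$ and $\nu$.

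Next I pass from the two-sided to the one-sided shift. Sinai's lemma gives a cohomologous potential $\phi^+=\phi\circ\pi+u-u\circ\sigma$ that depends only on future coordinates and is H\"{o}lder with exponent $\vartheta^{1/2}$. The same lemma is used to approximate observables. I replace $\tilde h$ by $\tilde h_k=\tilde h\circ\sigma^{k}$ conditioned on the cylinder $[a_{-k}\dots a_k]$, and $\tilde g$ similarly. The two-sided H\"{o}lder property gives $\|\tilde h-\tilde h_k\|_\infty\le |h|_\alpha C\lambda^{\alpha k}$, and $\tilde h_k\circ\sigma^{k}$ depends only on nonnegative coordinates. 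Combining this with invariance, $\int\tilde g\,\tilde h\circ\sigma^n d\nu$ becomes, up to an $O(\|g\|_\alpha\|h\|_\alpha\lambda^{\alpha k})$ error, a correlation of two one-sided H\"{o}lder functions at time $n-2k$ under the one-sided Gibbs measure $\nu^+$.

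On the one-sided shift I write $\int G\cdot H\circ\sigma^m d\nu^+=\int \mathcal L^m(G\,\rho)\,H\,dm$ using the normalized transfer operator $\mathcal L_{\phi^+}$. By Part~I, through the Lasota--Yorke inequality and Proposition~\ref{thm:itm} together with aperiodicity of $A$, this operator has a simple leading eigenvalue and a spectral gap on $C^{\vartheta^{1/2}}(\Sigma_A^+)$. Hence $\|\mathcal L^m G-\int G\,d\nu^+\|_{\vartheta}\le C\tau^m\|G\|_\vartheta$ for some $\tau<1$. Choosing $k=\lfloor n/3\rfloor$ balances the approximation error $\lambda^{\alpha k}$ against the decay $\tau^{n-2k}$ and yields the bound with $\theta=\max(\lambda^{\alpha/3},\tau^{1/3})$. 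The Hölder norms of the truncated observables grow by at most a factor $\vartheta^{-k/2}$ relative to $\|g\|_\alpha$. This growth is absorbed by shrinking $k$ to $\varepsilon n$, which changes only the value of $\theta$.

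I expect the main obstacle to be the bookkeeping in this last step, namely the loss of regularity when $\tilde g$ is truncated and shifted. The one-sided H\"{o}lder norm of $\tilde g_k\circ\sigma^k$ is controlled only with a factor $\vartheta^{-k}$. The argument needs this factor to be beaten by the spectral-gap decay $\tau^{n-2k}$ while $\lambda^{\alpha k}$ still decays, so $k=\varepsilon n$ must be chosen with $\vartheta^{-\varepsilon}\tau^{1-2\varepsilon}<1$. A second point to check is that the null exceptional set of $\pi$ does not affect the integrals; this follows from Main Theorem~\ref{thm:symbolic_coding}, since $\nu$ is ergodic and Gibbs.
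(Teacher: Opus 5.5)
Your proof is correct, and its first half is the paper's argument: code $\Omega$ by a Markov partition, then use $\pi\circ\sigma=f\circ\pi$ and $\pi_*\nu=\mu$ to move the correlation to $\Sigma_A$. The paper stops there. It invokes, in one line, an exponential decay of correlations theorem for Gibbs measures on the \emph{two-sided} shift from Part~I, and sets $\theta$ equal to that theorem's rate. You instead derive the two-sided statement from the one-sided spectral gap, via Sinai's lemma, approximation by cylinder functions and the normalized transfer operator.

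The paper's route is shorter, but it relies on Part~I containing the two-sided result. Its bound $\|g\circ\pi\|_\alpha\le C''\|g\|_\alpha$ is also stated with the same exponent $\alpha$. In fact the lift is H\"{o}lder for $d_\vartheta$ with $\vartheta=\lambda^\alpha$, i.e.\ exponent $\alpha\log_2\lambda^{-1}$ in the paper's $2^{-N}$ metric, and your version tracks this correctly. Your route needs only the one-sided gap produced by the Lasota--Yorke/Ionescu-Tulcea--Marinescu machinery. It also shows how $\theta$ is assembled from $\lambda^\alpha$ and the gap $\tau$. In both proofs $\theta$ depends on $\phi$ through $\tau$ as well, not only on $\lambda$ and $\alpha$ as the statement says.

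The obstacle you flag can be removed rather than balanced. Take $G=\tilde g_k\circ\sigma^k$ and $H=\tilde h_k\circ\sigma^k$, both functions of the coordinates $0,\dots,2k$. The correlation is then at time $n$, not $n-2k$, and you can write $\int G\cdot H\circ\sigma^n\,d\nu^+=\int \mathcal{L}^{k}G\cdot H\circ\sigma^{n-k}\,d\nu^+$.
\begin{itemize}
\item Since $|G|_{\vartheta^{1/2}}\le C\vartheta^{-k/2}\|g\|_\alpha$, the Lasota--Yorke inequality $|\mathcal{L}^{k}G|_{\vartheta^{1/2}}\le C(\vartheta^{k/2}|G|_{\vartheta^{1/2}}+\|G\|_\infty)$ bounds $\|\mathcal{L}^kG\|_{\vartheta^{1/2}}$ uniformly in $k$.
\item For $h$ only $\|H\|_\infty\le\|h\|_\infty$ is needed.
\item Hence $k=\lfloor n/2\rfloor$ gives $\theta=\max(\vartheta^{1/2},\tau^{1/2})$, with no $\varepsilon$-tuning.
\end{itemize}
Finally, the identity between the smooth and symbolic correlations holds for any factor map, so the null exceptional set of $\pi$ plays no role there. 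It matters only for identifying $\pi_*\nu$ with the equilibrium state, which both you and the paper take as given.
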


\begin{proof}
Let $\mathcal{R}$ be a Markov partition of $\Omega$ with transition matrix $A$ and coding map $\pi: \Sigma_A \to \Omega$ (Main Theorem~\ref{thm:symbolic_coding}). The compositions $\tilde g = g \circ \pi$, $\tilde h = h \circ \pi$, and $\tilde\phi = \phi \circ \pi$ are H\"{o}lder continuous on $\Sigma_A$ (since $\pi$ is H\"{o}lder by Proposition~\ref{prop:coding_accuracy} and compositions of H\"{o}lder functions are H\"{o}lder). The equilibrium state $\tilde\mu$ for $\tilde\phi$ on $\Sigma_A$ satisfies $\pi_*\tilde\mu = \mu$ (Proposition~\ref{prop:symbolic_extension}).

By the Exponential Decay of Correlations theorem for Gibbs measures on subshifts of finite type, established in Part~I \cite{Thiam2026a}:
\begin{equation}
\left|\int \tilde g \cdot (\tilde h \circ \sigma^n)\,d\tilde\mu - \int \tilde g\,d\tilde\mu \int \tilde h\,d\tilde\mu\right| \leq C'\|\tilde g\|_\alpha\|\tilde h\|_\alpha\,\gamma^n
\end{equation}
for some $\gamma \in (0,1)$ determined by the spectral gap. Since $\pi \circ \sigma = f \circ \pi$ and $\pi_*\tilde\mu = \mu$, the left side equals $|\int g \cdot (h \circ f^n)\,d\mu - \int g\,d\mu\int h\,d\mu|$. The H\"{o}lder norms satisfy $\|\tilde g\|_\alpha \leq C''\|g\|_\alpha$ by the coding accuracy estimate, giving the result with $\theta = \gamma$ and adjusted constant.
\end{proof}

\begin{proposition}[Polynomial Mixing for Non-H\"{o}lder Observables]\label{prop:polynomial_mixing}
For continuous observables $g, h$ with modulus of continuity $\omega$, the correlation decay satisfies
\begin{equation}
\left| \int g \cdot (h \circ f^n) \, d\mu - \int g \, d\mu \int h \, d\mu \right| \leq C \|g\|_\infty \|h\|_\infty \cdot \omega(\lambda^n).
\end{equation}
\end{proposition}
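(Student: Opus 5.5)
The plan is an approximation argument that reduces the claim to the Hölder case, Theorem~\ref{thm:exponential_mixing}. The bound is read in the natural way: $\omega(\lambda^n)$ should dominate, and the constant $C$ may depend on $\omega$ through a regularisation scale. Fix $n$, write $\mathrm{Cor}_n(g,h)=\int g\,(h\circ f^n)\,d\mu-\int g\,d\mu\int h\,d\mu$, and split $n=2m$ or so. Work symbolically via the coding map $\pi:\Sigma_A\to\Omega$ of Main Theorem~\ref{thm:symbolic_coding}, using $\pi_*\tilde\mu=\mu$ and $\pi\circ\sigma=f\circ\pi$. The correlation then equals the correlation of $\tilde g=g\circ\pi$ and $\tilde h=h\circ\pi$ with respect to $\tilde\mu$.

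\textbf{Step 1: approximation.} Replace $\tilde g$ and $\tilde h$ by their conditional expectations $\tilde g_m,\tilde h_m$ on cylinders of length $2m+1$ centred at $0$. These are locally constant, hence Hölder. The coding accuracy estimate $\mathrm{diam}(\pi[a_{-m}\cdots a_m])\le C\lambda^m\mathrm{diam}(\mathcal R)$ gives
\[
\|\tilde g-\tilde g_m\|_\infty\le\omega(C\lambda^m),\qquad \|\tilde h-\tilde h_m\|_\infty\le\omega(C\lambda^m).
\]
Replacing $g,h$ by $\tilde g_m,\tilde h_m$ therefore costs at most $2(\|g\|_\infty+\|h\|_\infty)\,\omega(C\lambda^m)$ in the correlation. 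To get the stated product form $\|g\|_\infty\|h\|_\infty$, I will normalise. Since the statement is homogeneous only if $\omega$ scales with the functions, I will interpret $\omega$ as the modulus of $g/\|g\|_\infty$ and $h/\|h\|_\infty$.

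\textbf{Step 2: correlation of the approximants.} Here I use the Gibbs property and spectral gap for $\tilde\mu$ from Part~I. For functions depending on coordinates $[-m,m]$, the correlation at time $n$ is controlled by the decay of the transfer operator acting on indicators of cylinders of length $2m+1$. This gives a bound $C\|g\|_\infty\|h\|_\infty\,\gamma^{\,n-2m}$, which is the $\psi$-mixing property of Gibbs measures. Choose $m=\lfloor n/4\rfloor$. Then $\gamma^{n-2m}\le\gamma^{n/2}$, and $\omega(C\lambda^{n/4})$ is the approximation error. Because any modulus of continuity decays no faster than linearly, $\gamma^{n/2}\lesssim\omega(\lambda^{n/4})$ after choosing $\lambda$-units, and monotonicity plus subadditivity give $\omega(C\lambda^{n/4})\le(C+1)\,\omega(\lambda^{n/4})$.

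\textbf{Main obstacle.} The final bound comes out with $\omega(\lambda^{n/4})$ rather than $\omega(\lambda^n)$. For moduli such as $\omega(t)=t^a$ or $1/\log(1/t)$ this costs only a constant-factor change in the rate. It is not a constant factor in general, since $\omega(\lambda^{n/4})$ and $\omega(\lambda^{n})$ can differ by more than a constant. So the stated form holds only after replacing $\lambda$ by $\lambda^{1/4}$, i.e.\ adjusting the contraction rate by choosing the adapted metric and the iterate appropriately, in the same spirit as Step~3 of Proposition~\ref{prop:stable_contraction}. I expect the delicate points to be getting the uniform $\psi$-mixing bound (with $\|\cdot\|_\infty$ only, independent of the cylinder length) from Part~I, and justifying that rescaling of $\lambda$.
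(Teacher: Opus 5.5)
Your route differs from the paper's, and your first two steps are sound. The paper approximates only $h$ by Hölder functions $h_k$ with $|h_k|_\alpha\le C_\alpha k^\alpha$. It then applies Theorem~\ref{thm:exponential_mixing} with $\|g\|_\infty$, although that theorem needs $\|g\|_\alpha$, and sets $k=\lambda^{-n}$. This leaves a term of size $(\theta\lambda^{-\alpha})^n$ that need not decay at all. Your approach takes conditional expectations of both observables on cylinders of length $2m+1$ and uses a uniform $\psi$-mixing bound in $L^1$ norms. That avoids both problems and correctly yields
\[
|\mathrm{Cor}_n(g,h)|\le C\|g\|_\infty\|h\|_\infty\bigl(\omega(C\lambda^{m})+\gamma^{\,n-2m}\bigr).
\]

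The gap is the absorption $\gamma^{n/2}\lesssim\omega(\lambda^{n/4})$. Here $\gamma$ is the mixing rate of the Gibbs measure $\mu$, which is fixed by the potential and unrelated to $\lambda$. The only general lower bound, $\omega(t)\ge ct$, gives $\omega(\lambda^{n/4})\gtrsim\lambda^{n/4}$, and this dominates $\gamma^{n/2}$ only if $\gamma^2\le\lambda$. No choice of units changes an exponential rate.

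This step cannot be repaired, because the proposition is false as stated, under either reading of $\omega$. Take Example~\ref{ex:horseshoe} with $\lambda=1/3$, where $R_0\cap\Omega$ and $R_1\cap\Omega$ are disjoint compact sets at distance $\eta>0$.
\begin{itemize}
\item Let $\phi(x)=\log P_{a_0(x)a_1(x)}$, where $a(x)$ is the itinerary and $P=\begin{pmatrix}1-\epsilon&\epsilon\\ \epsilon&1-\epsilon\end{pmatrix}$. This potential is locally constant, hence Hölder, and its equilibrium state $\mu$ is the symmetric Markov measure.
\item Let $g=h=\mathbf{1}_{R_0}-\tfrac12$, which has modulus $\omega(t)=\min(1,t/\eta)$.
\end{itemize}
Then $\int g\,d\mu=0$ and the correlation equals $\tfrac14(1-2\epsilon)^n$, while the right-hand side is $O(3^{-n})$. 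The inequality therefore fails for $\epsilon<1/3$. For small $\epsilon$ it also fails with $\lambda$ replaced by $\lambda^{1/4}$, or by any fixed $\lambda'<1$. So the rescaling in your last paragraph would have to depend on $\mu$ through $\gamma$, and that is no longer the stated result.

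What your argument actually proves is the following. Take $m=\lfloor\kappa n\rfloor$ with a fixed $\kappa\in(0,\tfrac12)$, and use $\omega(t)\ge ct$, which holds for non-constant observables with subadditive $\omega$. You get the bound $C'\|g\|_\infty\|h\|_\infty\,\omega(\rho^n)$ with $\rho=\max(\lambda^{\kappa},\gamma^{1-2\kappa})<1$, a rate that depends on $\mu$. That is the correct form of the statement.
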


\begin{proof}
Approximate $h$ by H\"{o}lder functions $h_k$ with $\|h - h_k\|_\infty \leq \omega(k^{-1})$ and $|h_k|_\alpha \leq C_\alpha k^\alpha$. By Theorem~\ref{thm:exponential_mixing}, the correlations of $g$ and $h_k$ decay as $C\|g\|_\infty\|h_k\|_\alpha\theta^n$. Choosing $k = \lambda^{-n}$ balances the approximation error $\omega(\lambda^n)$ against the exponential decay, yielding the stated bound.
\end{proof}

\subsection{Structure of Non-Mixing Basic Sets}

A topologically transitive basic set may fail to be topologically mixing; when it does, it decomposes into a finite cyclic union of mixing components. This cyclic structure controls the periodicity of the symbolic dynamics and the leading eigenvalues of the transfer operator.

\begin{proposition}[Cyclic Structure]\label{prop:cyclic_structure}
If a basic set $\Omega_i$ is not mixing for $f$, then there exist disjoint closed sets $X_1, \ldots, X_n$ with $n \geq 2$ such that:
\begin{enumerate}
\item[(i)] $\Omega_i = X_1 \cup \cdots \cup X_n$.
\item[(ii)] $f(X_j) = X_{j+1}$ for $j = 1, \ldots, n-1$ and $f(X_n) = X_1$.
\item[(iii)] $f^n|_{X_j}$ is topologically mixing for each $j$.
\end{enumerate}
The integer $n$ is called the period of $\Omega_i$.
\end{proposition}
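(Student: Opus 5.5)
The plan is to use Bowen's construction from the unstable-manifold classes, which the proof of Main Theorem~\ref{thm:spectral_decomposition} already supplies, and then show that non-mixing of $\Omega_i$ forces the cycle length to be at least $2$.

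\textbf{Step 1: the cyclic pieces.} Fix a periodic point $p \in \Omega_i$ and set $X_1 := X_p = \overline{W^u(p)} \cap \Omega(f)$. By Lemma~\ref{lem:basic_set_structure}, each such set is open and closed in $\Omega(f)$. Two such sets through periodic points either coincide or are disjoint. I will use the identity $f(X_p) = X_{f(p)}$. This holds because $f(W^u(p)) = W^u(f(p))$ and $f$ preserves $\Omega(f)$ and closures. Let $n$ be the least positive integer with $f^n(X_1) = X_1$; it exists because only finitely many distinct classes occur, by compactness. Then set $X_j := f^{j-1}(X_1)$ for $1 \le j \le n$.

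These sets are pairwise disjoint by minimality of $n$ and the dichotomy in Lemma~\ref{lem:basic_set_structure}. Their union $Y = X_1 \cup \cdots \cup X_n$ is closed, open in $\Omega(f)$, and $f$-invariant. Since $f|_{\Omega_i}$ is transitive (Proposition~\ref{prop:transitivity}), $Y \cap \Omega_i$ is a nonempty, relatively open, invariant subset of $\Omega_i$. Transitivity therefore forces $Y \supseteq \Omega_i$; otherwise the complement would be a second nonempty open invariant set disjoint from $Y$. Conversely, $Y \subseteq \Omega_i$ by the definition of $\Omega_i$ as a cycle of classes. This gives items (i) and (ii).

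\textbf{Step 2: mixing of the return map.} For item (iii), I will invoke the mixing argument in the proof of Main Theorem~\ref{thm:spectral_decomposition} with $X = X_j$ and $N = n$. Take open $U, V \subset X_j$ and periodic points $p' \in U$, $q' \in V$. Because both lie in the same class $X_j$, there is a point $z \in W^u(\mathcal{O}(p')) \cap W^s(q')$. To get it, first use Lemma~\ref{lem:basic_set_structure} to write $X_{p'} = X_{q'}$, then bracket a point of $W^u(p')$ near $q'$ with $q'$.

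The resulting points $w = [p', \cdot\,] \in U$ satisfy $f^{tn}(w) \in V$ for every $t$ large, uniformly over the finitely many residues. This is the step I expect to be the main obstacle. The difficulty is checking that every sufficiently large multiple $tn$, and not just multiples of $\operatorname{lcm}(\mathrm{per}(p'), \mathrm{per}(q'))$ plus selected residues, is realized. I would handle it by choosing $z_j \in W^u(f^{jn}(p')) \cap W^s(q')$ for each residue $j$ modulo $\mathrm{per}(p')/\gcd$. Each $f^{jn}(p')$ lies in $X_j$, so each such $z_j$ exists by the same class argument. The shadowing-by-bracket construction then produces, for each residue, a point of $U$ whose $f^{tn}$-image lies in $V$.

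\textbf{Step 3: the period is at least $2$.} If $n = 1$, Step 2 would make $f$ itself mixing on $\Omega_i = X_1$, contradicting the hypothesis that $\Omega_i$ is not mixing. Hence $n \ge 2$.
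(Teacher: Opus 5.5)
Your proposal is correct and follows the paper's route: the paper proves this proposition only by citing part (b) of Main Theorem~\ref{thm:spectral_decomposition}, whose proof is your Steps 1--2 (the classes $X_p$ permuted cyclically by $f$, and mixing of the return map via points $z_j \in W^u(f^{jn}(p')) \cap W^s(q')$ chosen for each residue), and your Step 3 makes explicit the bound $n \geq 2$ that the paper leaves implicit. Two minor points: in Step 1 you should not invoke Proposition~\ref{prop:transitivity}, because the paper derives it from part (b) itself (so the use is circular), and it is unnecessary since $\Omega_i$ is by definition the union of the cycle of classes through $X_p$, which gives (i) directly; and in Step 2 no bracket is needed, because $f^{-(\ell\,\mathrm{per}(p') + jn)}(z_j)$ lies in $U$ for $\ell$ large and is carried into $V$ after a further fixed multiple of $\mathrm{per}(q')$, which covers every large multiple of $n$ as $j$ runs over the residues.
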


\begin{proof}
This is part (b) of Theorem \ref{thm:spectral_decomposition}. The period $n$ equals the number of components in the cyclic decomposition.
\end{proof}

\begin{corollary}[Period from Periodic Orbits]\label{cor:period_from_orbits}
The period of a basic set $\Omega_i$ equals the greatest common divisor of the periods of all periodic orbits in $\Omega_i$.
\end{corollary}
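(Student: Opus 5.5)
The plan is to prove the two divisibilities $n \mid g$ and $g \mid n$ separately, where $n$ is the period of $\Omega_i$ (the number of cyclic components in Proposition~\ref{prop:cyclic_structure}) and $g$ is the gcd of the periods of all periodic orbits in $\Omega_i$.

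\emph{Step 1: $n$ divides every period, hence $n \mid g$.} Let $p \in \Omega_i$ satisfy $f^m(p) = p$. Then $p \in X_j$ for some $j$. The cyclic structure $f(X_j) = X_{j+1}$ (indices mod $n$) gives $f^m(p) \in X_{j+m \bmod n}$. The $X_j$ are pairwise disjoint, so $p = f^m(p)$ forces $m \equiv 0 \pmod n$. This holds for every period $m$, so $n \mid g$.

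\emph{Step 2: produce periodic points of periods dividing $tn$ and $(t+1)n$.} Fix a periodic point $p \in X_1$ (dense by Proposition~\ref{prop:periodic_dense_basic}) and a small relatively open neighbourhood $U \subset X_1$ of $p$. By part (b) of Main Theorem~\ref{thm:spectral_decomposition}, $f^n|_{X_1}$ is topologically mixing. Hence there is $t_0$ such that for every $t \geq t_0$ there is $x_t \in U$ with $f^{tn}(x_t) \in U$.

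For each such $t$, form the periodic pseudo-orbit obtained by repeating the segment $x_t, f(x_t), \ldots, f^{tn-1}(x_t)$, closing up at the jump from $f^{tn}(x_t)$ back to $x_t$. The jump has size at most $\mathrm{diam}(U)$, so taking $U$ small makes this an $\alpha$-pseudo-orbit in $\Omega$. The Shadowing Lemma (Main Theorem~\ref{thm:shadowing}) gives a point $q_t \in \Omega$ that $\beta$-shadows it, with $\beta$ below the expansiveness constant of Proposition~\ref{thm:expansiveness}.

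The pseudo-orbit is invariant under the index shift by $tn$. Therefore $f^{tn}(q_t)$ also $\beta$-shadows it, so $f^{tn}(q_t)$ and $q_t$ stay within $2\beta$ along their whole orbits. Expansiveness then gives $f^{tn}(q_t) = q_t$.

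Since $q_t$ is within $\beta$ of $X_1$, and $\Omega_i$ and $X_1$ are relatively open and closed in $\Omega$, for small $\beta$ we get $q_t \in X_1 \subset \Omega_i$. So $q_t$ is a periodic point of $\Omega_i$ whose minimal period $m_t$ divides $tn$.

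\emph{Step 3: conclude $g \mid n$.} Apply Step 2 with $t$ and $t+1$ to obtain periodic points of minimal periods $m_t \mid tn$ and $m_{t+1} \mid (t+1)n$. Then
\[
g \mid \gcd(m_t, m_{t+1}) \mid \gcd\bigl(tn, (t+1)n\bigr) = n \gcd(t, t+1) = n .
\]
Combined with Step 1, this gives $g = n$. The point of using consecutive $t$ is that it removes any need to know the exact minimal periods of the shadowing orbits.

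The main obstacle is Step 2, specifically its two inputs. First, the shadowing point must be genuinely periodic of period dividing $tn$; this rests entirely on choosing $\beta$ below the expansiveness constant so that uniqueness of the shadowing orbit applies. Second, the shadowing point must lie in $\Omega_i$ itself and not merely in $\Omega$; this uses that the basic sets are open–closed in $\Omega$, as in the proof of Proposition~\ref{prop:periodic_dense_basic}. Once the constants are fixed in that order ($\beta$ from expansiveness, then $\alpha$ from shadowing, then $U$ of diameter less than $\alpha$, then $t_0$ from mixing), the rest is bookkeeping.
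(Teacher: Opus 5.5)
Your proof is correct and has the same skeleton as the paper's. The divisibility $n \mid g$ comes from the cyclic permutation of the disjoint pieces $X_j$. The divisibility $g \mid n$ comes from exhibiting periodic points in $\Omega_i$ with periods dividing $tn$ and $(t+1)n$ and then using $\gcd(t,t+1)=1$.

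The difference lies in how those periodic points are produced. The paper passes to the symbolic coding of $f^n|_{X_1}$ and invokes aperiodicity of its transition matrix to get return times $k$ and $k+1$. This needs a Markov partition and coding for $f^n|_{X_1}$, machinery from Sections~\ref{sec:markov_partitions}--\ref{sec:symbolic_dynamics}, together with the link from mixing to aperiodicity.

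You instead use topological mixing of $f^n|_{X_1}$ directly to get near-returns $f^{tn}(U)\cap U\neq\emptyset$ for all large $t$. You then close these up with shadowing plus expansiveness, which is exactly the Anosov Closing Lemma (Proposition~\ref{thm:closing_lemma}), so you could cite it rather than re-derive it. You also check explicitly that the closed orbit stays in $X_1$, via the positive separation of $X_1$ from $\Omega\setminus X_1$; this uses that the finitely many disjoint closed pieces of $\Omega$ are also open in $\Omega$.

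Your route is lighter and self-contained at the level of Section~\ref{sec:shadowing}, and your ordering of the constants ($\beta$, then $\alpha$, then $U$, then $t_0$) is the right one. The symbolic route is natural once the coding exists and yields finer information, such as periodic-point counts via $\mathrm{tr}(A^k)$, but this corollary does not need it.
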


\begin{proof}
Let $n$ be the period of $\Omega_i$ and let $d = \gcd\{\text{periods of orbits in } \Omega_i\}$. Any periodic orbit visits each $X_j$ exactly once before returning, so all periods are divisible by $n$, giving $n | d$.

Conversely, we show $d | n$. Since $f^n|_{X_1}$ is mixing (Main Theorem~\ref{thm:spectral_decomposition}(b)), there exist periodic orbits of $f^n$ in $X_1$ (by density of periodic points). A periodic point $p \in X_1$ of $f^n$ with period $k$ (meaning $(f^n)^k(p) = p$ and no smaller positive integer works) gives a periodic point of $f$ with period $nk$, which is divisible by $d$. Since $f^n|_{X_1}$ is mixing, for any two cylinders in the symbolic coding of $f^n|_{X_1}$ there exist return times of consecutive lengths $k$ and $k+1$ for large enough $k$ (this follows from the aperiodicity of the transition matrix for the mixing system $f^n|_{X_1}$). Thus $d | nk$ and $d | n(k+1)$, giving $d | n(k+1) - nk = n$. Combined with $n | d$, we conclude $d = n$.
\end{proof}

\subsection{Topological Entropy of Basic Sets}

The topological entropy of $f$ restricted to a basic set is finite and is expressed in terms of the spectral radius of the associated transition matrix once Markov partitions are in place. This subsection collects the entropy statements that feed into Main Theorem~\ref{thm:symbolic_coding}.

\begin{proposition}[Entropy of Basic Sets]\label{prop:entropy_basic_sets}
For a basic set $\Omega_i$,
\begin{equation}
h_{\mathrm{top}}(f|_{\Omega_i}) = \frac{1}{n_i} h_{\mathrm{top}}(f^{n_i}|_{X_{1,i}})
\end{equation}
where $n_i$ is the period and $X_{1,i}$ is any component.
\end{proposition}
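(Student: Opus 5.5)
The plan is to reduce the statement to two standard facts about topological entropy: the power rule $h_{\mathrm{top}}(g^k) = k\,h_{\mathrm{top}}(g)$ for a homeomorphism $g$ of a compact metric space, and the fact that entropy of a homeomorphism on a finite disjoint union of closed invariant pieces is the maximum of the entropies on the pieces. Write $\Omega_i = X_{1,i}\cup\cdots\cup X_{n_i,i}$ as in Main Theorem~\ref{thm:spectral_decomposition}(b). Each $X_{j,i}$ is closed and $f^{n_i}$-invariant, the $X_{j,i}$ are pairwise disjoint, and $f^{n_i}(X_{j,i}) = X_{j,i}$.

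\textbf{Step 1.} Apply the power rule to $f|_{\Omega_i}$ to get $h_{\mathrm{top}}(f^{n_i}|_{\Omega_i}) = n_i\, h_{\mathrm{top}}(f|_{\Omega_i})$. I will prove this with Bowen's $(n,\varepsilon)$-spanning sets.
\begin{itemize}
\item One inequality holds because an $(nk,\varepsilon)$-separated set for $f$ is, up to uniform continuity, $(n,\delta)$-separated for $f^k$.
\item The reverse inequality is immediate, since an $(n,\varepsilon)$-separated set for $f^k$ is $(nk,\varepsilon)$-separated for $f$.
\end{itemize}

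\textbf{Step 2.} Since $\Omega_i$ is the disjoint union of the closed $f^{n_i}$-invariant sets $X_{j,i}$, any $(n,\varepsilon)$-separated set for $f^{n_i}$ on $\Omega_i$ splits into separated sets on each piece. The maximal cardinality is therefore at most $n_i$ times the maximum over $j$, and at least that maximum. Taking $\frac1n\log$ and letting $n\to\infty$ gives
\[
h_{\mathrm{top}}(f^{n_i}|_{\Omega_i}) = \max_j h_{\mathrm{top}}(f^{n_i}|_{X_{j,i}}).
\]

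\textbf{Step 3.} Show that all components have the same entropy. The map $f|_{X_{j,i}} : X_{j,i}\to X_{j+1,i}$ is a homeomorphism that conjugates $f^{n_i}|_{X_{j,i}}$ to $f^{n_i}|_{X_{j+1,i}}$, because $f\circ f^{n_i} = f^{n_i}\circ f$. Topological entropy is a conjugacy invariant, so all the quantities in Step 2 coincide. Combining Steps 1 to 3 gives $n_i\, h_{\mathrm{top}}(f|_{\Omega_i}) = h_{\mathrm{top}}(f^{n_i}|_{X_{1,i}})$ for any choice of component, which is the claim.

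The only step requiring care is the first inequality in Step 1. There I will fix $\varepsilon$ and use uniform continuity of $f,\dots,f^{k-1}$ on the compact set $\Omega_i$ to choose $\delta$ such that $d(x,y)<\delta$ implies $d(f^r x, f^r y)<\varepsilon$ for $0\le r<k$. Then $(n,\delta)$-spanning sets for $f^k$ are $(nk,\varepsilon)$-spanning for $f$, and I will let $\varepsilon\to0$ in the correct order. The rest is bookkeeping.
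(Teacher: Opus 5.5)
Your proposal is correct and follows the paper's proof: the power formula $h_{\mathrm{top}}(f^{n_i}|_{\Omega_i}) = n_i\,h_{\mathrm{top}}(f|_{\Omega_i})$, then $h_{\mathrm{top}}(f^{n_i}|_{\Omega_i}) = \max_j h_{\mathrm{top}}(f^{n_i}|_{X_{j,i}})$ for the disjoint closed $f^{n_i}$-invariant pieces, then equality of the pieces' entropies via the conjugacies $f|_{X_{j,i}} : X_{j,i}\to X_{j+1,i}$. The only difference is that you verify the power rule and the maximum formula directly with Bowen's separated/spanning sets, whereas the paper cites Katok--Hasselblatt for the power rule and simply asserts the maximum formula.
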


\begin{proof}
The power formula $h_{\mathrm{top}}(f^n|_Y) = n \cdot h_{\mathrm{top}}(f|_Y)$ for any compact invariant set $Y$ is proved in \cite[Theorem~3.2.8]{KatokHasselblatt1995}. Since $\Omega_i = X_{1,i} \cup \cdots \cup X_{n_i,i}$ with $f(X_{j,i}) = X_{j+1,i}$, the restriction $f^{n_i}|_{X_{j,i}}$ has the same topological entropy for each $j$ (the sets $X_{j,i}$ are pairwise conjugate via the appropriate power of $f$). The topological entropy of $f|_{\Omega_i}$ satisfies $h_{\mathrm{top}}(f|_{\Omega_i}) = h_{\mathrm{top}}(f^{n_i}|_{\Omega_i})/n_i$ by the power formula. Since the $X_{j,i}$ are disjoint $f^{n_i}$-invariant sets, $h_{\mathrm{top}}(f^{n_i}|_{\Omega_i}) = \max_j h_{\mathrm{top}}(f^{n_i}|_{X_{j,i}}) = h_{\mathrm{top}}(f^{n_i}|_{X_{1,i}})$ (the maximum equals any single term since all are equal). Thus $h_{\mathrm{top}}(f|_{\Omega_i}) = h_{\mathrm{top}}(f^{n_i}|_{X_{1,i}})/n_i$.
\end{proof}

\begin{proposition}[Total Entropy]\label{prop:total_entropy}
\begin{equation}
h_{\mathrm{top}}(f|_{\Omega(f)}) = \max_{i=1,\ldots,s} h_{\mathrm{top}}(f|_{\Omega_i}).
\end{equation}
\end{proposition}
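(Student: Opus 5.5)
The plan is to prove the general fact that topological entropy is additive-as-a-maximum over a finite disjoint union of closed invariant sets, and then apply it to the decomposition $\Omega(f) = \Omega_1 \cup \cdots \cup \Omega_s$ from Main Theorem~\ref{thm:spectral_decomposition}. The basic sets are pairwise disjoint, closed and $f$-invariant, so each $\Omega_i$ is a compact invariant subset of the compact metric space $\Omega(f)$. Moreover they are at positive mutual distance: set $\eta = \min_{i \neq j} d(\Omega_i, \Omega_j) > 0$.

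\emph{Lower bound.} Since $\Omega_i \subset \Omega(f)$ is closed and invariant, monotonicity of topological entropy under restriction to closed invariant subsets gives $h_{\mathrm{top}}(f|_{\Omega_i}) \leq h_{\mathrm{top}}(f|_{\Omega(f)})$. To see this, note that an $(n,\varepsilon)$-separated set in $\Omega_i$ is also $(n,\varepsilon)$-separated in $\Omega(f)$. Taking the maximum over $i$ gives $\max_i h_{\mathrm{top}}(f|_{\Omega_i}) \leq h_{\mathrm{top}}(f|_{\Omega(f)})$.

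\emph{Upper bound.} I will use the Bowen definition with separated sets. Fix $\varepsilon < \eta$ and let $E \subset \Omega(f)$ be a maximal $(n,\varepsilon)$-separated set. Write $E_i = E \cap \Omega_i$. Each $E_i$ is $(n,\varepsilon)$-separated in $\Omega_i$, so $|E_i| \leq s_n(\varepsilon, \Omega_i)$, and hence
\[
s_n(\varepsilon,\Omega(f)) \leq \sum_{i=1}^s s_n(\varepsilon,\Omega_i) \leq s \cdot \max_i s_n(\varepsilon,\Omega_i).
\]
Taking $\frac1n\log$ and $\limsup_{n\to\infty}$ makes the factor $s$ disappear, because $\frac{\log s}{n}\to 0$. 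The remaining point is the interchange of $\limsup_n$ with $\max_i$ over a finite index set. This holds since $\limsup_n \max_i a_n^{(i)} = \max_i \limsup_n a_n^{(i)}$ for finitely many sequences: along a subsequence realizing the left side, some fixed index attains the max infinitely often. Letting $\varepsilon \to 0$ then gives
\[
h_{\mathrm{top}}(f|_{\Omega(f)}) \leq \max_i h_{\mathrm{top}}(f|_{\Omega_i}).
\]
The limit in $\varepsilon$ also commutes with the finite max, since each quantity is monotone in $\varepsilon$. The restriction $\varepsilon<\eta$ is not actually needed for this inequality; it only ensures that separation between points of different pieces is automatic.

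The only step requiring any care is the interchange of the limits $n\to\infty$ and $\varepsilon\to0$ with the finite maximum. I would handle it by monotonicity in $\varepsilon$ together with the pigeonhole argument above. No hyperbolicity beyond the finiteness and disjointness given by Main Theorem~\ref{thm:spectral_decomposition} is used, and the same argument is the one already invoked for the $X_{j,i}$ in Proposition~\ref{prop:entropy_basic_sets}. Alternatively, I could cite the variational principle: every ergodic measure on $\Omega(f)$ lives on a single $\Omega_i$ by invariance and disjointness, so $\sup_\mu h_\mu$ splits as a max over $i$.
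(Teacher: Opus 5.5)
Your proof is correct, but your main argument takes a different route from the paper's. The paper's proof appeals to the variational principle in one line: $h_{\mathrm{top}}$ is the supremum of $h_\mu$ over invariant measures, and because the $\Omega_i$ are disjoint and invariant, this supremum splits as a maximum over the basic sets. That is essentially the alternative you sketch at the end.

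Your separated-set argument is elementary and self-contained. It uses monotonicity for the lower bound. For the upper bound it uses the counting inequality $s_n(\varepsilon,\Omega(f)) \le s\cdot\max_i s_n(\varepsilon,\Omega_i)$, together with the interchange of $\limsup_n$ and $\lim_{\varepsilon\to 0}$ with a finite maximum.

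What this buys you:
\begin{itemize}
\item It avoids the variational principle entirely.
\item It avoids a step the paper leaves implicit: passing from an arbitrary invariant measure $\mu$ to measures carried by a single $\Omega_i$. That passage needs either restricting the supremum to ergodic measures, or the affinity $h_\mu = \sum_i \mu(\Omega_i)\, h_{\mu_i}$ applied to $\mu = \sum_i \mu(\Omega_i)\,\mu_i$.
\item As you observe, it does not use disjointness, so it works for any finite cover by closed invariant sets.
\item It therefore also proves the analogous claim $h_{\mathrm{top}}(f^{n_i}|_{\Omega_i}) = \max_j h_{\mathrm{top}}(f^{n_i}|_{X_{j,i}})$, which is asserted without proof in Proposition~\ref{prop:entropy_basic_sets}.
\end{itemize}

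The paper's route is shorter once the variational principle is available, and it fits the measure-theoretic framework used in the later Parts. Yours is more elementary and more general.
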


\begin{proof}
Since the basic sets are disjoint and invariant, and topological entropy is computed as a supremum over invariant measures, the total entropy is the maximum over the basic sets.
\end{proof}

\section{Pseudo-orbits and Shadowing}\label{sec:shadowing}

This section develops the theory of pseudo-orbits and the shadowing lemma for Axiom A diffeomorphisms, with explicit quantitative bounds. The shadowing lemma is the main tool for the construction of Markov partitions in Section~\ref{sec:markov_partitions}.

\subsection{Pseudo-orbits}

A pseudo-orbit is a sequence of points $\{x_i\}$ such that consecutive pairs $(x_i, x_{i+1})$ are only approximate iterates under $f$. The Shadowing Lemma will show that true orbits track such sequences on hyperbolic sets. We begin by fixing the definitions precisely.

\begin{definition}[Pseudo-orbit]
Let $f : M \to M$ be a diffeomorphism and $\alpha > 0$. A sequence $\{x_i\}_{i=a}^b$ (where $a, b \in \mathbb{Z} \cup \{\pm\infty\}$) is an $\alpha$-pseudo-orbit if
\begin{equation}
d(f(x_i), x_{i+1}) < \alpha \quad \text{for all } i \in [a, b-1).
\end{equation}
The sequence is a finite, one-sided infinite, or bi-infinite pseudo-orbit according to whether both, one, or neither of $a, b$ is finite.
\end{definition}

\begin{definition}[Shadowing]
A point $x \in M$ $\beta$-shadows the pseudo-orbit $\{x_i\}_{i=a}^b$ if
\begin{equation}
d(f^i(x), x_i) \leq \beta \quad \text{for all } i \in [a, b].
\end{equation}
\end{definition}

\subsection{The Shadowing Lemma}

The Shadowing Lemma states that every sufficiently fine pseudo-orbit on a hyperbolic set is tracked by a unique true orbit, with an explicit error bound controlled by the hyperbolicity data. This is the key technical tool behind the Anosov closing lemma, the specification property, and the construction of Markov partitions.

\begin{maintheorem}[Shadowing Lemma]\label{thm:shadowing}
Let $\Lambda$ be a hyperbolic set for a $C^1$ diffeomorphism $f$. For every $\beta > 0$, there exists $\alpha > 0$ such that every $\alpha$-pseudo-orbit $\{x_i\}_{i=a}^b$ in $\Lambda$ is $\beta$-shadowed by a point $x \in \Lambda$.

Moreover, there exist constants $C > 0$ and $\delta_0 > 0$ depending on the hyperbolicity data such that for $\beta < \delta_0$, we may take
\begin{equation}
\alpha = C^{-1} (1 - \lambda) \beta
\end{equation}
where $\lambda$ is the hyperbolicity exponent in an adapted metric.
\end{maintheorem}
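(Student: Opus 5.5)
We follow the bracket-iteration strategy announced in the introduction, working throughout in the adapted metric of Theorem~\ref{thm:adapted_metric}, where $E^s$ and $E^u$ are orthogonal and contract or expand by $\lambda$ in one step. We use the local product structure of Theorem~\ref{thm:local_product}, with scale $\delta_0$ and constant $\varepsilon_0$, the bracket bounds of Proposition~\ref{prop:bracket_bounds} with constant $C_1=(1-K)^{-1}$, and the contraction estimate $d(f^n x,f^n y)\le\lambda^n d(x,y)$ along $W^s_\varepsilon$ from Proposition~\ref{prop:stable_contraction}, together with its unstable analogue under $f^{-1}$.

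\textbf{Step 1: finite pseudo-orbits.} First treat a finite $\alpha$-pseudo-orbit $x_0,\dots,x_n$ in $\Lambda$. Define true points inductively by $y_0=x_0$ and
\[
y_{i+1}=[x_{i+1},f(y_i)].
\]
Thus $y_{i+1}$ lies in $W^s_{\varepsilon}(x_{i+1})\cap W^u_{\varepsilon}(f(y_i))$. The bracket is defined provided $d(x_{i+1},f(y_i))<\delta_0$, and we maintain this inductively. Since $y_i\in W^s_\varepsilon(x_i)$, we have $d(f y_i,f x_i)\le\lambda\, d(y_i,x_i)$. Combining this with $d(f x_i,x_{i+1})<\alpha$ and Proposition~\ref{prop:bracket_bounds} gives the recursion
\[
d(y_{i+1},x_{i+1})\le C_1\bigl(\alpha+\lambda\, d(y_i,x_i)\bigr).
\]
This recursion is only useful if $C_1\lambda<1$. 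To arrange this we take $K$ small, so that $C_1$ is close to $1$, and if necessary pass to a power $f^M$, bracketing only every $M$ steps as the introduction indicates. We then obtain $d(y_i,x_i)\le C\alpha/(1-\lambda)$ uniformly in $i$. We expect this bookkeeping of the factor $C_1$ against $\lambda$ to be the main obstacle.

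\textbf{Step 2: pulling back to a single shadowing point.} Set $z=f^{-n}(y_n)$. Each step moved only along local unstable leaves, since $y_{i+1}\in W^u_\varepsilon(f y_i)$. Hence $f^{i}(z)$ and $y_i$ lie on a common local unstable leaf. Pulling back contracts the displacement introduced at stage $j$ by $\lambda^{\,i-j}$ when we look at time $i$. The per-step unstable displacement is at most $C_1(\alpha+\lambda\,d(y_j,x_j))\lesssim C\alpha$, so summing the geometric series gives
\[
d(f^i z,y_i)\le \frac{C\alpha}{1-\lambda}.
\]
Therefore
\[
d(f^i z,x_i)\le \frac{2C\alpha}{1-\lambda}\le\beta
\]
once $\alpha=C^{-1}(1-\lambda)\beta$, after enlarging $C$. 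The point $z$ lies in $\Lambda$ because all brackets remain in $\Lambda$ (Theorem~\ref{thm:local_product}) and $\Lambda$ is invariant. We take $\beta<\delta_0$ so that all brackets are defined and all points remain within $\varepsilon_0$-neighbourhoods where the graph estimates hold.

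\textbf{Step 3: infinite pseudo-orbits.} For one-sided or bi-infinite pseudo-orbits, apply Step~2 to the truncations $\{x_i\}_{|i|\le n}$, reindexed to start at $-n$. This gives points $z_n\in\Lambda$ with $d(f^i z_n,x_i)\le\beta$ for all $|i|\le n$. By compactness of $\Lambda$, a subsequence converges to some $z\in\Lambda$. Continuity of each $f^i$ then gives $d(f^i z,x_i)\le\beta$ for every $i$. The qualitative statement for arbitrary $\beta$ follows by applying the quantitative statement to $\min(\beta,\delta_0/2)$.
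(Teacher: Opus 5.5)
Your proposal is correct and is essentially the paper's argument. Like the paper, you bracket the true image $f(y_i)$ (or its $f^M$-image) with the next pseudo-orbit point, pull the last point back by $f^{-n}$ along unstable leaves, and pass to a compactness limit for infinite pseudo-orbits; passing to $f^M$ to beat the bracket constant plays exactly the role it plays in Lemma~\ref{lem:finite_shadowing}. Your bookkeeping is tighter than the paper's, since your recursion is genuinely linear in $\alpha$ and you account for the pulled-back unstable displacement, but note that your constant $C$ silently absorbs a factor $(1-C_1\lambda)^{-1}\approx(1-\lambda)^{-1}$, so the bound you actually obtain is of order $\alpha/(1-\lambda)^2$. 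This is still permitted, because $C$ may depend on the hyperbolicity data.
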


The proof proceeds through several steps.

\begin{lemma}[Finite Pseudo-orbit Shadowing]\label{lem:finite_shadowing}
There exist $\varepsilon > 0$, $\delta > 0$, and $M \in \mathbb{N}$ depending on the hyperbolicity data such that: if $\{x_i\}_{i=0}^{rM}$ is an $\alpha$-pseudo-orbit in $\Lambda$ with $\alpha$ sufficiently small, then there exists $x \in \Lambda$ with
\begin{equation}
d(f^i(x), x_i) \leq C_0 \varepsilon \quad \text{for } i \in [0, rM]
\end{equation}
where $C_0 = (1 - \lambda)^{-1}$.
\end{lemma}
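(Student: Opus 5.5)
We follow the iterated-bracket construction already sketched in the introduction. First we fix the scales. Let $\delta_0$ and $\varepsilon_0$ be the local product structure constants of Theorem~\ref{thm:local_product}, and fix $\varepsilon>0$ with $C_1\varepsilon<\delta_0/2$, where $C_1=(1-K)^{-1}$ is the bracket constant from Proposition~\ref{prop:bracket_bounds}. Next we choose $M\in\mathbb{N}$ with $\lambda^M\leq 1/2$; more precisely, we take $M$ large enough that $2C_1\lambda^M\leq\lambda$, so that the contraction factor over one block is at most $\lambda$. Finally we choose $\delta>0$ and require $\alpha$ so small that every $\alpha$-pseudo-orbit of length $M$ stays $\varepsilon/2$-close to the true orbit of its initial point. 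This holds if $\alpha\sum_{j<M}\|Df\|_\infty^j\leq\varepsilon/2$, by uniform continuity of $f,\dots,f^M$. In particular $d(f^M(x_{(k-1)M}),x_{kM})<\varepsilon/2<\delta_0$.

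The construction proceeds block by block. Set $x'_0=x_0$, and for $k=1,\dots,r$ define
\[
x'_{kM}=[\,f^M(x'_{(k-1)M}),\,x_{kM}\,]\in W^s_{\varepsilon_0}(f^M(x'_{(k-1)M}))\cap W^u_{\varepsilon_0}(x_{kM}),
\]
which lies in $\Lambda$ by Theorem~\ref{thm:local_product}. The output point is $x=f^{-rM}(x'_{rM})$. The key inductive claim is that the point $y_k:=f^{-kM}(x'_{kM})$ satisfies $d(f^i(y_k),x_i)\leq C_0\varepsilon$ for all $0\leq i\leq kM$. The point $x'_{kM}$ sits on the unstable manifold of $x_{kM}$ and on the stable manifold of $f^M(x'_{(k-1)M})$. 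Pulling back by $f^{-M}$, the unstable manifold contracts, which is item (iii) of Main Theorem~\ref{thm:stable_manifold} applied to $f^{-1}$. Hence the displacement introduced at step $k$, which is of size at most $C_1\varepsilon/2$ by Proposition~\ref{prop:bracket_bounds}, is seen at time $(k-j)M$ with size at most $C_1(\varepsilon/2)\lambda^{jM}$ along unstable leaves. Within a block the forward displacement along stable leaves only shrinks. Summing the geometric series over $j$ gives
\[
d(f^i(y_r),x_i)\leq\frac{\varepsilon}{2}+C_1\frac{\varepsilon}{2}\sum_{j\geq0}\lambda^{jM}\leq C_0\varepsilon,
\]
after absorbing constants through the choice of $M$; this is where the factor $(1-\lambda)^{-1}$ arises. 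We also need intermediate times $i$ inside a block: for these we compare the true orbit with the pseudo-orbit using the continuity bound from the choice of $\alpha$.

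We expect the main obstacle to be keeping every bracket legitimate, that is, keeping all points within $\delta_0$ and on $\varepsilon_0$-local leaves. This requires the inductive hypothesis to be the sharper statement that the accumulated deviation stays below $\delta_0/2$ at block endpoints, so that $d(f^M(x'_{(k-1)M}),x_{kM})<\delta_0$ holds at every stage. It also requires that backward iterates of points on $W^u_{\varepsilon_0}(x_{kM})$ stay on the local unstable manifolds of $x_{kM-i}$, which is the $f^{-1}$ version of the invariance used in Proposition~\ref{prop:stable_contraction}. We would first try to prove the two-sided estimate in one induction, tracking separately the stable component (forward-contracted) and the unstable component (backward-contracted) of the deviation from the pseudo-orbit. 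If the constant does not come out exactly as $C_0$, we would enlarge $M$ to absorb the excess.
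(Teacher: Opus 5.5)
Your bracket has its two arguments in the wrong order, and with the construction as written the key estimates fail. Under the paper's convention $[x,y]\in W^s_{\varepsilon_0}(x)\cap W^u_{\varepsilon_0}(y)$, your point $x'_{kM}=[f^M(x'_{(k-1)M}),x_{kM}]$ lies on the \emph{stable} leaf of the propagated point $f^M(x'_{(k-1)M})$ and on the \emph{unstable} leaf of the pseudo-orbit point $x_{kM}$. Two things then go wrong.

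First, the correction made at step $k$, from $f^M(x'_{(k-1)M})$ to $x'_{kM}$, is a displacement along a stable leaf. The map $f^{-M}$ expands such a displacement rather than contracting it. So nothing keeps $f^{-M}(x'_{kM})$ near $x'_{(k-1)M}$, and the tracking of earlier blocks is lost. Second, inside block $k$ the displacement of $x'_{kM}$ from $x_{kM}$ lies along an unstable leaf. Hence $f^j(x'_{kM})$ separates from $f^j(x_{kM})\approx x_{kM+j}$ at rate $\lambda^{-j}$, and $\lambda^{-M}$ is large by your choice of $M$.

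The linear model $f(s,u)=(\lambda s,\lambda^{-1}u)$, where $[x,y]=(s_y,u_x)$, makes this explicit. Your recursion gives $u(x'_{kM})=\lambda^{-kM}u(x_0)$ and $s(x)=\lambda^{-rM}s(x_{rM})$. So the unstable data of the pseudo-orbit are never used, and late pseudo-orbit errors enter $s(x)$ multiplied by $\lambda^{-i}$. Your two qualitative claims (backward contraction of each correction along unstable leaves, forward shrinking within a block along stable leaves) hold only for the opposite placement. Relatedly, the property you say you need, that $f^{-i}$ carries $W^u_{\varepsilon_0}(x_{kM})$ into local unstable leaves of $x_{kM-i}$, is false. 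Since $f^{-i}(x_{kM})\neq x_{kM-i}$ for a pseudo-orbit, invariance of unstable leaves only transports information along true orbits.

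The repair is to swap the arguments: $x'_{kM}=[x_{kM},f^M(x'_{(k-1)M})]\in W^s_{\varepsilon_0}(x_{kM})\cap W^u_{\varepsilon_0}(f^M(x'_{(k-1)M}))$. This is the membership the paper's proof actually states and uses. Its displayed bracket is written in your order, inconsistently with its own definition, but the leaves it names are the right ones, as in Bowen's argument. With this placement your outline closes:
\begin{itemize}
\item If $d(f^M(x'_{(k-2)M}),x_{(k-1)M})\le\varepsilon$, then $d(x'_{(k-1)M},x_{(k-1)M})\le C_1\varepsilon$. Stable contraction plus your choice of $\alpha$ then give $d(f^M(x'_{(k-1)M}),x_{kM})\le C_1\lambda^M\varepsilon+\varepsilon/2\le\varepsilon$. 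So the brackets stay legitimate with no accumulation.
\item $f^{-M}(x'_{kM})$ lies on the unstable leaf of $x'_{(k-1)M}$ within $C_1\lambda^M\varepsilon$. This yields your geometric series comparing $f^{kM}(x)$ with $x'_{kM}$. That comparison is exactly the step the paper's write-up passes over when it treats $f^{sM}(x)$ as $x'_{sM}$.
\item $x'_{kM}\in W^s_{\varepsilon_0}(x_{kM})$, together with the choice of $\alpha$, controls the intermediate times.
\end{itemize}

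One smaller point concerns the final constant. Enlarging $M$ only shrinks the tail $\sum_{j\ge 1}\lambda^{jM}$. It does not shrink the leading term $\varepsilon/2+C_1\varepsilon/2$, which exceeds $(1-\lambda)^{-1}\varepsilon$ once $\lambda<(C_1-1)/(C_1+1)$. Since $\varepsilon$ is yours to choose, run the construction at a fixed fraction of $\varepsilon$ instead.
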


\begin{proof}
Using an adapted metric with exponent $\lambda$, choose $\varepsilon > 0$ small enough that the local product structure holds at scale $\varepsilon$ with bracket constant $\delta < \varepsilon/2$. Choose $M$ so large that $\lambda^M \varepsilon < \delta/2$.

Choose $\alpha > 0$ such that for any $\alpha$-pseudo-orbit $\{y_i\}_{i=0}^M$ in $\Lambda$,
\begin{equation}
d(f^j(y_0), y_j) < \delta/2 \quad \text{for } j \in [0, M].
\end{equation}
This is possible by the uniform continuity of $f$ on the compact set $\Lambda$.

We construct the shadowing point by iterated bracketing. Define $x'_0 = x_0$. For $k = 0, 1, \ldots, r-1$, inductively define
\begin{equation}
x'_{(k+1)M} = [f^M(x'_{kM}), x_{(k+1)M}] \in W^u_\varepsilon(f^M(x'_{kM})) \cap W^s_\varepsilon(x_{(k+1)M}).
\end{equation}

This is well-defined because:
\begin{align}
d(f^M(x'_{kM}), x_{(k+1)M}) &\leq d(f^M(x'_{kM}), f^M(x_{kM})) + d(f^M(x_{kM}), x_{(k+1)M}) \\
&\leq \lambda^M \varepsilon + \delta/2 < \delta
\end{align}
using the contraction on unstable manifolds under $f^{-M}$ (we have $x'_{kM} \in W^u_\varepsilon(f^M(x'_{(k-1)M}))$ giving backward contraction) and the pseudo-orbit condition.

Set $x = f^{-rM}(x'_{rM})$. For $i \in [0, rM]$, write $i = sM + j$ with $0 \leq j < M$. Then
\begin{align}
d(f^i(x), x_i) &\leq d(f^{i-sM}(x'_{sM}), f^{i-sM}(x_{sM})) + d(f^{i-sM}(x_{sM}), x_i) \\
&\leq \lambda^{M-j} \cdot d(x'_{sM}, x_{sM}) + \delta/2.
\end{align}

We estimate $d(x'_{sM}, x_{sM})$ by tracking through the bracketing construction:
\begin{align}
\begin{split}
& d(x'_{sM}, x_{sM}) \\
&\leq d(x'_{sM}, f^M(x'_{(s-1)M})) + d(f^M(x'_{(s-1)M}), f^M(x_{(s-1)M})) + d(f^M(x_{(s-1)M}), x_{sM}) \\
&\leq \varepsilon + \lambda^M d(x'_{(s-1)M}, x_{(s-1)M}) + \delta/2.
\end{split}
\end{align}

This recurrence with $d(x'_0, x_0) = 0$ gives
\begin{equation}
d(x'_{sM}, x_{sM}) \leq (\varepsilon + \delta/2) \sum_{t=0}^{s-1} \lambda^{tM} \leq \frac{\varepsilon + \delta/2}{1 - \lambda^M} \leq \frac{2\varepsilon}{1 - \lambda}.
\end{equation}

Thus $d(f^i(x), x_i) \leq \frac{2\varepsilon}{1-\lambda} + \delta/2 \leq C_0 \varepsilon$ for appropriate choice of constants.

The point $x$ lies in $\Lambda$ by the following argument: $x'_{rM} \in \Lambda$ since it is defined via brackets of points in $\Lambda$, and $\Lambda$ is $f$-invariant, so $x = f^{-rM}(x'_{rM}) \in \Lambda$.
\end{proof}

\begin{proof}[Proof of Theorem \ref{thm:shadowing}]
\textbf{Finite pseudo-orbits:} For a finite $\alpha$-pseudo-orbit $\{x_i\}_{i=0}^n$, extend it to $\{x_i\}_{i=0}^{rM}$ where $rM \geq n$ by setting $x_i = f^{i-n}(x_n)$ for $i > n$. Any shadowing point for the extension also shadows the original.

For a pseudo-orbit $\{x_i\}_{i=a}^b$ with $a \neq 0$, shift indices and apply the above.

\textbf{Bi-infinite pseudo-orbits:} Let $\{x_i\}_{i=-\infty}^{+\infty}$ be an $\alpha$-pseudo-orbit in $\Lambda$. For each $m \geq 1$, the finite pseudo-orbit $\{x_i\}_{i=-m}^m$ is shadowed by some $y^{(m)} \in \Lambda$.

The sequence $\{y^{(m)}\}$ has a convergent subsequence by compactness of $\Lambda$. Let $y = \lim_{k \to \infty} y^{(m_k)}$. For any fixed $i$,
\begin{equation}
d(f^i(y), x_i) = \lim_{k \to \infty} d(f^i(y^{(m_k)}), x_i) \leq C_0 \varepsilon
\end{equation}
provided $m_k > |i|$. Thus $y$ $\beta$-shadows the entire bi-infinite pseudo-orbit with $\beta = C_0 \varepsilon$.

\textbf{Quantitative bound:} The relationship $\alpha = C^{-1}(1-\lambda)\beta$ follows from tracing through the constants in the proof, with $C$ depending on the adapted metric constants, the dimension, and the local product structure scale.
\end{proof}\newpage

\begin{figure}[ht]
\centering
\begin{tikzpicture}[>=stealth, font=\small]

  %%% Pseudo-orbit curve (dashed) connecting x_0, x_1, x_2, x_3, x_4 %%%
  \draw[dashed, thick]
    (0.0, 1.2) .. controls (1.2, 1.9) and (1.8, 2.5) .. (2.4, 2.5)
               .. controls (3.2, 2.5) and (4.2, 1.3) .. (5.0, 1.0)
               .. controls (5.8, 0.7) and (6.8, 2.0) .. (7.6, 2.2)
               .. controls (8.4, 2.4) and (9.2, 1.2) .. (10.0, 0.9);

  %%% Pseudo-orbit points (filled dots) %%%
  \fill (0.0, 1.2) circle (2.5pt); \node[above] at (0.0, 1.3) {$x_0$};
  \fill (2.4, 2.5) circle (2.5pt); \node[above] at (2.4, 2.6) {$x_1$};
  \fill (5.0, 1.0) circle (2.5pt); \node[below] at (5.0, 0.9) {$x_2$};
  \fill (7.6, 2.2) circle (2.5pt); \node[above] at (7.6, 2.3) {$x_3$};
  \fill (10.0, 0.9) circle (2.5pt); \node[below] at (10.0, 0.8) {$x_4$};

  %%% Beta-balls around each pseudo-orbit point %%%
  \draw[thin] (0.0, 1.2) circle (0.75);
  \draw[thin] (2.4, 2.5) circle (0.75);
  \draw[thin] (5.0, 1.0) circle (0.75);
  \draw[thin] (7.6, 2.2) circle (0.75);
  \draw[thin] (10.0, 0.9) circle (0.75);

  %%% True orbit curve (solid thick) threading through the balls %%%
  \draw[very thick]
    (0.3, 0.9) .. controls (1.3, 1.5) and (2.0, 2.5) .. (2.7, 2.7)
               .. controls (3.4, 2.8) and (4.2, 1.5) .. (4.8, 1.3)
               .. controls (5.5, 1.1) and (6.6, 1.8) .. (7.4, 1.9)
               .. controls (8.2, 2.0) and (9.3, 0.9) .. (10.2, 0.8);

  %%% True orbit points (open circles) %%%
  \draw[thick, fill=white] (0.3, 0.9) circle (3pt);
  \draw[thick, fill=white] (2.7, 2.7) circle (3pt);
  \draw[thick, fill=white] (4.8, 1.3) circle (3pt);
  \draw[thick, fill=white] (7.4, 1.9) circle (3pt);
  \draw[thick, fill=white] (10.2, 0.8) circle (3pt);

  %%% True orbit labels %%%
  \node[right, font=\scriptsize] at (0.4, 0.9) {$x$};
  \node[right, font=\scriptsize] at (2.8, 2.7) {$f(x)$};
  \node[right, font=\scriptsize] at (4.9, 1.3) {$f^2(x)$};
  \node[right, font=\scriptsize] at (7.5, 1.9) {$f^3(x)$};
  \node[right, font=\scriptsize] at (10.3, 0.8) {$f^4(x)$};

  %%% Beta radius label on one ball %%%
  \draw[<->] (5.0, 1.0) -- (4.47, 0.47);
  \node[font=\scriptsize] at (4.6, 0.85) {$\beta$};

  %%% Legend %%%
  \node[draw, anchor=north west, inner sep=6pt, font=\scriptsize, align=left]
    at (0.0, -0.3)
    {dashed curve: pseudo-orbit $\{x_i\}$, with $d(f(x_i), x_{i+1}) < \alpha$\\
     solid curve: true orbit $\{f^i(x)\}$, with $d(f^i(x), x_i) \leq \beta$\\
     $\bullet$\quad pseudo-orbit points $x_i$ \qquad $\circ$\quad true orbit points $f^i(x)$\\
     thin circle: ball of radius $\beta$ around $x_i$ \qquad relation: $\alpha = C^{-1}(1-\lambda)\beta$};

\end{tikzpicture}
\caption{The Shadowing Lemma (Main Theorem~\ref{thm:shadowing}). The pseudo-orbit $\{x_i\}$ (filled dots, dashed curve) has gaps $d(f(x_i), x_{i+1}) < \alpha$. The true orbit of $x$ (open circles, solid curve) $\beta$-shadows it: each $f^i(x)$ lies within the ball of radius $\beta$ around $x_i$.}
\label{fig:shadowing}
\end{figure}
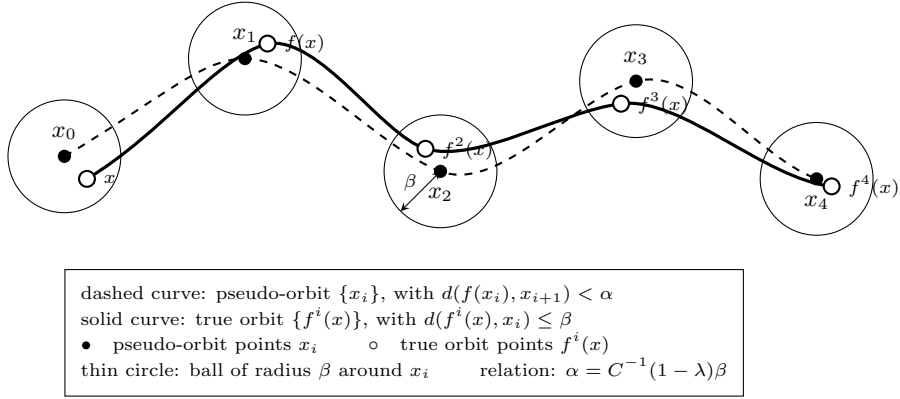

\subsection{Anosov Closing Lemma}

The Anosov Closing Lemma is a classical consequence of shadowing: an orbit that returns close to itself under $f^n$ is tracked by an exact periodic orbit of period $n$. It provides the source of dense periodic orbits in basic sets.

\begin{proposition}[Anosov Closing Lemma]\label{thm:closing_lemma}
Let $\Lambda$ be a hyperbolic set for $f$. For every $\beta > 0$, there exists $\alpha > 0$ such that: if $x \in \Lambda$ and $d(f^n(x), x) < \alpha$ for some $n > 0$, then there exists a periodic point $p \in \Lambda$ of period $n$ with
\begin{equation}
d(f^k(x), f^k(p)) \leq \beta \quad \text{for all } k \in [0, n].
\end{equation}
\end{proposition}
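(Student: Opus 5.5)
The plan is to build a periodic pseudo-orbit from the almost-closed orbit segment, shadow it with Main Theorem~\ref{thm:shadowing}, and then use expansiveness (Proposition~\ref{thm:expansiveness}) to show the shadowing point is periodic with period $n$. One remark on the statement: "period $n$" has to be read as $f^n(p)=p$. For example, $x$ may be a fixed point with $d(f^n(x),x)=0$, and then the minimal period of $p$ may be a proper divisor of $n$.

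\textbf{Choice of constants.} Let $\varepsilon$ be an expansiveness constant for $f|_\Lambda$. Replacing $\beta$ by $\min\{\beta,\varepsilon/3\}$ is harmless, so we assume $2\beta<\varepsilon$. Let $\alpha>0$ be the constant that Main Theorem~\ref{thm:shadowing} provides for this $\beta$.

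\textbf{Construction and shadowing.} Given $x\in\Lambda$ with $d(f^n(x),x)<\alpha$, define the bi-infinite periodic sequence $x_{kn+j}=f^j(x)$ for $k\in\Z$ and $0\le j<n$. This is an $\alpha$-pseudo-orbit in $\Lambda$:
\begin{itemize}
\item inside a block, $d(f(x_i),x_{i+1})=0$;
\item at a block boundary, $d(f(x_{kn+n-1}),x_{(k+1)n})=d(f^n(x),x)<\alpha$.
\end{itemize}
The bi-infinite case of Main Theorem~\ref{thm:shadowing} then gives $p\in\Lambda$ with $d(f^i(p),x_i)\le\beta$ for every $i\in\Z$. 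Taking $i=k$ with $0\le k\le n-1$ gives $d(f^k(p),f^k(x))\le\beta$.

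\textbf{Periodicity.} By construction $x_{i+n}=x_i$ for all $i$. Hence $f^n(p)$ also $\beta$-shadows the pseudo-orbit:
\[
d\bigl(f^i(f^n(p)),x_i\bigr)=d\bigl(f^{i+n}(p),x_{i+n}\bigr)\le\beta .
\]
By the triangle inequality, $d(f^i(f^n(p)),f^i(p))\le 2\beta<\varepsilon$ for all $i\in\Z$. Since both $p$ and $f^n(p)$ lie in $\Lambda$, expansiveness forces $f^n(p)=p$.

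\textbf{The endpoint $k=n$.} The pseudo-orbit has $x_n=x$, not $f^n(x)$, so the shadowing estimate there gives
\[
d(f^n(p),f^n(x))=d(p,f^n(x))\le d(p,x)+d(x,f^n(x))\le\beta+\alpha .
\]
The final bound is obtained by shrinking constants: run the argument with $\beta/2$ in place of $\beta$, and take $\alpha\le\beta/2$ as well. Then every $k\in[0,n]$ gives an error at most $\beta$.

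\textbf{Main obstacle.} The only delicate step is the expansiveness argument. It requires that the shadowing point exists for bi-infinite pseudo-orbits, which the paper obtains by a compactness limit, and that this point lies in $\Lambda$ so that Proposition~\ref{thm:expansiveness} applies. Both are granted by Main Theorem~\ref{thm:shadowing}, which asserts $p\in\Lambda$. The remaining requirement $2\beta<\varepsilon$ is arranged at the start by the choice of $\beta$.
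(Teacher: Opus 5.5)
Your proposal is correct and is essentially the paper's proof: you build the periodic pseudo-orbit from the orbit segment, apply bi-infinite shadowing, and use expansiveness on $p$ and $f^n(p)$ to get $f^n(p)=p$. Your extra steps fill points the paper's argument passes over silently: shrinking $\beta$ so that $2\beta$ lies below the expansiveness constant, handling the endpoint $k=n$ through $d(p,f^n(x))\le\beta+\alpha$, and reading ``period $n$'' as $f^n(p)=p$.
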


\begin{proof}
Define $x_i = f^k(x)$ for $i \equiv k \pmod n$ with $k \in [0, n)$. Then $\{x_i\}_{i=-\infty}^{+\infty}$ is an $\alpha$-pseudo-orbit since $d(f(x_{n-1}), x_0) = d(f^n(x), x) < \alpha$.

By the Shadowing Lemma, there exists $p \in \Lambda$ with $d(f^i(p), x_i) \leq \beta$ for all $i \in \mathbb{Z}$. In particular, for $i$ and $i+n$:
\begin{equation}
d(f^i(p), x_i) \leq \beta \quad \text{and} \quad d(f^{i+n}(p), x_{i+n}) = d(f^{i+n}(p), x_i) \leq \beta.
\end{equation}
Thus $d(f^i(p), f^{i+n}(p)) \leq 2\beta$ for all $i$.

By expansiveness (Theorem \ref{thm:expansiveness}), if $2\beta < \varepsilon_{\exp}$ (the expansiveness constant), then $f^n(p) = p$.
\end{proof}

\begin{corollary}[Periodic Points Dense in Nonwandering Set]\label{cor:periodic_dense}
For an Anosov diffeomorphism, periodic points are dense in $M = \Omega(f)$.
\end{corollary}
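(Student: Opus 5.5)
The plan is to take $\Lambda = M$ in the Anosov Closing Lemma (Proposition~\ref{thm:closing_lemma}) and combine it with the fact, asserted in Proposition~\ref{thm:anosov_axiom_a}, that $\Omega(f) = M$. Fix $z \in M$ and $\beta > 0$. We must produce a periodic point within $\beta$ of $z$. Let $\alpha > 0$ be the constant supplied by Proposition~\ref{thm:closing_lemma} for this $\beta$; shrinking $\beta$ if needed, we may also assume $2\beta$ is below the expansiveness constant of Proposition~\ref{thm:expansiveness}.

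The first step is to locate a point $x$ near $z$ with an approximate return $d(f^n(x), x) < \alpha$ for some $n > 0$. Because $z$ is nonwandering, for $U = B(z, \min(\alpha,\beta)/2)$ there are $n > 0$ and $x \in U$ with $f^n(x) \in U$. Hence $d(f^n(x), x) < \alpha$ and $d(x, z) < \beta/2$.

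The second step applies Proposition~\ref{thm:closing_lemma}, which gives a periodic point $p$ with $f^n(p) = p$ and $d(x, p) \le \beta$ (the case $k = 0$). Then $d(z, p) \le 3\beta/2$. Since $\beta$ was arbitrary, $\mathrm{Per}(f)$ is dense in $M$. Here $p \in M = \Lambda$ automatically, so membership in the hyperbolic set needs no check.

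The main obstacle is justifying $\Omega(f) = M$, which the paper has so far only asserted. The density argument above is otherwise formal, but it is circular if nonwandering is derived from density of periodic points. I would avoid that circularity by proving nonwandering directly only where it is needed. If this cannot be done cleanly, I would state the corollary as: periodic points are dense in $\Omega(f)$ for any Anosov $f$. That version uses only the hypothesis $x \in \Omega(f)$ in the first step and leaves the identification $\Omega(f) = M$ (for instance, for transitive Anosov maps, or via a volume-preservation or Poincaré recurrence argument in the examples of interest) as a separate input. Note that Proposition~\ref{prop:axiom_a_properties}(iii) gives $\mathrm{supp}(\mu) \subset \Omega(f)$ for any invariant measure. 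So for volume-preserving Anosov maps, $\Omega(f) = M$ follows from Poincaré recurrence, and I would record that case explicitly.
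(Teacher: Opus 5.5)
Your density argument is the paper's own: nonwandering of the base point gives a nearby $x$ with $d(f^n(x),x)<\alpha$, and the Anosov Closing Lemma with $\Lambda=M$ closes it to a periodic point. Your radius $\min(\alpha,\beta)/2$ also removes the paper's tacit requirement $\alpha\le\gamma$. So, granting $\Omega(f)=M$, the proposal is correct and matches the paper.

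The obstacle you flag is real, and your first remedy (proving nonwandering directly only where it is needed) cannot work. Periodic points are nonwandering and $\Omega(f)$ is closed, so density of periodic points in $M$ is equivalent to $\Omega(f)=M$. No argument can avoid proving that identity in full.

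The paper does not argue circularly. It simply asserts $\Omega(f)=M$ on the grounds that stable and unstable manifolds of every point are dense. That justification restates what is needed rather than proving it. On a connected $M$, density of all stable and unstable leaves, $\Omega(f)=M$, and transitivity of $f$ are equivalent for Anosov $f$. Whether every Anosov diffeomorphism of a connected manifold is transitive is a well-known open problem. It is settled only in special cases: codimension one (Newhouse), infranilmanifolds (Franks and Manning), and, as you note, maps preserving a fully supported invariant measure, via Proposition~\ref{prop:axiom_a_properties}(iii).

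So commit to your fallback. Run your argument with $z\in\Omega(f)$ and $\Lambda=M$. This proves unconditionally that periodic points are dense in $\Omega(f)$ for every Anosov $f$. Together with hyperbolicity of $\Omega(f)\subset M$, that is exactly the Axiom A property. The identity $\Omega(f)=M$ has to be taken as an additional hypothesis, not something to be proved.
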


\begin{proof}
Let $y \in M$ be arbitrary and $\gamma > 0$. Let $\beta = \gamma/2$ and let $\alpha > 0$ be the constant from the Anosov Closing Lemma (Proposition~\ref{thm:closing_lemma}) corresponding to this $\beta$. Since $M = \Omega(f)$ (every point is nonwandering for an Anosov diffeomorphism, as stable and unstable manifolds through any point are dense in $M$), the point $y$ is nonwandering. By definition, there exists $n > 0$ such that $f^n(B(y, \alpha/2)) \cap B(y, \alpha/2) \neq \emptyset$. Choose $x \in B(y, \alpha/2)$ with $f^n(x) \in B(y, \alpha/2)$. Then $d(f^n(x), x) \leq d(f^n(x), y) + d(y, x) < \alpha/2 + \alpha/2 = \alpha$. By Proposition~\ref{thm:closing_lemma}, there exists a periodic point $p \in M$ of period $n$ with $d(p, x) \leq \beta = \gamma/2$. Thus $d(p, y) \leq d(p, x) + d(x, y) < \gamma/2 + \alpha/2 \leq \gamma$ for $\alpha \leq \gamma$.
\end{proof}

\begin{remark}
This proves Theorem \ref{thm:anosov_axiom_a}: every Anosov diffeomorphism satisfies Axiom A.
\end{remark}

\subsection{Specification Property}

The shadowing lemma implies a specification property for Axiom A diffeomorphisms.

\begin{definition}[Specification]
A homeomorphism $f : X \to X$ has the specification property if for every $\varepsilon > 0$, there exists $N(\varepsilon) \in \mathbb{N}$ such that: for any collection of orbit segments
\begin{equation}
(x_1, [a_1, b_1]), \ldots, (x_k, [a_k, b_k])
\end{equation}
with $a_{j+1} - b_j \geq N(\varepsilon)$ for $j = 1, \ldots, k-1$, there exists $x \in X$ with
\begin{equation}
d(f^{a_j+i}(x), f^i(x_j)) < \varepsilon \quad \text{for } i \in [0, b_j - a_j], \, j = 1, \ldots, k.
\end{equation}
\end{definition}

\begin{proposition}[Specification for Axiom A]\label{thm:specification}
Every mixing basic set of an Axiom A diffeomorphism has the specification property with periodic specification: the shadowing point $x$ can be chosen to be periodic.
\end{proposition}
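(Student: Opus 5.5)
The plan is to follow Bowen's classical argument: build a single periodic pseudo-orbit that concatenates the given orbit segments with connecting transition segments, then apply the Shadowing Lemma (Main Theorem~\ref{thm:shadowing}) and the Anosov Closing Lemma (Proposition~\ref{thm:closing_lemma}). Throughout, $\Omega$ is a mixing basic set and $\varepsilon>0$ is given, with $\varepsilon$ smaller than the expansiveness constant of Proposition~\ref{thm:expansiveness} (for larger $\varepsilon$, use the value constructed for this smaller one). Set $\beta=\varepsilon/2$ and let $\alpha=C^{-1}(1-\lambda)\beta$ be the corresponding shadowing tolerance.

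\textbf{Step 1: uniform transition time.} Cover $\Omega$ by finitely many open sets $U_1,\dots,U_m$ of diameter less than $\alpha/2$, possible by compactness. By topological mixing of $f|_\Omega$ (Main Theorem~\ref{thm:spectral_decomposition}), for each ordered pair $(l,l')$ there is $N_{l,l'}$ with $f^n(U_l)\cap U_{l'}\neq\emptyset$ for all $n\ge N_{l,l'}$. Set $N(\varepsilon)=\max_{l,l'}N_{l,l'}+1$. Since there are only finitely many pairs, this bound is uniform.

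\textbf{Step 2: the pseudo-orbit.} Let segments $(x_j,[a_j,b_j])$ be given with gaps $a_{j+1}-b_j\ge N(\varepsilon)$. For each $j$, take $U_l\ni f^{b_j-a_j}(x_j)$ and $U_{l'}\ni x_{j+1}$. Step 1 gives a point $z_j\in U_l$ with $f^{a_{j+1}-b_j}(z_j)\in U_{l'}$. The pseudo-orbit is built as follows:
\begin{itemize}
\item over the time interval $[a_j,b_j]$, follow the orbit of $x_j$;
\item over $[b_j,a_{j+1}]$, follow the orbit of $z_j$.
\end{itemize}
Each junction then has jump smaller than $\alpha/2+\alpha/2$ after replacing the diameter bound by a uniform continuity estimate. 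More precisely, choose the $U_l$ so small that $d(f(u),f(v))<\alpha/2$ whenever $u,v$ lie in a common $U_l$.

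For periodic specification, also fix a period $P\ge b_k-a_1+N(\varepsilon)$. Close up the sequence by a last transition from $f^{b_k-a_k}(x_k)$ back to $x_1$ of length $P-(b_k-a_1)\ge N(\varepsilon)$. Then repeat the resulting block periodically to obtain a bi-infinite $P$-periodic $\alpha$-pseudo-orbit in $\Omega$.

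\textbf{Step 3: shadowing and periodicity.} Main Theorem~\ref{thm:shadowing} yields $x\in\Omega$ that $\beta$-shadows the pseudo-orbit. Here we use that $\Omega$ is itself hyperbolic and that the shadowing point can be taken in $\Omega$, as stated there. Because the pseudo-orbit is $P$-periodic, $f^P(x)$ also $\beta$-shadows it. Consequently
\[
d\bigl(f^{i}(f^Px),f^i(x)\bigr)\le 2\beta=\varepsilon
\]
for all $i$, and expansiveness forces $f^P(x)=x$. The tracking estimate $d(f^{a_j+i}(x),f^i(x_j))\le\beta<\varepsilon$ is immediate from the construction, after shifting time so that the pseudo-orbit index $a_j+i$ carries $f^i(x_j)$.

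\textbf{Main obstacle.} I expect the main difficulty to be Step 1's uniformity together with the requirement that transitions of \emph{every} length $\ge N(\varepsilon)$ be realizable. Mere transitivity is not enough for this, which is exactly why mixing of the basic set is assumed. If $\Omega$ is only transitive with period $n_i$, one would instead apply the argument to $f^{n_i}$ on a component $X_{j,i}$.

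A subtler point is that the bound $2\beta$ must lie strictly below the expansiveness constant. This requires $\beta<\varepsilon_{\exp}/2$, and it is handled by shrinking $\varepsilon$ at the outset, as noted above.
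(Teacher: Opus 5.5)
Your proposal is correct and follows essentially the same route as the paper. Both arguments use a uniform mixing transition time, build a $P$-periodic $\alpha$-pseudo-orbit in $\Omega$ that concatenates the given segments, and then apply the Shadowing Lemma followed by expansiveness. The differences are minor improvements. You justify the uniform $N(\varepsilon)$ by a finite cover, where the paper merely asserts it. You also obtain periodicity directly: $x$ and $f^P(x)$ shadow the same periodic pseudo-orbit, so expansiveness gives $f^P(x)=x$. The paper instead detours through the Anosov Closing Lemma, with the hypothesis $d(f^p(x),x)\le 2\beta<\varepsilon_{\exp}$, which is not quite the one that lemma requires.
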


\begin{proof}
Let $\Omega$ be a mixing basic set and $\varepsilon > 0$. Let $\beta = \varepsilon/2$ and $\alpha > 0$ be the shadowing constant from Main Theorem~\ref{thm:shadowing}. Since $f|_\Omega$ is mixing, there exists $N_0$ such that for all $x, y \in \Omega$ and $n \geq N_0$, there exists $z \in \Omega$ with $d(z, x) < \alpha/4$ and $d(f^n(z), y) < \alpha/4$. Set $N(\varepsilon) = N_0$.

Given orbit segments $(x_1, [a_1, b_1]), \ldots, (x_k, [a_k, b_k])$ with $a_{j+1} - b_j \geq N$, construct a pseudo-orbit as follows. In the interval $[a_j, b_j]$, follow the true orbit of $x_j$: set $y_i = f^{i - a_j}(x_j)$ for $i \in [a_j, b_j]$. In the gap $[b_j + 1, a_{j+1} - 1]$, use the mixing property to find $z_j \in \Omega$ with $d(z_j, f^{b_j - a_j}(x_j)) < \alpha/4$ and $d(f^{a_{j+1} - b_j}(z_j), x_{j+1}) < \alpha/4$. Set $y_i = f^{i - b_j}(z_j)$ for $i \in [b_j + 1, a_{j+1} - 1]$. The resulting sequence $\{y_i\}$ is an $\alpha$-pseudo-orbit in $\Omega$ (the jumps at $b_j$ and $a_{j+1}$ are each at most $\alpha/4 + \alpha/4 < \alpha$, using uniform continuity of $f$).

By the Shadowing Lemma, there exists $x \in \Omega$ with $d(f^i(x), y_i) \leq \beta = \varepsilon/2$ for all $i$. In particular, $d(f^{a_j + i}(x), f^i(x_j)) \leq \varepsilon/2 < \varepsilon$ for $i \in [0, b_j - a_j]$.

For periodic specification: close the pseudo-orbit by connecting the last segment back to the first using the mixing property, with total period $p = a_{k+1}$ where $a_{k+1} - b_k \geq N$. The resulting bi-infinite periodic pseudo-orbit is $\alpha$-shadowed by some $x \in \Omega$. By the Anosov Closing Lemma (Proposition~\ref{thm:closing_lemma}), since $d(f^p(x), x) \leq 2\beta < \varepsilon_{\exp}$, there is a periodic point $q$ of period $p$ with $d(f^i(q), f^i(x)) \leq \varepsilon/2$ for all $i$. Then $d(f^{a_j+i}(q), f^i(x_j)) \leq d(f^{a_j+i}(q), f^{a_j+i}(x)) + d(f^{a_j+i}(x), f^i(x_j)) \leq \varepsilon/2 + \varepsilon/2 = \varepsilon$.
\end{proof}

\subsection{Quantitative Periodic Orbit Approximation}

This subsection quantifies how many periodic orbits of each period exist and how well they approximate invariant measures. These estimates underlie the counting and equidistribution statements needed in Parts~IV--VI \cite{Thiam2026d,Thiam2026e,Thiam2026f}.

\begin{proposition}[Periodic Orbit Density Estimate]\label{prop:periodic_density}
For a mixing basic set $\Omega$ with hyperbolicity exponent $\lambda$ and topological entropy $h = h_{\mathrm{top}}(f|_\Omega)$, the number of periodic orbits of period $n$ satisfies
\begin{equation}
\frac{1}{n} \log |\{x \in \Omega : f^n(x) = x\}| \to h \quad \text{as } n \to \infty.
\end{equation}
More precisely, there exist constants $C_1, C_2 > 0$ such that
\begin{equation}
C_1 e^{nh} \leq |\mathrm{Per}_n(f) \cap \Omega| \leq C_2 e^{nh}
\end{equation}
for all sufficiently large $n$.
\end{proposition}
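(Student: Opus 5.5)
The plan is to count periodic points through the symbolic coding, counting orbit segments separately from above and from below. The cleanest route uses the Markov partition $\mathcal{R}=\{R_1,\dots,R_m\}$ of $\Omega$ (Main Theorem~\ref{thm:markov_existence}) and the coding map $\pi:\Sigma_A\to\Omega$ (Main Theorem~\ref{thm:symbolic_coding}). Since $\Omega$ is mixing, the transition matrix $A$ is aperiodic, and $h=\log\rho(A)$ by the entropy identification in Section~\ref{sec:symbolic_dynamics}. Proposition~\ref{prop:sft_periodic} then gives $|\mathrm{Per}_n(\sigma)|=\mathrm{tr}(A^n)=\rho(A)^n(1+o(1))=e^{nh}(1+o(1))$. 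The task is to compare $|\mathrm{Per}_n(f)\cap\Omega|$ with $\mathrm{tr}(A^n)$ up to multiplicative constants.

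\textbf{Upper bound.} Since $\pi$ is surjective and $\pi\circ\sigma=f\circ\pi$, I will show that every $f$-fixed point $x$ of $f^n$ has a $\sigma^n$-periodic preimage. The fiber $\pi^{-1}(x)$ is a nonempty closed set invariant under $\sigma^n$. Its cardinality is bounded by a constant $m^2$: a point lies in at most $m$ closed rectangles, and by the product structure the sequence is determined by the past and future choices at boundary crossings (the standard Bowen bound). So $\sigma^n$ permutes this finite set, and some power fixes a point of it. This is the delicate part. The fix I would use is to note that $\sigma^{n\cdot k!}$ fixes a point of the fiber for some $k\le m^2$, which only gives a polynomially weaker comparison. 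The better route is Bowen's argument: choose $a$ with $a_i$ defined by the rectangle containing $f^i(x)$, resolving ties periodically so that $a$ is itself $n$-periodic. The map $\mathrm{Per}_n(\sigma)\to\mathrm{Per}_n(f)\cap\Omega$ is then surjective, giving $|\mathrm{Per}_n(f)\cap\Omega|\le\mathrm{tr}(A^n)\le C_2e^{nh}$.

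\textbf{Lower bound.} Points of $\mathrm{Per}_n(\sigma)$ whose $\pi$-image avoids the boundary set $\bigcup_k f^k(\partial\mathcal{R})$ are mapped injectively, by the injectivity statement of Main Theorem~\ref{thm:symbolic_coding}. The difficulty is that periodic orbits can lie in the boundary. Here I would use the Manning/Bowen inclusion–exclusion: the boundary $\partial^s\mathcal{R}$ together with its images is coded by subshifts of strictly smaller entropy. Thus the number of periodic points of $\sigma$ of period $n$ mapping to $\partial\mathcal{R}$ is $O(e^{n h'})$ with $h'<h$, and each fiber has uniformly bounded size. This gives $|\mathrm{Per}_n(f)\cap\Omega|\ge \mathrm{tr}(A^n)-O(e^{nh'})\ge C_1e^{nh}$ for large $n$.

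As a fallback for the lower bound that avoids the entropy-drop argument, I would use expansiveness together with specification (Proposition~\ref{thm:specification}). Take a maximal $(n-N,\varepsilon)$-separated set $E$ of cardinality at least $c\,e^{(n-N)h}$; this bound follows from the entropy of a mixing SFT factor. Periodically close each orbit segment with gap $N(\varepsilon)$ using periodic specification, producing distinct points of $\mathrm{Per}_n(f)\cap\Omega$ because they are $\varepsilon$-separated. This yields $C_1=ce^{-Nh}$.

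The main obstacle is controlling the non-injectivity of $\pi$ on periodic points: the bounded-to-one property and the strict entropy drop on boundary codes. I would first try the specification fallback for the lower bound and Bowen's periodic-coding trick for the upper bound. The limit $\frac1n\log|\mathrm{Per}_n(f)\cap\Omega|\to h$ then follows immediately from the two-sided bounds.
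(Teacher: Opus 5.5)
\textbf{The gap is in your upper bound.} You claim that ties can be resolved so that the itinerary is $n$-periodic. That amounts to saying $\pi\colon\mathrm{Per}_n(\sigma)\to\mathrm{Per}_n(f)\cap\Omega$ is onto, so that $|\mathrm{Per}_n(f)\cap\Omega|\le\mathrm{tr}(A^n)$. This is false in general.

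The fiber $\pi^{-1}(x)=\{a\in\Sigma_A : f^i(x)\in R_{a_i}\ \text{for all } i\}$ is the set of bi-infinite paths in a graph whose allowed vertex sets repeat with period $n$, and $\sigma^n$ permutes it. Such a graph can have cycles of length $2n$ and none of length $n$, so $\sigma^n$ may fix no point of the fiber. Geometrically, this happens when $f^n$ reverses orientation at a periodic point on the partition boundary, so that it swaps the rectangles on either side.

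For instance, take $f=-L$ on $\mathbb T^2$, where $L$ is the cat map, so both eigenvalues of $f$ are negative. Choose a small Markov partition with boundary in $W^s(0)\cup W^u(0)$ and with $0$ a common corner of four rectangles.
\begin{itemize}
\item[] $f$ maps each corner rectangle into the opposite quadrant at $0$. Hence $\pi^{-1}(0)$ consists of two $\sigma$-orbits of period $2$ and contains no $\sigma$-fixed point.
\item[] The other four fixed points of $f$ are rational, hence off these irrational lines, hence interior to rectangles.
\item[] Therefore $\mathrm{tr}(A)=4<5=|\mathrm{Per}_1(f)|$.
\end{itemize}
This orientation effect is exactly why Manning's formula for periodic-point counts carries signs.

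Your first fallback does not help either. Choosing a point of the fiber fixed by $\sigma^{nk}$ with $k\le K$ only injects $\mathrm{Per}_n(f)\cap\Omega$ into $\bigcup_{k\le K}\mathrm{Per}_{nk}(\sigma)$. That set has size of order $e^{nKh}$, so the loss is exponential, not polynomial.

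\textbf{The repair: the code does not need to be periodic.} For $x\in\mathrm{Per}_n(f)\cap\Omega$, pick any $a\in\pi^{-1}(x)$ and record the admissible word $a_0\cdots a_{n-1}$.
\begin{itemize}
\item[] If $x$ and $y$ give the same word, then $f^j(x),f^j(y)\in R_{a_j}$ for $0\le j<n$.
\item[] By $n$-periodicity, $d(f^j(x),f^j(y))\le\mathrm{diam}\,\mathcal R$ for all $j\in\mathbb Z$.
\item[] So $x=y$ once $\mathrm{diam}\,\mathcal R$ is below the expansivity constant, which Main Theorem~\ref{thm:markov_existence} lets you arrange.
\end{itemize}
Hence $|\mathrm{Per}_n(f)\cap\Omega|\le\mathbf 1^{T}A^{n-1}\mathbf 1\le C_2\rho(A)^n=C_2e^{nh}$ by Perron--Frobenius.

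\textbf{Your lower bound is correct,} and it follows directly from what you already state. $\pi$ maps $\mathrm{Per}_n(\sigma)$ into $\mathrm{Per}_n(f)\cap\Omega$ with fibers of size at most $K$, so $|\mathrm{Per}_n(f)\cap\Omega|\ge\mathrm{tr}(A^n)/K\ge C_1e^{nh}$. No entropy drop on boundary codes is needed.

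The specification fallback also works. Your $\delta$-free lower bound on separated sets is true for expansive maps at small scale, by submultiplicativity of cover numbers. That is an improvement on the paper's proof, which uses separated sets and specification with $e^{\pm n\delta}$ losses and so by itself only yields the limit $\frac1n\log|\mathrm{Per}_n(f)\cap\Omega|\to h$. Once the upper bound is fixed as above, your coding route is what actually delivers the constants $C_1$ and $C_2$.
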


\begin{proof}
The upper bound follows from the definition of topological entropy: periodic orbits of period $n$ form an $(n, \varepsilon)$-separated set for small $\varepsilon$, and the cardinality of such sets is bounded by $e^{n(h+\delta)}$ for any $\delta > 0$.

The lower bound uses the specification property. Fix $\varepsilon > 0$ small and let $N = N(\varepsilon)$ be the specification gap from Theorem~\ref{thm:specification}. Let $E_m$ be a maximal $(m, \varepsilon)$-separated subset of $\Omega$ with $|E_m| \geq e^{m(h - \delta)}$ for $\delta > 0$ arbitrary and $m$ large (this exists by the definition of topological entropy). Set $m = n - 2N$. For each $x \in E_m$, apply specification with periodic closure: the orbit segment $(x, [0, m-1])$ can be shadowed by a periodic point $p_x$ of period at most $m + 2N = n$ with $d(f^i(p_x), f^i(x)) < 2\varepsilon$ for $i \in [0, m-1]$.

Distinct points $x, y \in E_m$ yield distinct periodic orbits: since $E_m$ is $(m, \varepsilon)$-separated, there exists $i \in [0, m-1]$ with $d(f^i(x), f^i(y)) > \varepsilon$. The shadowing gives $d(f^i(p_x), f^i(p_y)) \geq d(f^i(x), f^i(y)) - d(f^i(p_x), f^i(x)) - d(f^i(p_y), f^i(y)) > \varepsilon - 4\varepsilon$. For $\varepsilon$ small enough that $5\varepsilon$ is less than the expansiveness constant, distinct $x$ give distinct orbits $p_x$. Some periodic points may have period strictly dividing $n$ rather than equal to $n$, but their number is at most $\sum_{d | n, d < n}|\mathrm{Per}_d| \leq n \cdot C_2 e^{(n-1)h} = o(e^{nh})$. Thus $|\mathrm{Per}_n(f) \cap \Omega| \geq |E_m| - o(e^{nh}) \geq e^{(n-2N)(h-\delta)} - o(e^{nh}) \geq C_1 e^{nh}$ for $n$ large.
\end{proof}

\begin{proposition}[Equidistribution of Periodic Orbits]\label{prop:equidistribution}
For a mixing basic set $\Omega$ with unique measure of maximal entropy $\mu$, the periodic orbit measures
\begin{equation}
\mu_n = \frac{1}{|\mathrm{Per}_n|} \sum_{x : f^n(x) = x} \frac{1}{n} \sum_{k=0}^{n-1} \delta_{f^k(x)}
\end{equation}
converge weak-$*$ to $\mu$ as $n \to \infty$.
\end{proposition}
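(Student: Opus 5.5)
The plan is to follow Bowen's argument: every weak-$*$ limit point of $(\mu_n)$ is an invariant measure of maximal entropy. Uniqueness of $\mu$ then forces convergence of the full sequence.

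First, each $\mu_n$ is $f$-invariant, being an average of periodic orbit measures. By compactness of $\mathcal{M}(\Omega)$, it suffices to show that any limit point $\nu=\lim_k \mu_{n_k}$ satisfies $h_\nu(f)\ge h$. Invariance of $\nu$ is automatic, since $f_*\mu_n=\mu_n$ and $f_*$ is weak-$*$ continuous. Once $h_\nu(f)\ge h$ is known, the variational principle gives $h_\nu = h$, and uniqueness gives $\nu=\mu$.

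The entropy lower bound is the main step, and I will follow the proof of the variational principle (Misiurewicz's argument, as in Walters). Fix $\varepsilon$ smaller than half the expansiveness constant of Proposition~\ref{thm:expansiveness}. Then $\mathrm{Per}_n(f)\cap\Omega$ is $(n,\varepsilon)$-separated: two distinct fixed points of $f^n$ that stay $\varepsilon$-close for times $0,\dots,n-1$ stay $\varepsilon$-close for all times by periodicity, which would contradict expansiveness.

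Choose a finite partition $\xi$ of $\Omega$ with diameter less than $\varepsilon$ and $\nu(\partial\xi)=0$. Since $E_n=\mathrm{Per}_n$ is $(n,\varepsilon)$-separated, each atom of $\xi^n=\bigvee_{j=0}^{n-1}f^{-j}\xi$ contains at most one point of $E_n$. Let $\sigma_n$ be the uniform measure on $E_n$. Then
\[
H_{\sigma_n}(\xi^n)=\log|\mathrm{Per}_n(f)\cap\Omega| .
\]

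Next, split $\xi^n$ into blocks of length $q$ via subadditivity of entropy and average over the $q$ offsets. This gives
\[
\frac{q}{n}\log|\mathrm{Per}_n| \le H_{\mu_n}(\xi^q) + \frac{2q^2}{n}\log|\xi| .
\]
Here I use that $\mu_n$ equals the average of $f^j_*\sigma_n$ over $0\le j<n$, together with concavity of $H$. Pass to the limit along $n_k$. The condition $\nu(\partial\xi)=0$ gives $H_{\mu_{n_k}}(\xi^q)\to H_\nu(\xi^q)$. Proposition~\ref{prop:periodic_density} gives $\frac1n\log|\mathrm{Per}_n|\to h$. Hence $qh\le H_\nu(\xi^q)$, and dividing by $q$ and letting $q\to\infty$ yields $h\le h_\nu(f,\xi)\le h_\nu(f)$.

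I expect the main obstacle to be the bookkeeping in this block decomposition: the boundary terms at the start and end of each block, and the identification of $\mu_n$ with the average of the shifted uniform measures. The normalisation $|\mathrm{Per}_n|$ in the definition of $\mu_n$ corresponds exactly to uniform weight on fixed points of $f^n$, so the identification is clean.

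The other inputs are taken as given. The existence of a partition with $\nu$-null boundary is standard: use balls of radius $r$ chosen so that $\nu(\partial B_r)=0$, which holds for all but countably many $r$. Uniqueness of the measure of maximal entropy is assumed in the statement, and it comes from the coding $\pi$ together with Part~I.
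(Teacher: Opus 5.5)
Your proposal is correct and follows the same route as the paper: every weak-$*$ limit point $\nu$ of $(\mu_n)$ is $f$-invariant with $h_\nu(f)\ge h$, using Proposition~\ref{prop:periodic_density}, and the variational principle together with uniqueness then forces $\nu=\mu$. Your Misiurewicz-type argument (separatedness of $\mathrm{Per}_n$ via expansiveness, a partition with $\nu$-null boundary, and the block inequality) supplies the mechanism that the paper's proof only gestures at; note also that since $f$ permutes $\mathrm{Per}_n$, one has $\mu_n=\sigma_n$ exactly, so the averaging over offsets is unnecessary and subadditivity for the invariant measure $\sigma_n$ directly gives $\log|\mathrm{Per}_n|\le (n/q+1)\,H_{\mu_n}(\xi^q)$.
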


\begin{proof}
By uniqueness of the measure of maximal entropy $\mu$ (which follows from the specification property and the variational principle, see  \cite[Theorem~1.22]{Bowen1975}), it suffices to show that every weak-$*$ limit point of $\{\mu_n\}$ equals $\mu$. Let $\nu$ be such a limit along a subsequence $n_k \to \infty$. Since $\mu_n$ is a convex combination of invariant measures (each periodic orbit measure is $f$-invariant), $\nu$ is $f$-invariant.

We show $h_\nu(f) = h_{\mathrm{top}}(f|_\Omega)$, which by uniqueness forces $\nu = \mu$. By Proposition~\ref{prop:periodic_density}, $|\mathrm{Per}_n| \sim e^{nh}$ where $h = h_{\mathrm{top}}(f|_\Omega)$. For any finite partition $\mathcal{P}$ of $\Omega$ and each periodic point $p$ of period $n$, the empirical measure along $p$ assigns mass $k_P(p)/n$ to each $P \in \mathcal{P}$, where $k_P(p) = |\{0 \leq j < n : f^j(p) \in P\}|$. The contribution to $H_{\mu_n}(\mathcal{P}) = -\sum_P \mu_n(P)\log\mu_n(P)$ approaches $h_\nu(f, \mathcal{P})$ along the subsequence, and taking the supremum over $\mathcal{P}$ shows $h_\nu(f) \geq h$. The variational inequality $h_\nu(f) \leq h_{\mathrm{top}}(f|_\Omega) = h$ gives equality, so $\nu = \mu$.
\end{proof}

\section{Markov Partitions}\label{sec:markov_partitions}

This section constructs Markov partitions for basic sets of Axiom A diffeomorphisms with explicit estimates on partition diameter and the Markov property. The construction follows Bowen's approach \cite{Bowen1970b, Bowen1975} but with quantitative bounds throughout.

\subsection{Rectangles}

Rectangles are the building blocks of Markov partitions: subsets of a basic set that are products of their stable and unstable slices via the bracket map. This subsection develops their basic properties before the construction proper.

\begin{definition}[Rectangle]
Let $\Omega$ be a basic set for an Axiom A diffeomorphism $f$ with local product structure at scale $\delta_0$. A subset $R \subset \Omega$ is a rectangle if:
\begin{enumerate}
\item[(i)] $\mathrm{diam}(R) < \delta_0$, and
\item[(ii)] for all $x, y \in R$, we have $[x, y] \in R$.
\end{enumerate}
A rectangle $R$ is proper if $R$ is closed in $\Omega$ and $R = \overline{\mathrm{int}(R)}$ where the interior is taken in $\Omega$.
\end{definition}

For a rectangle $R$ and $x \in R$, define the stable and unstable slices:
\begin{equation}
W^s(x, R) = W^s_{\varepsilon}(x) \cap R, \quad W^u(x, R) = W^u_{\varepsilon}(x) \cap R
\end{equation}
where $\varepsilon$ is the local manifold size (larger than $\mathrm{diam}(R)$).

\begin{lemma}[Product Structure of Rectangles]\label{lem:rectangle_product}
For a rectangle $R$ and any $x \in R$, the map
\begin{equation}
\Phi_x : W^s(x, R) \times W^u(x, R) \to R, \quad (y, z) \mapsto [y, z]
\end{equation}
is a homeomorphism.
\end{lemma}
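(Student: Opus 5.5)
The plan is to exhibit an explicit inverse for $\Phi_x$ and check continuity of both maps using Proposition~\ref{prop:bracket_continuous}. The candidate inverse is
\begin{equation*}
\Psi_x : R \to W^s(x,R)\times W^u(x,R), \quad w \mapsto \bigl([x,w],\,[w,x]\bigr).
\end{equation*}
Recall the convention of Theorem~\ref{thm:local_product}: $[a,b]\in W^s_\varepsilon(a)\cap W^u_\varepsilon(b)$. Before anything else, check that both maps land where claimed. For $y\in W^s(x,R)$ and $z\in W^u(x,R)$, the rectangle property (ii) gives $[y,z]\in R$, and (i) gives $d(y,z)<\delta_0$, so the bracket is defined. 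Conversely, for $w\in R$ we have $[w,x]\in R\cap W^u_\varepsilon(x)=W^u(x,R)$. Similarly $[x,w]\in R\cap W^s_\varepsilon(x)=W^s(x,R)$. Note that the $s$-component of $\Psi_x$ is $[x,w]$, which lies on the stable manifold of $x$.

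Next, verify $\Psi_x\circ\Phi_x=\mathrm{id}$ and $\Phi_x\circ\Psi_x=\mathrm{id}$. The tool is the uniqueness in Theorem~\ref{thm:local_product} together with the fact that, at the small scales involved, the local stable (unstable) manifolds of nearby points on a common local stable (unstable) leaf coincide locally.

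Start with $w=[y,z]$ for $y\in W^s_\varepsilon(x)$ and $z\in W^u_\varepsilon(x)$. Then $w\in W^s_\varepsilon(y)$, and $y\in W^s_\varepsilon(x)$. Using (iii) of Main Theorem~\ref{thm:stable_manifold}, after possibly shrinking $\varepsilon$ relative to $\delta_0$ by a factor such as $2$, we get $w\in W^s_{2\varepsilon}(x)$. So $[x,w]$ is the unique point of $W^s(x)\cap W^u(w)$ near $x$. The point $y$ qualifies: $y\in W^s(x)$ and $w\in W^s(y)$ by construction. The subtle part is that we also need $y\in W^u(w)$, and this is false in general. The correct identity to use is instead $[x,[y,z]]=[x,z]$-type; I would sort out the two roles carefully. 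Concretely, I expect $[[y,z],x]=y$ and $[x,[y,z]]=z$ under the paper's convention $[a,b]\in W^s(a)\cap W^u(b)$. So the inverse should be adjusted to $w\mapsto([w,x],[x,w])$. Each such identity follows because both sides lie in the same intersection $W^s_{\varepsilon'}(\cdot)\cap W^u_{\varepsilon'}(\cdot)$, which is a single point by Theorem~\ref{thm:local_product} applied at a slightly larger scale $\varepsilon'\le\varepsilon_0$. The other composition, $[[w,x],[x,w]]=w$, is handled the same way: $[w,x]\in W^s(w)$ and $[x,w]\in W^u(w)$, so $w$ lies in $W^s([w,x])\cap W^u([x,w])$, and uniqueness gives the claim.

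Continuity of $\Phi_x$ and $\Psi_x$ then follows directly from Proposition~\ref{prop:bracket_continuous}, or in Lipschitz form from Proposition~\ref{prop:bracket_lipschitz}, since both maps are compositions of the bracket with projections and constant maps. Hence $\Phi_x$ is a homeomorphism. Compactness is not needed, because the inverse is given explicitly.

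The main obstacle is the scale bookkeeping in the uniqueness step. Local stable sets are transitive only up to a factor: $w\in W^s_\varepsilon(y)$ and $y\in W^s_\varepsilon(x)$ give only $w\in W^s_{2\varepsilon}(x)$. I would therefore choose $\varepsilon$ (the slice scale) so that $2\varepsilon\le\varepsilon_0$ of Theorem~\ref{thm:local_product}, with $\delta_0$ correspondingly smaller. This keeps every point involved inside a single canonical-coordinate chart, where the graph description of Theorem~\ref{thm:local_product} makes uniqueness immediate.
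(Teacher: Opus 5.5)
\textbf{There is a genuine gap: as you set it up, the verification cannot work, because the lemma as literally stated is degenerate.}

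With the paper's convention $[a,b]\in W^s_\varepsilon(a)\cap W^u_\varepsilon(b)$, take $y\in W^s(x,R)$ and $z\in W^u(x,R)$. The conditions defining $W^s_\varepsilon$ and $W^u_\varepsilon$ are symmetric in the two points, so $x\in W^s_\varepsilon(y)\cap W^u_\varepsilon(z)$. Uniqueness in Theorem~\ref{thm:local_product} then forces $[y,z]=x$. Hence $\Phi_x$ is the constant map with value $x$, and it cannot be injective unless $R=\{x\}$.

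You half-notice this (``$y\in W^u(w)$ \ldots is false in general''), but your proposed fix fails in two ways:
\begin{enumerate}
\item You assert $[[y,z],x]=y$ and $[x,[y,z]]=z$ for $y\in W^s_\varepsilon(x)$ and $z\in W^u_\varepsilon(x)$. In fact both left-hand sides equal $[x,x]=x$.
\item Your ``adjusted'' inverse $w\mapsto([w,x],[x,w])$ does not land in the domain of $\Phi_x$. Its first component lies in $W^u(x,R)$ and its second in $W^s(x,R)$, so it maps $R$ into $W^u(x,R)\times W^s(x,R)$, not into $W^s(x,R)\times W^u(x,R)$.
\end{enumerate}
In effect you have replaced the lemma by a different one without saying so.

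\textbf{The fix.} Make that replacement explicit. The correct statement (Bowen's) is that $W^u(x,R)\times W^s(x,R)\to R$, $(y,z)\mapsto[y,z]$, is a homeomorphism. Equivalently, keep the stated domain and use $(y,z)\mapsto[z,y]$. For this corrected map your inverse $w\mapsto([w,x],[x,w])$ is right, and each identity is a one-line uniqueness argument.
\begin{enumerate}
\item Since $w\in W^s_\varepsilon([w,x])\cap W^u_\varepsilon([x,w])$, we get $[[w,x],[x,w]]=w$.
\item For $y\in W^u(x,R)$, $z\in W^s(x,R)$ and $w=[y,z]$, we have $y\in W^s_\varepsilon(w)\cap W^u_\varepsilon(x)$ and $z\in W^s_\varepsilon(x)\cap W^u_\varepsilon(w)$. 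Hence $[w,x]=y$ and $[x,w]=z$.
\end{enumerate}
Every membership here comes straight from the definitions and their symmetry. So, provided the slice scale $\varepsilon$ is at most the $\varepsilon_0$ of Theorem~\ref{thm:local_product}, the factor-$2$ enlargement of scales you worried about is never needed. Continuity in both directions then follows from Proposition~\ref{prop:bracket_continuous}, as you say.

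\textbf{Comparison with the paper.} The paper's own proof uses the inverse $w\mapsto([x,w],[w,x])$ and claims $[[x,w],[w,x]]=w$. This has the same defect: that bracket also equals $x$. The correct identity is $w=[[w,x],[x,w]]$, which is the one the paper itself uses in Lemma~\ref{lem:boundary_structure}. Once the statement is corrected, your explicit handling of the continuity of the inverse is somewhat more careful than the paper's one-line continuity remark.
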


\begin{proof}
The map is well-defined by the rectangle property (ii). It is injective because $[y_1, z_1] = [y_2, z_2]$ implies $y_1 = [y_1, z_1] \cap W^s_\varepsilon(y_1) = [y_2, z_2] \cap W^s_\varepsilon(y_1)$, but since $y_1, y_2 \in W^s(x, R) \subset W^s_\varepsilon(x)$, the uniqueness of local intersections gives $y_1 = y_2$. Similarly $z_1 = z_2$.

Surjectivity: for any $w \in R$, set $y = [x, w] \in W^s(x, R)$ and $z = [w, x] \in W^u(x, R)$. Then $[y, z] = [[x,w], [w,x]] = w$ by the properties of canonical coordinates.

Continuity follows from the continuity of the bracket operation.
\end{proof}

\begin{lemma}[Boundary Structure]\label{lem:boundary_structure}
For a closed rectangle $R$, the boundary (as a subset of $\Omega$) is
\begin{equation}
\partial R = \partial^s R \cup \partial^u R
\end{equation}
where
\begin{align}
\partial^s R &= \{x \in R : x \notin \mathrm{int}(W^u(x, R))\}, \\
\partial^u R &= \{x \in R : x \notin \mathrm{int}(W^s(x, R))\}
\end{align}
and the interiors are taken in $W^u_\varepsilon(x) \cap \Omega$ and $W^s_\varepsilon(x) \cap \Omega$ respectively.
\end{lemma}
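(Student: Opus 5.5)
The plan is to use the product homeomorphism $\Phi_x$ of Lemma~\ref{lem:rectangle_product} for a fixed $x \in R$. This transfers the topology of $R$, as a subspace of $\Omega$, to the product $W^s(x,R)\times W^u(x,R)$. The boundary of $R$ in $\Omega$ is then read off from the product description. The one point to check is that a neighborhood of $R$ in $\Omega$ also has product structure near $R$, so that interiors relative to $\Omega$ match interiors relative to the bracket coordinates. For this we extend $\Phi_x$ to a local homeomorphism
\[
(W^s_\varepsilon(x)\cap\Omega)\times(W^u_\varepsilon(x)\cap\Omega)\to\Omega,\qquad (y,z)\mapsto[y,z].
\]
By Theorem~\ref{thm:local_product} and Proposition~\ref{prop:bracket_continuous}, this map is continuous and injective onto a neighborhood of $R$ in $\Omega$. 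The inverse $w\mapsto([x,w],[w,x])$ is also continuous. Hence it is a homeomorphism onto an open set $U\supset R$ (openness uses that $\mathrm{diam}(R)<\delta_0$ and that brackets of points of $\Omega$ lie in $\Omega$).

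In these coordinates $R$ corresponds to $S\times T$, where $S=W^s(x,R)$ and $T=W^u(x,R)$. The standard product-topology identity gives
\[
\mathrm{int}(S\times T)=\mathrm{int}\,S\times\mathrm{int}\,T,
\]
with interiors taken in the factor spaces $W^s_\varepsilon(x)\cap\Omega$ and $W^u_\varepsilon(x)\cap\Omega$. Since $R$ is closed,
\[
\partial R = R\setminus \mathrm{int} R=\bigl((S\setminus\mathrm{int} S)\times T\bigr)\cup\bigl(S\times(T\setminus \mathrm{int} T)\bigr).
\]

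It then remains to identify each piece with the intrinsic sets $\partial^u R$ and $\partial^s R$ defined pointwise. For $w=[y,z]\in R$ we need to show that $y\in \mathrm{int}\,S$ if and only if $w\in\mathrm{int}(W^s(w,R))$, and similarly on the unstable side. For this we use the holonomy map $W^s(x,R)\to W^s(w,R)$, $y'\mapsto[y',w]$. It is a homeomorphism with inverse $v\mapsto[v,x]$, since by rectangle property (ii) both maps stay in $R$. It extends to a homeomorphism between neighborhoods in $W^s_\varepsilon(x)\cap\Omega$ and in $W^s_\varepsilon(w)\cap\Omega$, using the same bracket identities $[[y',w],x]=y'$ and $[[v,x],w]=v$ that were used in the surjectivity step of Lemma~\ref{lem:rectangle_product}. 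It sends $y$ to $[y,w]=w$. A homeomorphism preserves interior points, so $y\notin\mathrm{int}\,S$ if and only if $w\notin \mathrm{int}(W^s(w,R))$, which means $w\in\partial^u R$. Symmetrically, $z\notin\mathrm{int}\,T$ if and only if $w\in\partial^s R$. Combining gives $\partial R=\partial^sR\cup\partial^uR$.

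The main obstacle is the bracket-identity bookkeeping. We must verify that $[[y',w],x]=y'$ and $[y,z]=w$ hold on full neighborhoods and not only inside $R$, which requires the scales to satisfy $\mathrm{diam}(R)$ and the neighborhood radius well below $\delta_0$ and $\varepsilon$. I would first establish the identities $[[a,b],c]=[a,c]$ and $[a,[b,c]]=[a,c]$ for nearby points. These follow from uniqueness of the intersection $W^s_\varepsilon\cap W^u_\varepsilon$ in Theorem~\ref{thm:local_product}, together with transitivity of lying on a common local stable (respectively unstable) leaf at small scale. Everything else then follows formally.
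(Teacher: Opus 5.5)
Your argument is correct in substance, but it takes a more global route than the paper's.

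\textbf{The paper's proof.} It works pointwise, centred at the point being tested. If $x\in\mathrm{int}(R)$, intersecting an open neighbourhood of $x$ in $\Omega$ with $W^u_\varepsilon(x)$ and $W^s_\varepsilon(x)$ shows that $x$ is interior to both slices; only the subspace topology is used. Conversely, suppose $x$ is interior to both of its slices. For $y\in\Omega$ near $x$, continuity of the bracket gives $[x,y]\in W^s(x,R)$ and $[y,x]\in W^u(x,R)$, and the rectangle property then gives $y=[[y,x],[x,y]]\in R$. Hence $\mathrm{int}(R)$ is exactly the set of points interior to both of their own slices, and $\partial R=R\setminus\mathrm{int}(R)=\partial^sR\cup\partial^uR$.

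\textbf{Your proof.} You fix one base point and build a product chart on a neighbourhood of all of $R$. You then use $\mathrm{int}(S\times T)=\mathrm{int}\,S\times\mathrm{int}\,T$, and need holonomies to carry interior-ness from the slice through $x$ to the slice through $w$. If you centred the chart at $w$ itself, the holonomy step would disappear and your proof would reduce to the paper's.

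\textbf{What each route buys.} Yours yields the explicit description $\partial^uR=\Phi((S\setminus\mathrm{int}\,S)\times T)$ and $\partial^sR=\Phi(S\times(T\setminus\mathrm{int}\,T))$. In other words, $\partial^sR$ is a union of whole stable slices and $\partial^uR$ a union of whole unstable slices, which is the form in which rectangle boundaries are usually exploited. The paper's route buys locality. It only uses brackets in arbitrarily small neighbourhoods of one point, so it needs nothing beyond $\mathrm{diam}(R)<\delta_0$. Yours needs the bracket identities on a neighbourhood of all of $R$, which, as you note, may force $R$ to be smaller.

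\textbf{Bracket order.} With the paper's convention $[a,b]=W^s_\varepsilon(a)\cap W^u_\varepsilon(b)$, your formulas mix the two orders. For $y\in W^s_\varepsilon(x)$ and $z\in W^u_\varepsilon(x)$, the point $[y,z]$ lies in $W^s_\varepsilon(x)\cap W^u_\varepsilon(x)=\{x\}$. So the chart must be $(y,z)\mapsto[z,y]$, whose inverse is your $w\mapsto([x,w],[w,x])$. Likewise the holonomy $W^s(x,R)\to W^s(w,R)$ must be $y'\mapsto[w,y']$, with inverse $v\mapsto[x,v]$, and it sends $y$ to $[w,y]=w$. This slip is inherited from Lemma~\ref{lem:rectangle_product}, whose stated map has the same ordering problem.

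\textbf{Identities you don't need.} You do not need the transitivity identities $[[a,b],c]=[a,c]$, which would push you into $2\varepsilon$-scale estimates. Every identity you actually use follows, exactly like the paper's $y=[[y,x],[x,y]]$, from two facts. These identities are: chart and inverse mutually inverse, holonomy and inverse mutually inverse, and $[w,y]=w$. The two facts are the symmetry of the relation $b\in W^s_\varepsilon(a)$, which is built into the definition of $W^s_\varepsilon$, and uniqueness of the intersection point in Theorem~\ref{thm:local_product}.
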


\begin{proof}
If $x \in \mathrm{int}(R)$, then $W^u(x, R)$ is a neighborhood of $x$ in $W^u_\varepsilon(x) \cap \Omega$ (since $R$ is a neighborhood of $x$ in $\Omega$ and the bracket is continuous), so $x \in \mathrm{int}(W^u(x, R))$. Similarly $x \in \mathrm{int}(W^s(x, R))$.

Conversely, if $x \in \mathrm{int}(W^u(x, R))$ and $x \in \mathrm{int}(W^s(x, R))$, then for $y \in \Omega$ close to $x$, both $[x, y]$ and $[y, x]$ lie in $R$ (by openness of the interiors), and thus $y = [[y, x], [x, y]] \in R$. So $x \in \mathrm{int}(R)$.
\end{proof}

\begin{figure}[ht]
\centering
\begin{tikzpicture}[>=stealth, font=\small, scale=1.0]

  %% Parallelogram R (stable sides slope ~0.05, unstable sides steep ~5)
  \coordinate (BL) at (0.0, 0.0);
  \coordinate (BR) at (6.0, 0.3);
  \coordinate (TR) at (6.8, 4.3);
  \coordinate (TL) at (0.8, 4.0);

  %% Fill
  \fill[gray!10] (BL) -- (BR) -- (TR) -- (TL) -- cycle;

  %% Stable boundary: bottom and top (thick solid)
  \draw[very thick] (BL) -- (BR);
  \draw[very thick] (TL) -- (TR);

  %% Unstable boundary: left and right (thick dashed)
  \draw[very thick, dashed] (BL) -- (TL);
  \draw[very thick, dashed] (BR) -- (TR);

  %% Boundary labels (inside R, at different positions on each edge)
  \node[font=\footnotesize] at (4.5, 0.45) {$\partial^s R$};
  \node[font=\footnotesize] at (2.0, 3.85) {$\partial^s R$};
  \node[font=\footnotesize] at (0.9, 2.75) {$\partial^u R$};
  \node[font=\footnotesize] at (5.7, 0.85) {$\partial^u R$};

  %% Stable slice W^s(x, R): line through x parallel to (BR - BL)
  \draw[thick, gray!55!black] (0.41, 2.05) -- (6.41, 2.35);

  %% Unstable slice W^u(x, R): line through x parallel to (TL - BL)
  \draw[thick, gray!55!black] (3.0, 0.15) -- (3.8, 4.15);

  %% Slice labels (outside R)
  \node[anchor=west, font=\footnotesize, gray!55!black] at (6.5, 2.35) {$W^s(x, R)$};
  \node[anchor=south, font=\footnotesize, gray!55!black] at (3.9, 4.3) {$W^u(x, R)$};

  %% Reference point x
  \fill (3.4, 2.2) circle (2.5pt);
  \node[anchor=west, font=\footnotesize] at (3.5, 2.35) {$x$};

  %% Point y (upper-left region, off both slices through x)
  \fill (1.7, 3.0) circle (1.8pt);
  \node[anchor=east, font=\footnotesize] at (1.6, 3.0) {$y$};

  %% Point z (lower-right region, off both slices through x)
  \fill (4.8, 1.0) circle (1.8pt);
  \node[anchor=west, font=\footnotesize] at (4.9, 0.95) {$z$};

  %% W^s(y): dotted, parallel to stable direction, from y to [y,z]
  \draw[thin, dotted] (1.7, 3.0) -- (5.24, 3.18);
  \node[anchor=south, font=\footnotesize] at (3.2, 3.25) {$W^s(y)$};

  %% W^u(z): dotted, parallel to unstable direction, from z to [y,z]
  \draw[thin, dotted] (4.8, 1.0) -- (5.24, 3.18);
  \node[anchor=west, font=\footnotesize] at (5.05, 2.1) {$W^u(z)$};

  %% Bracket point [y, z]
  \fill (5.24, 3.18) circle (2pt);
  \node[anchor=south west, font=\footnotesize] at (5.3, 3.25) {$[y, z]$};

  %% R label (inside, lower-left quiet corner)
  \node[font=\small, font=\itshape] at (1.0, 0.55) {R};

  %% Direction arrows (outside R, lower-right): transverse, not orthogonal
  \draw[->, thick] (8.0, 1.0) -- (9.2, 1.06);
  \node[anchor=north, font=\footnotesize] at (8.6, 0.92) {$E^s$};
  \draw[->, thick] (8.0, 1.0) -- (8.22, 2.1);
  \node[anchor=east, font=\footnotesize] at (8.13, 1.55) {$E^u$};
  \node[anchor=north, font=\scriptsize, align=center] at (8.6, 0.55) {};

\end{tikzpicture}
\caption{A proper rectangle $R$ in a basic set (Lemmas~\ref{lem:rectangle_product} and~\ref{lem:boundary_structure}). The stable boundary $\partial^s R$ (solid, top and bottom) consists of pieces of stable manifolds along the $E^s$ direction; the unstable boundary $\partial^u R$ (dashed, left and right) consists of pieces of unstable manifolds along the $E^u$ direction. The stable slice $W^s(x, R)$ and unstable slice $W^u(x, R)$ through an interior point $x$ (gray) partition $R$. For $y, z \in R$, the bracket $[y, z] = W^s(y) \cap W^u(z)$ is obtained by moving from $y$ along the stable direction and from $z$ along the unstable direction (dotted) until they meet. The bracket map gives $R$ its local product structure.}
\label{fig:rectangle}
\end{figure}
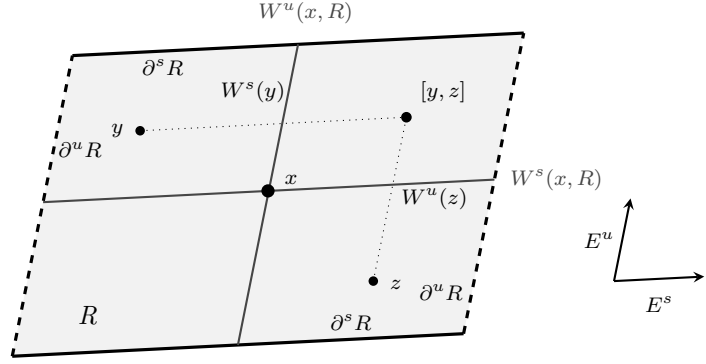

\subsection{The Markov Property}

A Markov partition refines rectangles into a finite cover such that the images under $f$ of stable (resp.\ unstable) slices of each rectangle are unions of stable (resp.\ unstable) slices of other rectangles. This property is what makes the symbolic coding a subshift of finite type rather than a more general symbolic system.

\begin{definition}[Markov Partition]
A Markov partition of a basic set $\Omega$ is a finite collection $\mathcal{R} = \{R_1, \ldots, R_m\}$ of proper rectangles satisfying:
\begin{enumerate}
\item[(M1)] $\Omega = \bigcup_{i=1}^m R_i$.
\item[(M2)] $\mathrm{int}(R_i) \cap \mathrm{int}(R_j) = \emptyset$ for $i \neq j$.
\item[(M3)] For $x \in \mathrm{int}(R_i) \cap f^{-1}(\mathrm{int}(R_j))$:
\begin{align}
f(W^u(x, R_i)) &\supset W^u(f(x), R_j), \\
f(W^s(x, R_i)) &\subset W^s(f(x), R_j).
\end{align}
\end{enumerate}
\end{definition}

Condition (M3) is the Markov property: the image of an unstable slice contains the entire unstable slice of the image rectangle, while stable slices map into stable slices.

\begin{maintheorem}[Existence of Markov Partitions]\label{thm:markov_existence}
Every basic set $\Omega$ of an Axiom A diffeomorphism admits Markov partitions of arbitrarily small diameter. Specifically, for every $\varepsilon > 0$, there exists a Markov partition $\mathcal{R}$ with $\max_i \mathrm{diam}(R_i) < \varepsilon$.
\end{maintheorem}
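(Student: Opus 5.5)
We follow Bowen's construction, using the shadowing lemma and the bracket-based refinement sketched in the introduction. Fix $\varepsilon>0$ and choose $\beta<\min(\varepsilon,\varepsilon_{\exp})/C$ small, where $\varepsilon_{\exp}$ is the expansiveness constant of Proposition~\ref{thm:expansiveness}, $C$ is a multiple of the bracket constants of Propositions~\ref{prop:bracket_bounds} and~\ref{prop:bracket_lipschitz}, and $2\beta$ is below the local product scale $\delta_0$. Let $\alpha=C^{-1}(1-\lambda)\beta$ be given by Main Theorem~\ref{thm:shadowing}. Pick $\gamma<\alpha/2$ such that $d(x,y)<\gamma$ implies $d(f(x),f(y))<\alpha/2$. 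Let $P=\{p_1,\dots,p_r\}\subset\Omega$ be $\gamma$-dense.

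Define the subshift $\Sigma(P)=\{q\in P^{\mathbb Z}: d(f(q_i),q_{i+1})<\alpha\}$, which is a subshift of finite type. Shadowing gives a point $\theta(q)$ that $\beta$-shadows $q$. Expansiveness and $2\beta<\varepsilon_{\exp}$ make this point unique, and the argument of Proposition~\ref{prop:bracket_continuous} shows $\theta$ is continuous and satisfies $\theta\circ\sigma=f\circ\theta$. The map $\theta$ is surjective: for $x\in\Omega$, choose $q_i$ with $d(f^i x,q_i)<\gamma$; this $q$ lies in $\Sigma(P)$ and is shadowed by $x$.

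Set $T_s=\theta(\{q:q_0=p_s\})$. Each $T_s$ is closed, has diameter at most $2\beta$, and the $T_s$ cover $\Omega$. The key verification is that $T_s$ is a rectangle. Given $q,q'$ with $q_0=q_0'=p_s$, splice them into $q^*=(\dots,q'_{-1},q_0,q_1,\dots)$, which is again admissible. Then $\theta(q^*)$ shadows $q$ forward and $q'$ backward, so $\theta(q^*)\in W^s_{2\beta}(\theta q)\cap W^u_{2\beta}(\theta q')=\{[\theta q,\theta q']\}$. The same splicing gives the pre-Markov property: if $x\in T_s$, $f(x)\in T_t$ and both come from the same code $q$, then $f(W^s(x,T_s))\subset W^s(fx,T_t)$ and $f^{-1}(W^u(fx,T_t))\subset W^u(x,T_s)$.

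Next we refine. For each intersecting pair $T_j\cap T_k\neq\emptyset$, split $T_j$ into the four sets determined by whether the $s$-coordinate (respectively $u$-coordinate) of a point lies in the projection of $T_j\cap T_k$:
\begin{align*}
T^1_{jk}&=\{x\in T_j:[x,T_k]\subset T_k,\ [T_k,x]\subset T_k\},\\
T^2_{jk}&=\{x\in T_j:[x,T_k]\subset T_k,\ [T_k,x]\cap T_k=\emptyset\},
\end{align*}
together with the two remaining sign patterns. Let $Z=\Omega\setminus\bigcup_j(\partial^sT_j\cup\partial^uT_j)$, which is open and dense by Lemma~\ref{lem:boundary_structure}. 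For $x\in Z$, define $R(x)$ as the intersection of the members of these splittings that contain $x$. The rectangles $\overline{R(x)}$ are finitely many; they are proper rectangles with disjoint interiors, they cover $\Omega$, and their diameters are at most $2\beta<\varepsilon$. This gives (M1), (M2) and the diameter bound $\le C\beta$.

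The main obstacle is (M3): showing the refinement inherits the Markov property from the $T_s$. The plan is to prove that for $x,y\in Z\cap f^{-1}Z$ with $R(x)=R(y)$ and $y\in W^s(x,R(x))$, one has $R(fx)=R(fy)$. Membership of $x$ in each piece $T^n_{jk}$ is tested by brackets with $T_j$ and $T_k$, and by the pre-Markov property for the $T$'s these brackets are transported by $f$ along stable slices. The approach is to show the stable boundary $\partial^s=\bigcup_j\partial^sT_j$ satisfies $f(\partial^s)\subset\partial^s$ and the unstable boundary satisfies $f^{-1}(\partial^u)\subset\partial^u$. This again uses the splicing of codes, now applied to codes whose $0$-th symbol is shifted. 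Boundary points are the delicate part, so the argument first works on the dense set $Z\cap f^{-1}Z$ and then passes to closures.
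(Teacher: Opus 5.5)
Your construction up to the pre-Markov property is exactly the paper's Steps~1--3, and your key lemma ($R(fx)=R(fy)$ when $R(x)=R(y)$ and $y\in W^s(x,R(x))$) is its Lemma~\ref{lem:refined_markov}. The gap is in how you propose to prove that lemma.

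Boundary invariance for the cover does hold: $f(\bigcup_j\partial^sT_j)\subset\bigcup_j\partial^sT_j$ and $f^{-1}(\bigcup_j\partial^uT_j)\subset\bigcup_j\partial^uT_j$ follow from Lemma~\ref{lem:initial_markov}. But it is too weak to force $R(fx)=R(fy)$ on a general basic set. Whether $W^u(fx,T_j)\cap T_k\neq\emptyset$ holds depends on which unstable leaf $fx$ lies on. When stable slices are totally disconnected, $fx$ and $fy$ can differ in this respect with no boundary point between them.

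For instance, take the full two-shift model of the horseshoe, with local stable sets $\{z:z_i=x_i,\ i\ge 0\}$. The cylinders $\{x_0=0\}$, $\{x_{-1}x_0=11\}$, $\{x_{-2}x_{-1}x_0=001\}$, $\{x_{-2}x_{-1}x_0=101\}$ are relatively clopen rectangles partitioning $\Omega$. So every $\partial^s$ and $\partial^u$ is empty and the invariance holds vacuously. Yet the stable half of (M3) fails at any $x$ with $x_0=0$, $x_1=1$. Boundary-crossing arguments need connected slices, as for Anosov maps of manifolds.

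What you need instead is the direct transport of intersection patterns through Lemma~\ref{lem:initial_markov} plus splicing:
\begin{enumerate}
\item[(a)] Coding $x$ with $q_0=p_s$, $q_1=p_j$ gives $\mathcal T(fx)=\mathcal T(fy)$ from part~(i).
\item[(b)] The stable tests agree trivially, since $fy\in W^s(fx)$.
\item[(c)] For the unstable test, let $z\in W^u(fx,T_j)\cap T_k$, and code $z$ with $0$th symbol $p_k$ and $(-1)$st symbol $p_t$. Part~(ii) gives $f^{-1}z\in W^u(x,T_s)\cap T_t$. So $R(x)=R(y)$ yields some $z'\in W^u(y,T_s)\cap T_t$. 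The spliced point $w=[f^{-1}z,z']$ has a code with $p_t,p_k$ in positions $0,1$. Hence $fw\in T_k$ and $fw=[z,fy]\in W^u(fy,T_j)$.
\end{enumerate}

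A second gap is in the refinement. You take $R(x)$, for $x\in Z=\Omega\setminus\bigcup_j\partial T_j$, to be an intersection of the pieces themselves, and then assert that the $\overline{R(x)}$ are proper with disjoint interiors. But $x\in Z$ does not put $x$ in the interior of the pieces containing it. The unstable test for a pair $(j,k)$ is locally constant at $x$ only if $W^u_\varepsilon(x)\cap\partial^uT_k=\emptyset$, and dually for the stable test. A point interior to every $T_j$ it lies in can still have its unstable leaf pass through $\partial^uT_k$ for some $T_k\in\mathcal T^*(x)$. So $R(x)$ need not be open, and since the $T_k$ need not be proper, properness of $\overline{R(x)}$ is unsupported.

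This is why the paper restricts to the open dense set $Z^*$, on which all these tests are locally constant. There $R(x)=\bigcap\mathrm{int}(T^n_{j,k})$ is an open rectangle containing $x$ (Lemma~\ref{lem:Rx_rectangle}), and the key lemma is proved for $x,y\in Z^*\cap f^{-1}Z^*$.
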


The proof is constructive and occupies the next several subsections.

\subsection{Construction of Markov Partitions}

\textbf{Step 1: Initial Cover.} Let $\beta > 0$ be small and let $\alpha > 0$ be the shadowing constant from Theorem \ref{thm:shadowing}. Choose $\gamma < \alpha/2$ such that $d(f(x), f(y)) < \alpha/2$ whenever $d(x, y) < \gamma$.

Let $P = \{p_1, \ldots, p_r\} \subset \Omega$ be a $\gamma$-dense subset. Define
\begin{equation}
\Sigma(P) = \left\{ q = (q_i)_{i \in \mathbb{Z}} \in P^{\mathbb{Z}} : d(f(q_i), q_{i+1}) < \alpha \text{ for all } i \right\}.
\end{equation}
This is a subshift of finite type on the alphabet $P$.

For each $q \in \Sigma(P)$, the shadowing lemma provides a unique point $\theta(q) \in \Omega$ with $d(f^i(\theta(q)), q_i) \leq \beta$ for all $i$. The map $\theta : \Sigma(P) \to \Omega$ is continuous and surjective.

\textbf{Step 2: Initial Rectangles.} For $s \in \{1, \ldots, r\}$, define
\begin{equation}
T_s = \{\theta(q) : q \in \Sigma(P), q_0 = p_s\}.
\end{equation}
Each $T_s$ is a rectangle: if $x = \theta(q)$ and $y = \theta(q')$ with $q_0 = q'_0 = p_s$, then $q^* = (q^*_i)$ defined by $q^*_i = q_i$ for $i \geq 0$ and $q^*_i = q'_i$ for $i \leq 0$ lies in $\Sigma(P)$, and $\theta(q^*) = [x, y] \in T_s$.

\begin{lemma}\label{lem:T_closed}
Each $T_s$ is closed.
\end{lemma}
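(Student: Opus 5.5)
The plan is to write $T_s$ as the continuous image of a compact set. By definition, $T_s = \theta(C_s)$, where $C_s = \{q \in \Sigma(P) : q_0 = p_s\}$ is the cylinder of sequences in $\Sigma(P)$ with zeroth symbol $p_s$. If $C_s$ is compact and $\theta$ is continuous, then $T_s$ is compact in $\Omega$, and hence closed, since $\Omega$ is a metric (Hausdorff) space.

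The first step is compactness of $C_s$. The alphabet $P$ is finite, so $P^{\mathbb{Z}}$ with the product topology (equivalently the metric $2^{-N(q,q')}$ from the preliminaries) is compact by Tychonoff. The set $\Sigma(P)$ is closed in $P^{\mathbb{Z}}$. Indeed, each constraint $d(f(q_i),q_{i+1})<\alpha$ involves only the two coordinates $q_i,q_{i+1}$, which range over a finite set. So $\Sigma(P)$ is the intersection over $i$ of the cylinder sets on which the pair $(q_i,q_{i+1})$ is an admissible transition. Each such set is clopen, so the intersection is closed. This is the subshift-of-finite-type structure already noted in Step~1. Similarly, $\{q_0 = p_s\}$ is clopen. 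Therefore $C_s$ is a closed subset of a compact space, and so it is compact.

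The second step, which I expect to be the main obstacle, is the continuity of $\theta$. It is asserted in Step~1, but I would justify it directly. Suppose $q^{(n)} \to q$ in $\Sigma(P)$, and set $x_n = \theta(q^{(n)})$. By compactness of $\Omega$, it suffices to show that every limit point $x$ of $(x_n)$ equals $\theta(q)$. Fix $i$. For large $n$ we have $q^{(n)}_i = q_i$, so $d(f^i(x_n), q_i) \le \beta$. Passing to the limit and using continuity of $f^i$ gives $d(f^i(x), q_i) \le \beta$ for every $i \in \mathbb{Z}$. Thus both $x$ and $\theta(q)$ $\beta$-shadow $q$, so $d(f^i(x), f^i(\theta(q))) \le 2\beta$ for all $i$. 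Choosing $\beta$ with $2\beta$ below the expansiveness constant of Proposition~\ref{thm:expansiveness}, we conclude $x = \theta(q)$. This argument is also exactly what underlies the uniqueness of the shadowing point claimed in Step~1.

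Finally, $T_s = \theta(C_s)$ is the continuous image of a compact set, hence compact and therefore closed in $\Omega$.

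As an alternative that avoids continuity of $\theta$, I would take $x_n = \theta(q^{(n)}) \in T_s$ with $x_n \to x$, extract a subsequence along which $q^{(n)} \to q \in C_s$ (by compactness of $C_s$), and then run the same limiting shadowing estimate plus expansiveness to obtain $x = \theta(q) \in T_s$.
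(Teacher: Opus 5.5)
Your proof is correct and takes essentially the paper's route: the paper also deduces closedness from the continuity of $\theta$ together with compactness of the cylinder $\{q \in \Sigma(P) : q_0 = p_s\}$, phrased sequentially exactly as in your alternative argument. Your justification of the continuity of $\theta$ (the limiting shadowing estimate followed by expansiveness) fills in a step the paper only asserts in Step~1.
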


\begin{proof}
This follows from the continuity of $\theta$: if $\theta(q^{(n)}) \to x$ with $q^{(n)}_0 = p_s$, then by compactness of $\Sigma(P)$, a subsequence of $q^{(n)}$ converges to some $q \in \Sigma(P)$ with $q_0 = p_s$, and $\theta(q) = x$ by continuity.
\end{proof}

\textbf{Step 3: Markov Property for Initial Cover.} The collection $\{T_1, \ldots, T_r\}$ satisfies a weak form of the Markov property.

\begin{lemma}\label{lem:initial_markov}
If $x = \theta(q) \in T_s$ with $q_1 = p_t$, then:
\begin{enumerate}
\item[(i)] $f(W^s(x, T_s)) \subset W^s(f(x), T_t)$,
\item[(ii)] $f(W^u(x, T_s)) \supset W^u(f(x), T_t)$.
\end{enumerate}
\end{lemma}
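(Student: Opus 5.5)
The plan is to work entirely at the level of the symbolic codes, using the shift-equivariance $\theta\circ\sigma = f\circ\theta$ together with a symbolic description of stable and unstable slices of the $T_s$. Equivariance follows from uniqueness of the shadowing point: $f(\theta(q))$ $\beta$-shadows $\sigma q$, so $f(\theta(q)) = \theta(\sigma q)$. Fix $x=\theta(q)\in T_s$ with $q_0=p_s$, $q_1=p_t$. The key auxiliary claim, which I will prove first, is the \emph{symbolic slice description}: if $y\in W^s(x,T_s)$, then $y=\theta(q')$ for some $q'$ with $q'_i=q_i$ for all $i\ge 0$. Similarly, if $y\in W^u(x,T_s)$, then $y=\theta(q')$ with $q'_i=q_i$ for all $i\le 0$.

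To prove the claim for the stable case, take any code $q''$ of $y$ with $q''_0=p_s$, and form the spliced sequence $q^*$ with $q^*_i=q_i$ for $i\ge0$ and $q^*_i=q''_i$ for $i\le 0$. It lies in $\Sigma(P)$ because the splice is at a common symbol $p_s$. By the argument in Step 2, $\theta(q^*)=[x,y]$ (the point whose forward orbit follows $x$ and backward orbit follows $y$). Since $y\in W^s_\varepsilon(x)$, the point $[x,y]$ lies in $W^s_\varepsilon(x)\cap W^u_\varepsilon(y)$, and $y$ itself lies in that intersection, so $[x,y]=y$ by uniqueness in Theorem~\ref{thm:local_product}. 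The unstable case is symmetric.

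Part (i) then follows directly. For $y\in W^s(x,T_s)$ with $y=\theta(q')$ and $q'_{\ge0}=q_{\ge0}$, we get $f(y)=\theta(\sigma q')$ and $(\sigma q')_0=q'_1=p_t$, so $f(y)\in T_t$. By Main Theorem~\ref{thm:stable_manifold}(iii), $d(f^n(f(x)),f^n(f(y)))\le\lambda\varepsilon$, hence $f(y)\in W^s_\varepsilon(f(x))\cap T_t=W^s(f(x),T_t)$.

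For part (ii), take $w\in W^u(f(x),T_t)$ and apply the unstable slice description at $f(x)=\theta(\sigma q)\in T_t$: $w=\theta(r)$ with $r_i=(\sigma q)_i=q_{i+1}$ for $i\le0$. Set $y=\theta(\sigma^{-1}r)$. Then $(\sigma^{-1}r)_0=r_{-1}=q_0=p_s$, so $y\in T_s$ and $f(y)=w$. It remains to check $y\in W^u_\varepsilon(x)$. The codes $\sigma^{-1}r$ and $q$ agree at all indices $\le 0$ (indeed $\le1$), so for $n\ge0$ we have $d(f^{-n}y,f^{-n}x)\le 2\beta$. I would choose $\beta$ with $2\beta\le\varepsilon$; alternatively, observe that $y$ and $x$ both shadow the same backward half-orbit, so $[y,x]$ reasoning puts $y$ on $W^u_\varepsilon(x)$. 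This gives $y\in W^u(x,T_s)$ and $w\in f(W^u(x,T_s))$.

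The main obstacle is the bookkeeping of scales. We need $2\beta<\varepsilon$ so that sharing a half-code implies lying on the local manifold, and $\varepsilon$ must also be within the local product and expansiveness scales so that uniqueness of brackets and of shadowing points applies. I will fix $\beta$ small relative to $\delta_0$ and $\varepsilon_{\exp}$ at the outset and take $\varepsilon$ between $2\beta$ and $\delta_0$, so that the slices $W^s(x,T_s)$ are well defined (note $\mathrm{diam}\,T_s\le 2\beta$).
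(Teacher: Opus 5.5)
Your proposal is correct and follows the paper's proof. In both, you splice the codes of $x$ and $y$ at the common symbol $p_s$, identify $\theta$ of the spliced code with $[x,y]=y$, and shift to land in $T_t$, with (ii) obtained through the inverse shift. You also make explicit what the paper leaves implicit: the equivariance $\theta\circ\sigma=f\circ\theta$ from uniqueness of shadowing points, the bracket-uniqueness argument for $[x,y]=y$, the explicit preimage $y=\theta(\sigma^{-1}r)$ in (ii), and the scale requirements $2\beta<\varepsilon\le\varepsilon_0$ with $2\beta$ below the expansiveness constant.
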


\begin{proof}
For (i), let $y \in W^s(x, T_s)$, so $y = \theta(q')$ with $q'_0 = p_s$. The element $[q, q']$ defined by $[q, q']_i = q_i$ for $i \geq 0$ and $[q, q']_i = q'_i$ for $i \leq 0$ satisfies $\theta[q, q'] = [x, y] = y$ (since $y \in W^s(x)$).

Then $f(y) = \theta(\sigma[q, q'])$ where $\sigma$ is the shift. Since $(\sigma[q, q'])_0 = [q, q']_1 = q_1 = p_t$, we have $f(y) \in T_t$. Also $f(y) \in f(W^s_\varepsilon(x)) \subset W^s_\varepsilon(f(x))$, so $f(y) \in W^s(f(x), T_t)$.

Part (ii) is proved similarly using the inverse shift.
\end{proof}

\textbf{Step 4: Refinement to Proper Rectangles.} The $T_s$ may overlap and may not be proper. We refine them as follows.

For each $x \in \Omega$, define
\begin{equation}
\mathcal{T}(x) = \{T_s : x \in T_s\}
\end{equation}
and
\begin{equation}
\mathcal{T}^*(x) = \{T_t : T_t \cap T_s \neq \emptyset \text{ for some } T_s \in \mathcal{T}(x)\}.
\end{equation}

The set $Z = \Omega \setminus \bigcup_s \partial T_s$ is open and dense in $\Omega$. The set
\begin{equation}
Z^* = \{x \in \Omega : W^s_\varepsilon(x) \cap \partial^s T_t = \emptyset, W^u_\varepsilon(x) \cap \partial^u T_t = \emptyset \text{ for all } T_t \in \mathcal{T}^*(x)\}
\end{equation}
is also open and dense.

For $T_j \cap T_k \neq \emptyset$, partition $T_j$ into regions based on the intersection pattern:
\begin{align}
T^1_{j,k} &= \{x \in T_j : W^u(x, T_j) \cap T_k \neq \emptyset, W^s(x, T_j) \cap T_k \neq \emptyset\} = T_j \cap T_k, \\
T^2_{j,k} &= \{x \in T_j : W^u(x, T_j) \cap T_k \neq \emptyset, W^s(x, T_j) \cap T_k = \emptyset\}, \\
T^3_{j,k} &= \{x \in T_j : W^u(x, T_j) \cap T_k = \emptyset, W^s(x, T_j) \cap T_k \neq \emptyset\}, \\
T^4_{j,k} &= \{x \in T_j : W^u(x, T_j) \cap T_k = \emptyset, W^s(x, T_j) \cap T_k = \emptyset\}.
\end{align}

Each $T^n_{j,k}$ is a rectangle (possibly empty) that is open in $\Omega$.

For $x \in Z^*$, define
\begin{equation}
R(x) = \bigcap \{\mathrm{int}(T^n_{j,k}) : x \in T_j, T_k \cap T_j \neq \emptyset, x \in T^n_{j,k}\}.
\end{equation}

\begin{lemma}\label{lem:Rx_rectangle}
For $x \in Z^*$, $R(x)$ is an open rectangle.
\end{lemma}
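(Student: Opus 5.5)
The argument has three steps: $R(x)$ is open, it is closed under the bracket, and it has small diameter.

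\emph{Openness.} $R(x)$ is an intersection of sets of the form $\mathrm{int}(T^n_{j,k})$, and each such set is open in $\Omega$. The intersection ranges over finitely many index triples, since there are at most $r$ rectangles $T_j$, at most $r$ rectangles $T_k$, and four patterns $n$. So $R(x)$ is a finite intersection of open sets and is open in $\Omega$.

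We also want $x \in R(x)$, so that $R(x)$ is nonempty. For this I would use $x \in Z^*$. Then $W^s_\varepsilon(x)$ avoids every $\partial^s T_t$ and $W^u_\varepsilon(x)$ avoids every $\partial^u T_t$, for all relevant $T_t \in \mathcal{T}^*(x)$. By Lemma~\ref{lem:boundary_structure}, the conditions "$W^u(x,T_j)\cap T_k \neq \emptyset$" and "$W^s(x,T_j)\cap T_k\neq\emptyset$" are then locally constant near $x$. Hence $x$ lies in the interior of the piece $T^n_{j,k}$ containing it.

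\emph{Diameter.} Since $R(x) \subset T_j$ for any $T_j \ni x$, we have $\mathrm{diam}(R(x)) \le \mathrm{diam}(T_j) \le 2\beta < \delta_0$. This gives condition (i) in the definition of a rectangle.

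\emph{Bracket closure.} This is the core of the proof. It reduces to showing that each $\mathrm{int}(T^n_{j,k})$ is closed under $[\cdot,\cdot]$: an intersection of bracket-closed sets is bracket-closed, because $y,z\in R(x)$ lie in every member. I would split the argument into two parts.

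First I would show each $T^n_{j,k}$ is a rectangle. Take $y,z \in T^n_{j,k}$ and set $w=[y,z]$, which lies in $T_j$ because $T_j$ is a rectangle. Then $W^s(w,T_j)=W^s(y,T_j)$, since $w\in W^s_\varepsilon(y)\cap T_j$ and stable slices of a rectangle through points on the same local stable leaf coincide (this uses Lemma~\ref{lem:rectangle_product}). Likewise $W^u(w,T_j)=W^u(z,T_j)$. The two defining conditions of $T^n_{j,k}$ depend only on these slices, so $w$ satisfies the same pattern $n$ as $y$ and $z$. The case $n=1$ is immediate, since $T_j\cap T_k$ is an intersection of rectangles.

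Second I would pass to interiors. Take $y,z\in\mathrm{int}(T^n_{j,k})$. By continuity of the bracket (Proposition~\ref{prop:bracket_continuous}), for $y',z'$ near $y,z$ we have $[y',z']$ near $[y,z]$. The map $(y',z')\mapsto[y',z']$ sends the product of two open neighborhoods onto a neighborhood of $[y,z]$ in $\Omega$; this follows from the product homeomorphism of Lemma~\ref{lem:rectangle_product}, since $[[w',y],[y,w']]$-type decompositions recover nearby points. Therefore $[y,z]\in\mathrm{int}(T^n_{j,k})$.

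\emph{Main obstacle.} The hardest step is this last passage from $T^n_{j,k}$ to its interior. I would handle it with the local product chart at $[y,z]$. Any $w'$ near $[y,z]$ can be written as $[\,[w',y'],\,[z',w']\,]$ with $[w',y']$ close to $y$ on $W^s$ and $[z',w']$ close to $z$ on $W^u$. These two points lie in $\mathrm{int}(T^n_{j,k})$ by openness, and the rectangle property from the first part then gives $w'\in T^n_{j,k}$.
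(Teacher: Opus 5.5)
Your proof is correct. It has the same skeleton as the paper's: $R(x)$ is a finite intersection of open bracket-closed sets, and $x\in Z^*$ places $x$ in each of them. The difference is that you supply the step the paper takes for granted.

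The paper's proof only says that an intersection of rectangles is a rectangle. It leans on the assertion, made just before the lemma, that each $T^n_{j,k}$ is open in $\Omega$, so that $\mathrm{int}(T^n_{j,k})=T^n_{j,k}$. That assertion fails in general. For instance $T^1_{j,k}=T_j\cap T_k$ is closed, hence not open when $\Omega=M$ is connected, as for an Anosov diffeomorphism; this is precisely why $R(x)$ is built from interiors. You instead prove two things:
\begin{itemize}
\item $T^n_{j,k}$ is bracket-closed: at $[y,z]$ the unstable condition is inherited from $z$ and the stable one from $y$.
\item The interior of a rectangle is again a rectangle, via $w'=[[w',y],[z,w']]$ with $[w',y]\to y$ and $[z,w']\to z$ as $w'\to[y,z]$.
\end{itemize}
You also separate openness (a finite intersection of open sets) from nonemptiness ($x\in R(x)$, which is where $Z^*$ enters). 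The paper's proof conflates these two. You check the diameter condition as well, which the paper omits.

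Two attributions should be adjusted, though neither affects validity.

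First, local constancy of the intersection pattern near $x$ does not follow from Lemma~\ref{lem:boundary_structure} alone. That lemma, applied with $T_j\in\mathcal{T}^*(x)$, gives $x\in\mathrm{int}(T_j)$. Persistence of $W^u(x,T_j)\cap T_k\neq\emptyset$ uses that a witness on $W^u_\varepsilon(x)\cap T_k$ avoids $\partial^u T_k$ by the definition of $Z^*$, so it can be slid along its stable leaf inside $T_k$. Persistence of $W^u(x,T_j)\cap T_k=\emptyset$ uses only that $T_k$ is closed (Lemma~\ref{lem:T_closed}) and that the relation $v\in W^u_\varepsilon(y)$ is closed.

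Second, the slice identity $W^s(w,T_j)=W^s(y,T_j)$ for $w\in W^s_\varepsilon(y)\cap T_j$ comes from contraction along local stable manifolds together with $\mathrm{diam}(T_j)\ll\varepsilon$. It does not come from Lemma~\ref{lem:rectangle_product}. With the paper's convention $[y,z]\in W^s_\varepsilon(y)\cap W^u_\varepsilon(z)$, the map stated there on $W^s(x,R)\times W^u(x,R)$ is constant equal to $x$; the factors need to be swapped. It is better not to cite that lemma. Your final paragraph proves the interior step directly without it.
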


\begin{proof}
The intersection of rectangles is a rectangle. Openness follows from $x \in Z^*$ implying $x$ is in the interior of each $T^n_{j,k}$ appearing in the intersection.
\end{proof}

\begin{lemma}\label{lem:Rx_partition}
If $R(x) \cap R(x') \neq \emptyset$ for $x, x' \in Z^*$, then $R(x) = R(x')$.
\end{lemma}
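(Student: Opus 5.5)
The approach is to reduce the statement to a comparison of the index data defining $R(x)$ and $R(x')$, in the manner of Bowen's argument. For $x \in Z^*$, write $\mathcal{I}(x)$ for the set of triples $(j,k,n)$ such that $x \in T_j$, $T_k \cap T_j \neq \emptyset$ and $x \in T^n_{j,k}$. By definition, $R(x) = \bigcap_{(j,k,n) \in \mathcal{I}(x)} \mathrm{int}(T^n_{j,k})$. It therefore suffices to prove the following claim: if $R(x) \cap R(x') \neq \emptyset$, then $\mathcal{I}(x) = \mathcal{I}(x')$. Once the claim holds, the two intersections are over the same family of sets and are literally equal.

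\textbf{Step 1: choose a good common point.} Since $Z^*$ is open and dense, and $R(x) \cap R(x')$ is open in $\Omega$ by Lemma~\ref{lem:Rx_rectangle}, I will pick $y \in R(x) \cap R(x') \cap Z^*$. For every $(j,k,n) \in \mathcal{I}(x)$ we have $y \in \mathrm{int}(T^n_{j,k}) \subset T_j$.

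\textbf{Step 2: show $\mathcal{I}(x) \subseteq \mathcal{I}(y)$.} Take $(j,k,n) \in \mathcal{I}(x)$. Then $y \in T_j$, the condition $T_k \cap T_j \neq \emptyset$ is independent of the base point, and $y \in T^n_{j,k}$. So $(j,k,n) \in \mathcal{I}(y)$.

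For the reverse inclusion I need the following. If $y \in T_j$, then $x \in T_j$. Moreover, for each $k$ with $T_k \cap T_j \neq \emptyset$, $x$ and $y$ lie in the same piece $T^n_{j,k}$.

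\begin{itemize}
\item[(a)] \emph{Membership in $T_j$.} Suppose $y \in T_j$ but $x \notin T_j$. Pick any $T_{j_0} \ni x$. Then $y \in \mathrm{int}(T_{j_0})$ and $y \in T_j$, so $T_j$ meets $T_{j_0}$, and $j$ enters as the second index $k$ in the pattern of $x$ relative to $T_{j_0}$. The piece $T^n_{j_0,j}$ containing $x$ cannot be $T^1_{j_0,j} = T_{j_0} \cap T_j$, because $x \notin T_j$. On the other hand $y \in \mathrm{int}(T^n_{j_0,j})$ and $y \in T_j$, so the piece containing $y$ is $T^1$. These are different pieces, which is a contradiction.

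\item[(b)] \emph{Same piece $T^n_{j,k}$.} Since the $T^n_{j,k}$ are disjoint for fixed $j,k$, the pieces containing $x$ and $y$ are either equal or disjoint. When $x$ and $y$ are in $T_j$, the pattern of $x$ relative to $(j,k)$ contains the piece $T^{n}_{j,k}$ with $x \in T^n_{j,k}$, and $y$ lies in its interior. Hence $y$ lies in the same piece.
\end{itemize}

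Together, (a) and (b) give $\mathcal{I}(x) = \mathcal{I}(y)$. The same argument gives $\mathcal{I}(x') = \mathcal{I}(y)$, and the claim follows.

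\textbf{Main obstacle.} I expect the difficulty to be in (a). There, $x$ lies on $\partial T_j$ or outside $T_j$ while $y$ lies inside, and the key is that membership in $T_j$ is itself recorded by the $T^1$ versus $T^{2,3,4}$ dichotomy relative to some $T_{j_0} \ni x$. To make this rigorous I will use the product structure of Lemma~\ref{lem:rectangle_product} and the defining property of $Z^*$: the slices $W^s_\varepsilon(x)$ and $W^u_\varepsilon(x)$ avoid $\partial^s T_t$ and $\partial^u T_t$ for all $T_t \in \mathcal{T}^*(x)$. Via Lemma~\ref{lem:boundary_structure}, this shows that the four conditions defining $T^n_{j_0,j}$ are locally constant near $x$ within $T_{j_0}$. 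Consequently, each $T^n_{j_0,j}$ containing $x$ is a neighbourhood of $x$, and the pattern cannot change along $R(x)$.
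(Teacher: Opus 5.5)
Your proof is correct. Its skeleton matches the paper's: pick a common point $y$ of $R(x)\cap R(x')$ and show that the family of pieces defining $R(y)$ coincides with the families defining $R(x)$ and $R(x')$. The difference is in the reverse inclusion $\mathcal{I}(y)\subseteq\mathcal{I}(x)$. The paper handles it by asserting $x\in R(y)$ ``by the same argument applied to $R(y)$''. That argument needs $x\in R(y)$ as input, which is exactly what is in question, so the paper's step is circular.

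Your steps (a) and (b) supply the missing content, and this is Bowen's original argument. The four pieces $T^n_{j_0,j}$ partition $T_{j_0}$, and $T^1_{j_0,j}=T_{j_0}\cap T_j$. So the piece $T^n_{j_0,j}$ containing $x$ also contains $y$, while $y\in T_{j_0}\cap T_j=T^1_{j_0,j}$. This forces $n=1$, hence $x\in T_j$. That gives $\mathcal{T}(y)\subseteq\mathcal{T}(x)$, and (b) then matches the pieces.

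Two simplifications are available.

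First, (a) is already rigorous as written, so the ``main obstacle'' paragraph is unnecessary. Local constancy of the intersection pattern near $x$, coming from the definition of $Z^*$, is what is needed for $x\in\mathrm{int}(T^n_{j,k})$, i.e.\ for $x\in R(x)$ in Lemma~\ref{lem:Rx_rectangle}. It plays no role in this lemma.

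Second, you never actually use $y\in Z^*$, openness, or density. The index set $\mathcal{I}(y)$ makes sense for every $y\in\Omega$, and (a) and (b) only use $y\in R(x)$ (respectively $y\in R(x')$). So any point of the nonempty set $R(x)\cap R(x')$ directly gives $\mathcal{I}(x)=\mathcal{I}(y)=\mathcal{I}(x')$.
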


\begin{proof}
By Lemma~\ref{lem:Rx_rectangle}, both $R(x)$ and $R(x')$ are open. Hence $R(x) \cap R(x')$ is a nonempty open subset of $\Omega$. Since $Z^*$ is dense in $\Omega$, the intersection $R(x) \cap R(x') \cap Z^*$ is nonempty; pick any $y$ in it.

We claim $R(y) = R(x)$. Indeed, both $R(y)$ and $R(x)$ are defined as the intersection of those interiors $\mathrm{int}(T^n_{j,k})$ containing the respective base point. Since $y \in R(x) \subset \mathrm{int}(T^n_{j,k})$ for each $T^n_{j,k}$ appearing in the defining intersection of $R(x)$, the collection of interiors containing $y$ is at least as large. Conversely, if $y \in \mathrm{int}(T^n_{j,k})$ then in particular $y \in T_j \cap T^n_{j,k}$, and since $x \in R(y) \subset T^n_{j,k}$ as well (by the same argument applied to $R(y)$), both collections coincide. Hence $R(y) = R(x)$. The same argument gives $R(y) = R(x')$, so $R(x) = R(x')$.
\end{proof}

The collection $\{R(x) : x \in Z^*\}$ contains only finitely many distinct rectangles by compactness (each is an intersection of finitely many from a finite collection). Let $\mathcal{R} = \{R_1, \ldots, R_m\}$ be the distinct elements.

\textbf{Step 5: Verification of Markov Property.} The key is to show that the Markov property (M3) holds for the refined partition.

\begin{lemma}\label{lem:refined_markov}
If $x, y \in Z^* \cap f^{-1}Z^*$ with $R(x) = R(y)$ and $y \in W^s_\varepsilon(x)$, then $R(f(x)) = R(f(y))$.
\end{lemma}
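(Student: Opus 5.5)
The plan is to follow Bowen's argument and show that $f(x)$ and $f(y)$ lie in exactly the same sets $T^n_{j,k}$, in the sense used to define $R(\cdot)$. Since $R(z)$ is the intersection of the interiors $\mathrm{int}(T^n_{j,k})$ containing $z$, it suffices to prove the following. For every pair $(j,k)$ with $f(x)\in T_j$ and $T_j\cap T_k\neq\emptyset$, we have $f(y)\in T_j$. Moreover $f(x)$ and $f(y)$ fall into the same piece among $T^1_{j,k},\dots,T^4_{j,k}$. The same statements must hold with the roles of $x$ and $y$ exchanged. The hypothesis $x,y\in Z^*\cap f^{-1}Z^*$ guarantees that all of these points avoid the stable and unstable boundaries of every relevant $T_t$. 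Consequently, membership in $T_j$ and in the individual pieces is stable under small motions along $W^s$ and $W^u$ slices.

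\textbf{Step 1: $f(x)\in T_j$ implies $f(y)\in T_j$.} Write $f(x)=\theta(q')$ with $q'_0=p_j$. Since $x\in T_{s}$ for a suitable $s$ (namely $x=\theta(\sigma^{-1}q')$), the assumption $R(x)=R(y)$ gives $y\in T_s$, because $R(x)$ and $R(y)$ record the same collection of $T_s$'s. As $y\in W^s_\varepsilon(x)$, the slice $W^s(x,T_s)$ contains $y$. Lemma~\ref{lem:initial_markov}(i), applied with $q_1=p_j$, then gives $f(y)\in W^s(f(x),T_j)$. The converse direction is symmetric, since $x\in W^s_\varepsilon(y)$.

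\textbf{Step 2: the pieces agree.} Fix $k$ with $T_j\cap T_k\neq\emptyset$. By Step 1, $f(y)\in W^s(f(x),T_j)$, and the rectangle property gives $W^s(f(y),T_j)=W^s(f(x),T_j)$. Hence the conditions ``$W^s(\cdot,T_j)\cap T_k\neq\emptyset$'' coincide for $f(x)$ and $f(y)$.

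For the unstable condition, I use the product structure (Lemma~\ref{lem:rectangle_product}). The bracket $w\mapsto[w,f(y)]$ maps $W^u(f(x),T_j)$ bijectively onto $W^u(f(y),T_j)$. I would show that this holonomy preserves membership in $T_k$. Take $w\in W^u(f(x),T_j)\cap T_k$. Then $f(y)\in W^s_\varepsilon(f(x))\cap T_j$. If $f(y)\in T_k$, then $[w,f(y)]\in T_k$ by the rectangle property of $T_k$. Otherwise I would argue by continuity along the stable segment from $f(x)$ to $f(y)$. Moving along this segment, the intersection $W^u(\cdot,T_j)\cap T_k$ can only become empty when the segment crosses $\partial^s T_k$. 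But $f(x)\in Z^*$ forbids $W^s_\varepsilon(f(x))$ from meeting $\partial^s T_k$, since $T_k\in\mathcal T^*(f(x))$.

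\textbf{Step 3: conclusion.} Steps 1 and 2 show that $f(x)$ and $f(y)$ belong to the same family of sets $\mathrm{int}(T^n_{j,k})$. Both points lie in $Z^*$, so each belongs to the interior of every piece containing it, and the defining intersections coincide. This gives $R(f(x))=R(f(y))$.

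\textbf{Main obstacle.} I expect the main obstacle to be Step 2 for the unstable pattern, that is, showing that the $T_k$-incidence of unstable slices is constant along a stable slice. This requires a careful use of the $Z^*$ condition, namely that $W^s_\varepsilon(f(x))\cap\partial^s T_k=\emptyset$. It also requires using Lemma~\ref{lem:boundary_structure} to know that leaving $T_k$ along a stable slice forces a crossing of $\partial^s T_k$. My first attempt would argue by connectedness along the stable direction, working in $\Omega$, which may be totally disconnected. If that fails, I would instead take the canonical-coordinate description $T_k\cap W^s_\varepsilon(f(x))=W^s(z,T_k)$ for some $z$, and show directly that $[w,f(y)]\in T_k$ using $f(y)\notin\partial^s T_k$.
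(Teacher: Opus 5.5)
Your Step~1 and the stable half of Step~2 match the paper's argument. The unstable half of Step~2 has a genuine gap.

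\textbf{Why your argument cannot work.} The claim you need is that, for points of $Z^*$, the condition $W^u(\cdot,T_j)\cap T_k\neq\emptyset$ is constant along $W^s(f(x),T_j)$. That claim is false in general. So it cannot be derived from $f(x)\in Z^*$, either by connectedness or by a direct bracket argument. Draw $T_j$ in canonical coordinates as a square with horizontal stable slices, and let $T_k$ be a smaller square in its middle. Take $p$ at mid-height to the left of $T_k$, and $p'$ at the same height inside $\mathrm{int}(T_k)$. Then:
\begin{itemize}
\item[] the common stable leaf through $p$ and $p'$ misses the top and bottom edges, which form $\partial^s T_k$;
\item[] the vertical leaves through $p$ and $p'$ miss the side edges, which form $\partial^u T_k$;
\item[] hence both points satisfy the $Z^*$ condition with respect to $T_k$;
\item[] yet $p\in T^3_{j,k}$ while $p'\in T^1_{j,k}$.
\end{itemize}
Your boundary bookkeeping is reversed. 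Moving along a stable slice, the unstable incidence with $T_k$ changes when you cross $\partial^u T_k$, the points that are not interior to their stable slice. The set $Z^*$ places no restriction on $W^s_\varepsilon(f(x))\cap\partial^u T_k$. Note also that you use $R(x)=R(y)$ only to obtain $y\in T_s$. Its unstable content, namely that $x$ and $y$ share the same $u$-pattern with respect to every $T_t$, is never used. Without it the conclusion fails.

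\textbf{The missing idea.} The paper transports the unstable pattern from time $0$ using the Markov property of the initial cover.
\begin{itemize}
\item[] Let $w\in W^u(f(x),T_j)\cap T_k$. Lemma~\ref{lem:initial_markov}(ii) gives $z=f^{-1}w\in W^u(x,T_s)$.
\item[] Write $w=\theta(r)$ with $r_0=p_k$, and let $t$ be the index with $r_{-1}=p_t$. Then $z=\theta(\sigma^{-1}r)\in T_t$, with next symbol $p_k$. Hence $W^u(x,T_s)\cap T_t\neq\emptyset$.
\item[] Because $R(x)=R(y)$, the point $y$ lies in the same piece $T^n_{s,t}$ as $x$. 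So there is some $v\in W^u(y,T_s)\cap T_t$.
\item[] The point $[z,v]$ lies in $W^s(z,T_t)\cap W^u(y,T_s)$. Lemma~\ref{lem:initial_markov}(i), applied at $z$, sends it into $W^s(w,T_k)$.
\item[] By uniqueness of local intersections, $f([z,v])=[w,f(y)]$. This point lies in $T_j$ by the rectangle property, hence in $W^u(f(y),T_j)\cap T_k$.
\end{itemize}
So your candidate point $[w,f(y)]$ is the right witness. Its membership in $T_k$, however, comes from the time-$0$ hypothesis $R(x)=R(y)$ combined with the Markov property of the $T$'s, not from $Z^*$ at time $1$.
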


\begin{proof}
First, $\mathcal{T}(f(x)) = \mathcal{T}(f(y))$ by Lemma \ref{lem:initial_markov}(i): if $f(x) \in T_j$ then $f(y) \in f(W^s(x, T_s)) \subset W^s(f(x), T_j) \subset T_j$ for appropriate $T_s$.

For $f(x), f(y) \in T_j$ and $T_k \cap T_j \neq \emptyset$, we show $f(x)$ and $f(y)$ lie in the same $T^n_{j,k}$. Since $f(y) \in W^s_\varepsilon(f(x))$, we have $W^s(f(y), T_j) = W^s(f(x), T_j)$.

For the unstable direction: suppose $W^u(f(x), T_j) \cap T_k \neq \emptyset$, say $f(z) \in W^u(f(x), T_j) \cap T_k$. Working backwards through the Markov property of the $T$'s, there exists $z' \in W^u(x, T_s) \cap T_t$ for appropriate $T_s \ni x$ and $T_t$ with $f(z') = f(z)$. Since $x, y \in R(x) = R(y)$ are in the same $T^n_{s,t}$, there exists $z'' \in W^u(y, T_s) \cap T_t$. Then $f(z'') \in W^u(f(y), T_j) \cap T_k$.

Thus $f(x)$ and $f(y)$ have the same intersection pattern, giving $R(f(x)) = R(f(y))$.
\end{proof}

From Lemma \ref{lem:refined_markov} and density arguments, the Markov property (M3) follows for the closures $\overline{R_i}$.

\subsection{Diameter Bounds}

The diameter of the Markov partition controls the H\"{o}lder regularity of the coding map. This subsection gives explicit diameter estimates in terms of the shadowing constants and the choice of cover refinement.

\begin{proposition}[Diameter Estimate]\label{prop:diameter_estimate}
The Markov partition $\mathcal{R}$ constructed above satisfies
\begin{equation}
\max_i \mathrm{diam}(R_i) \leq C \cdot \beta
\end{equation}
where $\beta$ is the shadowing accuracy and $C$ depends on the geometry of $\Omega$.
\end{proposition}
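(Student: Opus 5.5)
The plan is to show that each refined rectangle $R_i$ sits inside one of the initial rectangles $T_s$, and then to bound $\mathrm{diam}(T_s)$ directly from the shadowing construction in Step~1. Refinement can only shrink sets, so a bound on the $T_s$ passes to the $R_i$, and then to their closures.

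\emph{Step 1: containment.} By construction, $R(x)$ is an intersection of sets $\mathrm{int}(T^n_{j,k})$ with $x \in T_j$. By the definitions of $T^1_{j,k},\dots,T^4_{j,k}$, each such set is a subset of $T_j$. Hence $R(x) \subset T_j$ for any $T_j \in \mathcal{T}(x)$. Since $\mathcal{T}(x)\neq\emptyset$ by (M1) for the $T_s$, this gives $R_i \subset T_{s(i)}$ for some $s(i)$. Because $T_{s(i)}$ is closed (Lemma~\ref{lem:T_closed}), also $\overline{R_i} \subset T_{s(i)}$, and therefore $\mathrm{diam}(\overline{R_i}) \le \max_s \mathrm{diam}(T_s)$.

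\emph{Step 2: diameter of $T_s$.} Take $x=\theta(q)$ and $y=\theta(q')$ with $q_0=q'_0=p_s$. Main Theorem~\ref{thm:shadowing} gives $d(\theta(q),q_0)\le\beta$ and $d(\theta(q'),q'_0)\le\beta$. By the triangle inequality, $d(x,y)\le d(x,p_s)+d(p_s,y)\le 2\beta$. Thus $\max_i\mathrm{diam}(R_i)\le 2\beta$, which is the claim with $C=2$. If we also want $\max_i \mathrm{diam}(R_i) < \varepsilon$ as in Main Theorem~\ref{thm:markov_existence}, we take $\beta<\varepsilon/2$. We must also ensure $2\beta<\delta_0$, so that the $T_s$ qualify as rectangles in the sense of condition~(i) of the definition. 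This is why $\beta$ is taken small in Step~1, subject to $\beta$ lying in the range where the shadowing point is unique (below the expansiveness constant of Proposition~\ref{thm:expansiveness}).

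\emph{Step 3: expressing $C$ in terms of hyperbolicity data.} To make $C$ depend on the geometry as in the statement, I would substitute the quantitative relation $\alpha=C_{\mathrm{sh}}^{-1}(1-\lambda)\beta$ from Main Theorem~\ref{thm:shadowing}. If the diameter is measured in the original metric $g$ rather than the adapted one, there is an extra distortion factor $K$ from Theorem~\ref{thm:adapted_metric}, giving $C=2K$.

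The main obstacle is Step~1. The sets $R_i$ are defined only on the dense set $Z^*$, through interiors, so one has to check that taking closures does not escape $T_{s(i)}$. Closedness of $T_s$ handles this. The remaining point is that the $\gamma$-density of $P$ guarantees every $x\in\Omega$ lies in some $T_s$: given $x$, pick $q_i \in P$ with $d(f^i(x),q_i)<\gamma$; this gives a sequence in $\Sigma(P)$ with $\theta(q)=x$. That membership $x \in T_s$ is what makes $\mathcal{T}(x)$ nonempty.
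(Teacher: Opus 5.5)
Your proof is correct and follows the paper's argument: you bound $\mathrm{diam}(T_s)\le 2\beta$ by the triangle inequality through $p_s$, and you note that each refined rectangle lies inside some $T_s$, so refinement cannot enlarge diameters. Your treatment of closures, via the closedness of $T_s$ (Lemma~\ref{lem:T_closed}), is tidier than the paper's appeal to ``boundary thickness'' controlled by continuity. In fact no such control is needed, since taking the closure of a set never increases its diameter.
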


\begin{proof}
Each $T_s$ has diameter at most $2\beta$ (points $\theta(q)$ with $q_0 = p_s$ all lie within $\beta$ of $p_s$). The refinement does not increase diameters. The closure adds at most the boundary thickness, which is controlled by continuity.
\end{proof}

\begin{corollary}[Small Diameter Partitions]\label{cor:small_diameter}
For any $\varepsilon > 0$, choosing $\beta$ and $\alpha$ appropriately in the construction yields a Markov partition with $\mathrm{diam}(\mathcal{R}) < \varepsilon$.
\end{corollary}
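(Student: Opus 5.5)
The plan is to run the construction of Steps 1--5 with the shadowing accuracy $\beta$ taken as a free parameter, and then to invoke Proposition~\ref{prop:diameter_estimate}. Fix $\varepsilon>0$. We first require $\beta<\delta_0$, so that Main Theorem~\ref{thm:shadowing} applies with $\alpha=C^{-1}(1-\lambda)\beta$. We also require $2\beta$ to be smaller than the expansiveness constant of Proposition~\ref{thm:expansiveness}; this makes the shadowing point $\theta(q)$ unique, so $\theta$ is well defined. Finally we require $C\beta<\varepsilon$, with $C$ the geometric constant of Proposition~\ref{prop:diameter_estimate}. All three conditions are upper bounds on $\beta$, so they hold together once $\beta$ is small. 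We then choose $\gamma<\alpha/2$ as in Step~1, using the uniform continuity of $f$. By compactness of $\Omega$ there is a finite $\gamma$-dense set $P$, and we build $\Sigma(P)$, $\theta$, the initial rectangles $T_s$, and their refinements $R(x)$ exactly as in Steps~1--4.

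The key steps, in order, are as follows.

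First, we check that $\Sigma(P)$ covers $\Omega$, so that $\bigcup_s T_s=\Omega$. Given $x\in\Omega$, pick $q_i\in P$ with $d(f^i(x),q_i)<\gamma$. Then $d(f(q_i),q_{i+1})\le d(f(q_i),f^{i+1}(x))+d(f^{i+1}(x),q_{i+1})<\alpha/2+\gamma<\alpha$, so $q\in\Sigma(P)$. The orbit of $x$ $\gamma$-shadows $q$, and uniqueness of shadowing then gives $\theta(q)=x$.

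Second, we note that the closures $\overline{R_i}$ of the finitely many distinct $R(x)$ cover $\Omega$, because $Z^*$ is dense. They have disjoint interiors by Lemma~\ref{lem:Rx_partition}, and they are proper rectangles, since each $R(x)$ is open and the bracket is continuous (Proposition~\ref{prop:bracket_continuous}). This gives (M1) and (M2).

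Third, we obtain (M3) from Lemma~\ref{lem:refined_markov} and its unstable analogue (apply the lemma to $f^{-1}$). These give the Markov property on the dense set $Z^*\cap f^{-1}Z^*$. We then pass to the closed rectangles by continuity of the local stable and unstable manifolds (Main Theorem~\ref{thm:stable_manifold}(iv)).

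Fourth, we get the diameter bound. Each $T_s$ lies in the closed $\beta$-ball about $p_s$, so $\mathrm{diam}(T_s)\le 2\beta$. Every $R(x)$ is contained in some $T_j$, and taking closures does not increase diameter. Hence $\max_i\mathrm{diam}(R_i)\le 2\beta\le C\beta<\varepsilon$.

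The main obstacle is the third step, the passage from the Markov property on $Z^*\cap f^{-1}Z^*$ to (M3) for the closed rectangles. Here $\mathrm{int}(R_i)$ may strictly contain $R(x)$ at points where stable slices of neighbouring rectangles meet. My first attempt will use the product structure of Lemma~\ref{lem:rectangle_product}. Take $x\in\mathrm{int}(R_i)\cap f^{-1}\mathrm{int}(R_j)$ and approximate it by points of $Z^*\cap f^{-1}Z^*$ lying in the same open sets. Apply Lemma~\ref{lem:refined_markov} to pairs on a common stable slice, which shows $f(W^s(x,R_i))\subset R_j$. The unstable inclusion will come from the inverse statement. Closedness of the slices and continuity of $[\cdot,\cdot]$ then carry the inclusions through the limit. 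By contrast, the diameter part, which is what the corollary actually asks for, is immediate once this is in place.
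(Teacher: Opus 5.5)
Your proposal is correct and is the paper's argument: the corollary is Proposition~\ref{prop:diameter_estimate} applied with $\beta$ taken small. Your fourth step ($T_s$ lies in the closed $\beta$-ball about $p_s$, each $R_i$ is the closure of a subset of some $T_j$, hence $\mathrm{diam}(R_i)\le 2\beta$) is exactly that estimate, stated more cleanly than the paper's remark about ``boundary thickness''; steps one to three, including the (M3) passage you flag as the obstacle, only re-verify the construction of Main Theorem~\ref{thm:markov_existence} and can be cited rather than reproved, and you should impose $2\beta<\varepsilon$ directly, since writing $2\beta\le C\beta$ tacitly assumes $C\ge 2$.
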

\section{Symbolic Dynamics}\label{sec:symbolic_dynamics}

This section establishes the symbolic coding of Axiom A diffeomorphisms via Markov partitions, providing the essential bridge between smooth dynamics and the symbolic dynamics developed in Parts~I and~II \cite{Thiam2026a, Thiam2026b}. We prove quantitative bounds on the coding map and characterize the exceptional set where the coding fails to be injective.

\subsection{The Transition Matrix}

Let $\mathcal{R} = \{R_1, \ldots, R_m\}$ be a Markov partition of a basic set $\Omega$ for an Axiom A diffeomorphism $f$.

\begin{definition}[Transition Matrix]
The transition matrix $A = A(\mathcal{R})$ is the $m \times m$ matrix with entries
\begin{equation}
A_{ij} = \begin{cases} 1 & \text{if } \mathrm{int}(R_i) \cap f^{-1}(\mathrm{int}(R_j)) \neq \emptyset, \\ 0 & \text{otherwise.} \end{cases}
\end{equation}
\end{definition}

\begin{proposition}[Transition Matrix Properties]\label{prop:transition_properties}
The transition matrix $A$ satisfies:
\begin{enumerate}
\item[(i)] If $x \in R_i$ and $f(x) \in R_j$, then $A_{ij} = 1$.
\item[(ii)] If $f|_\Omega$ is topologically transitive, then $A$ is irreducible.
\item[(iii)] If $f|_\Omega$ is topologically mixing, then $A$ is aperiodic.
\end{enumerate}
\end{proposition}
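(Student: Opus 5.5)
The plan is to derive all three items from one mechanism. We pass from the topological dynamics of $f|_\Omega$ to admissible words of $A$ by following orbits that never meet the boundary set
\[
\partial\mathcal{R} := \bigcup_{k=1}^m \partial R_k .
\]
Since each $R_k$ is proper, $R_k = \overline{\mathrm{int}(R_k)}$, so $\partial R_k$ is closed with empty interior in $\Omega$. Hence $\bigcup_{n\in\mathbb{Z}} f^n(\partial\mathcal{R})$ is a countable union of closed nowhere dense sets. By the Baire category theorem on the compact metric space $\Omega$, the set
\[
G = \Omega \setminus \bigcup_{n\in\mathbb{Z}} f^n(\partial\mathcal{R})
\]
is residual, hence dense. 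For $z \in G$, each $f^k(z)$ lies in the interior of exactly one rectangle, by (M1) and (M2). Call its index $a_k$. Then $f^k(z)\in\mathrm{int}(R_{a_k})$ and $f^{k+1}(z)\in\mathrm{int}(R_{a_{k+1}})$, so $A_{a_k a_{k+1}}=1$ directly from the definition.

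For (ii), fix $i,j$ and set $U=\mathrm{int}(R_i)$ and $V=\mathrm{int}(R_j)$, both nonempty open sets in $\Omega$. Transitivity gives $n>0$ with $f^n(U)\cap V\neq\emptyset$. The set $U\cap f^{-n}(V)$ is then a nonempty open subset of $\Omega$, so it meets the dense set $G$ in some point $z$. Its itinerary $i=a_0,a_1,\dots,a_n=j$ is admissible, which gives $(A^n)_{ij}>0$; hence $A$ is irreducible. For (iii), the same argument applies with mixing in place of transitivity. For each pair $(i,j)$ there is $N_{ij}$ such that $f^n(U)\cap V\neq\emptyset$ for every $n\ge N_{ij}$, and therefore $(A^n)_{ij}>0$ for all such $n$. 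Taking $N=\max_{i,j}N_{ij}$ over the finitely many pairs gives aperiodicity.

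For (i), if $x\in\mathrm{int}(R_i)$ and $f(x)\in\mathrm{int}(R_j)$, the claim is immediate from the definition of $A$. The general case $x\in R_i\cap f^{-1}(R_j)$ is the main obstacle, because boundary points of rectangles could a priori touch only at corners. The plan is to use properness and the product structure. First choose a point $x'\in\mathrm{int}(R_i)$ close to $x$ with $f(x')$ in the relevant region near $f(x)$. Next use Lemma~\ref{lem:rectangle_product} to write nearby points of $R_i$ as brackets $[y,z]$ with $y\in W^s(x,R_i)$ and $z\in W^u(x,R_i)$. Then use (M3), which gives $f(W^s(x,R_i))\subset W^s(f(x),R_j)$ and $f(W^u(x,R_i))\supset W^u(f(x),R_j)$. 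Combining these with continuity of the bracket (Proposition~\ref{prop:bracket_continuous}) should produce a point $w=[y,z]$ with $w\in\mathrm{int}(R_i)$ and $f(w)\in\mathrm{int}(R_j)$.

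If this corner case resists, the fallback is to prove (i) for $x\in G$, or more generally for $x$ in $\mathrm{int}(R_i)\cap f^{-1}(\mathrm{int}(R_j))$. That is the form actually used later for the coding map $\pi$ in Section~\ref{sec:symbolic_dynamics}. In that case the boundary points would be treated as part of the exceptional set $\bigcup_n f^n(\partial\mathcal{R})$ appearing in Main Theorem~\ref{thm:symbolic_coding}.
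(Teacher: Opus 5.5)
Your arguments for (ii) and (iii) are correct, and more complete than the paper's. The paper passes straight from $f^n(\mathrm{int}\,R_i)\cap\mathrm{int}\,R_j\neq\emptyset$ to $(A^n)_{ij}>0$. That step needs an orbit whose intermediate iterates also lie in interiors of rectangles, and your residual set $G$ (the paper's good set $Y$) supplies exactly that.

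The genuine gap is your plan for (i) at boundary points. The step that ``should produce'' $w=[y,z]$ with $w\in\mathrm{int}(R_i)$ and $f(w)\in\mathrm{int}(R_j)$ cannot be carried out. You also may not invoke (M3) at $x$ at all: it is assumed only for $x\in\mathrm{int}(R_i)\cap f^{-1}(\mathrm{int}(R_j))$, which is what you are trying to prove.

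In fact (i) is false for general Markov partitions. The one-dimensional model already shows it. For the doubling map with intervals $[0,\tfrac14],[\tfrac14,\tfrac12],[\tfrac12,\tfrac34],[\tfrac34,1]$, the point $\tfrac14$ lies in the first interval and its image $\tfrac12$ lies in the third. Yet $f((0,\tfrac14))=(0,\tfrac12)$ misses $(\tfrac12,\tfrac34)$.

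The same happens for a connected basic set, such as $\Omega=\mathbb{T}^2$ under a hyperbolic automorphism. Take any Markov partition $\mathcal R$ with $m\geq 2$ and its standard refinement $\mathcal R'$, with elements $R'_{ij}=\overline{\mathrm{int}(R_i\cap f^{-1}R_j)}$ for $A_{ij}=1$; it again satisfies (M1)--(M3). By connectedness some $y$ lies in $R_j\cap R_{j'}$ with $j\neq j'$. Approximating $y$ by good points of $\mathrm{int}\,R_j$ and of $\mathrm{int}\,R_{j'}$ gives $f^{-1}(y)\in R'_{ij}$ and $y\in R'_{j'k}$ for some $i,k$. But $f(\mathrm{int}\,R'_{ij})\subset\mathrm{int}\,R_j$ is disjoint from $\mathrm{int}\,R'_{j'k}\subset\mathrm{int}\,R_{j'}$, so the corresponding entry of $A(\mathcal R')$ is $0$.

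So your fallback is not a retreat but the correct form of (i): $A_{ij}=1$ whenever $x\in\mathrm{int}(R_i)$ and $f(x)\in\mathrm{int}(R_j)$, in particular along every orbit in $G$. That is all that is used later; the surjectivity argument in Main Theorem~\ref{thm:symbolic_coding} only needs itineraries of points of $Y$.

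The paper's own proof of (i) does not survive either. It applies (M3) at a point where (M3) is not assumed. It also asserts that $W^u(f(x),R_j)$ has nonempty interior in $\Omega$, which is false in general: for $\Omega=\mathbb{T}^2$ or a horseshoe, unstable slices have empty interior in $\Omega$.
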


\begin{proof}
Part (i): If $x \in R_i$ and $f(x) \in R_j$, then by the Markov property (M3), $f(W^u(x, R_i)) \supset W^u(f(x), R_j)$. Since $W^u(f(x), R_j)$ has nonempty interior in $\Omega$, and $f$ maps interiors to interiors (locally), $\mathrm{int}(R_i) \cap f^{-1}(\mathrm{int}(R_j)) \neq \emptyset$.

Part (ii): Transitivity means for any $R_i, R_j$ there exists $n > 0$ with $f^n(\mathrm{int}(R_i)) \cap \mathrm{int}(R_j) \neq \emptyset$, which implies $(A^n)_{ij} > 0$.

Part (iii): Mixing means for large $n$, $f^n(\mathrm{int}(R_i)) \cap \mathrm{int}(R_j) \neq \emptyset$ for all $i, j$, giving aperiodicity.
\end{proof}

\subsection{The Coding Map}

The coding map $\pi : \Sigma_A \to \Omega$ sends a bi-infinite sequence of symbols to the unique point whose forward and backward orbits track the rectangles indexed by the sequence. We define it via the shadowing lemma and establish its basic properties.

\begin{definition}[Itinerary]
For $x \in \Omega$, the itinerary of $x$ is the bi-infinite sequence $a(x) = (a_i)_{i \in \mathbb{Z}}$ where $a_i \in \{1, \ldots, m\}$ is determined by $f^i(x) \in R_{a_i}$.
\end{definition}

The itinerary is well-defined when $f^i(x) \in \mathrm{int}(R_{a_i})$ for all $i$, i.e., when $x$ avoids the boundaries of all iterates of the partition.

\begin{definition}[Boundary Set]
Define the stable and unstable boundaries of the partition:
\begin{equation}
\partial^s \mathcal{R} = \bigcup_{j=1}^m \partial^s R_j, \quad \partial^u \mathcal{R} = \bigcup_{j=1}^m \partial^u R_j.
\end{equation}
The total boundary is $\partial \mathcal{R} = \partial^s \mathcal{R} \cup \partial^u \mathcal{R}$.
\end{definition}

\begin{proposition}[Boundary Invariance]\label{prop:boundary_invariance}
The stable and unstable boundaries satisfy:
\begin{equation}
f(\partial^s \mathcal{R}) \subset \partial^s \mathcal{R}, \quad f^{-1}(\partial^u \mathcal{R}) \subset \partial^u \mathcal{R}.
\end{equation}
\end{proposition}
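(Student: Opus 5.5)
The plan is to prove the two inclusions by working one rectangle at a time and reducing each to the Markov property (M3) combined with the slice characterization of the boundary in Lemma~\ref{lem:boundary_structure}. The unstable statement will follow from the stable one applied to $f^{-1}$: the partition $\mathcal{R}$ is still Markov for $f^{-1}$ with the roles of $W^s$ and $W^u$ exchanged, since (M3) is symmetric under time reversal once stated for $f^{-1}$. So it suffices to show $f(\partial^s\mathcal{R})\subset\partial^s\mathcal{R}$.

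\textbf{Setting up.} Fix $x\in\partial^s R_i$, so $x\in R_i$ and $x\notin\mathrm{int}\,W^u(x,R_i)$, where the interior is taken in $W^u_\varepsilon(x)\cap\Omega$. Choose $j$ with $f(x)\in R_j$; such a $j$ exists by (M1). The goal is $f(x)\in\partial^s R_j$. Suppose instead that $f(x)\in\mathrm{int}\,W^u(f(x),R_j)$. Two facts will then be used.
\begin{enumerate}
\item[(a)] Because the rectangles are proper and the $R_i$ have disjoint interiors, we may perturb $x$ within $W^s(x,R_i)$ to a point $x'\in\mathrm{int}(R_i)\cap f^{-1}\mathrm{int}(R_j)$. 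Such points exist arbitrarily close to $x$ by Proposition~\ref{prop:transition_properties}(i) together with density of $\mathrm{int}(R_i)$ in $R_i$. The map $y\mapsto[y,x]$ then transports unstable slices: it is the holonomy $W^u(x',R_i)\to W^u(x,R_i)$, and it is a homeomorphism by Lemma~\ref{lem:rectangle_product}.
\item[(b)] By (M3) at $x'$, $f(W^u(x',R_i))\supset W^u(f(x'),R_j)$, while $f$ restricted to $W^u_\varepsilon$ is an expanding local homeomorphism. Since $f$ commutes with the bracket, $f[y,x]=[f y,f x]$ whenever both sides are defined, and so the holonomy at level $i$ is conjugated to the holonomy at level $j$.
\end{enumerate}

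\textbf{Deriving the contradiction.} From (a) and (b) we get that $f(W^u(x,R_i))\supset W^u(f(x),R_j)$. The key point is then that $f^{-1}(W^u(f(x),R_j))\cap W^u_\varepsilon(x)\subset W^u(x,R_i)$. This holds because points of $f^{-1}(R_j)$ near $x$ in the unstable direction cannot lie in another $R_k$'s interior without violating disjointness (M2) after applying (M3) to that $R_k$. Hence $f^{-1}$ maps a relative neighbourhood of $f(x)$ in $W^u(f(x),R_j)$ onto a relative neighbourhood of $x$ inside $W^u(x,R_i)$, since $f^{-1}$ is a homeomorphism of local unstable sets intersected with $\Omega$. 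This gives $x\in\mathrm{int}\,W^u(x,R_i)$, contradicting $x\in\partial^s R_i$. If $f(x)$ lies in several $R_j$, we run the argument for each of them. To guarantee the neighbourhood condition, the choice should be $j$ with $R_j$ the rectangle whose unstable slice contains $f$ of a one-sided neighbourhood of $x$.

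\textbf{Main obstacle.} The hard step is the local containment $f^{-1}(W^u(f(x),R_j))\cap W^u_\varepsilon(x)\subset W^u(x,R_i)$ at a point $x$ that is itself on the boundary. Here (M3) is only available at interior points, and $x$ may lie in several rectangles at once. I would first try to prove the containment at the interior approximants $x'$, where (M3) together with disjointness of interiors gives it directly. Then I would pass to the limit using the closedness of the $R_i$ and the continuity of the bracket (Proposition~\ref{prop:bracket_continuous}), with the holonomy $[\cdot,x]$ carrying slices through $x'$ to slices through $x$. If the limiting argument is delicate, a fallback is to work with the equivalent characterization $\partial^s R=\{x: W^u(x,R)\text{ meets }\Omega\setminus R\text{ arbitrarily near }x\}$ and show directly that $f$ carries such approaching outside points to outside points of $R_j$ near $f(x)$.
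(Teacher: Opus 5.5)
\textbf{Gap in step (a).} Your skeleton is the paper's. You obtain $f(W^u(x,R_i))\supset W^u(f(x),R_j)$ at the boundary point $x$ itself, then pull back the relative interior of $W^u(f(x),R_j)$ under $f^{-1}$. (The paper simply cites the Markov property at $x$, i.e.\ Lemma~\ref{lem:markov_all_points}.) The gap is in step (a), which is the only place you actually produce that inclusion.

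You cannot perturb $x$ \emph{within} $W^s(x,R_i)$ to a point of $\mathrm{int}(R_i)$. For $x'\in W^s(x,R_i)$, the holonomy $y\mapsto[y,x']$ is a local homeomorphism from $W^u_\varepsilon(x)\cap\Omega$ near $x$ onto $W^u_\varepsilon(x')\cap\Omega$ near $x'$, with inverse $z\mapsto[z,x]$. It sends $x$ to $x'$, and by the rectangle property it carries $W^u(x,R_i)$ onto $W^u(x',R_i)$. Hence $x\notin\mathrm{int}\,W^u(x,R_i)$ forces $x'\notin\mathrm{int}\,W^u(x',R_i)$, so $x'\in\partial^s R_i$. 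By the first half of the proof of Lemma~\ref{lem:boundary_structure}, $x'\notin\mathrm{int}(R_i)$. So the whole stable slice of a stable-boundary point lies in $\partial^s R_i$, and the set you perturb into is empty.

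Your justification would not give nearby points anyway:
\begin{itemize}
\item Proposition~\ref{prop:transition_properties}(i) at best yields some point of $\mathrm{int}(R_i)\cap f^{-1}\mathrm{int}(R_j)$, not one near $x$.
\item That proposition is itself proved in the paper by applying (M3) at an arbitrary point of $R_i$, so invoking it here is circular.
\item For an arbitrary $j$ with $f(x)\in R_j$ (e.g.\ $f(x)$ a common corner of several rectangles), there need be no such points near $x$. Your plan to ``run the argument for each $j$'' therefore fails for some $j$.
\end{itemize}

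\textbf{The fix.} Drop both the restriction to $W^s(x,R_i)$ and the requirement of closeness to $x$.
\begin{itemize}
\item Since $\bigcup_k\mathrm{int}(R_k)$ is open and dense in $\Omega$ and $x\in\overline{\mathrm{int}(R_i)}$, there are $y_n\to x$ in $\mathrm{int}(R_i)\cap f^{-1}\mathrm{int}(R_j)$ for a single $j$, by pigeonhole. Closedness then gives $f(x)\in R_j$.
\item Take $x'=y_1$. For any $x'\in R_i$, the holonomy $z\mapsto[z,x]$ maps $W^u(x',R_i)$ into $W^u(x,R_i)$.
\item Once the rectangles are small relative to the product-structure scale, $f[z,x]=[f(z),f(x)]$.
\item Combining these with (M3) at $x'$ gives $f(W^u(x,R_i))\supset\{[w,f(x)] : w\in W^u(f(x'),R_j)\}=W^u(f(x),R_j)$. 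The last equality holds because $f(x),f(x')\in R_j$.
\end{itemize}

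With this inclusion in hand, the containment you single out as the ``key point'' and ``main obstacle'' is immediate: $f^{-1}(W^u(f(x),R_j))\subset f^{-1}f(W^u(x,R_i))=W^u(x,R_i)$. No appeal to (M2) and no limiting argument is needed. The real difficulty is the inclusion at the boundary point, which is exactly where (a) breaks. Your time-reversal reduction for $\partial^u\mathcal{R}$ is fine.
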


\begin{proof}
Let $x \in \partial^s R_i$, so $x \notin \mathrm{int}(W^u(x, R_i))$. By the Markov property, $f(x) \in R_j$ for some $j$ with $A_{ij} = 1$, and $f(W^u(x, R_i)) \supset W^u(f(x), R_j)$.

If $f(x) \notin \partial^s R_j$, then $f(x) \in \mathrm{int}(W^u(f(x), R_j))$, and since $f$ is a local diffeomorphism, $x \in \mathrm{int}(f^{-1}(W^u(f(x), R_j))) \subset \mathrm{int}(W^u(x, R_i))$, contradicting $x \in \partial^s R_i$.

The argument for $\partial^u$ and $f^{-1}$ is analogous, replacing stable slices by unstable slices and $f$ by $f^{-1}$ throughout.
\end{proof}

\begin{definition}[Good Set]
The good set is
\begin{equation}
Y = \Omega \setminus \bigcup_{n \in \mathbb{Z}} f^n(\partial^s \mathcal{R} \cup \partial^u \mathcal{R}).
\end{equation}
This is the set of points whose entire orbit avoids the partition boundary.
\end{definition}

\begin{proposition}[Good Set Properties]\label{prop:good_set}
The good set $Y$ is a dense $G_\delta$ subset of $\Omega$ with full measure for any ergodic invariant measure $\mu$ with $\mu(\partial \mathcal{R}) = 0$.
\end{proposition}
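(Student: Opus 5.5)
The plan has two parts: a Baire-category argument for the topological claim, and invariance of $\mu$ plus countable subadditivity for the measure claim. Throughout, write $B = \partial^s\mathcal{R}\cup\partial^u\mathcal{R} = \partial\mathcal{R}$, so that $Y = \Omega\setminus\bigcup_{n\in\mathbb{Z}} f^n(B)$.

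\emph{Step 1: $B$ is closed and nowhere dense in $\Omega$.} By Lemma~\ref{lem:boundary_structure}, $\partial R_j$ (boundary taken in $\Omega$) equals $\partial^s R_j\cup\partial^u R_j$. Each $R_j$ is closed in $\Omega$, so $\partial R_j = R_j\setminus \mathrm{int}(R_j)$ is closed. Each $R_j$ is also proper, i.e.\ $R_j = \overline{\mathrm{int}(R_j)}$. The boundary of a closed set equal to the closure of its interior has empty interior. Indeed, an open set $U\subset\partial R_j$ would lie in $R_j=\overline{\mathrm{int}(R_j)}$, so any nonempty such $U$ would meet $\mathrm{int}(R_j)$, which is disjoint from $\partial R_j$. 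Hence each $\partial R_j$ is closed nowhere dense, and so is the finite union $B$.

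\emph{Step 2: $Y$ is a dense $G_\delta$.} Since $f|_\Omega$ is a homeomorphism, each $f^n(B)$ is closed and nowhere dense in $\Omega$. Thus $Y=\bigcap_{n\in\mathbb{Z}}\bigl(\Omega\setminus f^n(B)\bigr)$ is a countable intersection of open dense subsets of the compact metric space $\Omega$. By the Baire category theorem, $Y$ is a dense $G_\delta$. Proposition~\ref{prop:boundary_invariance} is not needed here, but it shows the union collapses somewhat: $\bigcup_n f^n(B) = \bigcup_{n\ge0}f^{-n}(\partial^s\mathcal{R})\cup\bigcup_{n\ge0}f^{n}(\partial^u\mathcal{R})$.

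\emph{Step 3: full measure.} Let $\mu$ be $f$-invariant with $\mu(\partial\mathcal{R})=0$. Invariance gives $\mu(f^n(B))=\mu(B)=0$ for every $n\in\mathbb{Z}$, since $f^n(B)=f^{-(-n)}(B)$ and $f$ is invertible. Countable subadditivity then gives $\mu\bigl(\bigcup_n f^n(B)\bigr)=0$, so $\mu(Y)=1$. Ergodicity is not actually needed.

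The only point I expect to need care is Step 1. The interiors in Lemma~\ref{lem:boundary_structure} are taken relative to $\Omega$, and the argument uses $\partial R_j$ in that same relative sense. This is exactly why properness of the rectangles (guaranteed by the definition of Markov partition) is needed. The rest is routine.
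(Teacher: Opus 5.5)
Your proof is correct and follows the paper's argument. You show that $\partial\mathcal{R}$ is closed and nowhere dense, apply Baire to the countable union of the homeomorphic images $f^n(\partial\mathcal{R})$ to get a dense $G_\delta$, and use invariance plus countable subadditivity to get $\mu(Y)=1$; you are right that ergodicity is not needed. One small correction: the boundary $R_j\setminus\mathrm{int}(R_j)$ of any closed set already has empty interior, so Step~1 does not actually require properness, although invoking it (as the paper also does) is harmless.
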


\begin{proof}
Since $\partial \mathcal{R}$ is closed and nowhere dense (the partition elements are proper rectangles), each $f^n(\partial \mathcal{R})$ is closed and nowhere dense. The countable union has empty interior, so $Y$ is dense. Being a countable intersection of open dense sets, $Y$ is a dense $G_\delta$.

For the measure statement, $\mu(\partial \mathcal{R}) = 0$ and $f$-invariance give $\mu(f^n(\partial \mathcal{R})) = 0$ for all $n$, hence $\mu(Y) = 1$.
\end{proof}

\begin{maintheorem}[Symbolic Coding]\label{thm:symbolic_coding}
Let $\Sigma_A \subset \{1, \ldots, m\}^{\mathbb{Z}}$ be the two-sided subshift of finite type defined by $A$. There exists a continuous surjection $\pi : \Sigma_A \to \Omega$ satisfying:
\begin{enumerate}
\item[(i)] $\pi \circ \sigma = f \circ \pi$, where $\sigma$ is the shift on $\Sigma_A$.
\item[(ii)] For each $a \in \Sigma_A$, $\pi(a) = \bigcap_{n \in \mathbb{Z}} f^{-n}(R_{a_n})$ is a single point.
\item[(iii)] $\pi$ is injective on $\pi^{-1}(Y)$, where $Y$ is the good set.
\item[(iv)] For $x \in Y$, $\pi^{-1}(x) = \{a(x)\}$, the itinerary of $x$.
\end{enumerate}
\end{maintheorem}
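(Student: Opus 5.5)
The plan is the classical Bowen argument: define $\pi$ through the nested intersections in (ii), and deduce the remaining properties from the Markov property (M3) and the exponential contraction in the Stable Manifold Theorem.

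\textbf{Step 1 (Markov cylinders are nonempty and shrink).} For $a\in\Sigma_A$ and $N\geq 0$ set $K_N(a)=\bigcap_{|n|\leq N} f^{-n}(R_{a_n})$. I first show by induction on $N$ that $K_N(a)\neq\emptyset$ and that $K_N(a)$ is a closed rectangle. Closedness and the rectangle property follow because each $R_{a_n}$ is closed and $f^{-n}$ maps rectangles of small diameter to sets closed under the bracket. Nonemptiness uses $A_{a_na_{n+1}}=1$, which gives a point $x\in\mathrm{int}(R_{a_n})\cap f^{-1}\mathrm{int}(R_{a_{n+1}})$. The two inclusions in (M3) then show that the forward constraint cuts the unstable slices while leaving stable slices intact, and the backward constraint does the reverse. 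Bracketing a point satisfying the forward constraints with one satisfying the backward constraints therefore produces a point of $K_N(a)$.

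For the diameter, take $y,z\in K_N(a)$ and $w=[y,z]$. Then $f^n(y)$ and $f^n(w)$ lie in a common stable slice of $R_{a_n}$ for $0\leq n\leq N$, and Main Theorem~\ref{thm:stable_manifold}(iii) applied to $f^{N}$ gives $d(y,w)\leq\lambda^N\mathrm{diam}(\mathcal{R})$. The unstable analogue gives $d(w,z)\leq\lambda^N\mathrm{diam}(\mathcal{R})$. Combining these with Proposition~\ref{prop:bracket_bounds} yields $\mathrm{diam}(K_N(a))\leq C\lambda^N\mathrm{diam}(\mathcal{R})$.

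\textbf{Step 2 (definition of $\pi$, continuity, and (i)).} By compactness the nested closed sets $K_N(a)$ have nonempty intersection, and by Step~1 that intersection is a single point, which I define to be $\pi(a)$; this proves (ii). If $a$ and $b$ agree on $[-N,N]$, then $\pi(a),\pi(b)\in K_N(a)$, so $d(\pi a,\pi b)\leq C\lambda^N$. This gives continuity, and in fact H\"older continuity in the metric $2^{-N}$. Property (i) is immediate, since $f\bigl(\bigcap_n f^{-n}R_{a_n}\bigr)=\bigcap_n f^{-n}R_{a_{n+1}}$.

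\textbf{Step 3 (surjectivity).} For $x\in Y$, every $f^n(x)$ lies in the interior of exactly one rectangle, by (M2) and the definition of $Y$. So the itinerary $a(x)$ is well defined. Proposition~\ref{prop:transition_properties}(i) shows $a(x)\in\Sigma_A$, and clearly $\pi(a(x))=x$. Thus $Y\subset\pi(\Sigma_A)$. Since $Y$ is dense (Proposition~\ref{prop:good_set}) and $\pi(\Sigma_A)$ is compact, $\pi$ is surjective.

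\textbf{Step 4 ((iii) and (iv)).} If $x\in Y$ and $\pi(b)=x$, then $f^n(x)\in R_{b_n}$ for all $n$. Because $f^n(x)$ lies in the interior of a unique rectangle and in no boundary, $R_{b_n}$ must be that rectangle, so $b=a(x)$. This gives (iv), and (iii) follows at once.

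\textbf{Main obstacle.} The delicate step is the nonemptiness of $K_N(a)$ in Step~1. It requires checking that (M3), which is only stated at interior points of the rectangles, propagates along a whole admissible word, and that $f^{-n}$ of a thin rectangle stays within the local product scale $\delta_0$ so that brackets remain defined. I would handle this by using the closures of the rectangles together with the inclusions of slices, and by choosing $\mathrm{diam}(\mathcal{R})$ small via Corollary~\ref{cor:small_diameter}.
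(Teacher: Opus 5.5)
Your route is the same as the paper's: nested cylinders $K_N(a)$, a single point from compactness plus shrinking diameter, continuity from $\pi(b)\in K_N(a)$ when $b$ agrees with $a$ on $[-N,N]$, surjectivity from density of $Y$ and compactness of $\pi(\Sigma_A)$, and (iii)--(iv) from the fact that an orbit in $Y$ meets only interiors. There is, however, a genuine error in the diameter estimate of Step~1: it runs time in the wrong direction, so the step fails as written.

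For $w=[y,z]\in W^s(y)$, the fact that $f^n(y)$ and $f^n(w)$ lie in a common stable slice of $R_{a_n}$ for $0\le n\le N$ holds for \emph{any} two points of one stable slice of $R_{a_0}$. It is forced by the inclusion $f(W^s(x,R_i))\subset W^s(f(x),R_j)$. Main Theorem~\ref{thm:stable_manifold}(iii) applied to such a pair gives only $d(f^N(y),f^N(w))\le\lambda^N d(y,w)$, which is not an upper bound on $d(y,w)$. Two points far apart on one stable slice of $R_{a_0}$ satisfy your hypothesis for every $N$, so the smallness of $d(y,w)$ cannot come from the constraints at positive times. Your ``unstable analogue'' has the same inversion. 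It also contradicts your own correct remark in the nonemptiness paragraph, that the forward constraints cut the \emph{unstable} slices.

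The stable leg has to be controlled by the past of the word and the unstable leg by the future.
\begin{itemize}
\item[] \emph{Stable leg.} Since $w$ lies on the unstable slice of $z$ in $R_{a_0}$, it inherits the past $a_{-1},\dots,a_{-N}$ from $z$ via the inclusion $f(W^u(x,R_i))\supset W^u(f(x),R_j)$. Hence $f^{-n}(y)$ and $f^{-n}(w)$ both lie in $R_{a_{-n}}$ and, by uniqueness of the local bracket, in a common stable slice of it. Applying (iii) with base point $f^{-N}(y)$ for $N$ steps gives $d(y,w)\le\lambda^N d(f^{-N}(y),f^{-N}(w))\le\lambda^N\mathrm{diam}(\mathcal{R})$.
\item[] \emph{Unstable leg.} Symmetrically, $w$ inherits the future $a_1,\dots,a_N$ from $y$, so $f^n(w)$ and $f^n(z)$ lie in a common unstable slice of $R_{a_n}$ for $0\le n\le N$. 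The unstable version of (iii) at $f^N(z)$ gives $d(w,z)\le\lambda^N\mathrm{diam}(\mathcal{R})$.
\end{itemize}
The triangle inequality then finishes the estimate; Proposition~\ref{prop:bracket_bounds} is not needed.

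The same letter-by-letter inheritance (with Lemma~\ref{lem:markov_all_points} at non-interior points) is what actually shows that $K_N(a)$ is closed under the bracket. Your stated reason, that $f^{-n}$ carries small rectangles to bracket-closed sets, does not hold in general. The set $f^{-n}(R_{a_n})$ is long in the stable direction, and without the intermediate constraints $f^n$ need not commute with the local bracket on it. With these corrections your Step~1 becomes a more complete version of the paper's argument, which only asserts in one line that stable and unstable diameters contract.
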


\begin{proof}
\textbf{Part (ii):} For $a \in \Sigma_A$, define
\begin{equation}
K_n(a) = \bigcap_{j=-n}^{n} f^{-j}(R_{a_j}).
\end{equation}
We show $K_n(a)$ is nonempty and has diameter tending to zero.

By the Markov property, if $a_0 a_1 \cdots a_n$ is an allowed word (i.e., $A_{a_j a_{j+1}} = 1$ for all $j$), then
\begin{equation}
\bigcap_{j=0}^{n} f^{-j}(R_{a_j})
\end{equation}
is a $u$-subrectangle of $R_{a_0}$ (see Definition below). Similarly, the intersection over negative indices is an $s$-subrectangle. The full intersection $K_n(a)$ is the intersection of a $u$-subrectangle and an $s$-subrectangle, hence nonempty.

\begin{definition}[Subrectangles]
A subset $S \subset R_i$ is a $u$-subrectangle if $S$ is a proper rectangle and $W^u(x, S) = W^u(x, R_i)$ for all $x \in S$. An $s$-subrectangle is defined analogously with stable slices.
\end{definition}

The diameter of $K_n(a)$ tends to zero because:
\begin{itemize}
\item[] The stable diameter (diameter within stable manifolds) contracts by $\lambda^n$ under $n$ forward iterates.
\item[] The unstable diameter contracts by $\lambda^n$ under $n$ backward iterates.
\end{itemize}
Thus $\mathrm{diam}(K_n(a)) \leq C\lambda^n \to 0$.

The intersection $\pi(a) = \bigcap_n K_n(a)$ is a single point.

\textbf{Part (i):} For $a \in \Sigma_A$,
\begin{equation}
\pi(\sigma a) = \bigcap_{n \in \mathbb{Z}} f^{-n}(R_{(\sigma a)_n}) = \bigcap_{n \in \mathbb{Z}} f^{-n}(R_{a_{n+1}}) = f\left(\bigcap_{n \in \mathbb{Z}} f^{-n-1}(R_{a_{n+1}})\right) = f(\pi(a)).
\end{equation}

\textbf{Continuity:} If $a^{(k)} \to a$ in $\Sigma_A$, then for each $N$, eventually $a^{(k)}_j = a_j$ for $|j| \leq N$. Thus $\pi(a^{(k)}) \in K_N(a)$ for large $k$, giving $\pi(a^{(k)}) \to \pi(a)$.

\textbf{Surjectivity:} For $x \in Y$, the itinerary $a(x)$ lies in $\Sigma_A$ (since $A_{a_i a_{i+1}} = 1$ by Proposition \ref{prop:transition_properties}(i)), and $\pi(a(x)) = x$. Since $Y$ is dense and $\pi(\Sigma_A)$ is compact, $\pi(\Sigma_A) = \Omega$.

\textbf{Parts (iii) and (iv):} For $x \in Y$, if $\pi(a) = x = \pi(b)$, then $f^n(x) \in R_{a_n} \cap R_{b_n}$ for all $n$. Since $x \in Y$, we have $f^n(x) \in \mathrm{int}(R_{a_n})$, which (combined with the disjoint interiors condition) forces $a_n = b_n$ for all $n$.
\end{proof}\newpage

\begin{figure}[h!]
\centering
\includegraphics[width=\textwidth]{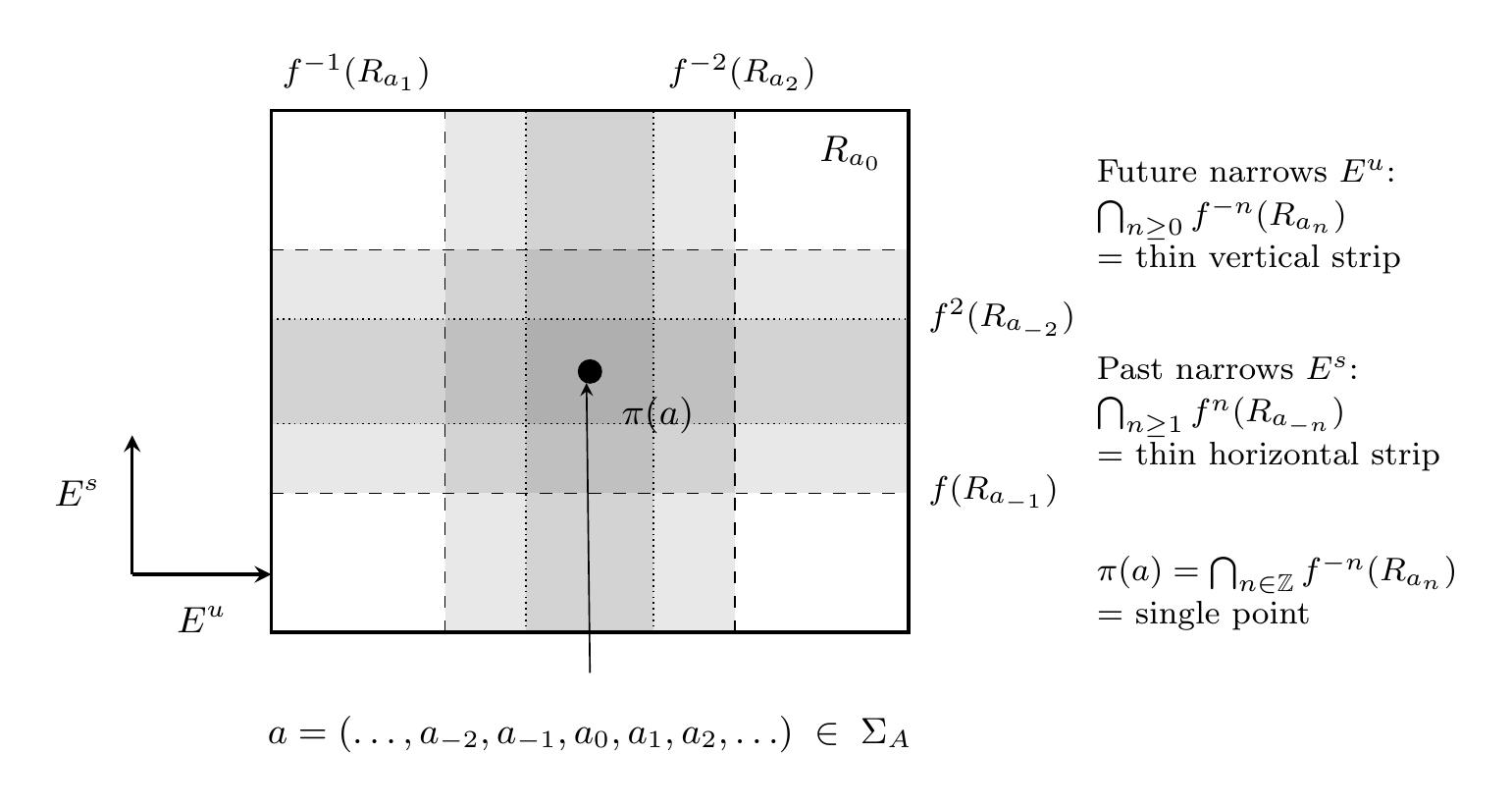}
\caption{The coding map $\pi: \Sigma_A \to \Lambda$ (Main Theorem~\ref{thm:symbolic_coding}). Inside the rectangle $R_{a_0}$, successive preimages $f^{-n}(R_{a_n})$ narrow the unstable direction (vertical strips), while successive forward images $f^n(R_{a_{-n}})$ narrow the stable direction (horizontal strips). Two levels of nesting are shown. The intersection over all $n \in \Z$ is the single point $\pi(a)$.}
\label{fig:coding}
\end{figure}

\subsection{Quantitative Coding Estimates}

This subsection provides the quantitative bounds on the coding map that are used in Parts~IV--VI \cite{Thiam2026d,Thiam2026e,Thiam2026f}: H\"{o}lder continuity with explicit exponent, and control on the exceptional set where the map fails to be injective.

\begin{proposition}[Coding Accuracy]\label{prop:coding_accuracy}
For $a \in \Sigma_A$ and $x = \pi(a)$,
\begin{equation}
d(f^n(x), R_{a_n}) = 0 \quad \text{for all } n \in \mathbb{Z}
\end{equation}
and more precisely,
\begin{equation}
d(x, \pi(b)) \leq C \sum_{j \in \mathbb{Z}} \lambda^{|j|} \mathbf{1}_{a_j \neq b_j} \cdot \mathrm{diam}(\mathcal{R})
\end{equation}
where $\mathrm{diam}(\mathcal{R}) = \max_i \mathrm{diam}(R_i)$.
\end{proposition}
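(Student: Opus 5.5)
The first assertion is immediate from Main Theorem~\ref{thm:symbolic_coding}(ii). Since $\pi(a)=\bigcap_n f^{-n}(R_{a_n})$, we have $f^n(x)\in R_{a_n}$ for every $n$. The rectangles are closed, so $d(f^n(x),R_{a_n})=0$.

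For the quantitative bound, I reduce to the first index of disagreement. Let $N=N(a,b)=\min\{|j|:a_j\neq b_j\}$. Then $a_j=b_j$ for $|j|<N$, so both $x$ and $\pi(b)$ lie in the common cylinder set
\[
K_{N-1}(a)=\bigcap_{|j|\le N-1}f^{-j}(R_{a_j}).
\]
Since $\mathbf 1_{a_j\neq b_j}=1$ for some $|j|=N$, the right-hand side is at least $C\lambda^{N}\mathrm{diam}(\mathcal R)$. It therefore suffices to prove the cylinder estimate
\[
\mathrm{diam}(K_n(a))\le C'\lambda^{n}\mathrm{diam}(\mathcal R),
\]
after which we absorb the factor $\lambda^{-1}$ into $C$. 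The degenerate cases are easy. If $N=0$, we use $d(x,\pi(b))\le \mathrm{diam}(M)$ trivially, or note both points lie near $\Omega$. The case of fixed diameters is absorbed by choosing $C\ge \mathrm{diam}(\Omega)/\mathrm{diam}(\mathcal R)$.

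The cylinder estimate is where the work lies, and I would prove it through the product structure. Take $y,z\in K_n(a)\subset R_{a_0}$ and set $w=[y,z]$. Since $K_n(a)$ is an intersection of rectangles (by the Markov property, each $f^{-j}(R_{a_j})\cap R_{a_0}$ is a $u$- or $s$-subrectangle), $w\in K_n(a)$. Now $w\in W^s(y,R_{a_0})$, and $f^{n}(y),f^{n}(w)$ both lie in the rectangle $R_{a_n}$ on a common stable slice. The difficulty is that Proposition~\ref{prop:stable_contraction} contracts forward, whereas here I need backward contraction along the stable direction. I would therefore apply $f^{-n}$ to $f^n(y),f^n(w)$, which lie in $W^s_\varepsilon(f^n y)$ within distance $\mathrm{diam}(R_{a_n})$. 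Backward iteration expands stable distances, so this goes the wrong way. The correct use is the reverse: $y,w$ lie on a common unstable slice of $R_{a_0}$ via $z$... I would sort this out carefully, but the intended mechanism is as follows. $w$ and $z$ share an unstable slice, and their images $f^{n}(w),f^{n}(z)$ stay in $R_{a_n}$ of diameter $\le\mathrm{diam}(\mathcal R)$. The unstable version of Main Theorem~\ref{thm:stable_manifold}(iii) then gives $d(w,z)\le\lambda^n\mathrm{diam}(\mathcal R)$. Symmetrically, $y,w$ share a stable slice, and $f^{-n}(y),f^{-n}(w)\in R_{a_{-n}}$, so $d(y,w)\le\lambda^n\mathrm{diam}(\mathcal R)$. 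The triangle inequality then gives $d(y,z)\le 2\lambda^n\mathrm{diam}(\mathcal R)$. Proposition~\ref{prop:bracket_bounds} is used only to guarantee that the bracket stays at scale $\delta_0$, so that $w$ is defined and the local manifolds apply.

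The main obstacle is justifying that the whole segments stay in the local manifolds at every intermediate time. That is, I need $f^{k}(w)\in W^u_\varepsilon(f^k z)$ for $0\le k\le n$, with the local unstable slices of the $R_{a_k}$ matching up. I would use Lemma~\ref{lem:initial_markov}-type Markov inclusions (M3) inductively: $f(W^s(x,R_{a_k}))\subset W^s(f x,R_{a_{k+1}})$, and the unstable analogue for $f^{-1}$. These keep the pair inside a single slice of $R_{a_k}$ at each step, with diameter below $\varepsilon$, so the contraction estimate applies iteratively. Finally, summing is unnecessary. The displayed sum merely dominates the first-disagreement term, which yields the stated inequality. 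As a byproduct, $\pi$ is Hölder with exponent $\log\lambda^{-1}/\log 2$ for the metric $2^{-N}$.
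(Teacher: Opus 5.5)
Your proposal is correct and follows the paper's route: reduce to the first index of disagreement $N$, then invoke the cylinder bound $\mathrm{diam}(K_n(a))\le C\lambda^n\,\mathrm{diam}(\mathcal{R})$. The paper only asserts this bound, via the heuristic in the proof of Main Theorem~\ref{thm:symbolic_coding}; your bracket-plus-triangle-inequality argument with $w=[y,z]$ actually proves it. You also rightly use $K_{N-1}(a)$ in place of the paper's claim $K_N(a)=K_N(b)$, which need not hold because $a_j\neq b_j$ for some $|j|=N$. The ``main obstacle'' you flag follows directly from the dynamical definition of $W^u_\varepsilon$: $w\in W^u_\varepsilon(z)$ together with $f^k w,f^k z\in R_{a_k}$ for $0\le k\le n$ already gives $f^n w\in W^u_\varepsilon(f^n z)$, so no inductive Markov argument is needed.
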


\begin{proof}
The first statement is immediate from $\pi(a) \in R_{a_n}$ (shifted appropriately).

For the second, let $N = \min\{|j| : a_j \neq b_j\}$ be the first disagreement. Then $\pi(a), \pi(b) \in K_N(a) = K_N(b)$ (the intersections agree up to level $N$). The diameter bound $\mathrm{diam}(K_N) \leq C\lambda^N \mathrm{diam}(\mathcal{R})$ gives the estimate.
\end{proof}

\begin{proposition}[Exceptional Set Measure]\label{prop:exceptional_measure}
For any $f$-invariant ergodic measure $\mu$ on $\Omega$,
\begin{equation}
\mu(\Omega \setminus Y) = 0
\end{equation}
provided $\mu$ gives zero measure to the boundary $\partial \mathcal{R}$.
\end{proposition}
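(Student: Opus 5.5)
The plan is to write the complement of the good set as a countable union of null sets and use invariance of $\mu$. By the definition of the good set,
\begin{equation}
\Omega \setminus Y = \bigcup_{n \in \mathbb{Z}} f^n(\partial^s \mathcal{R} \cup \partial^u \mathcal{R}) = \bigcup_{n \in \mathbb{Z}} f^n(\partial \mathcal{R}).
\end{equation}
Countable subadditivity gives $\mu(\Omega \setminus Y) \leq \sum_{n \in \mathbb{Z}} \mu(f^n(\partial\mathcal{R}))$, so it suffices to show that each term vanishes.

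For each $n \in \mathbb{Z}$, I will use that $f$ is a homeomorphism of $\Omega$, so $f^n(\partial\mathcal{R})$ is a Borel set (indeed closed, since $\partial\mathcal{R}$ is a finite union of closed boundaries) and $f^n(\partial\mathcal{R}) = f^{-(-n)}(\partial\mathcal{R})$. Invariance of $\mu$ in the form $\mu(f^{-k}B) = \mu(B)$ for every $k \in \mathbb{Z}$ then yields $\mu(f^n(\partial\mathcal{R})) = \mu(\partial\mathcal{R}) = 0$. For negative $k$ this is not immediate from the one-sided definition $\mu(T^{-1}A) = \mu(A)$ in the preliminaries. I will obtain it by applying that definition to the set $f^{k}B$: this gives $\mu(f^{-1}(f^{k}B)) = \mu(f^kB)$, and iterating shows $\mu(f^{k}B) = \mu(B)$ for all integers $k$ because $f$ is bijective. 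Summing gives $\mu(\Omega \setminus Y) = 0$. This is exactly the measure half of Proposition~\ref{prop:good_set}, and I would cite it directly. Ergodicity is not actually needed; only invariance and the hypothesis $\mu(\partial\mathcal{R})=0$ are used.

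There is no real obstacle here; the only point needing care is the two-sided invariance just described, together with measurability of the images $f^n(\partial\mathcal{R})$. For the latter, I will note that $\partial^s R_j$ and $\partial^u R_j$ are closed: their union is the topological boundary of the closed set $R_j$ by Lemma~\ref{lem:boundary_structure}, so $\partial\mathcal{R}$ is closed and each of its images under the homeomorphisms $f^n$ is closed.

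As an optional remark, I would point out that the hypothesis $\mu(\partial\mathcal{R})=0$ is automatic when $\mu$ is ergodic and has full support, such as a Gibbs measure. By Proposition~\ref{prop:boundary_invariance}, $f(\partial^s\mathcal{R}) \subset \partial^s\mathcal{R}$, so $\partial^s\mathcal{R}$ is forward invariant and, up to a null set, invariant. Ergodicity then gives $\mu(\partial^s\mathcal{R}) \in \{0,1\}$. Since $\partial^s\mathcal{R}$ is closed and nowhere dense, full support excludes the value $1$; the same argument with $f^{-1}$ handles $\partial^u\mathcal{R}$. This remark is not part of the stated proposition.
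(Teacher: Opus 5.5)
Your proposal is correct and is the paper's argument: the paper's proof just invokes the measure half of Proposition~\ref{prop:good_set}, namely that $\Omega\setminus Y=\bigcup_{n\in\mathbb{Z}}f^n(\partial\mathcal{R})$ is a countable union of sets each of measure $\mu(\partial\mathcal{R})=0$ by invariance. Your extra care about two-sided invariance and measurability is sound, and you are right that ergodicity plays no role. One small correction: Lemma~\ref{lem:boundary_structure} only shows that the union $\partial^sR_j\cup\partial^uR_j=\partial R_j$ is closed, not that each piece is, but closedness of the union is all your argument needs.
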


\begin{proof}
By Proposition~\ref{prop:good_set}, the hypothesis $\mu(\partial \mathcal{R}) = 0$ together with $f$-invariance of $\mu$ gives $\mu(Y) = 1$. Since $\Omega \setminus Y$ is the complement of a full-measure set, $\mu(\Omega \setminus Y) = 0$.
\end{proof}

\subsection{Entropy via Symbolic Dynamics}

We express the topological entropy of $f$ on a basic set in terms of the spectral radius of the transition matrix of the associated Markov partition. This identification is a direct consequence of the coding map being a measure-preserving factor map.

\begin{theorem}[Entropy Formula]\label{thm:entropy_formula}
For a mixing basic set $\Omega$ with Markov partition $\mathcal{R}$ and transition matrix $A$,
\begin{equation}
h_{\mathrm{top}}(f|_\Omega) = h_{\mathrm{top}}(\sigma|_{\Sigma_A}) = \log \rho(A)
\end{equation}
where $\rho(A)$ is the spectral radius of $A$.
\end{theorem}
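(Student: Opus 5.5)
The plan is to prove the two equalities separately. The symbolic one, $h_{\mathrm{top}}(\sigma|_{\Sigma_A}) = \log\rho(A)$, comes from counting words. For the smooth one, the coding map $\pi$ of Main Theorem~\ref{thm:symbolic_coding} is a topological factor map, which gives $h_{\mathrm{top}}(f|_\Omega) \le h_{\mathrm{top}}(\sigma|_{\Sigma_A})$. The reverse inequality comes from showing that $\pi$ is boundedly finite-to-one, or, alternatively, from a periodic-point count.

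\textbf{Symbolic side.} By Proposition~\ref{prop:transition_properties}(iii), mixing of $f|_\Omega$ makes $A$ aperiodic. The number of admissible words of length $n$ is $\sum_{i,j}(A^{n-1})_{ij}$. By Perron--Frobenius, as already used in Proposition~\ref{prop:sft_periodic}, this count is comparable to $\rho(A)^n$ up to constants. Cylinders of length $n$ are exactly the Bowen $(n,\epsilon)$-balls for the metric $d(x,y)=2^{-N(x,y)}$ at the appropriate scale. So both the spanning and separated set counts grow like $\rho(A)^n$, which gives $h_{\mathrm{top}}(\sigma)=\log\rho(A)$.

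\textbf{Upper bound for $f$.} Since $\pi$ is a continuous surjection with $\pi\circ\sigma=f\circ\pi$ (Main Theorem~\ref{thm:symbolic_coding}(i)), entropy does not increase under factors. Concretely, the image of an $(n,\epsilon')$-spanning set for $\sigma$ is $(n,\epsilon)$-spanning for $f$, by uniform continuity of $\pi$; Proposition~\ref{prop:coding_accuracy} makes this explicit. Hence $h_{\mathrm{top}}(f|_\Omega)\le\log\rho(A)$.

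\textbf{Lower bound for $f$.} This is the main obstacle. I would first try periodic points. Each $\sigma$-periodic $a$ of period $n$ gives $\pi(a)\in\mathrm{Per}_n(f)\cap\Omega$. If every fibre of $\pi$ has cardinality at most $m^2$, then
\begin{equation}
|\mathrm{Per}_n(f)\cap\Omega| \ge m^{-2}\,\mathrm{tr}(A^n).
\end{equation}
Combining this with the upper bound of Proposition~\ref{prop:periodic_density}, $|\mathrm{Per}_n(f)\cap\Omega|\le C_2e^{nh}$, and with $\mathrm{tr}(A^n)\sim\rho(A)^n$, yields $\log\rho(A)\le h_{\mathrm{top}}(f|_\Omega)$.

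To get the fibre bound $m^2$, suppose $\pi(a^{(1)}),\dots,\pi(a^{(N)})$ all equal $x$ with $N>m^2$. By pigeonhole, two of the sequences, $a$ and $b$, agree at coordinate $0$ and eventually in both directions. The standard argument then uses the Markov property (M3) and the product structure of Lemma~\ref{lem:rectangle_product}. It shows that two distinct admissible sequences with the same symbol at time $0$ and the same image point would force the slices $W^s(x,R_{a_0})$ and $W^u(x,R_{a_0})$ to split across disjoint interiors. That contradicts (M2) combined with expansiveness (Proposition~\ref{thm:expansiveness}).

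Should this fibre bound prove delicate, the fallback is measure-theoretic. Take the Parry measure $\tilde\mu$ on $\Sigma_A$, which has entropy $\log\rho(A)$. It gives zero mass to the preimage of $\partial\mathcal{R}$ because that preimage is a proper closed shift-quasi-invariant set. By Main Theorem~\ref{thm:symbolic_coding}(iii), $\pi$ is injective on a full-measure set, so $h_{\pi_*\tilde\mu}(f)=\log\rho(A)$. The variational principle then gives the lower bound.
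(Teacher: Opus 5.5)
Your upper bound and the identity $h_{\mathrm{top}}(\sigma|_{\Sigma_A})=\log\rho(A)$ match the paper. For the lower bound, the paper asserts that every point has at most $m^2$ preimages under $\pi$, then cites Bowen's theorem that finite-to-one factor maps preserve topological entropy. Your periodic-point substitute for Bowen's theorem is sound as a counting argument. You only need $\limsup_n \frac1n\log|\mathrm{Per}_n(f)\cap\Omega|\le h_{\mathrm{top}}(f|_\Omega)$, which holds because $\{x\in\Omega : f^n(x)=x\}$ is $(n,\varepsilon)$-separated for $\varepsilon$ below the expansiveness constant.

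Your derivation of the fibre bound, however, does not work. Pigeonhole on $N>m^2$ preimages does not produce two sequences that agree at coordinate $0$ and eventually in both directions. Moreover, the configuration you then declare contradictory is not contradictory: if $x\in\mathrm{int}(R_i)$ and $f(x)\in R_j\cap R_k$ with $j\neq k$, then $x$ has two distinct codings with symbol $i$ at time $0$.

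The correct argument runs as follows.
\begin{itemize}
\item Fix $n$ so large that the $N$ preimages are pairwise distinct on $[-n,n]$, and pigeonhole on the $m^2$ endpoint pairs $(a_{-n},a_n)$. This gives $a\neq b$ with $a_{\pm n}=b_{\pm n}$.
\item Iterating Lemma~\ref{lem:markov_all_points} shows that both $K_n(a)$ and $K_n(b)$ contain $f^{n}W^s(f^{-n}x,R_{a_{-n}})$ and $f^{-n}W^u(f^{n}x,R_{a_n})$. These sets depend only on $x$ and the endpoint symbols.
\item Since $K_n(a)$ and $K_n(b)$ are rectangles, both contain every bracket $[u,s]$ with $u$ in the second set and $s$ in the first. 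This set of brackets has nonempty interior in $\Omega$ because the $R_i$ are proper.
\item So if $V$ is a nonempty open subset of $K_n(a)\cap K_n(b)$ and $a_t\neq b_t$, then $f^t(V)$ is a nonempty open subset of $R_{a_t}\cap R_{b_t}$, contradicting (M2).
\end{itemize}
The paper's parenthetical justification of the $m^2$ bound does not supply this argument either.

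Your fallback, by contrast, is a complete proof of the lower bound on its own. To write it out, treat $\pi^{-1}(\partial^s\mathcal{R})$ and $\pi^{-1}(\partial^u\mathcal{R})$ separately. By Proposition~\ref{prop:boundary_invariance}, the first satisfies $\sigma(E)\subset E$ and the second $\sigma^{-1}(E)\subset E$; their union is in general neither, so "quasi-invariant" is not the right description of the union.

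Each piece is contained in $\pi^{-1}(\partial\mathcal{R})$, which is a closed proper subset of $\Sigma_A$ because $\partial\mathcal{R}$ is closed and nowhere dense and $\pi$ is onto. By full support of the Parry measure $\tilde\mu$, each piece therefore has measure $<1$. Being invariant mod $0$, each is then null by ergodicity. Since $\pi^{-1}(f^kE)=\sigma^k\pi^{-1}(E)$, this gives $\tilde\mu(\pi^{-1}(\Omega\setminus Y))=0$. Main Theorem~\ref{thm:symbolic_coding}(iii) then makes $\pi$ a measure-theoretic isomorphism onto $(\Omega,f,\pi_*\tilde\mu)$, and the variational principle finishes.

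Comparing the routes:
\begin{itemize}
\item Your fallback needs neither a uniform fibre bound nor Bowen's finite-to-one theorem, only injectivity on the good set. It also identifies $\pi_*\tilde\mu$ as a measure of maximal entropy for $f|_\Omega$.
\item The paper's route is shorter, but it rests on an external theorem plus the fibre bound, which it does not prove.
\item Your periodic-point route, once the fibre bound is repaired, additionally gives the explicit count $|\mathrm{Per}_n(f)\cap\Omega|\ge m^{-2}\,\mathrm{tr}(A^n)$.
\end{itemize}
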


\begin{proof}
The coding map $\pi : \Sigma_A \to \Omega$ is a factor map (continuous, surjective, equivariant). Topological entropy does not increase under factor maps, so $h_{\mathrm{top}}(f|_\Omega) \leq h_{\mathrm{top}}(\sigma|_{\Sigma_A})$.

For the reverse inequality, $\pi$ is finite-to-one: each $x \in \Omega$ has at most $m^2$ preimages in $\Sigma_A$ (corresponding to the at most $m$ choices of rectangle for each boundary point at each iterate). By the theorem of  \cite[Theorem~17]{Bowen1971} (see also \cite[Theorem~8.2]{Walters1975}), finite-to-one factor maps between compact metric spaces preserve topological entropy: $h_{\mathrm{top}}(\sigma|_{\Sigma_A}) = h_{\mathrm{top}}(f|_\Omega)$.

The entropy of a subshift of finite type equals $\log \rho(A)$ by standard results (see Part~I \cite{Thiam2026a}).
\end{proof}

\begin{corollary}[Periodic Orbit Counting]\label{cor:periodic_counting}
The number of periodic points of period $n$ satisfies
\begin{equation}
|\{x \in \Omega : f^n(x) = x\}| = \mathrm{tr}(A^n) + O(1)
\end{equation}
where the error accounts for boundary effects.
\end{corollary}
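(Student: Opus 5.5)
The plan is to compare fixed points of $f^n$ on $\Omega$ with fixed points of $\sigma^n$ on $\Sigma_A$ through the coding map $\pi$ of Main Theorem~\ref{thm:symbolic_coding}. First, $|\mathrm{Fix}(\sigma^n)| = \mathrm{tr}(A^n)$, by the counting in the proof of Proposition~\ref{prop:sft_periodic}. Second, by Main Theorem~\ref{thm:symbolic_coding}(i), $\pi$ maps $\mathrm{Fix}(\sigma^n)$ into $\mathrm{Fix}(f^n)\cap\Omega$.

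\textbf{Splitting $\mathrm{Fix}(f^n)\cap\Omega$.} I will split this set into the part lying in the good set $Y$ and the part lying in $\Omega\setminus Y$.

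For $x\in Y$ with $f^n(x)=x$, the itinerary $a(x)$ satisfies $a(f^n x)=\sigma^n a(x)$, so it is $\sigma^n$-periodic. By Main Theorem~\ref{thm:symbolic_coding}(iv), $\pi^{-1}(x)=\{a(x)\}$. Hence $\pi$ is a bijection between $\mathrm{Fix}(\sigma^n)\cap\pi^{-1}(Y)$ and $\mathrm{Fix}(f^n)\cap Y$.

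So the discrepancy $|\mathrm{Fix}(f^n)\cap\Omega|-\mathrm{tr}(A^n)$ equals
\[
|\mathrm{Fix}(f^n)\setminus Y| \;-\; |\mathrm{Fix}(\sigma^n)\cap\pi^{-1}(\Omega\setminus Y)|,
\]
and it suffices to bound both terms by constants independent of $n$.

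\textbf{Controlling the boundary terms.} A periodic point $p\notin Y$ has its orbit meeting $\partial^s\mathcal{R}\cup\partial^u\mathcal{R}$. By Proposition~\ref{prop:boundary_invariance}, $f(\partial^s\mathcal{R})\subset\partial^s\mathcal{R}$. A periodic orbit that meets $\partial^s\mathcal{R}$ therefore lies entirely in $\partial^s\mathcal{R}$, so it is contained in the compact invariant set $\bigcap_{k\ge0}f^k(\partial^s\mathcal{R})$. The same holds for $\partial^u\mathcal{R}$ with $f^{-1}$.

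Each point has at most $m^2$ preimages under $\pi$, by the finite-to-one bound used in Theorem~\ref{thm:entropy_formula}. Consequently both terms are at most $(m^2+1)$ times the number of periodic points lying in $\partial\mathcal{R}$. The estimate $\mathrm{tr}(A^n)+O(1)$ thus reduces to showing that $\partial\mathcal{R}$ contains only finitely many periodic points.

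\textbf{Main obstacle.} The crux is this finiteness. I would attack it using the structure of the refined rectangles from the construction in Section~\ref{sec:markov_partitions}. The stable boundary $\partial^s\mathcal{R}$ is a finite union of pieces of local stable slices. Forward invariance and the contraction of Main Theorem~\ref{thm:stable_manifold}(iii) force $\bigcap_{k\ge0}f^k(\partial^s\mathcal{R})$ to consist of points whose stable slices inside $\partial^s\mathcal{R}$ shrink to points. I would then try to show that each of the finitely many boundary pieces contains at most one periodic point, namely its $\omega$-limit under the contracting return map. This is done by applying the contraction-mapping argument of Theorem~\ref{thm:graph_convergence} to the first-return map on that piece.

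I expect this to be the step where the argument is delicate. When $\dim E^s\ge2$, or when $\Omega$ is a Cantor set transverse to the boundary, a boundary piece can contain a whole invariant sub-basic set. In that case I would fall back on choosing the initial cover $P$ in Step~1 generically, so that the boundary pieces are stable and unstable slices through finitely many periodic points, as in the horseshoe. Finiteness then holds by construction, and the constant in $O(1)$ is $(m^2+1)$ times the number of boundary periodic points.
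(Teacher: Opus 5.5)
Your reduction is sound and matches the paper's. $\pi$ restricts to a bijection $\mathrm{Fix}(\sigma^n)\cap\pi^{-1}(Y)\to\mathrm{Fix}(f^n)\cap Y$. By Proposition~\ref{prop:boundary_invariance}, every periodic point outside $Y$ has its whole orbit in $\partial^s\mathcal{R}$ or in $\partial^u\mathcal{R}$. The $m^2$-to-one bound then gives $\bigl|\,|\mathrm{Fix}(f^n)\cap\Omega|-\mathrm{tr}(A^n)\bigr|\le m^2\,|\mathrm{Fix}(f^n)\cap\partial\mathcal{R}|$.

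The gap is exactly the step you flag: that $\partial\mathcal{R}$ carries only boundedly many periodic points. The paper does not prove this either; it only asserts that ``the boundary has finite combinatorial complexity''. Your cycle argument proves it when $\partial^s\mathcal{R}$ and $\partial^u\mathcal{R}$ are finite unions of local stable and unstable slices, as for the usual partitions of hyperbolic automorphisms of $\mathbb{T}^2$. There $f$ induces a self-map of a finite set of contracting slices, and each cycle carries a single periodic orbit.

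In general, $\partial^s\mathcal{R}$ is the union of stable slices through the whole relative boundary of the unstable slices. When $\dim E^u\ge 2$ and those slices are not totally disconnected, this boundary is an uncountable, typically fractal, set that $f$ expands. Then $f$ on $\bigcap_{k\ge0}f^k(\partial^s\mathcal{R})$ can have positive entropy, hence exponentially many periodic points. Your fallback of choosing $P$ generically has nothing behind it: the boundaries are produced by the refinement in Step~4, not prescribed. The example below shows that no choice of partition can help.

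In fact the $O(1)$ statement is false in this generality, so no argument can close the gap. Let $f$ be the automorphism of $\mathbb{T}^3$ induced by $F=\begin{pmatrix}0&0&1\\1&0&1\\0&1&0\end{pmatrix}$. Its characteristic polynomial $x^3-x-1$ has a real root $\rho\approx 1.3247$ and a non-real pair $\mu,\bar\mu$ with $\rho|\mu|^2=1$. Then
\[
|\mathrm{Fix}(f^n)|=|\det(F^n-I)|=\rho^n-(\rho\mu)^n-(\rho\bar\mu)^n+\mu^n+\bar\mu^n-|\mu|^{2n},\qquad |\rho\mu|=\rho^{1/2}>1 .
\]
For any Markov partition, $\mathrm{tr}(A^n)=\sum_\nu m_\nu\nu^n$ with nonnegative integer multiplicities. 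So in $|\mathrm{Fix}(f^n)|-\mathrm{tr}(A^n)$ the coefficient of $(\rho\mu)^n$ is $-1-m_{\rho\mu}\le -1$. An exponential polynomial $\sum_j c_j z_j^n$ with distinct $z_j$ is bounded only if $c_j=0$ whenever $|z_j|>1$, so the difference is unbounded.

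What your decomposition does yield, combined with expansiveness (distinct fixed points of $f^n$ are $(n,\varepsilon)$-separated), is
\[
|\mathrm{Fix}(f^n)\cap\Omega|=\mathrm{tr}(A^n)+O\bigl(e^{n(h'+\delta)}\bigr)\quad\text{for every }\delta>0,
\]
where $h'$ is the topological entropy of $f$ on $\bigcap_{k\ge0}f^k(\partial^s\mathcal{R})\cup\bigcap_{k\ge0}f^{-k}(\partial^u\mathcal{R})$. Bowen's theorem that the measure of maximal entropy gives $\partial\mathcal{R}$ zero mass implies $h'<h_{\mathrm{top}}(f|_\Omega)=\log\rho(A)$. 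Manning's inclusion--exclusion formula over multiple intersections of rectangles gives the exact count. The $O(1)$ version holds only under extra hypotheses, such as the finite-slice case above.
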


\begin{proof}
Periodic points of $\sigma$ of period $n$ in $\Sigma_A$ are exactly the sequences 
\begin{equation}
(a_0, \ldots, a_{n-1}, a_0, \ldots)\quad\text{with}\quad A_{a_i a_{i+1 \bmod n}} = 1\quad\text {for all} \quad i,
\end{equation}
counted by $\mathrm{tr}(A^n)$. The coding map $\pi$ sends periodic points of $\sigma$ to periodic points of $f$, and $\pi$ is injective on $Y$. Every periodic point of $f|_\Omega$ in $Y$ lifts to a unique periodic point of $\sigma$ of the same period, so $|\mathrm{Per}_n(f) \cap Y| \leq \mathrm{tr}(A^n)$.

For the reverse, each periodic point of $\sigma$ maps to a periodic point of $f$. The map $\pi$ is at most $m^2$-to-one on periodic orbits (where $m = |\mathcal{R}|$), since a periodic orbit of $f$ meeting the boundary $\partial\mathcal{R}$ at some iterate can lie in at most $m$ rectangles at each visited time. The number of periodic orbits of $f$ that have multiple preimages under $\pi$ is bounded by $|\partial\mathcal{R}|^2$ independently of $n$ (since such orbits must pass through the boundary, and the boundary has finite combinatorial complexity). Thus $|\mathrm{Per}_n(f) \cap \Omega| = \mathrm{tr}(A^n) + O(1)$.
\end{proof}

\subsection{Markov Property Extension}

The Markov property is initially established on a dense subset of $\Omega$. This subsection extends it to all points, which is needed for the entropy calculation and for the unique ergodicity statements in Part~IV \cite{Thiam2026d}.

\begin{lemma}[Markov Property for All Points]\label{lem:markov_all_points}
For $x \in R_i$ and $f(x) \in R_j$ with $A_{ij} = 1$,
\begin{equation}
f(W^s(x, R_i)) \subset W^s(f(x), R_j), \quad f(W^u(x, R_i)) \supset W^u(f(x), R_j).
\end{equation}
\end{lemma}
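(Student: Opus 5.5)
The plan is to extend (M3) from the open dense set where it already holds to all of $R_i$ by a limiting argument, using closedness of the slices and continuity of the bracket. Let $G_{ij} = \mathrm{int}(R_i) \cap f^{-1}(\mathrm{int}(R_j))$. On $G_{ij}$ the two inclusions are exactly (M3). This set is nonempty because $A_{ij} = 1$, and it is open. Each $R_i$ is proper, so $R_i = \overline{\mathrm{int}(R_i)}$.

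\textbf{Step 1: reduction to the rectangle $R_i \cap f^{-1}(R_j)$.} Fix $x \in R_i$ with $f(x) \in R_j$. The key step is to show that $x \in \overline{G_{ij}}$ and, more sharply, that $x$ is a limit of points $x_k \in G_{ij}$ with $x_k \in W^s_\varepsilon(x)$ or $x_k \in W^u_\varepsilon(x)$ as needed. I would use the product structure $R_i \cong W^s(x,R_i)\times W^u(x,R_i)$ from Lemma~\ref{lem:rectangle_product}. First take $y_k \in G_{ij}$ with $y_k \to x$, using properness of $R_i$ and $R_j$ together with the fact that $f$ is a homeomorphism of $\Omega$. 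Then set $x_k^s := [x, y_k]$ and $x_k^u := [y_k, x]$. By Proposition~\ref{prop:bracket_continuous} both tend to $x$.

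\textbf{Step 2: stable inclusion.} Let $z \in W^s(x,R_i)$. Choose $z_k = [z, y_k]$, which lies in $W^s(y_k, R_i)$, and note $z_k \to [z,x] = z$. Applying (M3) at $y_k \in G_{ij}$ gives $f(z_k) \in W^s(f(y_k), R_j) \subset R_j$. Since $R_j$ is closed, $f(z) = \lim f(z_k) \in R_j$. In addition, $f(z) \in f(W^s_\varepsilon(x)) \subset W^s_{\lambda\varepsilon}(f(x)) \subset W^s_\varepsilon(f(x))$ by Main Theorem~\ref{thm:stable_manifold}(iii). Together these give $f(z) \in W^s(f(x), R_j)$.

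\textbf{Step 3: unstable inclusion.} This is the dual argument applied with $f^{-1}$. Let $w \in W^u(f(x), R_j)$, and put $w_k = [f(y_k), w] \in W^u(f(y_k), R_j)$. Then (M3) at $y_k$ gives $f^{-1}(w_k) \in W^u(y_k, R_i) \subset R_i$. Passing to the limit, $w_k \to [f(x), w] = w$, so $f^{-1}(w) \in R_i$ by closedness. Also $f^{-1}(w) \in W^u_\varepsilon(x)$ by backward contraction along unstable manifolds. Hence $f^{-1}(w) \in W^u(x,R_i)$.

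The main obstacle is Step 1. The difficulty is producing $y_k \in G_{ij}$ converging to $x$ when $x$ lies on $\partial R_i$ and $f(x)$ lies on $\partial R_j$, since density of $\mathrm{int}(R_i)$ alone does not force $f(y_k)$ into $\mathrm{int}(R_j)$. My first attempt would be the following. Note that $R_i \cap f^{-1}R_j$ is a rectangle, since it is closed under brackets by Lemma~\ref{lem:initial_markov}-type reasoning. Its interior is dense in it by the boundary invariance of Proposition~\ref{prop:boundary_invariance}, together with the fact that $\partial^s\mathcal R$ and $\partial^u\mathcal R$ are nowhere dense. If that fails, I would instead approximate $x$ by points of the good set $Y$ (Proposition~\ref{prop:good_set}) whose itinerary agrees with $(i,j)$ at times $0,1$, constructed by bracketing a good point in $G_{ij}$ against $x$. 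I would also need to confirm that all brackets above stay within the local product scale $\delta_0$; this holds since $\mathrm{diam}(R_i) < \delta_0$.
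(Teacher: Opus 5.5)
Your strategy, pushing (M3) from the open set $G_{ij}$ to all of $Q:=R_i\cap f^{-1}(R_j)$ by a limiting argument, is the same ``by continuity'' extension the paper invokes, and the paper's one-line proof skips exactly the point you flag. As written, your proof has a genuine gap: Step~1 (every $x\in Q$ is a limit of points of $G_{ij}$) is never established, and neither proposed patch closes it.

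Boundary invariance and nowhere-density of $\partial^s\mathcal R$ and $\partial^u\mathcal R$ do not force $Q$ to be the closure of its interior. A bracket-closed product set whose stable factor is, say, $\{1\}\cup[2,2.5]$ has nonempty interior but contains a point outside the closure of that interior. What rules this out for $Q$ is (M3) itself, which is the very thing being extended.

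The second patch produces no approximants at all. Bracketing a fixed $y_0\in G_{ij}$ against $x$ gives $[x,y_0]$ or $[y_0,x]$, and these are close to $x$ only if $y_0$ already is.

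There is also an order slip in Steps~2--3. With the paper's convention $[a,b]=W^s_\varepsilon(a)\cap W^u_\varepsilon(b)$, the point $[z,y_k]$ lies on $W^u_\varepsilon(y_k)$, not in $W^s(y_k,R_i)$. It tends to $[z,x]=x$ rather than $z$, so the limit only yields the trivial $f(x)\in R_j$. Similarly, $[f(y_k),w]$ lies on $W^s_\varepsilon(f(y_k))$ and tends to $f(x)$. You need $[y_k,z]$ and $[w,f(y_k)]$ instead.

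The missing idea is that no approximation of $x$ is needed. A single $y\in G_{ij}$ suffices; it exists since $A_{ij}=1$. You also need that $Q$ and $f(Q)=f(R_i)\cap R_j$ are bracket-closed, which you already noted follows from $f[a,b]=[f(a),f(b)]$ at the rectangle scale.

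\textbf{Stable part.} For $z\in W^s(x,R_i)$, put $y'=[y,z]\in W^s(y,R_i)$. Then (M3) at $y$ gives $f(y')\in R_j$, so $y'\in Q$. Hence $z=[x,z]=[x,[y,z]]=[x,y']\in Q$. Since also $f(z)\in W^s_\varepsilon(f(x))$, this gives $f(z)\in W^s(f(x),R_j)$.

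\textbf{Unstable part.} For $w\in W^u(f(x),R_j)$, put $w'=[w,f(y)]\in W^u(f(y),R_j)\subset f(W^u(y,R_i))$, so $w'\in f(Q)$. Then $w=[w,f(x)]=[[w,f(y)],f(x)]=[w',f(x)]\in f(Q)$. Hence $f^{-1}(w)\in R_i\cap W^u_\varepsilon(x)=W^u(x,R_i)$.

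This uses only the identities $[a,[b,c]]=[a,c]=[[a,b],c]$, $[a,c]=c$ for $c\in W^s_\varepsilon(a)$, and $[c,a]=c$ for $c\in W^u_\varepsilon(a)$. So the limiting argument, and with it Step~1, can be dropped entirely. Combined with properness of $R_i$ and $R_j$, Step~1 even follows afterwards as a corollary rather than serving as an input.
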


\begin{proof}
This extends the Markov property (M3) from interior points to all points by continuity. The proof follows the final part of the Markov partition construction in Section \ref{sec:markov_partitions}.
\end{proof}

\begin{proposition}[Symbolic Extension]\label{prop:symbolic_extension}
The map $\pi : \Sigma_A \to \Omega$ is a symbolic extension in the sense of  \cite{BoyleDownarowicz2004}: for every $f$-invariant measure $\mu$ on $\Omega$, there exists a $\sigma$-invariant measure $\tilde{\mu}$ on $\Sigma_A$ with $\pi_* \tilde{\mu} = \mu$ and $h_{\tilde{\mu}}(\sigma) = h_\mu(f)$.
\end{proposition}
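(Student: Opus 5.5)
The plan is to lift measures through the factor map $\pi$ by a standard compactness argument, and then use finiteness of the fibres to match entropies. Fix an $f$-invariant Borel probability measure $\mu$ on $\Omega$. Since $\pi$ is a continuous surjection between compact metric spaces (Main Theorem~\ref{thm:symbolic_coding}), the pushforward $\pi_* : \mathcal{M}(\Sigma_A) \to \mathcal{M}(\Omega)$ is continuous and affine for the weak-$*$ topology. It is also surjective: first lift Dirac masses, $\delta_x = \pi_*\delta_a$ for any $a \in \pi^{-1}(x)$, then use convexity and density of finite convex combinations of Dirac masses, together with compactness of $\mathcal{M}(\Sigma_A)$.

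\textbf{Step 1: construct an invariant lift.} Choose any $\nu \in \mathcal{M}(\Sigma_A)$ with $\pi_*\nu = \mu$ and form the Ces\`aro averages $\nu_n = \frac1n\sum_{k=0}^{n-1}\sigma^k_*\nu$. The intertwining $\pi\circ\sigma = f\circ\pi$ from Main Theorem~\ref{thm:symbolic_coding}(i), combined with invariance of $\mu$, gives $\pi_*\nu_n = \frac1n\sum_k f^k_*\mu = \mu$. Any weak-$*$ limit point $\tilde\mu$ of $(\nu_n)$ is then $\sigma$-invariant by the usual Krylov--Bogolyubov estimate, and $\pi_*\tilde\mu = \mu$ by continuity of $\pi_*$.

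\textbf{Step 2: entropy equality.} The inequality $h_\mu(f) \le h_{\tilde\mu}(\sigma)$ is immediate, because $(\Omega,f,\mu)$ is a measure-theoretic factor of $(\Sigma_A,\sigma,\tilde\mu)$ via $\pi$. For the reverse inequality I would use that $\pi$ is uniformly finite-to-one, with at most $m^2$ preimages per point as asserted in the proof of Theorem~\ref{thm:entropy_formula}. The cleanest route is the Abramov--Rokhlin formula for factor extensions, $h_{\tilde\mu}(\sigma) = h_\mu(f) + h_{\tilde\mu}(\sigma \mid \pi^{-1}\mathcal{B}_\Omega)$. Here the relative entropy vanishes: conditionally on $\pi^{-1}\mathcal{B}_\Omega$, the zero-coordinate partition $\{[i]_0\}$ has at most $m^2$ atoms on each fibre, so $H_{\tilde\mu}\bigl(\bigvee_{k=0}^{n-1}\sigma^{-k}\{[i]_0\} \mid \pi^{-1}\mathcal{B}_\Omega\bigr)$ is bounded by the logarithm of the fibre cardinality, which is uniformly bounded in $n$. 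After dividing by $n$ this tends to zero. Since the zero-coordinate partition generates for $\sigma$, this gives $h_{\tilde\mu}(\sigma) = h_\mu(f)$.

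As an alternative route, one could first handle the case $\mu(Y)=1$ directly. On $Y$, the map $\pi$ is a bijection onto its image with measurable inverse $x \mapsto a(x)$ (Main Theorem~\ref{thm:symbolic_coding}(iii),(iv)), so $\tilde\mu = (a)_*\mu$ is an isomorphic lift and the entropies agree trivially. The general case would then still require the finite-to-one argument above, because measures supported on $\Omega\setminus Y$ need not be handled by the isomorphism.

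\textbf{Main obstacle.} The hard step is justifying the uniform bound on fibre cardinality, since the paper only sketches the bound $m^2$. Each $\pi^{-1}(x)$ consists of sequences $b$ with $f^n(x) \in R_{b_n}$ for every $n$, and a priori there could be many such sequences. To control this I would use the product structure together with Lemma~\ref{lem:markov_all_points} and boundary invariance (Proposition~\ref{prop:boundary_invariance}). The point is to show that a choice of symbols $b_0$ (with the associated stable and unstable slices) propagates: once $f^n(x)$ lies off $\partial^s\mathcal{R}$, the forward symbols are determined, and once it lies off $\partial^u\mathcal{R}$, the backward symbols are determined. This yields at most $m$ choices on each side, hence the bound $m^2$.

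If this combinatorial step resists, I would fall back on the weaker statement that every fibre is finite, or even just countable, together with measurability. That suffices for the vanishing of conditional entropy, using Rokhlin's theorem that a countable-to-one factor has zero fibre entropy.
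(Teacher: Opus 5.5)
Your argument is sound, and it departs from the paper's in the existence step. The paper produces the lift only for measures with $\mu(\partial\mathcal{R})=0$, via the itinerary map, which is an isomorphism on the good set $Y$. It then applies the Abramov--Rokhlin formula to a lift whose existence for general $\mu$ (for instance a periodic orbit measure supported on $\partial\mathcal{R}$) it never addresses. Your Krylov--Bogolyubov averaging of an arbitrary $\pi_*$-preimage gives a $\sigma$-invariant lift of every $f$-invariant $\mu$, which is what the statement actually asserts. Your bound $H_{\tilde\mu}\bigl(\bigvee_{k=0}^{n-1}\sigma^{-k}\xi \mid \pi^{-1}\mathcal{B}_\Omega\bigr)\le \log m^2$, with $\xi=\{[i]_0\}$, makes precise the paper's remark that fibre entropy vanishes on finite fibres. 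The entropy step is otherwise the paper's. Both rest on the uniform bound $|\pi^{-1}(x)|\le m^2$, which the paper only asserts; its parenthetical ``at most $m$ choices at each iterate'' would not give a bound independent of orbit length.

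Your sketch for that bound does not work as stated. By Proposition~\ref{prop:boundary_invariance}, $f(\partial^s\mathcal{R})\subset\partial^s\mathcal{R}$. So lying off $\partial^s\mathcal{R}$ is inherited by earlier times, and lying off $\partial^u\mathcal{R}$ by later times, which is the opposite of the pairing in your sketch.

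More seriously, no positional argument can suffice. A point of $\partial^s\mathcal{R}$ stays in $\partial^s\mathcal{R}$ for all forward times, where it can lie in several rectangles, so its forward symbols genuinely branch. A periodic point at a corner never leaves $\partial\mathcal{R}$ at all. Your countable-fibre fallback would need an argument of the same kind.

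The missing idea is a ``no diamond'' lemma: if $a,b\in\pi^{-1}(x)$ and $a_k=b_k$, $a_l=b_l$ with $k<l$, then $a_j=b_j$ for all $k\le j\le l$.

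To prove it, put $y=f^k(x)$ and $Q_w=\bigcap_{j=k}^{l}f^{-(j-k)}R_{w_j}$ for $w\in\{a,b\}$. Applying Lemma~\ref{lem:markov_all_points} step by step along $w$ shows three things:
\begin{enumerate}
\item[(1)] $Q_w$ contains $W^s(y,R_{a_k})$;
\item[(2)] $Q_w$ contains $f^{-(l-k)}W^u(f^{l-k}(y),R_{a_l})$;
\item[(3)] $Q_w$ is closed under the bracket, since $[z,z']\in W^s(z,R_{w_k})$ for $z,z'\in Q_w$.
\end{enumerate}
Hence both $Q_a$ and $Q_b$ contain the bracket of the two sets in (1) and (2). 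In the product chart at $y$ this bracket set is a product of two sets with nonempty relative interior, because the rectangles are proper. So it has nonempty interior in $\Omega$.

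But if $a_j\neq b_j$ for some $k<j<l$, then $\mathrm{int}\,Q_a\cap\mathrm{int}\,Q_b\subset f^{-(j-k)}\bigl(\mathrm{int}\,R_{a_j}\cap\mathrm{int}\,R_{b_j}\bigr)=\emptyset$ by (M2), a contradiction.

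Consequently the block $a_{[-N,N]}$ of any preimage of $x$ is determined by the pair $(a_{-N},a_N)$. At most $m^2$ blocks occur for each $N$, and so $|\pi^{-1}(x)|\le m^2$.
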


\begin{proof}
For measures with $\mu(\partial \mathcal{R}) = 0$, the lift $\tilde{\mu}$ is uniquely defined by the itinerary map. The entropy equality follows from the Abramov-Rokhlin formula (see  \cite[Theorem~4.4.2]{BoyleDownarowicz2004}): for a factor map $\pi : (\Sigma_A, \sigma, \tilde\mu) \to (\Omega, f, \mu)$, the entropy satisfies $h_{\tilde\mu}(\sigma) = h_\mu(f) + h_{\tilde\mu}(\sigma | \pi^{-1}\mathcal{B}_\Omega)$, where the conditional (fiber) entropy $h_{\tilde\mu}(\sigma | \pi^{-1}\mathcal{B}_\Omega)$ measures the entropy in the fibers of $\pi$. Since $\pi$ is finite-to-one (at most $m^2$ preimages), the fibers are finite sets, and the conditional entropy of a transformation on a finite set is zero. Thus $h_{\tilde\mu}(\sigma) = h_\mu(f)$.
\end{proof}

\section{Computational Consequences}\label{sec:computational}

We illustrate the quantitative Markov partition construction by computing explicit constants for the Smale horseshoe.

\begin{example}[Smale horseshoe]\label{ex:horseshoe}
Let $f: \mathbb{R}^2 \to \mathbb{R}^2$ be a horseshoe map with hyperbolic set $\Lambda$, contraction rate $\lambda = 1/3$, expansion rate $\mu = 3$, and mixing time $M = 1$ (the transition matrix $A = \begin{pmatrix} 1 & 1 \\ 1 & 1 \end{pmatrix}$ is already positive). The alphabet has $N = 2$ symbols.

\textbf{Adapted metric.} By Theorem~\ref{thm:adapted_metric}, the adapted metric constant is $K(\lambda, c) = c/(1-\lambda) = c \cdot 3/2$ where $c = 1$ in the standard metric. In the adapted metric, $\|Df|_{E^s}\| \leq 1/3$ and $\|Df^{-1}|_{E^u}\| \leq 1/3$ without the prefactor~$c$.

\textbf{Stable manifold size.} By Main Theorem~\ref{thm:stable_manifold}, the local stable manifold $W^s_\varepsilon(x)$ exists for $\varepsilon \leq \varepsilon_0$ where $\varepsilon_0$ depends on the injectivity radius and the Lipschitz constant of $Df$. For the standard horseshoe embedded in the unit square, $\varepsilon_0 \geq (1-\lambda)/(2\|D^2f\|) = (2/3)/(2\|D^2f\|)$.

\textbf{Shadowing constant.} By Main Theorem~\ref{thm:shadowing}, every $\alpha$-pseudo-orbit is $\beta$-shadowed with $\alpha = C^{-1}(1-\lambda)\beta = (2/3C)\beta$. For the horseshoe with $M = 1$, the constant $C$ in the proof satisfies $C \leq (1-\lambda)^{-1} = 3/2$, giving $\alpha \geq (4/9)\beta$.

\textbf{Markov partition diameter.} The natural Markov partition of the horseshoe consists of $m = 2$ rectangles $R_0, R_1$ (the two vertical strips in the invariant set). Their diameters satisfy $\mathrm{diam}(R_i) \leq 2\varepsilon_0/(1-\lambda) = 3\varepsilon_0$, and for the standard horseshoe with the unit square embedding, $\mathrm{diam}(R_i) = 1/3^k$ after $k$ refinements. To achieve coding accuracy $\delta$ (meaning $d(x, \pi(a)) \leq \delta$), the Markov partition must have $\mathrm{diam}(\mathcal{R}) \leq \delta(1-\lambda)/C = 2\delta/3C$. With $C = 3/2$, this gives $\mathrm{diam}(\mathcal{R}) \leq 4\delta/9$.

\textbf{Grid resolution.} For rigorous symbolic coding at accuracy $\delta = 10^{-6}$, the partition diameter must satisfy $\mathrm{diam}(\mathcal{R}) \leq 4.4 \times 10^{-7}$. Since $\mathrm{diam}(\mathcal{R}) = (1/3)^k$ after $k$ refinements, we need $(1/3)^k \leq 4.4 \times 10^{-7}$, i.e., $k \geq \lceil 6\log 10/\log 3 \rceil + 1 = 14$. The refined partition has $2^{14} = 16384$ rectangles.

This computation demonstrates that the quantitative constants from our construction, while not optimal for specific systems, give computable bounds that can be used for rigorous numerical verification.
\end{example}

%\section{Conclusion}
%
%This Part establishes the geometric-topological foundations for the thermodynamic formalism of Axiom A systems: the quantitative theory of hyperbolic splittings, the Stable Manifold Theorem with explicit H\"{o}lder estimates, the canonical coordinates and local product structure, the Spectral Decomposition Theorem, the shadowing lemma, and the construction of Markov partitions yielding the coding map $\pi: \Sigma_A \to \Lambda$. The coding map is the bridge that allows the spectral theory of Part~I \cite{Thiam2025a} and the variational theory of Part~II \cite{Thiam2025b} to be transferred to smooth dynamics. Part~IV of this series uses this coding to develop the transfer operator theory, construct SRB measures, establish the Gibbs Equivalence Theorem, and prove structural stability with quantitative H\"{o}lder bounds on conjugacies.

\section{Conclusion}\label{sec:conclusion} 

This Part establishes the geometric-topological foundations for the thermodynamic formalism of Axiom~A systems through five main theorems proved with explicit constants throughout. Main Theorem~\ref{thm:stable_manifold} provides the Stable Manifold Theorem via the backward graph transform, with a complete fiber-contraction argument yielding $C^r$ regularity, exponential contraction along stable manifolds, and H\"{o}lder dependence on the base point with exponent computed explicitly from the hyperbolicity data. Main Theorem~\ref{thm:spectral_decomposition} establishes the Spectral Decomposition of the nonwandering set into basic sets on which $f$ is topologically transitive, together with the cyclic refinement into mixing components and quantitative mixing rates. Main Theorem~\ref{thm:shadowing} proves the Shadowing Lemma with explicit linear dependence of the shadowing constant on the pseudo-orbit tolerance, which in turn yields the Anosov Closing Lemma and the specification property as quantitative consequences. Main Theorem~\ref{thm:markov_existence} constructs Markov partitions of arbitrarily small diameter on each basic set, with diameter bounds expressed in terms of the shadowing constants and the bracket-map estimates. Main Theorem~\ref{thm:symbolic_coding} produces the coding map $\pi: \Sigma_A \to \Omega$ as a H\"{o}lder continuous factor map from a subshift of finite type, with quantitative control on the exceptional set where $\pi$ fails to be injective and with the identification $h_{\mathrm{top}}(f|_\Omega) = \log\rho(A)$.

Together, these five theorems provide the canonical coordinates, the local product structure, the shadowing theory, and the symbolic coding that constitute the geometric bridge connecting the symbolic thermodynamic formalism of Part~I \cite{Thiam2026a} and the convex-analytic theory of Part~II \cite{Thiam2026b} to smooth dynamics. All constants are expressed in terms of the contraction rate $\lambda$, the H\"{o}lder exponent of $Df$, the manifold dimension $d$, and the injectivity radius of $M$, so the full chain of estimates can be tracked through subsequent parts.

Part~IV \cite{Thiam2026d} of this series uses this coding to develop the transfer operator theory on manifolds, construct SRB measures, establish the Gibbs Equivalence Theorem, and prove structural stability with quantitative H\"{o}lder bounds on conjugacies. Parts~V  and~VI \cite{Thiam2026e, Thiam2026f} develop the statistical limit theorems and the structural consequences for higher-dimensional systems.

\subsection*{Open Problems}

\begin{enumerate}
\item[] \textbf{Optimal Markov partition diameter.} Our construction yields Markov partitions with diameter $O(\delta)$ for any $\delta < \delta_0$. Is there a canonical ``optimal'' Markov partition that minimizes some functional (e.g., the number of rectangles for a given coding accuracy)?

\item[] \textbf{Markov partitions for non-uniformly hyperbolic systems.}  \cite{Sarig2013} constructed countable Markov partitions for surface diffeomorphisms with positive entropy. Can the quantitative bounds in our construction be extended to this setting?

\item[] \textbf{Effective hyperbolicity verification.} Given a $C^2$ diffeomorphism $f$ and a compact invariant set $\Lambda$, is there an algorithm to verify hyperbolicity and compute the constants $(\lambda, C, M)$ to prescribed accuracy? The cone field construction suggests a computational approach, but rigorous error bounds for numerical verification are not developed.
\end{enumerate}

\subsection*{Acknowledgments}

The author is grateful to Stefano Luzzatto for supervision during the ICTP Postgraduate Diploma in Mathematics at the International Centre for Theoretical Physics, Trieste, Italy (2013), during which the author worked through Bowen's monograph.

\appendix

\section{Technical Proofs and Supplementary Material}\label{app:technical}

This appendix contains detailed proofs of technical results stated in the main text, along with supplementary material on functional analysis and geometric measure theory.

\subsection{Stable Manifold Theorem: Detailed Estimates}\label{app:stable_manifold_details}

We provide the complete graph transform analysis for the Stable Manifold Theorem (Theorem \ref{thm:stable_manifold}).

\subsubsection{Setup and Notation}

Let $\Lambda$ be a hyperbolic set for a $C^r$ diffeomorphism $f : M \to M$ with $r \geq 1$. Fix $x_0 \in \Lambda$ and use the exponential map to identify a neighborhood $U$ of $x_0$ with a neighborhood of $0$ in $T_{x_0}M \cong \mathbb{R}^d$. Under this identification:
\begin{equation}
T_{x_0}M = E^s_{x_0} \oplus E^u_{x_0} \cong \mathbb{R}^{k_s} \times \mathbb{R}^{k_u}
\end{equation}
where $k_s = \dim E^s$ and $k_u = \dim E^u$.

In these coordinates, $f$ has the form
\begin{equation}
f(s, u) = (As + a(s, u), Bu + b(s, u))
\end{equation}
where $A = Df_{x_0}|_{E^s}$, $B = Df_{x_0}|_{E^u}$, and $a, b$ are the nonlinear terms satisfying:
\begin{align}
a(0, 0) &= 0, \quad Da(0, 0) = 0, \\
b(0, 0) &= 0, \quad Db(0, 0) = 0.
\end{align}

Using an adapted metric, we have $\|A\| \leq \lambda$ and $\|B^{-1}\| \leq \lambda$ for some $\lambda \in (0, 1)$.

\subsubsection{Nonlinear Estimates}

For $\delta > 0$ small, let $B_\delta = \{(s, u) : \|s\| \leq \delta, \|u\| \leq \delta\}$.

\begin{lemma}\label{lem:nonlinear_bounds}
There exists $C_1 > 0$ such that for $(s, u), (s', u') \in B_\delta$:
\begin{align}
\|a(s, u) - a(s', u')\| &\leq C_1 \delta (\|s - s'\| + \|u - u'\|), \\
\|b(s, u) - b(s', u')\| &\leq C_1 \delta (\|s - s'\| + \|u - u'\|).
\end{align}
\end{lemma}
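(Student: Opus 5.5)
The plan is to derive both estimates from the mean value inequality applied to $a$ and $b$ on the convex set $B_\delta$, with the required first-derivative smallness supplied by the vanishing of $Da$ and $Db$ at the origin together with a uniform bound on second derivatives. Since $a(s,u) = f(s,u) - (As, Bu)$ in the exponential chart, we have $D^2 a = \pi_s D^2 f$ and $D^2 b = \pi_u D^2 f$ in chart coordinates, where $\pi_s,\pi_u$ are the projections onto the stable and unstable factors. The norms of these projections are controlled by Lemma~\ref{lem:angle_projection} and Corollary~\ref{cor:angle_bound}; in the adapted metric the splitting is orthogonal, so the norms are at most $1$. Hence
\begin{equation}
\sup_{B_\delta}\bigl(\|D^2 a\| + \|D^2 b\|\bigr) \leq C_1' := C_{\exp}\,\max\{\|D^2 f\|_\infty, 1\},
\end{equation}
where $C_{\exp}$ bounds the first and second derivatives of $\exp_{x_0}$ and its inverse on a ball of radius below the injectivity radius. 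This bound is uniform in $x_0\in\Lambda$ by compactness of $M$.

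The first step uses $Da(0,0)=0$: for $(s,u)\in B_\delta$, integrating $D^2 a$ along the segment from the origin gives $\|Da(s,u)\| \leq C_1' \|(s,u)\|$. In the max norm on the product this is at most $C_1'\delta$; for the operator norm with respect to the sum norm $\|s\|+\|u\|$ appearing in the statement, it is at most $2C_1'\delta$. The same bound holds for $b$.

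The second step applies the mean value inequality along the segment joining $(s,u)$ and $(s',u')$. This segment stays in $B_\delta$ because $B_\delta$ is convex, so
\begin{equation}
\|a(s,u)-a(s',u')\| \leq \sup_{B_\delta}\|Da\|\,\bigl(\|s-s'\|+\|u-u'\|\bigr) \leq 2C_1'\delta\,\bigl(\|s-s'\|+\|u-u'\|\bigr),
\end{equation}
and identically for $b$. Taking $C_1 = 2C_1'$ gives the claim.

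The only real obstacle is bookkeeping: ensuring $C_1$ is independent of the base point $x_0$ and of $\delta \leq \delta_0$. Two issues arise. First, $f$ is only $C^1$ when $r=1$, so a second-derivative bound is unavailable. In that case I would replace the bound on $\|D^2 a\|$ by the modulus of continuity $\omega$ of $Df$, uniform over $\Lambda$ by compactness, which yields $\|a(s,u)-a(s',u')\| \leq \omega(2\delta)(\|s-s'\|+\|u-u'\|)$. The stated linear-in-$\delta$ form then requires $f\in C^2$, which is consistent with the use of $C_0=\max\{\|D^2f\|_\infty,\|D^2f^{-1}\|_\infty\}$ in Proposition~\ref{prop:manifold_size}. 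Second, the transfer from the Riemannian to the adapted metric introduces at most a factor $K^2$ from Theorem~\ref{thm:adapted_metric}, which is absorbed into $C_1$.
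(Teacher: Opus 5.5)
Your proposal is correct and follows the paper's route: the vanishing of $Da$ and $Db$ at the origin gives $\|Da\|, \|Db\| \leq C_1\delta$ on $B_\delta$, the mean value inequality on the convex set $B_\delta$ then yields both estimates, and your extra bookkeeping (uniformity in $x_0$, chart and norm-equivalence factors) makes explicit what the paper leaves implicit. Your caveat for $r=1$ is well taken: the paper's one-line proof infers $\|Da(s,u)\| \leq C_1\|(s,u)\|$ from $a \in C^1$ alone, which does not follow, and the linear-in-$\delta$ bound genuinely needs a bound on $D^2 f$ (or a Lipschitz $Df$), as the constant $C_0$ in Proposition~\ref{prop:manifold_size} already presumes.
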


\begin{proof}
Since $a$ is $C^1$ with $Da(0,0) = 0$, we have $\|Da(s,u)\| \leq C_1\|(s,u)\| \leq C_1\delta$ on $B_\delta$. The mean value theorem gives the estimate. The same argument applies to $b$.
\end{proof}

\subsubsection{Graph Transform Analysis}

Let $\mathcal{G} = \mathcal{G}(\delta, K)$ be the space of Lipschitz graphs $\phi : B^s_\delta \to B^u_\delta$ with $\phi(0) = 0$ and $\|\phi\|_{\mathrm{Lip}} \leq K$.

\begin{lemma}[Inverse Function for Stable Component]\label{lem:inverse_stable}
For $\phi \in \mathcal{G}$ and $\delta, K$ sufficiently small, the map $s \mapsto As + a(s, \phi(s))$ is a contraction from $B^s_\delta$ to $B^s_{\lambda\delta + C_1\delta^2(1+K)}$. For each $s' \in B^s_{\delta'}$ with $\delta' = \lambda\delta(1 + C_1\delta(1+K)/\lambda)^{-1}$, there exists a unique $s \in B^s_\delta$ with $As + a(s, \phi(s)) = s'$.
\end{lemma}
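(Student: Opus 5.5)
The plan is to treat the two assertions separately: the forward map $g_\phi(s) := As + a(s,\phi(s))$ for the contraction and image bound, and a fixed-point reformulation through $A^{-1}$ for solvability of $g_\phi(s) = s'$. Throughout I use Lemma~\ref{lem:nonlinear_bounds}, $\phi(0)=0$, $\|\phi\|_{\mathrm{Lip}}\le K$, and $\|A\|\le\lambda$ from the adapted metric. Note that the whole graph $\{(s,\phi(s))\}$ lies in $B_\delta$, so Lemma~\ref{lem:nonlinear_bounds} applies along it.

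\emph{Contraction and image.} For $s_1,s_2\in B^s_\delta$, Lemma~\ref{lem:nonlinear_bounds} gives
\begin{equation*}
\|g_\phi(s_1)-g_\phi(s_2)\| \le \lambda\|s_1-s_2\| + C_1\delta\bigl(\|s_1-s_2\|+K\|s_1-s_2\|\bigr) = \bigl(\lambda + C_1\delta(1+K)\bigr)\|s_1-s_2\|.
\end{equation*}
The constant $\lambda + C_1\delta(1+K)$ is $<1$ once $\delta<(1-\lambda)/(C_1(1+K))$. Taking $s_2=0$ and using $g_\phi(0)=0$ (since $a(0,0)=0$) gives $\|g_\phi(s)\|\le\lambda\delta + C_1\delta^2(1+K)$. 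So $g_\phi$ maps $B^s_\delta$ into the stated ball.

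\emph{Unique preimage.} Here I use that $A=Df_{x_0}|_{E^s}$ is invertible, because $f$ is a diffeomorphism. The equation $g_\phi(s)=s'$ is equivalent to $s = T_{s'}(s) := A^{-1}\bigl(s' - a(s,\phi(s))\bigr)$. Again by Lemma~\ref{lem:nonlinear_bounds}, $\mathrm{Lip}(T_{s'})\le \|A^{-1}\|\,C_1\delta(1+K)$, which is $<1$ for $\delta$ small. Moreover $\|T_{s'}(s)\|\le\|A^{-1}\|\bigl(\|s'\|+C_1\delta^2(1+K)\bigr)$, so $T_{s'}$ maps $B^s_\delta$ into itself as soon as
\begin{equation*}
\|s'\|\le \|A^{-1}\|^{-1}\delta - C_1\delta^2(1+K) = \|A^{-1}\|^{-1}\delta\bigl(1 - C_1\delta(1+K)\|A^{-1}\|\bigr).
\end{equation*}
The Banach fixed point theorem then gives existence and uniqueness of $s\in B^s_\delta$. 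Uniqueness also follows independently from the lower bound $\|g_\phi(s_1)-g_\phi(s_2)\|\ge(\|A^{-1}\|^{-1}-C_1\delta(1+K))\|s_1-s_2\|$. Finally I would compare this admissible radius with the paper's $\delta'=\lambda\delta(1+C_1\delta(1+K)/\lambda)^{-1}$. Expanding to first order, the two agree when $\|A^{-1}\|^{-1}$ is replaced by $\lambda$, since $(1+x)^{-1}\ge 1-x$.

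\emph{Main obstacle.} The delicate step is the radius $\delta'$. The stated formula implicitly requires the lower expansion bound $\|A^{-1}\|^{-1}\ge\lambda$, that is, that $A$ contracts by no more than $\lambda$. This does not follow from $\|A\|\le\lambda$ alone; it holds, for instance, in the conformal or one-dimensional-stable normalization. I would first try to secure it by the chart normalization, choosing $\lambda$ so that the stable spectrum of $A$ lies in $[\lambda_{\min},\lambda]$. If that fails, I would state the lemma with $\lambda$ replaced by $m:=\|A^{-1}\|^{-1}$ in $\delta'$, namely $\delta' = m\delta\bigl(1+C_1\delta(1+K)/m\bigr)^{-1}$, which the fixed-point argument above proves verbatim. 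For the backward graph transform only existence on a ball of radius comparable to $\delta$ is actually needed.
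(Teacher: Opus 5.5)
Your proof of the first assertion is the paper's argument; the paper bounds $\|A+D_sa+D_ua\,D\phi\|$ by $\lambda+C_1\delta(1+K)$, and your Lipschitz version is cleaner because $\phi$ is only Lipschitz. For the second assertion you depart from the paper, and your diagnosis is right. The paper only says that ``the contraction mapping theorem gives existence and uniqueness of the inverse.'' Applied to $g_\phi$, that theorem yields only the fixed point $0$, and covering $B^s_{\delta'}$ needs a lower bound on $g_\phi$, whereas contractivity is an upper bound. The statement is in fact false as written. With $a\equiv0$ and $A=\tfrac{\lambda}{10}\mathrm{Id}$ one gets $g_\phi(B^s_\delta)=B^s_{\lambda\delta/10}$, which does not contain $B^s_{\delta'}$ for small $\delta$. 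Your reformulation $s=A^{-1}(s'-a(s,\phi(s)))$ is the right mechanism.

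Your first proposed remedy cannot work, however. Since $\|A^{-1}\|^{-1}\le\|A\|\le\lambda$ always, requiring $m\ge\lambda$ forces $A$ to be $\lambda$ times an isometry at every point of $\Lambda$. No adapted metric achieves this in general, because the moduli of the stable eigenvalues of $Df^n_p$ at periodic points do not depend on the metric. So work directly with $m$, or with $m_0=\inf_{x_0\in\Lambda}\|A_{x_0}^{-1}\|^{-1}>0$ if you want a radius independent of the base point.

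The step that actually fails is your claim that the fixed-point argument proves the corrected statement, with $\delta'=m\delta(1+x)^{-1}$ and $x:=C_1\delta(1+K)/m$, ``verbatim.'' Your self-map condition is $\|s'\|\le m\delta-C_1\delta^2(1+K)=m\delta(1-x)$. The inequality $(1+x)^{-1}\ge1-x$ that you invoke works against you: $m\delta(1+x)^{-1}-m\delta(1-x)=m\delta x^2/(1+x)>0$. So the shell $m\delta(1-x)<\|s'\|\le m\delta(1+x)^{-1}$ is not covered, and agreement to first order in $\delta$ does not give the inclusion. You have two ways to repair this.
\begin{itemize}
\item State the conclusion with radius $m\delta-C_1\delta^2(1+K)$, which is exactly what your argument proves.
\item Sharpen the bound on the nonlinear term. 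Integrate $\|Da(p)\|\le C_1\|p\|$ (the estimate used in the proof of Lemma~\ref{lem:nonlinear_bounds}) along $t\mapsto(ts,\phi(ts))$, using $K\le1$. This gives $\|a(s,\phi(s))\|\le\tfrac12C_1(1+K)\|s\|^2$, so $T_{s'}$ maps $B^s_\delta$ into itself whenever $\|s'\|\le m\delta(1-x/2)$. Since $(1-x/2)(1+x)\ge1$ for $x\le1$, this covers the radius $m\delta(1+x)^{-1}$.
\end{itemize}
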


\begin{proof}
The derivative of $s \mapsto As + a(s, \phi(s))$ is $A + D_s a + D_u a \cdot D\phi$, with norm at most $\lambda + C_1\delta(1 + K) < 1$ for small $\delta$. The contraction mapping theorem gives existence and uniqueness of the inverse.
\end{proof}

Define the graph transform $\Gamma : \mathcal{G} \to \mathcal{G}$ as follows. For $\phi \in \mathcal{G}$ and $s' \in B^s_{\delta'}$, let $s = s(\phi, s')$ be the unique solution from Lemma \ref{lem:inverse_stable}, and set
\begin{equation}
\Gamma(\phi)(s') = B\phi(s) + b(s, \phi(s)).
\end{equation}

\begin{proposition}\label{prop:gamma_welldefined}
For $\delta$ and $K$ chosen appropriately (depending on $\lambda$ and $C_1$), $\Gamma : \mathcal{G} \to \mathcal{G}$ is well-defined.
\end{proposition}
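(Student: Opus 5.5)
The plan is to verify directly the three defining conditions of $\mathcal{G}(\delta,K)$ for $\Gamma(\phi)$: that $\Gamma(\phi)(0)=0$, that $\Gamma(\phi)$ takes values in $B^u_\delta$, and that $\|\Gamma(\phi)\|_{\mathrm{Lip}}\le K$. Each is reduced to an explicit inequality between $\delta$, $K$, $\lambda$, $C_1$, $\|B\|$ and $m(A):=\|A^{-1}\|^{-1}$. The constants are then chosen at the end, $K$ first and then $\delta$.

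\emph{Well-definedness and normalisation.} Fix $\phi\in\mathcal{G}$ and write $h_\phi(s)=As+a(s,\phi(s))$. By Lemma~\ref{lem:inverse_stable}, for each $s'\in B^s_{\delta'}$ there is a unique $s=s(\phi,s')\in B^s_\delta$ with $h_\phi(s)=s'$, so the formula $\Gamma(\phi)(s')=B\phi(s)+b(s,\phi(s))$ is single-valued. Since $a(0,0)=0$ and $\phi(0)=0$, we have $h_\phi(0)=0$, and uniqueness gives $s(\phi,0)=0$. Then $b(0,0)=0$ yields $\Gamma(\phi)(0)=0$.

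\emph{Range.} Using $\|\phi(s)\|\le K\|s\|\le K\delta$ and Lemma~\ref{lem:nonlinear_bounds} (applied with base point $(0,0)$), we get
\[
\|\Gamma(\phi)(s')\|\le \|B\|K\delta+C_1\delta^2(1+K).
\]
This is at most $\delta$ as soon as $\|B\|K\le\tfrac12$ and $C_1\delta(1+K)\le\tfrac12$. This fixes the first choice, $K\le 1/(2\|B\|)$, followed by $\delta$ small.

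\emph{Lipschitz bound.} Take $s'_1,s'_2$ with preimages $s_1,s_2$. On the stable side, Lemma~\ref{lem:nonlinear_bounds} gives the lower bound
\[
\|s'_1-s'_2\|\ge\bigl(m(A)-C_1\delta(1+K)\bigr)\|s_1-s_2\|.
\]
On the unstable side it gives the upper bound
\[
\|u'_1-u'_2\|\le\bigl(\|B\|K+C_1\delta(1+K)\bigr)\|s_1-s_2\|.
\]
Hence
\[
\|\Gamma(\phi)\|_{\mathrm{Lip}}\le\frac{\|B\|K+C_1\delta(1+K)}{m(A)-C_1\delta(1+K)},
\]
and we must arrange that this is $\le K$.

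This is the step I expect to be the main obstacle, because as $\delta\to0$ the requirement becomes $\|B\|\,\|A^{-1}\|\le 1$ (with a little room to absorb the $O(\delta)$ terms). The hypotheses $\|A\|\le\lambda$ and $\|B^{-1}\|\le\lambda$ do not by themselves give this, since they control $\|A\|$ and $\|B^{-1}\|$ rather than $\|A^{-1}\|$ and $\|B\|$. So the "appropriate" choice of constants must encode a domination condition.

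What I would try first is to write $\rho:=\|B\|\,\|A^{-1}\|$ and split into cases. If $\rho<1$, take $K\le1/(2\|B\|)$ and then $\delta$ so small that $\rho+O(\delta)\le 1$; the inequality then holds for every $K>0$, because the $K$-independent term $C_1\delta/(m(A)-C_1\delta(1+K))$ is absorbed by choosing $\delta\ll K(1-\rho)$. If $\rho\ge1$, I would replace $f$ by an iterate or rescale the stable and unstable coordinates by independent factors to reduce to the previous case. I would check whether such a reduction is compatible with the adapted-metric normalisation. If it is not, the statement can only be secured on the subdomain $B^s_{\delta'}$ with the target Lipschitz constant enlarged, and I would flag that as the precise condition under which the "appropriate" choice exists.
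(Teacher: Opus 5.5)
Your three checks (normalisation, range, Lipschitz) are the same as the paper's. Your estimates are the correct versions: you use $\|B\|$ and $m(A)=\|A^{-1}\|^{-1}$ where the paper writes $\lambda^{-1}$ and $\lambda$. The gap is the final Lipschitz step, and it cannot be closed.

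The case $\rho=\|B\|\,\|A^{-1}\|<1$ in your case split never occurs. From $1\le\|A\|\,\|A^{-1}\|$ and $\|A\|\le\lambda$ you get $\|A^{-1}\|\ge\lambda^{-1}$. From $\|B^{-1}\|\le\lambda$ you get $\|B\|\ge\lambda^{-1}$. Hence $\rho\ge\lambda^{-2}>1$ always.

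Your proposed reductions do not help either:
\begin{itemize}
\item Rescaling $s$ and $u$ by independent constants leaves $A$ and $B$ unchanged; it only rescales slopes, and hence $K$.
\item Any other norm still gives $\rho\ge r(B)\,r(A^{-1})\ge\lambda^{-2}$, where $r$ is the spectral radius.
\item Passing to $f^n$ makes the factor at least $\lambda^{-2n}$.
\end{itemize}
Forward iteration steepens graphs over $E^s$. This is a genuine obstruction, not an artifact of your estimates.

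In fact the statement is false as posed. Let $f(s,u)=(\lambda s,\lambda^{-1}u)$ near $0$ in $\R^2$ (for instance a linear hyperbolic toral automorphism in eigen-coordinates), so $a=b=0$. Let $\phi(s)=Ks$ with $0<K\le 1$. Then $\Gamma(\phi)(s')=\lambda^{-2}Ks'$. It is defined only for $|s'|\le\lambda\delta$, and its Lipschitz constant is $\lambda^{-2}K>K$. For $K>1$, truncate $\phi$ at height $\delta$; the slope at $0$ is still multiplied by $\lambda^{-2}$. So no choice of $\delta$ and $K$ makes $\Gamma$ a self-map of $\mathcal{G}(\delta,K)$. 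Even the domain shrinks from $B^s_\delta$ to $B^s_{\delta'}$, as you noticed.

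The paper's proof reaches the opposite conclusion in two steps, neither valid:
\begin{itemize}
\item It uses $\|B\|\le\lambda^{-1}$ and $\|Ds/Ds'\|\le(\lambda-C_1\delta)^{-1}$, whereas the hypotheses give the reverse inequalities $\|B\|\ge\lambda^{-1}$ and $m(A)\le\lambda$.
\item It then asserts that $(\lambda^{-1}K+C_1\delta)/(\lambda-C_1\delta)\le K$ for small $\delta$, although this ratio tends to $\lambda^{-2}K>K$ as $\delta\to0$.
\end{itemize}

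So the correct outcome of your computation is the negative one. The self-map property does hold in two other settings:
\begin{itemize}
\item For the backward transform $\Gamma^{-1}$ on graphs over $E^s$. There the slope factor is about $\|B^{-1}\|\,\|A\|\le\lambda^2$, and $As+a(s,u)$ stays in $B^s_\delta$ because $\|A\|\le\lambda<1$. This is Lemma~\ref{lem:backward_welldefined}, and it is all the contraction argument actually uses.
\item For the forward transform on graphs over $E^u$.
\end{itemize}
Your fallback of shrinking the domain and enlarging the target Lipschitz constant gives a true statement, about graphs over the smaller disc $B^s_{\delta'}$ with Lipschitz constant roughly $\rho K$. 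It is not the statement claimed.
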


\begin{proof}
We verify:
\begin{enumerate}
\item[(i)] $\Gamma(\phi)(0) = 0$: When $s' = 0$, the solution is $s = 0$ (since $A \cdot 0 + a(0, \phi(0)) = a(0, 0) = 0$), giving $\Gamma(\phi)(0) = B\phi(0) + b(0, 0) = 0$.

\item[(ii)] $\|\Gamma(\phi)(s')\| \leq \delta$: We have
\begin{equation}
\|\Gamma(\phi)(s')\| \leq \|B\| \cdot \|\phi(s)\| + \|b(s, \phi(s))\| \leq \lambda^{-1} K\delta + C_1\delta^2
\end{equation}
which is at most $\delta$ if $K \leq \lambda(1 - C_1\delta)/\delta$.

\item[(iii)] $\|\Gamma(\phi)\|_{\mathrm{Lip}} \leq K$: Differentiating $\Gamma(\phi)(s') = B\phi(s(s')) + b(s(s'), \phi(s(s')))$ with respect to $s'$ and using $\|Ds/Ds'\| \leq (\lambda - C_1\delta)^{-1}$, the Lipschitz constant is at most $(\|B\|K + C_1\delta)/(\lambda - C_1\delta) \leq (\lambda^{-1}K + C_1\delta)/(\lambda - C_1\delta)$, which is at most $K$ for small $\delta$ and appropriate $K$.
\end{enumerate}
\end{proof}

\subsubsection{Contraction Property}

\begin{proposition}\label{prop:gamma_contraction}
There exists $\theta \in (0, 1)$ such that for $\phi, \psi \in \mathcal{G}$:
\begin{equation}
\|\Gamma^{-1}(\phi) - \Gamma^{-1}(\psi)\|_{C^0} \leq \theta \|\phi - \psi\|_{C^0}.
\end{equation}
\end{proposition}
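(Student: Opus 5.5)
The plan is to repeat, in the appendix notation, the argument of Proposition~\ref{prop:graph_contraction}, using the uniform nonlinear bounds of Lemma~\ref{lem:nonlinear_bounds} for the remainders $a$ and $b$. Here $\Gamma^{-1}(\phi)$ means the backward graph transform of Definition~\ref{def:backward_graph_transform}. For $s\in B^s_\delta$, the value $u=\Gamma^{-1}(\phi)(s)$ is the unique fixed point of
\[
F_\phi(u)=B^{-1}\bigl[\phi(As+a(s,u))-b(s,u)\bigr]
\]
on $B^u_\delta$. Lemma~\ref{lem:backward_welldefined} shows that $F_\phi$ maps $B^u_\delta$ into itself and is a contraction with ratio $\lambda C_1\delta(K+1)$. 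I take this as given, so both $u_1=\Gamma^{-1}(\phi)(s)$ and $u_2=\Gamma^{-1}(\psi)(s)$ exist and lie in $B^u_\delta$.

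Fix $s$ and set $s'_i=As+a(s,u_i)$. Subtracting the two fixed-point identities gives
\[
u_1-u_2=B^{-1}\bigl[\phi(s'_1)-\psi(s'_2)\bigr]-B^{-1}\bigl[b(s,u_1)-b(s,u_2)\bigr].
\]
Split the first bracket as $[\phi(s'_1)-\psi(s'_1)]+[\psi(s'_1)-\psi(s'_2)]$. The first piece is at most $\|\phi-\psi\|_{C^0}$. The second is at most $K\|s'_1-s'_2\|$, and by Lemma~\ref{lem:nonlinear_bounds} (with equal $s$-arguments) this is at most $KC_1\delta\|u_1-u_2\|$. The $b$-term is likewise at most $C_1\delta\|u_1-u_2\|$. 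Using $\|B^{-1}\|\le\lambda$, these combine to
\[
\|u_1-u_2\|\le\lambda\|\phi-\psi\|_{C^0}+\lambda C_1\delta(K+1)\|u_1-u_2\|.
\]
Absorbing the last term on the left yields $\|u_1-u_2\|\le\theta\|\phi-\psi\|_{C^0}$ with
\[
\theta=\frac{\lambda}{1-\lambda C_1\delta(K+1)}.
\]
Since the bound is uniform in $s$, taking the supremum finishes the estimate.

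It remains to ensure $\theta<1$, which holds exactly when $\delta<(1-\lambda)/(\lambda C_1(K+1))$. This is compatible with the smallness requirements already imposed in Lemma~\ref{lem:backward_welldefined} and Lemma~\ref{lem:inverse_stable}, so we simply shrink $\delta$ to satisfy all of them at once.

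The main point to watch is that the Lipschitz bound on $\psi$ must be applied at the two \emph{different} points $s'_1$ and $s'_2$. Those points depend on the unknowns $u_i$, so the estimate is implicit and has to be closed by absorption. That absorption is legitimate only because $\|u_1-u_2\|$ is finite a priori, which holds since $u_1,u_2\in B^u_\delta$. The remaining steps are routine.
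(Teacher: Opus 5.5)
Your proposal is correct and is essentially the paper's own argument: subtract the two implicit fixed-point identities, split $\phi(s'_1)-\psi(s'_2)$ through $\psi(s'_1)$, bound $\|s'_1-s'_2\|$ and the $b$-difference by $C_1\delta\|u_1-u_2\|$, and absorb to get $\theta=\lambda/(1-\lambda C_1\delta(K+1))<1$ for $\delta<(1-\lambda)/(\lambda C_1(K+1))$. The appeal to Lemma~\ref{lem:inverse_stable} is unnecessary, since that lemma concerns the forward transform, but it does no harm.
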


\begin{proof}
The forward graph transform $\Gamma$ (Definition in Section~\ref{sec:stable_manifolds}) satisfies $\|\Gamma(\phi) - \Gamma(\psi)\|_{C^0} \leq (\lambda^{-1} + C_2\delta)\|\phi - \psi\|_{C^0}$, which gives \emph{expansion} since $\lambda^{-1} > 1$. Thus $\Gamma$ is unsuitable for a contraction argument on stable manifolds.

The backward graph transform $\Gamma^{-1}$ (Definition~\ref{def:backward_graph_transform}) is contractive because it involves $B^{-1}$ with $\|B^{-1}\| \leq \lambda < 1$. We provide the detailed estimates.

Fix $s \in B^s_\delta$. Let $u_1 = \Gamma^{-1}(\phi)(s)$ and $u_2 = \Gamma^{-1}(\psi)(s)$, and let $s'_i = As + a(s, u_i)$. From equation \eqref{eq:backward_implicit}:
\begin{align}
u_1 - u_2 &= B^{-1}[\phi(s'_1) - \psi(s'_2)] - B^{-1}[b(s, u_1) - b(s, u_2)].
\end{align}

\textbf{Nonlinear remainder:} $\|B^{-1}[b(s, u_1) - b(s, u_2)]\| \leq \lambda C_1\delta\|u_1 - u_2\|$.

\textbf{Main term:}
\begin{align}
\|\phi(s'_1) - \psi(s'_2)\| &\leq \|\phi(s'_1) - \psi(s'_1)\| + \|\psi(s'_1) - \psi(s'_2)\| \\
&\leq \|\phi - \psi\|_{C^0} + K\|s'_1 - s'_2\|.
\end{align}
Since $\|s'_1 - s'_2\| = \|a(s, u_1) - a(s, u_2)\| \leq C_1\delta\|u_1 - u_2\|$:
\begin{equation}
\|\phi(s'_1) - \psi(s'_2)\| \leq \|\phi - \psi\|_{C^0} + KC_1\delta\|u_1 - u_2\|.
\end{equation}

\textbf{Combining:}
\begin{equation}
\|u_1 - u_2\| \leq \lambda\|\phi - \psi\|_{C^0} + \lambda C_1\delta(K + 1)\|u_1 - u_2\|.
\end{equation}
Solving:
\begin{equation}
\|u_1 - u_2\| \leq \frac{\lambda}{1 - \lambda C_1\delta(K+1)}\|\phi - \psi\|_{C^0} = \theta\|\phi - \psi\|_{C^0}
\end{equation}
with $\theta = \lambda/(1 - \lambda C_1\delta(K+1)) < 1$ for $\delta < (1-\lambda)/(\lambda C_1(K+1))$.
\end{proof}

\subsection{H\"{o}lder Spaces on Manifolds}\label{app:holder_spaces}

This appendix subsection collects basic facts about the H\"{o}lder function spaces $C^\alpha(M)$ used throughout the Part: their Banach-algebra structure, the Arzel\`{a}-Ascoli compactness result, and the elementary inclusions between spaces of different exponents.

\begin{definition}
For a compact Riemannian manifold $(M, g)$ and $\alpha \in (0, 1]$, the H\"{o}lder space $C^\alpha(M)$ consists of functions $\phi : M \to \mathbb{R}$ with finite norm
\begin{equation}
\|\phi\|_\alpha = \|\phi\|_\infty + |\phi|_\alpha, \quad |\phi|_\alpha = \sup_{x \neq y} \frac{|\phi(x) - \phi(y)|}{d(x, y)^\alpha}.
\end{equation}
\end{definition}

\begin{proposition}[{\cite[Section~19.1]{KatokHasselblatt1995}}]
$C^\alpha(M)$ is a Banach algebra: $\|\phi\psi\|_\alpha \leq \|\phi\|_\alpha \|\psi\|_\alpha$.
\end{proposition}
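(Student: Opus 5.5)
The claim is the Banach algebra inequality $\|\phi\psi\|_\alpha \leq \|\phi\|_\alpha\|\psi\|_\alpha$ for $C^\alpha(M)$, together with completeness. I will prove it directly from the definition, as in Proposition~\ref{prop:holder_algebra}; the manifold structure enters only through the geodesic distance $d$.

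\textbf{Step 1 (sup norm).} Pointwise, $\|\phi\psi\|_\infty \leq \|\phi\|_\infty\|\psi\|_\infty$.

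\textbf{Step 2 (seminorm).} For $x \neq y$, split
\[
\phi\psi(x)-\phi\psi(y) = \phi(x)\bigl(\psi(x)-\psi(y)\bigr) + \psi(y)\bigl(\phi(x)-\phi(y)\bigr).
\]
Dividing by $d(x,y)^\alpha$ and taking the supremum gives
\[
|\phi\psi|_\alpha \leq \|\phi\|_\infty|\psi|_\alpha + \|\psi\|_\infty|\phi|_\alpha.
\]

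\textbf{Step 3 (combine).} Adding Steps 1 and 2,
\[
\|\phi\psi\|_\alpha \leq \|\phi\|_\infty\|\psi\|_\infty + \|\phi\|_\infty|\psi|_\alpha + \|\psi\|_\infty|\phi|_\alpha .
\]
The right-hand side is at most $(\|\phi\|_\infty + |\phi|_\alpha)(\|\psi\|_\infty + |\psi|_\alpha)$, because expanding the product only adds the nonnegative term $|\phi|_\alpha|\psi|_\alpha$. This is exactly the claimed inequality.

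\textbf{Step 4 (completeness).} Let $(\phi_n)$ be Cauchy in $\|\cdot\|_\alpha$. Then it is Cauchy in sup norm, so it converges uniformly to a continuous $\phi$. For fixed $x \neq y$ the difference quotients $\bigl((\phi_n-\phi_m)(x)-(\phi_n-\phi_m)(y)\bigr)/d(x,y)^\alpha$ are uniformly small for large $n,m$. Letting $m\to\infty$ pointwise gives $|\phi_n-\phi|_\alpha \to 0$ and $\phi \in C^\alpha(M)$.

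No step is a real obstacle. The only point needing care is the bookkeeping in Step 3: the product of the full norms must dominate the sum of the cross terms, and it does because the extra term is nonnegative.
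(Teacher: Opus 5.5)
Your proof is correct and follows the paper's route. The appendix statement itself only cites Katok--Hasselblatt, but Proposition~\ref{prop:holder_algebra} proves it with the same splitting $\phi\psi(x)-\phi\psi(y)=\phi(x)\bigl(\psi(x)-\psi(y)\bigr)+\psi(y)\bigl(\phi(x)-\phi(y)\bigr)$, giving $|\phi\psi|_\alpha\le\|\phi\|_\infty|\psi|_\alpha+\|\psi\|_\infty|\phi|_\alpha$. You also make two things explicit that the paper leaves implicit: the final comparison with the product of norms (dropping the nonnegative term $|\phi|_\alpha|\psi|_\alpha$) and the completeness of $C^\alpha(M)$.
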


\begin{proposition}[Arzel\`{a}-Ascoli]
The inclusion $C^\alpha(M) \hookrightarrow C^0(M)$ is compact.
\end{proposition}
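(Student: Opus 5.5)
The compactness of the inclusion $C^\alpha(M) \hookrightarrow C^0(M)$ will follow from the Arzel\`{a}--Ascoli theorem on the compact metric space $(M,d)$. The goal is to show that the image of the closed unit ball of $C^\alpha(M)$ is relatively compact in $C^0(M)$. By linearity, this is equivalent to saying that every bounded sequence in $C^\alpha(M)$ has a uniformly convergent subsequence. This parallels the second half of Proposition~\ref{prop:holder_algebra}, now with $X = M$ carrying the geodesic distance.

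The argument has three steps.

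\textbf{Uniform boundedness.} Fix a sequence $(\phi_n)$ with $\|\phi_n\|_\alpha \leq R$ for all $n$. Since $\|\phi_n\|_\infty \leq \|\phi_n\|_\alpha \leq R$, the family is uniformly bounded.

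\textbf{Uniform equicontinuity.} For all $x,y \in M$ and all $n$,
\begin{equation}
|\phi_n(x) - \phi_n(y)| \leq |\phi_n|_\alpha\, d(x,y)^\alpha \leq R\, d(x,y)^\alpha .
\end{equation}
Given $\eta > 0$, set $\delta = (\eta/R)^{1/\alpha}$. Then $d(x,y) < \delta$ implies $|\phi_n(x)-\phi_n(y)| < \eta$ for every $n$. So the family is uniformly equicontinuous, with modulus $t \mapsto R t^\alpha$.

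\textbf{Extraction.} Since $M$ is a compact metric space, Arzel\`{a}--Ascoli gives a subsequence $\phi_{n_k}$ that converges uniformly to some $\phi \in C^0(M)$. For completeness one can note that the limit is again in $C^\alpha(M)$ with $\|\phi\|_\alpha \leq R$. This holds because the pointwise inequalities $|\phi_{n_k}(x)| \leq R$ and $|\phi_{n_k}(x)-\phi_{n_k}(y)| \leq R\,d(x,y)^\alpha$ pass to the limit. This extra fact is not needed for compactness of the inclusion, but it shows that the unit ball of $C^\alpha(M)$ is in fact closed in $C^0(M)$.

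There is no real obstacle. The only points to check are that the inclusion is bounded, which holds since $\|\phi\|_\infty \leq \|\phi\|_\alpha$, and that compactness of $M$ is available, which holds because $M$ is a compact Riemannian manifold with its geodesic distance. If one also wanted the quantitative version used in the Lasota--Yorke setting of Proposition~\ref{thm:itm}, the standard additional remark is that the inclusion $C^\alpha \hookrightarrow C^{\alpha'}$ is compact for $\alpha' < \alpha$. That follows from the interpolation bound $|\psi|_{\alpha'} \leq 2\|\psi\|_\infty^{1-\alpha'/\alpha}|\psi|_\alpha^{\alpha'/\alpha}$ applied to $\psi = \phi_{n_k} - \phi$. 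I would mention this but not rely on it.
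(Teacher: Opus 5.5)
Your proof is correct and takes the same route as the paper. The paper (in Proposition~\ref{prop:holder_algebra}) notes only that bounded H\"{o}lder families are uniformly bounded and equicontinuous, so Arzel\`{a}--Ascoli applies on the compact space $M$; you make the equicontinuity modulus $R\,d(x,y)^\alpha$ explicit, and your side remarks on closedness of the $C^\alpha$-ball and on the interpolation bound are correct but, as you say, not needed.
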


\begin{proposition}
For $\alpha < \beta$, $C^\beta(M) \subset C^\alpha(M)$ with continuous inclusion, and $|\phi|_\alpha \leq \mathrm{diam}(M)^{\beta - \alpha} |\phi|_\beta$.
\end{proposition}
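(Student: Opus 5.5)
We prove the continuous inclusion $C^\beta(M) \subset C^\alpha(M)$ and the seminorm bound $|\phi|_\alpha \leq \mathrm{diam}(M)^{\beta-\alpha}|\phi|_\beta$ directly from the definition of the H\"{o}lder seminorm. No dynamics enters; the only inputs are compactness of $M$, which makes $D := \mathrm{diam}(M) < \infty$, and the monotonicity of $t \mapsto t^{\beta-\alpha}$ on $(0,\infty)$ for $\beta - \alpha > 0$.

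The key step is a pointwise estimate. Fix $\phi \in C^\beta(M)$ and $x \neq y$ in $M$. Since $0 < d(x,y) \leq D$ and $\beta - \alpha > 0$, we factor
\begin{equation*}
\frac{|\phi(x)-\phi(y)|}{d(x,y)^\alpha} = \frac{|\phi(x)-\phi(y)|}{d(x,y)^\beta}\, d(x,y)^{\beta-\alpha} \leq |\phi|_\beta\, D^{\beta-\alpha}.
\end{equation*}
Taking the supremum over $x \neq y$ gives $|\phi|_\alpha \leq D^{\beta-\alpha}|\phi|_\beta < \infty$. Hence $\phi \in C^\alpha(M)$, which proves the inclusion together with the stated seminorm bound.

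For continuity of the inclusion map, add $\|\phi\|_\infty$ to both sides of the seminorm bound:
\begin{equation*}
\|\phi\|_\alpha \leq \|\phi\|_\infty + D^{\beta-\alpha}|\phi|_\beta \leq \max\{1, D^{\beta-\alpha}\}\,\|\phi\|_\beta .
\end{equation*}
So the linear inclusion is bounded with operator norm at most $\max\{1, D^{\beta-\alpha}\}$, and a bounded linear map is continuous.

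No step is a genuine obstacle. The one point to check is the direction of the inequality $d(x,y)^{\beta-\alpha} \leq D^{\beta-\alpha}$, which uses $\beta > \alpha$ and $d(x,y) \leq D$. The bound is stated in this form because $D$ may exceed $1$; if instead $D \leq 1$, the constant could be replaced by $1$.
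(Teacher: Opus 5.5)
Your proof is correct. Factoring $d(x,y)^{-\alpha} = d(x,y)^{-\beta}\,d(x,y)^{\beta-\alpha}$ and using $d(x,y)^{\beta-\alpha} \le \mathrm{diam}(M)^{\beta-\alpha}$ (valid since $\beta > \alpha$) gives the seminorm bound. Adding $\|\phi\|_\infty$ then gives $\|\phi\|_\alpha \le \max\{1,\mathrm{diam}(M)^{\beta-\alpha}\}\,\|\phi\|_\beta$, which is continuity of the inclusion.

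The paper states this proposition in the appendix without any proof, so there is no argument to compare against. What you wrote is the standard argument the paper implicitly relies on.
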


\subsection{Spectral Theory for Quasi-compact Operators}\label{app:spectral}

We recall the spectral decomposition for quasi-compact bounded operators on a Banach space. This is used in Parts~IV--V \cite{Thiam2026d,Thiam2026e}, where the Ruelle transfer operator on $C^\alpha$-manifolds is shown to be quasi-compact.

\begin{proposition}[Spectral Decomposition for Quasi-compact Operators]\label{prop:spectral_decomp_qc}
Let $L : \mathcal{B} \to \mathcal{B}$ be a quasi-compact operator on a Banach space with spectral radius $\rho$ and essential spectral radius $\rho_{\mathrm{ess}} < \rho$. Then:
\begin{enumerate}
\item[(i)] The spectrum in $\{z : |z| > \rho_{\mathrm{ess}}\}$ consists of finitely many eigenvalues of finite multiplicity.
\item[(ii)] If $\lambda_1, \ldots, \lambda_k$ are the eigenvalues with $|\lambda_i| = \rho$, with spectral projections $P_1, \ldots, P_k$, then
\begin{equation}
L^n = \sum_{i=1}^k \lambda_i^n P_i + R^n
\end{equation}
where $\|R^n\| \leq C \theta^n$ for some $\theta < \rho$ and $C > 0$.
\end{enumerate}
\end{proposition}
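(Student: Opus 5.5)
The plan is to use the Riesz spectral projection for the part of the spectrum outside the essential disk. Fix $r$ with $\rho_{\mathrm{ess}} < r < \rho$. The set $\sigma(L)\cap\{|z|\geq r\}$ is compact. By the definition of essential spectrum, any point of $\sigma(L)$ with $|z|>\rho_{\mathrm{ess}}$ is not an accumulation point of $\sigma(L)$, is an eigenvalue (kernel nontrivial; the range of $zI-L$ is closed, and a non-accumulation point of the spectrum with closed range and finite-dimensional kernel is a pole of the resolvent of finite rank), and has finite-dimensional kernel. Points of $\sigma(L)$ with $|z|\geq r$ are therefore isolated, so a compact set of isolated points is finite. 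Letting $r\downarrow\rho_{\mathrm{ess}}$ gives (i). To keep this self-contained, I would identify each isolated $\lambda_i$ with finite-rank Riesz projection
\begin{equation}
P_i=\frac{1}{2\pi i}\oint_{|z-\lambda_i|=\eta}(zI-L)^{-1}\,dz.
\end{equation}
Finiteness of the rank is the point where I would invoke the Fredholm characterization, namely that $\lambda I-L$ is Fredholm of index zero for $|\lambda|>\rho_{\mathrm{ess}}$, citing the standard result rather than reproving it.

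For (ii), let $\lambda_1,\dots,\lambda_k$ be the peripheral eigenvalues, those with $|\lambda_i|=\rho$, and set $P=\sum_i P_i$. Then $P$ commutes with $L$, and $\mathcal{B}=P\mathcal{B}\oplus(I-P)\mathcal{B}$ is an $L$-invariant splitting. By the spectral mapping property of Riesz projections, $\sigma(L|_{(I-P)\mathcal{B}})=\sigma(L)\setminus\{\lambda_1,\dots,\lambda_k\}$. By (i) this remaining set lies in a disk of radius $\theta_0<\rho$, where $\theta_0=\max(\rho_{\mathrm{ess}},\max\{|z|:z\in\sigma(L),|z|<\rho\})$ and the inner maximum is over finitely many points outside the essential disk. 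Gelfand's formula then gives $\|(L(I-P))^n\|\leq C\theta^n$ for any $\theta\in(\theta_0,\rho)$. Set $R^n:=L^n(I-P)$.

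The main obstacle is the peripheral part. On the finite-dimensional space $P_i\mathcal{B}$ we have $L^nP_i=(\lambda_iP_i+N_i)^n$ with $N_i$ nilpotent, so $L^nP_i=\lambda_i^nP_i$ only if $N_i=0$, that is, if the peripheral eigenvalues are semisimple. As literally stated, (ii) requires this; otherwise the correct formula carries terms of the form $\binom{n}{j}\lambda_i^{n-j}N_i^j$. I would add that hypothesis explicitly, or deduce it from power-boundedness $\sup_n\|L^n\|/\rho^n<\infty$, which holds in the transfer-operator applications. Power-boundedness forces $N_i=0$ because the $n^j$ growth of the Jordan terms would otherwise contradict it. With that in place, $L^n=\sum_i\lambda_i^nP_i+R^n$ follows.
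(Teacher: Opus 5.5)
Your proof of (ii) follows the paper's route: Riesz projections, then a Gelfand spectral-radius bound on the complementary invariant subspace. It is more careful than the paper's sketch. The paper's contour $|z|=\rho+\varepsilon$ encloses all of $\sigma(L)$ and so yields $P=I$; your small circles around each $\lambda_i$ are what is needed. Your semisimplicity caveat is also correct, and the paper does not address it. For example, $L=\begin{pmatrix}1&1\\0&1\end{pmatrix}$ on $\mathbb{C}^2$ has $\rho=1$ and $P_1=I$, yet $\|L^n-P_1\|=n$.

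The genuine gap is the last step of (i): the claim that letting $r\downarrow\rho_{\mathrm{ess}}$ gives (i) does not follow. For each fixed $r>\rho_{\mathrm{ess}}$ you correctly show that $\sigma(L)\cap\{|z|\ge r\}$ is a finite set of eigenvalues with finite-rank Riesz projections. The union over $r\downarrow\rho_{\mathrm{ess}}$, however, is only a countable discrete set, and it may accumulate on the circle $|z|=\rho_{\mathrm{ess}}$. In fact (i) as literally stated is false. Take $L=\mathrm{diag}(1,\tfrac12,\tfrac13,\dots)$ on $\ell^2$: it has $\rho=1$ and $\sigma_{\mathrm{ess}}(L)=\{0\}$ under the paper's definition, since $0$ is an accumulation point of $\sigma(L)$ and each $1/n$ is an isolated eigenvalue with one-dimensional kernel and closed range. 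Yet it has infinitely many eigenvalues in $\{|z|>0\}$.

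Just as you did for (ii), you should restate (i): for every $r>\rho_{\mathrm{ess}}$, the spectrum in $\{|z|\ge r\}$ consists of finitely many eigenvalues of finite algebraic multiplicity. The same error affects your definition of $\theta_0$, which you justify as a maximum over finitely many points outside the essential disk. Instead, fix $r\in(\rho_{\mathrm{ess}},\rho)$ and set $\theta_0=\max\bigl(r,\max\{|z|: z\in\sigma(L),\ r\le|z|<\rho\}\bigr)$. This is a maximum over a finite set and is strictly less than $\rho$, so the Gelfand bound on $L^n(I-P)$ goes through unchanged. With these two corrections, plus the semisimplicity or power-boundedness hypothesis you already propose, your argument proves the corrected statement.
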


\begin{proof}
This is a consequence of the Riesz functional calculus. The projection onto the peripheral spectrum (eigenvalues of maximum modulus) is
\begin{equation}
P = \frac{1}{2\pi i} \oint_{|z| = \rho + \varepsilon} (zI - L)^{-1} \, dz
\end{equation}
for small $\varepsilon > 0$. The operator $R = L - \sum_i \lambda_i P_i$ has spectral radius strictly less than $\rho$.
\end{proof}

\subsection{Measure-Theoretic Lemmas}\label{app:measure}

We record two standard measure-theoretic lemmas on disintegration of measures. Both are used in the coding theory of Section~\ref{sec:symbolic_dynamics} to handle measures on fibers of the coding map.

\begin{lemma}[Disintegration of Measures {\cite[Chapter~5]{CornfeldFominSinai1982}}]
Let $\mu$ be a probability measure on $X$ and $\pi : X \to Y$ a measurable map with $\nu = \pi_* \mu$. There exists a family of probability measures $\{\mu_y\}_{y \in Y}$ on $X$ such that:
\begin{enumerate}
\item[(i)] $\mu_y(\pi^{-1}(y)) = 1$ for $\nu$-almost every $y$.
\item[(ii)] For every measurable $A \subset X$, $y \mapsto \mu_y(A)$ is measurable.
\item[(iii)] $\mu(A) = \int_Y \mu_y(A) \, d\nu(y)$.
\end{enumerate}
\end{lemma}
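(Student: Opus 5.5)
The statement is the standard disintegration theorem, so I would prove it rather than re-derive it ad hoc. I will work under the implicit standing hypotheses that make it true and that hold in every application in this Part: $X$ and $Y$ are compact metric spaces (for instance $X = \Sigma_A$, $Y = \Omega$) with Borel $\sigma$-algebras, and $\pi$ is Borel. My approach is the Radon--Nikodym construction on a countable dense family of continuous functions, followed by a Riesz representation step.

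\textbf{Step 1 (conditional expectations on a countable family).} Fix a countable set $\mathcal{D} \subset C(X)$ that is dense in sup norm, is a vector space over $\mathbb{Q}$, and contains the constant $1$; separability of $C(X)$ provides this. For each $g \in \mathcal{D}$, the signed measure $B \mapsto \int_{\pi^{-1}B} g\,d\mu$ is absolutely continuous with respect to $\nu = \pi_*\mu$. Radon--Nikodym therefore gives a Borel density $\ell_y(g)$ with $\int_{\pi^{-1}B} g\,d\mu = \int_B \ell_y(g)\,d\nu(y)$. Since $\mathcal{D}$ is countable, there is a single $\nu$-null set $N$ off which the following all hold simultaneously:
\begin{itemize}
\item[] $g \mapsto \ell_y(g)$ is $\mathbb{Q}$-linear;
\item[] $\ell_y(g) \geq 0$ whenever $g \geq 0$;
\item[] $\ell_y(1) = 1$;
\item[] $|\ell_y(g)| \leq \|g\|_\infty$.
\end{itemize}
For $y \notin N$, the map $\ell_y$ then extends by density to a positive normalized linear functional on $C(X)$. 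Riesz representation gives a Borel probability measure $\mu_y$ with $\ell_y(g) = \int g\,d\mu_y$. For $y \in N$, set $\mu_y$ to be any fixed probability measure.

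\textbf{Step 2 (measurability and (iii)).} The map $y \mapsto \int g\,d\mu_y$ is Borel for $g \in \mathcal{D}$, and hence, by uniform limits, for every $g \in C(X)$. The class of Borel sets $A$ for which $y \mapsto \mu_y(A)$ is measurable and $\int_{\pi^{-1}B} \mathbf{1}_A\,d\mu = \int_B \mu_y(A)\,d\nu$ for all Borel $B$ is a $\lambda$-system. It contains the closed sets, since each closed set is a decreasing limit of continuous functions and monotone convergence applies. Dynkin's $\pi$--$\lambda$ theorem then yields (ii), and taking $B = Y$ yields (iii) in the refined form $\mu(A \cap \pi^{-1}B) = \int_B \mu_y(A)\,d\nu$.

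\textbf{Step 3 (concentration on fibres, (i)), the main obstacle.} Let $\{B_k\}$ be a countable base for the topology of $Y$. Applying the refined identity with $A = \pi^{-1}B_k$ gives $\int_B \mu_y(\pi^{-1}B_k)\,d\nu = \nu(B \cap B_k)$ for all Borel $B$. Hence $\mu_y(\pi^{-1}B_k) = \mathbf{1}_{B_k}(y)$ for $\nu$-a.e.\ $y$, and, after removing one more null set, this holds for all $k$ at once. For such $y$ the measure $\mu_y$ gives full mass to $\pi^{-1}B_k$ for every basic open $B_k \ni y$. It also gives zero mass to $\pi^{-1}B_k$ for every $B_k \not\ni y$. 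Since $Y \setminus \{y\} = \bigcup\{B_k : y \notin B_k\}$ is a countable union, $\mu_y(\pi^{-1}(Y \setminus \{y\})) = 0$, and therefore $\mu_y(\pi^{-1}(y)) = 1$. This is the only step requiring care: the ``a.e.\ $y$'' quantifier must be taken uniformly over a countable family, which is where second countability of $Y$ is essential. For general (non-standard) measurable spaces the lemma fails, so I would state the metric/Borel hypothesis explicitly and refer to \cite[Chapter~5]{CornfeldFominSinai1982} for the Lebesgue-space version.
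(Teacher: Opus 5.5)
Your proof is correct, and it takes a different route from the paper. The paper gives no argument of its own: it defers to the Lebesgue-space theory of measurable partitions in Cornfeld--Fomin--Sinai (Rokhlin's construction of conditional measures), and it states the lemma with no hypotheses on $X$, $Y$, $\pi$.

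You instead work in the compact metric category and construct the $\mu_y$ directly:
\begin{enumerate}
\item[(a)] Radon--Nikodym densities for a countable $\mathbb{Q}$-linear dense family $\mathcal{D}\subset C(X)$;
\item[(b)] a single Borel null set controlling the countably many linearity, positivity and normalization relations (positivity survives the density extension because $\mathcal{D}$ contains the rational constants);
\item[(c)] Riesz representation to produce each $\mu_y$;
\item[(d)] Dynkin's $\pi$--$\lambda$ theorem to reach the refined identity $\mu(A\cap\pi^{-1}B)=\int_B\mu_y(A)\,d\nu$;
\item[(e)] a countable base of $Y$ to get concentration on the fibres.
\end{enumerate}

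Your route gives a self-contained, elementary proof. Its hypotheses are explicit and match exactly how the lemma is used in the paper, namely $\pi:\Sigma_A\to\Omega$ continuous between compact metric spaces. Your caveat is also justified: for arbitrary measurable $X$, $Y$, $\pi$ the statement as written is false, since $\pi^{-1}(y)$ need not be measurable and regular conditional probabilities can fail to exist.

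The cited Rokhlin route gives generality without topology. It covers any Lebesgue probability space and any measurable partition, and it supplies a.e.\ uniqueness of the conditional measures; this is the framework of the Rokhlin lemma stated immediately afterwards. Your proof extends to standard Borel spaces through a Borel isomorphism onto a compact metric space. Uniqueness also follows in your setup by testing against the countable family $\mathcal{D}$. So nothing essential is lost for the purposes of this Part.
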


\begin{lemma}[Rokhlin's Disintegration Theorem {\cite[Theorem~5.1.11]{CornfeldFominSinai1982}}]
If $\mu$ is a Borel probability measure on a standard Borel space $X$ and $\mathcal{P}$ is a measurable partition of $X$, then the conditional measures $\mu_P$ on partition elements $P \in \mathcal{P}$ exist and are unique up to $\mu$-null sets.
\end{lemma}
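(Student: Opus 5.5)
We reduce to the Disintegration lemma stated just before, by realizing the measurable partition $\mathcal{P}$ as the fibers of a measurable map into a standard Borel space, and we prove uniqueness separately through Radon--Nikodym derivatives on a countable generating algebra. By definition of a measurable partition, there is a countable family $\{B_n\}_{n \geq 1}$ of measurable $\mathcal{P}$-saturated sets that separates the elements of $\mathcal{P}$, possibly after discarding a $\mu$-null set.

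\textbf{Reduction to a map.} Define $\pi : X \to Y := \{0,1\}^{\mathbb{N}}$ by $\pi(x) = (\mathbf{1}_{B_n}(x))_{n \geq 1}$. This map is Borel, $Y$ is a compact metrizable space, and the fibers $\pi^{-1}(y)$ are exactly the elements of $\mathcal{P}$ (up to the discarded null set). Set $\nu = \pi_*\mu$. The conditional measures $\mu_P$ will then be the measures $\mu_y$ for $y = \pi(P)$.

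\textbf{Existence.} We would prove existence directly, since the earlier lemma's statement is only cited, and this makes clear where the hypothesis that $X$ is standard Borel enters.

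1. Transfer to a good model. Because $X$ is standard Borel, we may assume $X$ is a compact metric space and $\mu$ is a Radon measure. Fix a countable algebra $\mathcal{A}$ that generates the Borel $\sigma$-algebra and is built from finite unions of closed balls.

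2. Define the candidates. For each $A \in \mathcal{A}$, the measure $E \mapsto \mu(A \cap \pi^{-1}E)$ is absolutely continuous with respect to $\nu$. Let $\mu_y(A)$ be its Radon--Nikodym derivative, a function of $y$ defined up to $\nu$-null sets.

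3. Fix one null set. Since $\mathcal{A}$ is countable, there is a single $\nu$-null set off which, for all $A, A' \in \mathcal{A}$:
\begin{itemize}
\item[] $A \mapsto \mu_y(A)$ is finitely additive, nonnegative, and has total mass $1$;
\item[] $\mu_y(A) \le \mu_y(A')$ whenever $A \subset A'$;
\item[] $\mu_y(A) = \sup\{\mu_y(F) : F \in \mathcal{A},\ \overline{F} \subset A\}$, the inner regularity obtained by choosing countably many approximants in advance.
\end{itemize}

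4. Extend to measures. Off the null set, compactness upgrades finite additivity on $\mathcal{A}$ to countable additivity (the compact-class argument). Carathéodory's theorem then extends $\mu_y$ to a Borel probability measure.

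5. Check the disintegration identities. Measurability of $y \mapsto \mu_y(A)$ and the formula $\mu(A) = \int \mu_y(A)\,d\nu$ hold for $A \in \mathcal{A}$ by construction. A monotone class argument extends both to all Borel sets $A$.

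6. Check concentration on fibers. Because each $B_n$ is saturated, $\mu(B_n \cap \pi^{-1}E) = \nu(E \cap \{y_n = 1\})$. Hence $\mu_y(B_n) = y_n$ for $\nu$-a.e. $y$. Intersecting over the countably many $n$ gives $\mu_y(\pi^{-1}(y)) = 1$ for $\nu$-a.e. $y$.

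\textbf{Uniqueness.} Suppose $\{\mu_y\}$ and $\{\mu'_y\}$ are two such families. For each $A \in \mathcal{A}$ and each Borel $E \subset Y$, concentration on fibers gives
\[
\int_E \mu_y(A)\,d\nu \;=\; \mu(A \cap \pi^{-1}E) \;=\; \int_E \mu'_y(A)\,d\nu .
\]
Uniqueness of Radon--Nikodym derivatives then gives $\mu_y(A) = \mu'_y(A)$ for $\nu$-a.e. $y$. Since $\mathcal{A}$ is countable, this holds simultaneously for all $A \in \mathcal{A}$ off one null set. Because $\mathcal{A}$ is a $\pi$-system generating the Borel sets, Dynkin's theorem gives $\mu_y = \mu'_y$ for $\nu$-a.e. $y$, which is uniqueness up to $\mu$-null sets.

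\textbf{Main obstacle.} The hard step is step 4, upgrading the a.e.-defined finitely additive set functions to genuine countably additive measures. This is precisely where the standard Borel hypothesis is needed. We would handle it by fixing the compact model and the countable regularity conditions of step 3 before choosing the exceptional null set.
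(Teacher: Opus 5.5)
\textbf{Comparison with the paper.} Your proof is correct, but it does not follow the paper's route. The paper gives no argument here. It defers to the Rokhlin theory in Cornfeld--Fomin--Sinai, where conditional measures are built for Lebesgue (complete, mod~$0$) spaces, with the measurable partition, its quotient space and its quotient measure as the primary objects.

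You instead use the countable family of saturated separating sets to realize $\mathcal{P}$ as the fibres of a Borel map $\pi$ into $\{0,1\}^{\mathbb{N}}$. You then run the regular-conditional-probability argument:
\begin{enumerate}
\item Radon--Nikodym derivatives on a countable generating algebra;
\item a single exceptional null set;
\item a compact-class argument for countable additivity, followed by Carath\'eodory extension;
\item Radon--Nikodym uniqueness together with Dynkin's theorem for uniqueness.
\end{enumerate}
This gives a self-contained proof; in effect you also prove the preceding disintegration lemma, which the paper likewise only cites. It shows exactly where standardness enters, namely through the compact model. It also makes clear that ``measurable partition'' must be read in Rokhlin's sense: for a mere partition into Borel sets, such as the orbits of an irrational rotation, the statement fails.

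In exchange, the Rokhlin framework gives the intrinsic formulation, in which $P\mapsto\mu_P(A)$ is measurable for the quotient $\sigma$-algebra. Your $y$-indexed uniqueness matches this. By Lusin separation of disjoint analytic sets, any $S\subset\pi(X)$ with $\pi^{-1}(S)$ Borel is the trace on $\pi(X)$ of a Borel subset of $Y$.

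\textbf{Two points to tighten.}

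\emph{Step~3.} Take $\mathcal{A}$ to be the algebra generated by closed balls of rational radius about a countable dense set. Since $X\setminus\overline{B}(c,r)=\bigcup_n \bigl(X\setminus\overline{B}(c,r+1/n)\bigr)$, every $A\in\mathcal{A}$ is an increasing union of sets $F_{A,n}\in\mathcal{A}$ with $\overline{F_{A,n}}\subset A$. The a.e.\ inner regularity then follows from $\int\bigl(\mu_y(A)-\lim_n\mu_y(F_{A,n})\bigr)\,d\nu=\mu(A)-\lim_n\mu(F_{A,n})=0$.

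\emph{Step~5.} Carry the localized identity $\int_E\mu_y(A)\,d\nu=\mu(A\cap\pi^{-1}E)$ for every Borel $E$, not just $E=Y$, through the monotone class argument. Also set $\mu_y$ equal to a fixed probability measure on the exceptional null set. Both are needed because step~6 applies the localized identity to the sets $B_n$, which need not belong to $\mathcal{A}$.
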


\subsection{Geometric Measure Theory}\label{app:geometric}

We state the definition of Hausdorff dimension and recall Bowen's equation, which links the Hausdorff dimension of a conformal repeller to the zero of an associated pressure function. These tools are used in the dimension-theoretic consequences developed in Parts~V--VI \cite{Thiam2026e,Thiam2026f}.

\begin{definition}[Hausdorff Dimension]
For a metric space $(X, d)$ and $s \geq 0$, the $s$-dimensional Hausdorff measure is
\begin{equation}
\mathcal{H}^s(A) = \lim_{\delta \to 0} \inf\left\{ \sum_i (\mathrm{diam} U_i)^s : A \subset \bigcup_i U_i, \mathrm{diam} U_i < \delta \right\}.
\end{equation}
The Hausdorff dimension is $\dim_H(A) = \inf\{s : \mathcal{H}^s(A) = 0\} = \sup\{s : \mathcal{H}^s(A) = \infty\}$.
\end{definition}

\begin{proposition}[Bowen's Equation {\cite{Bowen1979}}]\label{prop:bowen_equation}
For a conformal repeller $\Lambda$ with expansion rate $\lambda(x) = |Df_x|$ and topological pressure $P(s) = P(-s\log|Df|)$, the Hausdorff dimension satisfies the Bowen equation $P(\dim_H(\Lambda)) = 0$.
\end{proposition}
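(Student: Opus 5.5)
The plan is the classical Bowen--Ruelle argument. We transfer the problem to a symbolic model through a Markov partition, then compare geometric sizes of cylinders with Birkhoff sums of $-\log|Df|$. We assume $f\in C^{1+\beta}$ and that $f|_\Lambda$ is topologically mixing; the general case reduces to this one through the spectral-decomposition structure of Main Theorem~\ref{thm:spectral_decomposition}. Write $\varphi=-\log|Df|$ and $P(s)=P(s\varphi)$.

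\textbf{Step 1: the pressure function.} We show $s\mapsto P(s)$ has a unique zero $s_0$. Set $\chi_{\min}=\min_\Lambda \log|Df|$, which is positive for a repeller. Then for $t>0$
\begin{equation*}
P(s+t)\le P(s)-t\,\chi_{\min},
\end{equation*}
so $P$ is continuous and strictly decreasing. We also have $P(0)=h_{\mathrm{top}}(f|_\Lambda)\ge 0$ and $P(s)\to-\infty$ as $s\to\infty$, so $P$ has exactly one zero $s_0$.

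\textbf{Step 2: coding and cylinder sizes.} For the expanding (one-sided) case we use the analogue of Main Theorem~\ref{thm:markov_existence} and Main Theorem~\ref{thm:symbolic_coding}: a Markov partition $\{R_1,\dots,R_m\}$ of small diameter and a H\"older coding $\pi:\Sigma_A^+\to\Lambda$. For an admissible word $w=a_0\cdots a_{n-1}$ let $R_w=\bigcap_{j<n}f^{-j}R_{a_j}$ be the $n$-cylinder. The key geometric estimate we would aim for is
\begin{equation*}
C^{-1}\,|Df^n_x|^{-1}\le \mathrm{diam}(R_w)\le C\,|Df^n_x|^{-1}\quad\text{for all } x\in R_w,
\end{equation*}
with $C$ independent of $n$ and $w$. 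Here $|Df^n_x|$ is the conformal factor, so $Df^n_x$ is a scalar multiple of an isometry. The estimate follows from two facts:
\begin{enumerate}
\item[] \emph{bounded distortion:} $\bigl|S_n\varphi(x)-S_n\varphi(y)\bigr|\le C'$ for $x,y\in R_w$. This comes from the $\beta$-H\"older continuity of $\log|Df|$ together with the exponential contraction of inverse branches, which makes the relevant sum geometric.
\item[] \emph{conformality:} $f^n$ maps $R_w$ onto $R_{a_{n-1}}$ as a map that is nearly a similarity with ratio $|Df^n_x|$.
\end{enumerate}

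\textbf{Step 3: the upper bound $\dim_H\Lambda\le s_0$.} Fix $s>s_0$ and cover $\Lambda$ by all $n$-cylinders. By Step 2,
\begin{equation*}
\sum_{|w|=n}\mathrm{diam}(R_w)^s\le C^s\sum_{|w|=n}\exp\bigl(s\,S_n\varphi(x_w)\bigr)\le C''e^{n(P(s)+o(1))}.
\end{equation*}
The second inequality uses that, for subshifts of finite type, the pressure is the growth rate of these cylinder sums (Part~I). Since $P(s)<0$ the sums tend to $0$, so $\mathcal H^s(\Lambda)=0$.

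\textbf{Step 4: the lower bound $\dim_H\Lambda\ge s_0$.} Let $\mu$ be the Gibbs equilibrium state for $s_0\varphi\circ\pi$ from Part~I, pushed forward to $\Lambda$. Since $P(s_0)=0$, the Gibbs property and Step 2 give
\begin{equation*}
\mu(R_w)\asymp \exp\bigl(s_0S_n\varphi\bigr)\asymp \mathrm{diam}(R_w)^{s_0}.
\end{equation*}
To pass from cylinders to balls, fix $r>0$ and consider the family of cylinders $R_w$ with $\mathrm{diam}(R_w)\le r<\mathrm{diam}(R_{w^-})$, where $w^-$ is $w$ with its last symbol removed. These cylinders are disjoint in interior. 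Each has diameter comparable to $r$, because one step changes the diameter by a factor at most $\|Df\|_\infty$. Each also contains a ball of radius comparable to its diameter. The containment comes from conformality and the fact that the $R_i$ are proper rectangles, so each contains a ball of fixed size, which $f^{-n}$ shrinks nearly by a similarity.

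By a volume-packing argument, $B(x,r)$ then meets at most $N$ of these cylinders, with $N$ independent of $r$. Hence $\mu(B(x,r))\le N C r^{s_0}$, and the mass distribution principle gives $\dim_H\Lambda\ge s_0$.

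I expect the main obstacle to be the geometric comparison in Step 2 and its use in Step 4. This means establishing bounded distortion with uniform constants and showing that each cylinder contains a ball of comparable radius. That second point is precisely where conformality is essential: without it, cylinders become eccentric and the ball count $N$ is lost. I would handle it first, by proving a Koebe-type lemma for inverse branches of $f^n$ on neighborhoods of the rectangles. The remaining steps are then routine applications of the symbolic thermodynamic formalism.
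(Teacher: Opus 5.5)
The paper does not prove this proposition. Its proof is the single sentence pointing to \cite{Bowen1979}; that paper treats quasi-circles, and the general conformal-repeller statement is usually credited to Ruelle. Your outline is the standard Bowen--Ruelle argument the citation stands for:
\begin{itemize}
\item[] a unique zero $s_0$ of the strictly decreasing pressure;
\item[] comparison of cylinder diameters with $|Df^n|^{-1}$ via bounded distortion and conformality;
\item[] cylinder sums for the upper bound;
\item[] the Gibbs state of $s_0\varphi$ with the mass distribution principle for the lower bound.
\end{itemize}
So you are supplying an argument where the paper has none, not diverging from it. The inputs you borrow are not actually available in this paper, though. Main Theorems~\ref{thm:markov_existence}, \ref{thm:symbolic_coding} and~\ref{thm:spectral_decomposition} concern hyperbolic sets of diffeomorphisms with two-sided codings, whereas a repeller is invariant for a non-invertible expanding map. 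The one-sided Markov partition, the coding $\pi:\Sigma_A^+\to\Lambda$, and the decomposition into mixing pieces are therefore external ingredients. You also need to add $C^{1+\beta}$ regularity, which the statement omits.

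One step fails as written. In Step~4 you say each stopping-time cylinder contains a ball of radius comparable to $r$, and then you count by volume. But $R_w\subset\Lambda$, and when $\Lambda$ has empty interior in $M$ (the case of interest) no cylinder contains a ball of $M$. Properness of the $R_i$ only gives relative balls $B(y,\rho)\cap\Lambda\subset R_i$.

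The repair is short. Pulling back by the inverse branch with bounded eccentricity gives points $y_w$ with $B(y_w,cr)\cap\Lambda\subset\mathrm{int}_\Lambda R_w$. Distinct stopping-time cylinders have disjoint relative interiors and $y_{w'}\in\Lambda$, so $d(y_w,y_{w'})\ge cr$. Hence the balls $B(y_w,cr/2)$ are disjoint in $M$ and lie in $B(x,(1+C)r)$, and Riemannian volume comparison gives the uniform bound $N$.

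Alternatively, you can drop both the packing and the Markov partition. By conformality and bounded distortion, $B(x,c\varepsilon|Df^n_x|^{-1})\subset B_n(x,\varepsilon)\subset B(x,C\varepsilon|Df^n_x|^{-1})$ for $x\in\Lambda$. The Gibbs property of the equilibrium state on Bowen balls, together with $P(s_0)=0$, then gives $\mu(B(x,r))\lesssim r^{s_0}$ directly. Covers by Bowen balls centred on maximal $(n,\varepsilon)$-separated sets give the upper bound.

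Three smaller points:
\begin{itemize}
\item[] $P(s+t)\le P(s)-t\chi_{\min}$ gives strict monotonicity but not continuity. For continuity use $|P(\phi)-P(\psi)|\le\|\phi-\psi\|_\infty$.
\item[] It is $f^{n-1}$, not $f^n$, that maps $R_w$ onto $R_{a_{n-1}}$. This is harmless since $|Df|$ is bounded above and below.
\item[] Conformality is normally assumed only at points of $\Lambda$. The near-similarity of inverse branches on neighbourhoods must therefore come from H\"older continuity of $Df$ and the exponential closeness of orbits. That is what your Koebe-type lemma has to state.
\end{itemize}
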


\noindent The proof is given in Bowen's original paper \cite{Bowen1979}.

\subsection{Proof of Proposition \ref{prop:cone_criterion}}\label{app:cone_proof}

We prove the cone criterion for hyperbolicity in detail.

\begin{proof}[Proof of Proposition \ref{prop:cone_criterion}]
Define the invariant bundles by
\begin{align}
E^u_x &= \bigcap_{n \geq 0} Df^n_{f^{-n}(x)}(\mathcal{C}^u_{f^{-n}(x)}), \\
E^s_x &= \bigcap_{n \geq 0} Df^{-n}_{f^n(x)}(\mathcal{C}^s_{f^n(x)}).
\end{align}

\textbf{Step 1: The intersections are nonempty.} Each $Df^n(\mathcal{C}^u)$ is a cone strictly contained in the previous one (by condition (i)). In finite dimensions, a nested sequence of closed cones with strictly decreasing aperture converges to a linear subspace.

\textbf{Step 2: The intersections are linear subspaces.} The limit of cones with aperture tending to zero is a linear subspace. The dimension is determined by the original cone dimensions.

\textbf{Step 3: Invariance.} $Df_x(E^u_x) = E^u_{f(x)}$ follows from the definition and $Df$-invariance of the cone sequence.

\textbf{Step 4: Expansion/contraction.} Condition (iii) gives $\|Df^n_x v\| \geq \lambda^{-n}\|v\|$ for $v \in \mathcal{C}^u_x$. In the limit, this applies to $v \in E^u_x$. Rewriting: $\|Df^{-n}_x w\| \leq \lambda^n \|w\|$ for $w \in E^u_x$, which is the hyperbolic expansion condition.

\textbf{Step 5: Transversality.} $E^s_x \cap E^u_x = \{0\}$ because $\mathcal{C}^s_x \cap \mathcal{C}^u_x = \{0\}$ by assumption, and the invariant subspaces are contained in the respective cones.

\textbf{Step 6: Spanning.} $E^s_x \oplus E^u_x = T_x M$ by dimension count: each has dimension equal to the dimension of the corresponding cone's axis, and these sum to $\dim M$.
\end{proof}

\end{document}